\documentclass[reqno,a4paper]{amsart}

\usepackage{amsthm}
\usepackage{amsmath}
\usepackage[all]{xy} \SelectTips{eu}{}
\usepackage{latexsym,amsfonts,amssymb}
\usepackage{hyperref}
\hypersetup{colorlinks=true, linkcolor=teal}
\usepackage{cite}
\hypersetup{citecolor=brown}
\usepackage{leftidx}
\usepackage[dvipsnames,svgnames,table]{xcolor}
\usepackage{mathptmx}
\usepackage{nicefrac}
\usepackage{array}
\usepackage{mathrsfs}
\usepackage{rotating}
\usepackage{tikz-cd}

\usepackage{vmargin}
\setpapersize{A4}
\setmargrb{25mm}{20mm}{25mm}{20mm}

\renewcommand{\theequation}{$\smash{\sharp}\mspace{0.5mu}$\arabic{equation}}

\newcommand{\numberseries}{\mdseries}   %Fontseries used for numbering theorem

\newlength{\thmtopspace}                %Space above theorem
\newlength{\thmbotspace}                %Space below theorem
\newlength{\thmheadspace}               %Space between theorem caption and text
\newlength{\thmindent}                  %For indenting

\newtheoremstyle{bfupright head,slanted body}
                {\thmtopspace}{\thmbotspace}
                {\slshape}{\thmindent}{\bfseries}{.}{\thmheadspace}
                {{\numberseries \thmnumber{{\bf #2} }}\thmnote{#3}}

\newtheoremstyle{bfupright head,upright body}
                {\thmtopspace}{\thmbotspace}
                {\upshape}{\thmindent}{\bfseries}{.}{\thmheadspace}
                {{\numberseries \thmnumber{{\bf #2} }}\thmnote{#3}}

\newtheoremstyle{bfit head,upright body}
                {\thmtopspace}{\thmbotspace}
                {\upshape}{\thmindent}{\upshape}{.}{\thmheadspace}
                {{\numberseries\thmnumber{{\bf #2} }}
                {\bfseries\itshape\thmnote{\negthickspace#3}}}

\newtheoremstyle{it head,upright body}
                {\thmtopspace}{\thmbotspace}
                {\upshape}{\thmindent}{\upshape}{.}{\thmheadspace}
                {{\numberseries\thmnumber{{\bf #2} }}
                {\itshape\thmnote{\negthickspace#3}}}

\newtheoremstyle{fixed bf head,slanted body}
                {\thmtopspace}{\thmbotspace}{\slshape}
                {\thmindent}{\bfseries}{\!.}{\thmheadspace}
                {{\thmname{#1}\numberseries\thmnumber{ {\bf #2}}}\thmnote{ (#3)} }

\newtheoremstyle{fixed bf head,upright body}
                {\thmtopspace}{\thmbotspace}{\upshape}
                {\thmindent}{\bfseries}{\!.}{\thmheadspace}
                {{\thmname{#1}\numberseries\thmnumber{ {\bf #2}}}\thmnote{ (#3)} }

\newtheoremstyle{independent paragraph}
                {\thmtopspace}{\thmbotspace}
                {\upshape}{\thmindent}{\upshape}{}{0pt}
                {\thmnote{#3 }}

\newtheoremstyle{subparagraph}
                {\thmbotspace}{\thmbotspace}
                {\upshape}{\thmindent}{\upshape}{}{0pt}
                {\thmnote{#3 }}

\newtheoremstyle{notes}
                {\thmtopspace}{\thmbotspace}
                {\ttfamily}{\thmindent}{\ttfamily\small }{}{0pt}
                {\thmnote{#3 }}
                
\newtheoremstyle{numbered paragraph}
                {\thmtopspace}{\thmbotspace}{\upshape}
                {\thmindent}{\upshape}{}{\thmheadspace}
                {{\numberseries \thmnumber{\bf #2.}}}

\theoremstyle{bfupright head,slanted body}
\newtheorem{res}{}[section]             \newtheorem*{res*}{}

\theoremstyle{bfit head,upright body}
                 \newtheorem*{com*}{}

\theoremstyle{bfupright head,upright body}
               \newtheorem*{bfhpg*}{}

\theoremstyle{it head,upright body}
               \newtheorem*{ithpg*}{}

\theoremstyle{fixed bf head,slanted body}
\newtheorem{thm}[res]{Theorem}          \newtheorem*{thm*}{Theorem}
\newtheorem{prp}[res]{Proposition}      \newtheorem*{prp*}{Proposition}
\newtheorem{cor}[res]{Corollary}        \newtheorem*{cor*}{Corollary}
\newtheorem{lem}[res]{Lemma}            \newtheorem*{lem*}{Lemma}

\theoremstyle{fixed bf head,upright body}
\newtheorem{dfn}[res]{Definition}       \newtheorem*{dfn*}{Definition}
      \newtheorem*{obs*}{Observation}
\newtheorem{rmk}[res]{Remark}           \newtheorem*{rmk*}{Remark}
\newtheorem{exa}[res]{Example}          \newtheorem*{exa*}{Example}
         \newtheorem*{exe*}{Exercise}
\newtheorem{stp}[res]{Setup}            \newtheorem{stp*}{Setup}
         \newtheorem{ntn*}{Notation}
\newtheorem{con}[res]{Construction}     \newtheorem{con*}{Construction}
      \newtheorem*{conj*}{Conjecture}

\theoremstyle{numbered paragraph}

\theoremstyle{subparagraph}

\theoremstyle{notes}

\newlength{\thmlistleft}        %leftmargin
\newlength{\thmlistright}       %rightmargin
\newlength{\thmlistpartopsep}   %partopsep
\newlength{\thmlisttopsep}      %topsep
\newlength{\thmlistparsep}      %parsep
\newlength{\thmlistitemsep}     %itemsep

\newcounter{eqc} 
\newenvironment{eqc}{\begin{list}{\upshape (\textit{\roman{eqc}})}%
    {\usecounter{eqc}%
      \setlength{\leftmargin}{\thmlistleft}%
      \setlength{\labelwidth}{\thmlistleft}%
      \setlength{\rightmargin}{\thmlistright}%
      \setlength{\partopsep}{\thmlistpartopsep}%
      \setlength{\topsep}{\thmlisttopsep}%
      \setlength{\parsep}{\thmlistparsep}%
      \setlength{\itemsep}{\thmlistitemsep}}}%
  {\end{list}}%

\newcommand{\eqclbl}[1]{{\upshape(\textit{#1})}}

\newcounter{prt}
\newenvironment{prt}{\begin{list}{\upshape (\alph{prt})}%
    {\usecounter{prt}%
      \setlength{\leftmargin}{\thmlistleft}%
      \setlength{\labelwidth}{\thmlistleft}%
      \setlength{\rightmargin}{\thmlistright}%
      \setlength{\partopsep}{\thmlistpartopsep}%
      \setlength{\topsep}{\thmlisttopsep}%
      \setlength{\parsep}{\thmlistparsep}%
      \setlength{\itemsep}{\thmlistitemsep}}}%
  {\end{list}}%

\newcommand{\prtlbl}[1]{{\upshape(#1)}}

\newcounter{rqm}
\newenvironment{rqm}{\begin{list}{\upshape (\arabic{rqm})}%
    {\usecounter{rqm}%
      \setlength{\leftmargin}{\thmlistleft}%
      \setlength{\labelwidth}{\thmlistleft}%
      \setlength{\rightmargin}{\thmlistright}%
      \setlength{\partopsep}{\thmlistpartopsep}%
      \setlength{\topsep}{\thmlisttopsep}%
      \setlength{\parsep}{\thmlistparsep}%
      \setlength{\itemsep}{\thmlistitemsep}}}%
  {\end{list}}%

\newcounter{rqmm}
  {\end{list}}%

  {\end{list}}%

\newenvironment{itemlist}{\nopagebreak \begin{list}{{\small $\bullet$}}%
    {\setlength{\leftmargin}{\thmlistleft}%
      \setlength{\labelwidth}{\thmlistleft}%
      \setlength{\rightmargin}{\thmlistright}%
      \setlength{\partopsep}{\thmlistpartopsep}%
      \setlength{\topsep}{\thmlisttopsep}%
      \setlength{\parsep}{\thmlistparsep}%
      \setlength{\itemsep}{\thmlistitemsep}}}%
  {\end{list}}%

  \newcommand{\proofoftag}[2][:]{(#2)#1}

  \newcommand{\proofofimp}[3][:]{\mbox{\eqclbl{#2}$\!\implies\!$\eqclbl{#3}#1}}  

\newcommand{\pgref}[1]{\ref{#1}}

\renewcommand{\eqref}[1]{(\pgref{eq:#1})}

\newcommand{\corref}[2][Corollary ]{#1\pgref{cor:#2}}
\newcommand{\dfnref}[2][Definition~]{#1\pgref{dfn:#2}}
\newcommand{\exaref}[2][Example ]{#1\pgref{exa:#2}}
\newcommand{\lemref}[2][Lemma ]{#1\pgref{lem:#2}}

\newcommand{\prpref}[2][Proposition ]{#1\pgref{prp:#2}}

\newcommand{\rmkref}[2][Remark ]{#1\pgref{rmk:#2}}
\newcommand{\thmref}[2][Theorem ]{#1\pgref{thm:#2}}
\newcommand{\stpref}[2][Setup~]{#1\pgref{stp:#2}}

\newcommand{\conref}[2][Construction~]{#1\ref{con:#2}}

\makeatletter
\def\@nobreak@#1{\mathchoice%
  {\nobreakdef@\displaystyle\f@size{#1}}%
  {\nobreakdef@\nobreakstyle\tf@size{\firstchoice@false #1}}%
  {\nobreakdef@\nobreakstyle\sf@size{\firstchoice@false #1}}%
  {\nobreakdef@\nobreakstyle\ssf@size{\firstchoice@false #1}}%
  \check@mathfonts}%
\def\nobreakdef@#1#2#3{\hbox{{%
                    \everymath{#1}%
                    \let\f@size#2\selectfont%
                    #3}}}%
\makeatother

\DeclareFontFamily{T1}{cmex}{}
\DeclareFontShape{T1}{cmex}{m}{n}{<-> s * [0.89] cmex10}{}
\DeclareSymbolFont{cmlargesymbols}{T1}{cmex}{m}{n}
\DeclareMathSymbol{\mycoprod}{\mathop}{cmlargesymbols}{"60} 
\DeclareMathSymbol{\myprod}{\mathop}{cmlargesymbols}{"51} \let\prod\myprod

\DeclareSymbolFont{usualmathcal}{OMS}{cmsy}{m}{n}
\DeclareSymbolFontAlphabet{\mathcal}{usualmathcal}

\DeclareSymbolFont{letters}{OML}{txmi}{m}{it}

\DeclareMathSymbol{\alpha}{\mathord}{letters}{"0B}
\DeclareMathSymbol{\beta}{\mathord}{letters}{"0C}
\DeclareMathSymbol{\gamma}{\mathord}{letters}{"0D}
\DeclareMathSymbol{\sigma}{\mathord}{letters}{"0E}
\DeclareMathSymbol{\epsilon}{\mathord}{letters}{"0F}
\DeclareMathSymbol{\zeta}{\mathord}{letters}{"10}
\DeclareMathSymbol{\eta}{\mathord}{letters}{"11}
\DeclareMathSymbol{\theta}{\mathord}{letters}{"12}
\DeclareMathSymbol{\iota}{\mathord}{letters}{"13}
\DeclareMathSymbol{\kappa}{\mathord}{letters}{"14}
\DeclareMathSymbol{\lambda}{\mathord}{letters}{"15}
\DeclareMathSymbol{\mu}{\mathord}{letters}{"16}
\DeclareMathSymbol{\nu}{\mathord}{letters}{"17}
\DeclareMathSymbol{\xi}{\mathord}{letters}{"18}
\DeclareMathSymbol{\pi}{\mathord}{letters}{"19}
\DeclareMathSymbol{\rho}{\mathord}{letters}{"1A}
\DeclareMathSymbol{\sigma}{\mathord}{letters}{"1B}
\DeclareMathSymbol{\tau}{\mathord}{letters}{"1C}
\DeclareMathSymbol{\upsilon}{\mathord}{letters}{"1D}
\DeclareMathSymbol{\phi}{\mathord}{letters}{"1E}
\DeclareMathSymbol{\chi}{\mathord}{letters}{"1F}
\DeclareMathSymbol{\psi}{\mathord}{letters}{"20}
\DeclareMathSymbol{\omega}{\mathord}{letters}{"21}
\DeclareMathSymbol{\varepsilon}{\mathord}{letters}{"22}
\DeclareMathSymbol{\vartheta}{\mathord}{letters}{"23}
\DeclareMathSymbol{\varpi}{\mathord}{letters}{"24}
\DeclareMathSymbol{\varrho}{\mathord}{letters}{"25}
\DeclareMathSymbol{\varsigma}{\mathord}{letters}{"26}
\DeclareMathSymbol{\varphi}{\mathord}{letters}{"27}

\DeclareMathSymbol{\Gamma}{\mathord}{letters}{"00}
\DeclareMathSymbol{\Delta}{\mathord}{letters}{"01}
\DeclareMathSymbol{\Theta}{\mathord}{letters}{"02}
\DeclareMathSymbol{\Lambda}{\mathord}{letters}{"03}
\DeclareMathSymbol{\Xi}{\mathord}{letters}{"04}
\DeclareMathSymbol{\Pi}{\mathord}{letters}{"05}
\DeclareMathSymbol{\Upsilon}{\mathord}{letters}{"07}
\DeclareMathSymbol{\Phi}{\mathord}{letters}{"08}
\DeclareMathSymbol{\Psi}{\mathord}{letters}{"09}
\DeclareMathSymbol{\upGamma}{\mathalpha}{operators}{"00}
\DeclareMathSymbol{\upDelta}{\mathalpha}{operators}{"01}
\DeclareMathSymbol{\upTheta}{\mathalpha}{operators}{"02}
\DeclareMathSymbol{\upLambda}{\mathalpha}{operators}{"03}
\DeclareMathSymbol{\upXi}{\mathalpha}{operators}{"04}
\DeclareMathSymbol{\upPi}{\mathalpha}{operators}{"05}
\DeclareMathSymbol{\upSigma}{\mathalpha}{operators}{"06}
\DeclareMathSymbol{\upUpsilon}{\mathalpha}{operators}{"07}
\DeclareMathSymbol{\upPhi}{\mathalpha}{operators}{"08}
\DeclareMathSymbol{\upPsi}{\mathalpha}{operators}{"09}
\DeclareMathSymbol{\upOmega}{\mathalpha}{operators}{"0A}

\DeclareMathAlphabet\PazoBB{U}{fplmbb}{m}{n}%

\newcommand{\Hom}[3]{\operatorname{Hom}_{#1}(#2,#3)}
\newcommand{\Ext}[4]{\operatorname{Ext}_{#1}^{#2}(#3,#4)}
\newcommand{\Tor}[4]{\operatorname{Tor}^{#1}_{#2}(#3,#4)}

\newcommand{\Ker}[1]{\operatorname{Ker}\mspace{1mu}#1}
\newcommand{\Coker}[1]{\operatorname{Cok}\mspace{1mu}#1}

\newcommand{\Prj}[1]{\operatorname{Prj}\mspace{2mu}#1}
\newcommand{\Prjp}[1]{\operatorname{Prj}(#1)}
\newcommand{\GPrj}[1]{\operatorname{GPrj}\mspace{2mu}#1}

\newcommand{\Cy}[2]{\operatorname{Z}^{#1}\mspace{-2.5mu}#2}

\newcommand{\Hletterone}{\mathcal{H}}
\newcommand{\HH}[3]{\Hletterone^{#1}_{#2,\,#3}}
\newcommand{\HHop}[3]{\check{\Hletterone}^{#1}_{#2,\,#3}}

\newcommand{\Hlettertwo}{\mathbb{H}}

\newcommand{\sHH}[3]{%
\Hlettertwo^{#1}_{#2,\,#3}}
\newcommand{\sHHt}[3]{%
\Hlettertwo_{#1}^{#2,\,#3}}

\newcommand{\cHhj}[2]{\Hlettertwo^{#1}_{\textnormal{[}#2\textnormal{]}}}
\newcommand{\hHhj}[2]{\Hlettertwo_{#1}^{\textnormal{[}#2\textnormal{]}}}

\newcommand{\stalkco}[1]{S\mspace{-2mu}\langle{#1}\rangle}
\newcommand{\stalkcn}[1]{S\mspace{-2mu}\{#1\}}

\newcommand{\Cq}[1]{C_{\mspace{-1mu}\smash{#1}}}
\newcommand{\Sq}[1]{S_{\mspace{-4mu}\smash{#1}}}
\newcommand{\Kq}[1]{K_{\smash{#1}}}
\newcommand{\Fq}[1]{F_{\mspace{-2mu}\smash{#1}}}
\newcommand{\Eq}[1]{E_{\smash{#1}}}
\newcommand{\Gq}[1]{G_{\smash{#1}}}

\newcommand{\smasharrow}[2][\to]{\stackrel{\text{\raisebox{3pt}{\smash{$#2$}}}}{\smash{#1}}}

\newcommand{\lMod}[1]{{}_{#1}\mspace{-1mu}\operatorname{Mod}}
\newcommand{\rMod}[1]{\operatorname{Mod}_{#1}}
\newcommand{\lMode}[2]{{}_{#1}\mspace{-1mu}\operatorname{Mod}^{\,#2}}

\newcommand{\lPrj}[1]{{}_{#1}\mspace{-1mu}\operatorname{Prj}}
\newcommand{\lprj}[1]{{}_{#1}\mspace{-1mu}\operatorname{prj}}
\newcommand{\lInj}[1]
{{}_{#1}\mspace{-1mu}\operatorname{Inj}}

\newcommand{\lsmod}[1]{{}_{#1}\mspace{-1mu}\operatorname{mod}}

\DeclareMathOperator{\Inj}{Inj}
\newcommand{\exact}[1]{{}_{#1}\mathscr{E}}

\newcommand{\supp}[1]{\operatorname{supp}\mspace{1mu}#1}

\newcommand{\QSD}[2]{\mathbf{D}_{#1}(#2)}
\newcommand{\QSH}[2]{\mathbf{K}_{#1}(#2)}

\usepackage{microtype}
\usepackage{indentfirst}

\begin{document}

\title{Relative $Q$-shaped homological algebra}

%\author{Henrik Holm, Anastasios Slaftsos, Jorge Vit\'oria}
%

\author{Anastasios Slaftsos}

\address{Anastasios Slaftsos, Dipartimento di Matematica ``Tullio Levi-Civita'', Universit\'a degli Studi di Padova, via Trieste 63, 35131 Padova, Italy}
\email{slaftsos@math.unipd.it}

\author{Jorge Vit\'oria}

\address{Jorge Vit\'oria, Dipartimento di Matematica ``Tullio Levi-Civita'', Universit\'a degli Studi di Padova, via Trieste 63, 35131 Padova, Italy
} 
\email{jorge.vitoria@unipd.it}

%
%\urladdr{http://www.math.ku.dk/\~{}holm/}

\keywords{Bousfield (co)localisations; exact (model) categories; (Gorenstein) projective and injective objects; homotopy categories; recollements; relative homological algebra; quiver representations; triangulated categories.}

\subjclass{16G20, 16E35, 18G25, 18G80, 18N40}

%16G20 Representations of quivers and partially ordered sets
%16E35 Derived categories and associative algebras
%18G25 Relative homological algebra, projective classes 
%18G80 Derived categories, triangulated categories
%18N40 Homotopical algebra, Quillen model categories, derivators

\thanks{\textbf{Acknowledgements:} The authors would like to thank Henrik Holm for his significant contribution on this work. In addition, the authors thank Henrik Holm and Peter J\o rgensen for kindly allowing them to access their unpublished ideas concerning the characterisation of Gorenstein projective objects in $\lMod{Q,A}$ with respect to the abelian exact structure, which inspired \thmref[Theorems~]{GPrj} and \thmref[]{GInj}. 
A.S.~was supported by NextGenerationEU, Mission 4 Component 1 Investment 4.1~, CUP C96E23000600001.
J.V.~was supported by NextGenerationEU under NRRP, Call PRIN 2022  No.~104 of February 2, 2022 of the Italian Ministry of Universities and Research; Project 2022S97PMY \textit{Structures for Quivers, Algebras and Representations (SQUARE)}. }

\begin{abstract}
Exact categories are a natural generalisation of abelian categories and provide a fertile ground to develop relative homological algebra. In this paper, starting from a class of relative Gorenstein projective objects in an exact category $(\mathcal{A},\exact{})$, we define exact model structures on $\mathcal{A}$ and cohomology functors that detect trivial objects and weak equivalences.  Moreover, we show that varying the exact structure on $\mathcal{A}$ induces Bousfield (co)localisation sequences between the corresponding homotopy categories. We use these techniques to study the category $\lMod{Q,A}$ of $\lMod{A}$-valued representations, for a ring $A$, of a suitable $\Bbbk$-linear small category $Q$, where we apply our results to a range of objectwise exact structures, ranging from the split exact structure to the abelian one. In particular, we recover the $Q$-shaped derived category of Holm and J{\o}rgensen and construct an intermediate $Q$-shaped homotopy category, analogous to the homotopy category of complexes. Finally, we show that the $Q$-shaped derived category is a Verdier quotient of the $Q$-shaped homotopy category, and that this quotient functor is part of recollement - generalising results of Verdier, Krause, and Iyama-Kato-Miyachi for complexes and $N$-complexes, respectively.
\end{abstract}

\maketitle

\section{Introduction}
\label{sec:introduction}

A complex of left $A$-modules, for an algebra $A$ over a commutative ring $\Bbbk$ can be regarded as an $\lMod{A}$-valued representation of the repetitive quiver $\mathbb{Z}A_2$ of the Dynkin quiver, modulo the mesh relations. In other words, is a representation of the quiver $Q^\text{cpx}$ below, subject to the relation that every two consecutive compose to zero. 
\[
Q^\text{cpx}\colon\cdots\longrightarrow\bullet\longrightarrow\bullet\longrightarrow\bullet\longrightarrow\cdots
\]
Formally speaking, this observation witnesses an equivalence between the category of complexes $\mathbf{Ch}(A)$ and the functor category $\lMod{Q^{\text{cpx}},A}$ of $\Bbbk$-linear functors $Q^{\text{cpx}}\longrightarrow\lMod{A}$, where $Q^{\text{cpx}}$ is the $\Bbbk$-linear path category of $Q^\text{cpx}$ modulo the ideal generated by the composition of any two consecutive arrows.

This point of view allows one to consider various ``shapes'' $Q$ that admit the same properties as $Q^\text{cpx}$, and to study the categories $\lMod{Q,A}$ as variants of the category of chain complexes. The allowed shapes include (but are not restricted to) some cyclic quivers and some repetitive quivers (more generally stable translation quivers). A key tool in our approach to the study of the $\lMod{A}$-valued representation theory of $Q$ is the existence of a Serre functor in $Q$, which excludes finite quivers without oriented cycles. Inspired by the model structure on $\mathbf{Ch}(A)$ that gives rise to the derived category $\mathbf{D}(A)$ as its homotopy category, in \cite{HJ-JLMS, HJ-TAMS} the following statements are proved.
\begin{itemlist}
    \item There are classes $\mathbb{E},{}^\perp\mathbb{E}$ and $\mathbb{E}^\perp$ in $\lMod{Q,A}$ that generalise the exact, DG projective and DG injective complexes and give rise to two abelian model structures on $\lMod{Q,A}$ which in turn generalise the projective and injective model structures on complexes.
    \item These model structures share the same homotopy category, denoted by $\QSD{Q}{A}$ and called the \textbf{$Q$-shaped derived category}. In the special case $Q=Q^\text{cpx}$, we recover the usual derived category $\mathbf{D}(A)$ of $A$.
    \item The $Q$-shaped derived category is compactly generated.
\end{itemlist}

There is, however, an intermediate step between the category of complexes of $A$-modules and the derived category $\mathbf{D}(A)$: the homotopy category of complexes of $A$-modules $\mathbf{K}(A)$. In fact, the derived category $\mathbf{D}(A)$ can be expressed as a Verdier quotient of the homotopy category of complexes (cf. \cite[Chap.~III,~Def.~1.2.2]{JLV77}) and, as shown in \cite{Krause_2010}, this quotient functor $j^*$ fits in a recollement as follows
\[
\begin{tikzcd}[column sep=6em]
	{\mathbf{K}^{\text{ac}}(R)} & {\mathbf{K}(R)} & {\mathbf{D}(R).}
	\arrow["i_*"{description}, from=1-1, to=1-2]
	\arrow["{i^*}"', shift right=3, from=1-2, to=1-1]
	\arrow["{i^!}", shift left=3, from=1-2, to=1-1]
	\arrow["j^*"{description}, from=1-2, to=1-3]
	\arrow["{j_*}", shift left=3, from=1-3, to=1-2]
	\arrow["{j_!}"', shift right=3, from=1-3, to=1-2]
\end{tikzcd}
\]
The kernel of $j^*$ is the full subcategory $\mathbf{K}^{\text{ac}}(A)$ of acyclic complexes of $A$-modules. In this paper, we strive for the construction of an analogous intermediate step between the categories $\lMod{Q,A}$ and $\QSD{Q}{A}$, constructing a category which we will call the \emph{Q-shaped homotopy category} of $A$. To achieve this goal, we develop a machinery that produces projective (or injective) model structures on (efficient) exact categories (see for example \cite{SaorinStovicek}), recovering in the process the construction of the $Q$-shaped derived category as well. 

We begin this paper working with abstract exact categories, later specialising to the functor category $\lMod{Q,A}$. To state our main theorem (\thmref{main}), we consider a set (or, in some cases, a class) of Gorenstein projective objects in a weakly idempotent complete exact category $\mathcal{E}$ and we consider the cohomology functors defined as follows
\begin{equation*}
  \HH{i}{G}{n}(-) \;=\; \Ext{\mathcal{E}}{i}{\sigma^n G}{-}
  \colon \mathcal{E} \longrightarrow \mathsf{Ab}.
\end{equation*}
Here, $\sigma^nG$ denotes the $n$-th syzygy of a totally acyclic complex of projective objects in $\mathcal{E}$ which has as its zero-th syzygy the Gorenstein projective object $G$. It turns out that the functor $\HH{i}{G}{n}$ is independent of the choice of the totally acyclic complex, see \lemref{proj-equiv}. In this context, we show that $\mathcal{E}$ is equipped with a model structure for which we give a cohomological description of trivial objects and weak equivalences. In Theorem \ref{thm:main-op} we show the dual results by working with a class of Gorenstein injectives.

\begin{res*}[Theorem~A]
     
  \label{thm:A}
     Let $\mathscr{G}$ be a class of Gorenstein projective objects in a weakly idempotent complete exact category $\mathcal{E}$ with enough projectives and consider the subcategories: 
   \[
  \mathcal{W}_{\mathscr{G}}=\{W \in \mathcal{E} \ | \ \HH{1}{G}{n}(W)=0 \text{ for every } G \in \mathscr{G} \text{ and } n \in \mathbb{Z} \} \quad \text{and} \quad
  \mathcal{C}_{\mathscr{G}}={}^\perp \mathcal{W}_\mathscr{G}
  \]
If the cotorsion pair $(\mathcal{C}_\mathscr{G},\mathcal{W}_\mathscr{G})$ is complete (for example, when $\mathcal{E}$ is efficient and $\mathscr{G}$ is a set), then there is an hereditary exact model structure on $\mathcal{E}$ such that $\mathcal{C}_{\mathscr{G}}$ is the class of cofibrant objects, $\mathcal{W}_{\mathscr{G}}$ is the class of trivial objects and every object in $\mathcal{E}$ is fibrant. Furthermore, in that case, we have
\begin{enumerate}
\item An object $X$ is trivial if and only if $\HH{i}{G}{n}(X)=0$ for every $G$ in $\mathscr{G}$, $n$ and $i$ integers, with $i>0$.;
\item Given a morphism $\varphi$ in $\mathcal{E}$ the following conditions are equivalent:
\begin{eqc}
\item[(i)] $\varphi$ is a weak equivalence;
\item[(ii)] $\HH{i}{G}{n}(\varphi)$ is an isomorphism for every $G$ in $\mathscr{G}$, $n$ and $i$ integers, with $i>0$;
\item[(iii)] \smash{$\HH{1}{G}{n}(\varphi)$} and \smash{$\HH{2}{G}{n}(\varphi)$} are isomorphisms for every $G$ in $\mathscr{G}$ and $n$ integer.
\end{eqc}
\item $\mathcal{C}_{\mathscr{G}}$ is a Frobenius exact category whose projective-injective objects are precisely $\Prj{\mathcal{E}}$, and the homotopy category of the this model structure $\operatorname{Ho}(\mathcal{E})$ is equivalent to the stable category $\underline{\mathcal{C}}_\mathscr{G}$.
\end{enumerate}
\end{res*}

The functor category $\lMod{Q,A}$ is a Grothendieck abelian category, and thus an efficient exact category. In this category, we will consider not only the abelian exact structure but also \textbf{objectwise exact structures}. Indeed, if we endow $\lMod{A}$ with an exact structure $\exact{}$, there is an exact structure $\exact{Q}$ on $\lMod{Q,A}$ whose conflations are those exact sequences that are sent to conflations in $\lMod{A}$ when evaluated on any object $q$ in $Q$. Notable examples of such exact structures are the abelian exact structure, the objectwise pure exact structure and the objectwise split exact structure (\exaref{Prj-PPrj-Mod}). In this exact $Q$-shaped world, the role of the class of Gorenstein projective objects from \thmref{main} is played by stalk representations $\Sq{q}(T)$ where $T$ is a relative projective object with respect to the exact structure $\exact{}$ on $\lMod{A}$. In this case \thmref{main} can be expressed as follows (see Theorem \ref{thm:Q-shaped-relative-Prj} and Theorem \ref{thm:Q-shaped-relative-Inj} for the dual).

\begin{res*}[Theorem~B]
Let $\exact{}$ be an exact structure in $\lMod{A}$ such that $(\lMod{A},\exact{})$ has enough projectives and exact coproducts and let $\mathscr{T}$ be a class of projective objects in $(\lMod{A},\exact{})$. Consider in the exact category $(\lMod{Q,A},\exact{Q})$ the cotorsion pair $(\mathcal{C}(\mathscr{T}),\mathcal{W}(\mathscr{T}))$ generated by the class
\begin{equation*}
  \mathscr{G} \,=\, \mathscr{G}(\mathscr{T}) 
  \,=\, \{ \Sq{q}(T) \;|\; T \in \mathscr{T},\; q \in Q \} \;.
\end{equation*}
If this cotorsion pair is complete, then there is a hereditary exact model structure on $(\lMod{Q,A},\exact{Q})$ where the classes $\mathcal{W}(\mathscr{T})$ of trivial objects and of weak equivalences are described in parts (1) and (2) below, the class of cofibrant objects is $\mathcal{C}(\mathscr{T}) = {}^\perp(\mathcal{W}(\mathscr{T}))$, and every object in $(\lMod{Q,A},\exact{Q})$ is fibrant. 
\begin{rqm}
\item An object $X$ in $\lMod{Q,A}$ is trivial if and only if it satisfies the equivalent conditions:
\begin{eqc}
\setlength{\itemsep}{0pt}
\item $\HH{i}{\Sq{q}(T)}{n}(X)=0$ for all $T$ in $\mathscr{T}$, $q$ in $Q$, $n$ integer and $i>0$;

\item[\eqclbl{i$\,'$}] $\sHH{i}{\stalkco{q}}{n}(\Hom{A}{T}{X})=0$ for all $T$ in $\mathscr{T}$, $q$ in $Q$, $n$ integer and $i>0$;

\item $\HH{1}{\Sq{q}(T)}{0}(X)=0$ for all $T$ in $\mathscr{T}$ and $q$ in $Q$;

\item[\eqclbl{ii$\,'$}] $\sHH{1}{\stalkco{q}}{0}(\Hom{A}{T}{X})=0$ for all $T$ in $\mathscr{T}$ and $q$ in $Q$.
\end{eqc}

\item  A morphism $\varphi$ in $\lMod{Q,A}$ is a weak equivalence if and only if it satisfies the equivalent conditions:

\begin{eqc}
\setlength{\itemsep}{0pt}
\item $\HH{i}{\Sq{q}(T)}{n}(\varphi)$ is an isomorphism for all $T$ in $\mathscr{T}$, $q$ in $Q$, $n$ integer and $i>0$;.

\item[\eqclbl{i$\,'$}] $\sHH{i}{\stalkco{q}}{n}(\Hom{A}{T}{\varphi})$ is an isomorphism for all $T$ in $\mathscr{T}$, $q$ in $Q$, $n$ integer and $i>0$;

\item $\HH{1}{\Sq{q}(T)}{0}(\varphi)$ and $\HH{2}{\Sq{q}(T)}{0}(\varphi)$ are isomorphisms for all $T$ in $\mathscr{T}$ and $q$ in $Q$;

\item[\eqclbl{ii$\,'$}] $\sHH{1}{\stalkco{q}}{0}(\Hom{A}{T}{\varphi})$ and $\sHH{2}{\stalkco{q}}{0}(\Hom{A}{T}{\varphi})$ are isomorphisms for all $T$ in $\mathscr{T}$ and $q$ in $Q$.
\end{eqc}

\item $\mathcal{C}(\mathscr{T})$ is a Frobenius exact category whose class of projective-injective objects is precisely the class 
\begin{equation*}
  \Prj{(\lMod{Q,A},\exact{Q})} \;=\; 
  \operatorname{Add}\,\{\,\Fq{q}(T) \;|\; T \in \Prj{(\lMod{A},\exact{})},\;  q \in Q \,\} \;,
\end{equation*}
and the homotopy category $\operatorname{Ho}(\lMod{Q,A},\exact{Q})$ is equivalent to the stable category $\underline{\mathcal{C}}(\mathscr{T})$.
\end{rqm}
The functors $\Hletterone$ appearing in conditions of type \eqclbl{i} and  \eqclbl{ii} are cohomology functors in  $(\lMod{Q,A},\exact{Q})$, while the functors $\mathbb{H}$ appearing in conditions of type \eqclbl{i$\,'$} and \eqclbl{ii$\,'$} are cohomology functors in the abelian category $\lMod{Q}$.
\end{res*}

If we consider the abelian exact structure in $\lMod{Q,A}$, we recover in Proposition \ref{prp:assumptions-ok-Prj} and 
Example \ref{exa:Q-shaped derived category revisited} the $Q$-shaped derived category and the results of \cite[Thm.~7.1]{HJ-JLMS}. On the other hand, if we consider the objectwise split exact structure in $\lMod{Q,A}$, then the homotopy category obtained by the Theorem \ref{thm:Q-shaped-relative-Prj}, is what we call the \textbf{$Q$-shaped homotopy category} of $A$, which we will denote by $\mathbf{K}_Q(A)$ (see Definition \ref{dfn:q-shaped-homotopy}).

Note that exact structures in $\lMod{A}$ can be ordered by the inclusion of the classes of conflations. The minimal element for this partial order is clearly the split exact structure, while the maximal element is the abelian structure of $\lMod{A}$.  These exact structures give rise to a collection of partially ordered exact structures in $\lMod{Q,A}$. They range from the objectwise split exact structure in $\lMod{Q,A}$ to the abelian exact structure in $\lMod{Q,A}$. Inclusions of exact structures induce Bousfield (co)localisations between the associated homotopy categories. The following is a combination of Proposition \ref{prp:Ho-verdier-abstract} and \ref{prp:Ho-verdier-abstract-op}.

\begin{res*}[Theorem~C]
  Let $\exact{}_2 \subseteq \exact{}_1$ be two exact structures with enough projectives and enough injectives on an additive category $\mathcal{A}$.  Given a hereditary projective Hovey triple $(\mathcal{C}_\ell,\mathcal{W}_\ell,\mathcal{A})$  and a hereditary injective Hovey triple $(\mathcal{A},\mathcal{W}_\ell,\mathcal{F}_\ell)$ in the exact category $(\mathcal{A},\exact{}_\ell)$ with $\mathcal{W}_1 \supseteq \mathcal{W}_2$, then the following statements hold.
  \begin{prt}
  \item  $\mathcal{W}_1$, viewed as a full subcategory of $\operatorname{Ho}(\mathcal{A},\exact{}_2)$, is a thick triangulated subcategory; 
  \item The functor
  \(
      I_* \colon \operatorname{Ho}(\mathcal{A},\exact{}_2) \longrightarrow \operatorname{Ho}(\mathcal{A},\exact{}_1)
  \)
  induced by the homotopical functor $I \colon (\mathcal{A},\exact{}_2) \longrightarrow (\mathcal{A},\exact{}_1)$, can be identified with the Verdier localisation functor $V \colon \operatorname{Ho}(\mathcal{A},\exact{}_2) \longrightarrow \operatorname{Ho}(\mathcal{A},\exact{}_2)/\mathcal{W}_1$;
  \item   If we have $\mathcal{C}_1\cap\mathcal{W}_1\subseteq\mathcal{C}_2\cap\mathcal{W}_2$ and $\mathcal{F}_1\cap\mathcal{W}_1\subseteq\mathcal{F}_2\cap\mathcal{W}_2$, then there is a recollement of triangulated categories
  \[\begin{tikzcd}
	{\mathcal{W}_1} && {\operatorname{Ho}(\mathcal{A},\exact{}_2)} && {\operatorname{Ho}(\mathcal{A},\exact{}_1)}
	\arrow["J"{description}, from=1-1, to=1-3]
	\arrow["{J_r}", shift left=3, from=1-3, to=1-1]
	\arrow["{J_\ell}"', shift right=3, from=1-3, to=1-1]
	\arrow["{I_*}"{description}, from=1-3, to=1-5]
	\arrow["{I^r}", shift left=3, from=1-5, to=1-3]
	\arrow["{I^\ell}"', shift right=3, from=1-5, to=1-3]
\end{tikzcd}.\]
  \end{prt}
\end{res*}

In the special case where one considers the inclusion of the objectwise split exact structure of $\lMod{Q,A}$ into the abelian exact structure,  Theorem \ref{thm:Q-shaped-verider-loc} generalises results by Verdier \cite[Chap.~III,~Def.~1.2.2]{JLV77} and Krause \cite[Ex.~4.14]{Krause_2010} for complexes, and by Iyama, Kato and Miyachi in \cite[Thm.~3.17]{MR3742439} for $N$-complexes.

\begin{res*}[Theorem~D]
    The following statements hold.
    \begin{prt}
        \item The induced functor $I_*\colon \QSH{Q}{A}\longrightarrow\QSD{Q}{A}$ is a Verdier localisation;
        \item The functor $I_*\colon \QSH{Q}{A}\longrightarrow\QSD{Q}{A}$ admits a fully faithful left adjoint functor $\mathbf{c}\colon\QSD{Q}{A}\longrightarrow\QSH{Q}{A}$ and a fully faithful right adjoint functor $\mathbf{f}\colon\QSD{Q}{A}\longrightarrow\QSH{Q}{A}$, which map a $Q$-shaped module to its cofibrant replacement and fibrant replacement, respectively;
        \item Denote by $\mathbf{K}_Q^{\text{ac}}(A)=\mathcal{W}({\theta_1})$. Then we have a recollement of triangulated categories of the form
    \[
    \begin{tikzcd}[column sep=6em]
	{\mathbf{K}^{\text{ac}}_Q(A)} & {\QSH{Q}{A}} & {\QSD{Q}{A}.}
	\arrow["j"{description}, from=1-1, to=1-2]
	\arrow["{\ell}"', shift right=3, from=1-2, to=1-1]
 \arrow["{r}", shift left=3, from=1-2, to=1-1]
	\arrow["I_*"{description}, from=1-2, to=1-3]
	\arrow["{\mathbf{c}}"', shift right=3, from=1-3, to=1-2]
 \arrow["{\mathbf{f}}", shift left=3, from=1-3, to=1-2]
\end{tikzcd}
    \]
    \end{prt}
\end{res*}

\subsection*{Notation and conventions}
\label{notation}

Unless otherwise stated, all subcategories are strict and full. If $\mathcal{A}$ is an additive category admitting direct limits, we say that an object $X$ of $\mathcal{A}$ is \textbf{finitely presented} if $\Hom{\mathcal{A}}{X}{-}$ commutes with direct limits. For a set of objects $\mathcal{X}$ of an additive category $\mathcal{A}$, we denote by $\operatorname{add}\mathcal{X}$ (respectively, $\operatorname{Add}\mathcal{X}$) the subcategory of $\mathcal{A}$ whose objects are summands of finite (respectively, set-indexed) coproducts of objects in $\mathcal{X}$.  For any class $\mathcal{L}$ of objects in an exact category $\mathcal{E}$, and denoting by $\Ext{\mathcal{E}}{1}{-}{-}$ the pairing associating to two objects the Yoneda extensions (with respect to the exact structure) between them, we consider the following subcategories
\begin{align*}
  {}^{\perp}\mathcal{L}
  &\;=\;
  \{X \in \mathcal{E} \ |\, \Ext{\mathcal{E}}{1}{X}{L}=0 \text{ for every } L \in \mathcal{L} \}
  \\
  \mathcal{L}^{\perp}
  &\;=\;
  \{Y \in \mathcal{E} \ |\, \Ext{\mathcal{E}}{1}{L}{Y}=0 \text{ for every } L \in \mathcal{L} \}\;
\end{align*}

The word \textit{module} will stand for \textit{left module}. For a ring $A$, we denote by $\lMod{A}$ the category of left $A$-modules. For a left $A$-module $M$ (or more generally, for an object $M$ in an abelian category $\mathcal{A}$) we denote by $\operatorname{pd}M$ its projective dimension and by  $\operatorname{id}M$ its injective dimension. Unless otherwise stated, $\Bbbk$ will denote a commutative ring.

\subsection*{Structure of the paper}
 We begin in Section \ref{sec:preliminaries} preliminaries concerning (efficient) exact categories and cotorsion pairs therein. We discuss exact model categories and their connection to complete cotorsion pairs in exact categories as established by Gillespie \cite{MR2811572}, generalising the results of Hovey \cite{Hovey02} in the abelian case. Finally, we review the notions of Gorenstein projectivity (and injectivity) in these categories. 

In Section \ref{sec:projective_model_str} starting from a class of relative Gorenstein projective (or injective) objects in an exact category, we develop the main machinery of this paper. In particular, assuming that this class forms a set and the exact category is nice enough (for example it is efficient), we define a projective (or injective) model structure and cohomology functors that detect the trivial objects and weak equivalences. %Moreover, we show that these model structures induce a Frobenius model in the corresponding subcategory the cofibrant (or fibrant) objects whose stable category is the homotopy category arising from the exact model structure.

In Section \ref{sec:GProj-Proj-in-Q,A-Mod} we shift our attention to the category $\lMod{Q,A}$ of $\Bbbk$-linear functors from $Q$ to $\lMod{A}$, endowed with an exact structure $\exact{Q}$. The main goal is to ensure that $(\lMod{Q,A},\exact{Q})$ fits into the framework of Section \ref{sec:projective_model_str} and particularly of Theorem \ref{thm:main}. For this purpose, we generalise known results for the abelian case from \cite{HJ-JLMS} and \cite{HJ-TAMS} to the exact setting. In particular, we describe exactness of functors between the exact categories $(\lMod{Q,A},\exact{Q})$ and $(\lMod{A},\exact{})$ and we characterise the relative (Gorenstein) projective and relative (Gorenstein) injective objects in $(\lMod{Q,A},\exact{Q})$ with respect to the relative (Gorenstein) projective and relative (Gorenstein) injective modules in $(\lMod{A},\exact{})$. Finally, we provide examples of exact structures in $\lMod{Q,A}$ induced by classes of modules in $\lMod{A}$. These examples range from the objectwise split exact structure to the abelian exact structure.

In Section \ref{sec:Q-shaped derived category}, we interpret Theorems \ref{thm:main} and \ref{thm:main-op} in the special case of the exact category $(\lMod{Q,A},\exact{Q})$. In this case, the class of Gorenstein projective (or Gorenstein injective) objects turns out to be stalk representations $S_q(T)$, where $T$ is a relative projective (or injective) module in $(\lMod{A},\exact{})$. If $\exact{}$ is the abelian exact structure, we recover some results from \cite{HJ-JLMS} and the construction of the $Q$-shaped derived category of $A$.

Finally, in Section \ref{sec:q-shaped homotopy}, we discuss how the inclusion of two exact structures in our setup induces a comparison between their associated homotopy categories. Concretely, we see how the inclusion of exact structures in an additive category can give rise to a well-defined functor between exact model categories respecting weak equivalences and, consequently, to a functor (which turns out to be a Bousfield (co)localisation) between the associated homotopy categories. We apply this machinery to the inclusion of the objectwise split exact structure into the abelian exact structure in $\lMod{Q,A}$, showing that the $Q$-shaped derived category $\QSD{Q}{A}$ is a (co)localisation of the $Q$-shaped homotopy category $\QSH{Q}{A}$, with kernel consisting of the trivial objects in $\QSH{Q}{A}$ (with respect to the abelian exact structure). This generalises results by \cite{JLV77}, \cite{Krause_2010} and \cite{PJr05c} for complexes, and \cite{MR3742439} for $N$-complexes.

\section{Preliminaries}\label{sec:preliminaries}

\subsection{Efficient exact categories} 
\label{efficient}
\phantom{.} \vspace*{1ex}

We develop our main machinery in the framework of exact categories. We refer to the survey of B{\"u}hler \cite{Buhler} for notation and classic definitions regarding exact categories. Recall that an additive category $\mathcal{E}$ is called
\begin{itemlist}
\item \textbf{weakly idempotent complete}, if every retraction has a kernel; equivalently, if every coretraction has a cokernel (see \cite[Lem.~7.1]{Buhler}). If $\mathcal{E}$ is an exact category (see below), then weakly idempotent completeness of the underlying additive category can also be expressed in terms of the exact structure on $\mathcal{E}$ (see \cite[Cor.~7.5]{Buhler}).
\item \textbf{idempotent complete}, if every idempotent endomorphism has a kernel (see \cite[Def.~6.1 and Rem.~6.2]{Buhler}). 
\end{itemlist}
It is easy to observe that idempotent complete additive categories are also weakly idempotent complete. The converse holds if $\mathcal{E}$ admits countable coproducts (see \cite{Freyd66} and \cite[Rem.~7.3]{Buhler}).

An \textbf{exact category} $(\mathcal{E},\mathcal{S})$ is an additive category $\mathcal{E}$ together with a distinguished class $\mathcal{S}$ of kernel-cokernel pairs
\[
0\longrightarrow X\stackrel{i}{\longrightarrow}Y\stackrel{p}{\longrightarrow} Z\longrightarrow 0
\qquad (\,\textnormal{also written }
  \xymatrix@C=1.2pc{
  X \ar@{>->}[r]^-{i} & 
  Y \ar@{->>}[r]^-{p} &
  Z
  }\!)
\] 
called \textbf{conflations}. They satisfy certain axioms which make conflations behave similarly to short exact sequences in an abelian category. In this notation, the morphism $i$ is called an \textbf{inflation}, while the morphism $p$ is called a \textbf{deflation}. We denote an exact category by just $\mathcal{E}$ if $\mathcal{S}$ is implied. In such an exact category we say that an object $X$ is \textbf{projective} (respectively, \textbf{injective}) in $\mathcal{E}$ if $\Ext{\mathcal{E}}{1}{X}{-}=0$ (respectively, if $\Ext{\mathcal{E}}{1}{-}{X}=0$) and we denote the subcategory of projective objects by $\Prj{\mathcal{E}}$ (respectively, the subcategory of injective objects by $\Inj{\mathcal{E}}$). A class $\mathcal{L}$ of objects in an exact category $\mathcal{E}$ is \textbf{generating} if for every $X$ in $\mathcal{E}$ there exists a deflation $L \twoheadrightarrow X$ with $L$ in $\mathcal{L}$. Dually, $\mathcal{L}$ is \textbf{cogenerating} if for every $X$ in $\mathcal{E}$ there exists an inflation $X \rightarrowtail L$ with $L$ in $\mathcal{L}$. Let $\Prj{\mathcal{E}}$ be the class of projective objects and $\Inj{\mathcal{E}}$ the class of injective objects in $\mathcal{E}$, see \cite[Def.~11.1 and Rmk.~11.2]{Buhler}. One says that $\mathcal{E}$ has \textbf{enough projectives} (respectively, \textbf{enough injectives}) if $\Prj{\mathcal{E}}$ is generating (respectively, if $\Inj{\mathcal{E}}$ is cogenerating).
%and further
%\begin{align*}
%  {}^{\perp_\infty}\mathcal{L}
%  &\;=\;
%  \{X \in \mathcal{E} \ |\, \Ext{\mathcal{E}}{i}{X}{L}=0 \text{ for every } L \in \mathcal{L} \text{ and } i>0 \}
%  \\
%  \mathcal{L}^{\perp_\infty}
%  &\;=\;
%  \{Y \in \mathcal{E} \ |\, \Ext{\mathcal{E}}{i}{L}{Y}=0 \text{ for every } L \in \mathcal{L} \text{ and } i>0 \}\;.
%\end{align*}
Note that, for any class $\mathcal{L}$ of objects in $\mathcal{E}$, there are always inclusions $\Prj{\mathcal{E}} \subseteq {}^{\perp}\mathcal{L}$  and $\operatorname{Inj}{\mathcal{E}} \subseteq \mathcal{L}^{\perp}$.

Throughout, we will work with exact categories which are  \textbf{efficient}. This concept was introduced in \cite[Def.~2.6]{SaorinStovicek} to describe a class of exact categories with desirable properties. We avoid the rather technical definition of efficient exact categories, but we note some of the good properties of such categories that we will use throughout and highlight sufficient conditions for an exact category to be efficient.
\begin{itemlist}

\item As shown in \cite[Lem.~1.4]{SaorinStovicek}, an efficient exact category $\mathcal{E}$ has \emph{exact coproducts}, i.e.~ arbitrary (set-indexed) coproducts exist in $\mathcal{E}$ and the coproduct of any family of conflations in $\mathcal{E}$ is a conflation. In particular, efficient exact categories which are weakly idempotent complete are, in fact, idempotent complete (as noted above).

\item If $\mathcal{E}$ is an efficient exact category, then $\Ext{\mathcal{E}}{1}{X}{Y}$ is a set, for all $X$ and $Y$ in $\mathcal{E}$ (see \cite[Rem.~2.4]{SaorinStovicek}). 

\item If $\mathcal{E}$ is an idempotent complete locally finitely presented exact category (i.e., $\mathcal{E}$ admits all direct limits, its subcategory of finitely presented objects, $\mathsf{fp}(\mathcal{E})$, is skeletally small, and every object of $\mathcal{E}$ is the direct limit of a directed system in $\mathsf{fp}(\mathcal{E})$) having enough injectives, then $\mathcal{E}$ is efficient (\cite[Rem.~1.7, Proposition~2.7]{SaorinStovicek}). 
%In particular, every idempotent complete locally finitely presented category with enough injectives is efficient.

\item Further examples of efficient exact categories can be found in \cite[Ex.~2.8]{SaorinStovicek}. They include the crucial examples that matter to us: Grothendieck abelian categories and the category of complexes $\mathbf{Ch}(\mathcal{A})$ of a Grothencieck abelian category $\mathcal{A}$ equipped with the degreewise split exact structure. 
\end{itemlist}

In an exact category one can define acyclic complexes, see \cite[Def.~10.1]{Buhler} or \cite[\S1]{MR1080854}. A complex, $(X^n,\partial^{n}_X)_{n\in\mathbb{Z}}$ in $\mathcal{E}$ is said to be \textbf{acyclic} if each differential $\partial_X^{n}$ admits a factorisation,
\begin{equation*}
  \xymatrix@!=0.8pc{
    X^{n} 
    \ar@{.>>}[dr]_-{p_n}
    \ar[rr]^-{\partial_X^{n}} 
    & & 
    X^{n+1}
    \\
    & 
    \Cy{n+1}{X} \;,\mspace{-8mu}
    \ar@{>.>}[ur]_-{i_{n+1}}
    &
  }
\end{equation*}
such that, for all $n$ in $\mathbb{Z}$,
\begin{equation*}
 \xymatrix@C=1.2pc{
  \Cy{n}{X} \ar@{>->}[r]^-{i_n} & 
  X^n \ar@{->>}[r]^-{p_n} &
  \Cy{n+1}{X}
  }\
\end{equation*}
is a conflation. Note that by \cite[Rmk.~10.2]{Buhler} the object $\Cy{n}{X}$ is a kernel of $\partial_X^n \colon X^n \to X^{n+1}$, an image of $\partial_X^{n-1} \colon X^{n-1} \to X^n$, and a cokernel of $\partial_X^{n-2} \colon X^{n-2} \to X^{n-1}$. 

%%%%%%%%%%%%%%%%%%%%%%%%%%%%%%%%%%%%%%%%%%%%%%%%%%%%%%%%%%

\subsection{Cotorsion pairs}
\label{cotorsion-pairs}
\phantom{.} \vspace*{1ex}

A \textbf{cotorsion pair} in $\mathcal{E}$ is a pair $(\mathcal{A},\mathcal{B})$ of classes of objects in $\mathcal{E}$ satisfying $\mathcal{A}^\perp = \mathcal{B}$ and $\mathcal{A} = {}^\perp\mathcal{B}$. To every class $\mathcal{L}$ of objects in $\mathcal{E}$ there are two associated cotorsion pairs,
\begin{equation*}
  \mathfrak{G}_\mathcal{L}
  \;=\;
  ({}^\perp(\mathcal{L}^{\perp}),\mathcal{L}^{\perp})
  \qquad \text{and} \qquad
  \mathfrak{C}_\mathcal{L}
  \;=\;
  ({}^\perp\mathcal{L},({}^\perp\mathcal{L})^\perp)\;.
\end{equation*}
We call $\mathfrak{G}_\mathcal{L}$ the cotorsion pair \textbf{generated} by $\mathcal{L}$ and $\mathfrak{C}_\mathcal{L}$ the cotorsion pair \textbf{cogenerated}\footnote{\,This terminology is the one used in\cite[Def.~2.2.1]{GobelTrlifaj} and \cite[\S1.3]{SaorinStovicek}. Beware that some authors use the ``opposite'' terminology, see for example \cite[Def.~7.1.2]{rha}.}~by~$\mathcal{L}$.
A cotorsion pair $(\mathcal{A},\mathcal{B})$ in $\mathcal{E}$ is said to be \textbf{complete} if the following conditions hold:
\begin{rqm}
\item The class $\mathcal{A}$ is \textbf{special precovering} in $\mathcal{E}$, i.e.~for every $X$ in $\mathcal{E}$ there exists a conflation $B \rightarrowtail A \twoheadrightarrow X$ in $\mathcal{E}$ with $A$ in $\mathcal{A}$ and $B$ in $\mathcal{B}$. 
\item The class $\mathcal{B}$ is \textbf{special preenveloping} in $\mathcal{E}$, i.e.~for every $X$ in $\mathcal{E}$ there exists a conflation $X \rightarrowtail B' \twoheadrightarrow A'$ in $\mathcal{E}$ with $A'$ in $\mathcal{A}$ and $B'$ in $\mathcal{B}$. 
\end{rqm}
Sometimes, condition (1) is expressed by saying that the cotorsion pair $(\mathcal{A},\mathcal{B})$ \textbf{has enough projectives} and condition (2) by saying that $(\mathcal{A},\mathcal{B})$ \textbf{has enough injectives}, see \cite[Def.~7.1.5]{rha} and \cite[\S2.1]{MR2811572}.

\begin{rmk}
  \label{rmk:complete}
Salce's trick (\cite{Salce}) shows that for a cotorsion pair $(\mathcal{A},\mathcal{B})$ in an exact category $\mathcal{E}$, if if $\mathcal{A}$ is generating in $\mathcal{E}$, then (2) implies (1), while if $\mathcal{B}$ is cogenerating in $\mathcal{E}$, then (1) implies (2) (see the proofs of \cite[Prop.~7.1.7]{rha} or \cite[Lem.~2.2.6]{GobelTrlifaj} for module categories and \cite[Thm.~2.13(5)]{SaorinStovicek} for exact categories). 
\end{rmk}

%\begin{rmk}
%  \label{rmk:complete2}
%Let $(\mathcal{A},\mathcal{B})$ be a cotorsion pair in $\mathcal{E}$.
%\begin{prt}
%\item If $(\mathcal{A},\mathcal{B})$ is complete, then evidently $\mathcal{A}$ is generating and $\mathcal{B}$ is cogenerating in $\mathcal{E}$.
%
%\item If $\mathcal{E}$ has enough projectives, then $\mathcal{A}$ is generating (as $\Prj{\mathcal{E}}$ is contained in $\mathcal{A}$). Dually, if $\mathcal{E}$ has enough injectives, then $\mathcal{B}$ is cogenerating (as $\Inj{\mathcal{E}}$ is contained in $\mathcal{B}$).
%
%\item A trick due to Salce \cite{Salce} shows that if $\mathcal{A}$ is generating and $\mathcal{B}$ is cogenerating, then conditions \rqmlbl{1} and \rqmlbl{2} above are, in fact, equivalent. More precisely:
%\begin{itemlist}
%\item If $\mathcal{A}$ is generating, then (2) implies (1).
%\item If $\mathcal{B}$ is cogenerating, then (1) implies (2).
%\end{itemlist}
%It follows by inspecting the proofs of \cite[Proposition~7.1.7]{rha} or \cite[Lemma~2.2.6]{GobelTrlifaj} (for module categories) and \cite[Thm.~2.13(5)]{SaorinStovicek} (for exact categories). 
%\end{prt}
%\end{rmk}

Note that the assumption in the next lemma is satisfied if $(\mathcal{A},\mathcal{B})$ is complete (in this paper, we are only interested in complete cotorsion pairs). In the generality stated below, the lemma is due to {\v{S}}{\v{t}}ov{\'{\i}}{\v{c}}ek, but a version of it for abelian categories can be found e.g.~in  \cite[Lems.~2.3 and 2.4]{MR3459032} and \cite[Lem.~4.25]{SaorinStovicek}.

\begin{lem}[{{\v{S}}{\v{t}}ov{\'{\i}}{\v{c}}ek \cite[Lem.~6.17]{Stovicek2013}}]
\label{lem:Stovicek}
Let $(\mathcal{A},\mathcal{B})$ be a cotorsion pair in a weakly idempotent complete exact category $\mathcal{E}$, with $\mathcal{A}$ being generating and $\mathcal{B}$ cogenerating in $\mathcal{E}$. The following conditions are equivalent:
\begin{eqc}
\item $\Ext{\mathcal{E}}{i}{A}{B}=0$ for every $A$ in $\mathcal{A}$, $B$ in $\mathcal{B}$, and $i>0$;
\item $\Ext{\mathcal{E}}{2}{A}{B}=0$ for every $A$ in $\mathcal{A}$ and $B$ in $\mathcal{B}$;
\item For every conflation $X' \rightarrowtail X \twoheadrightarrow X''$ in $\mathcal{E}$ with $X$ and $X''$ in $\mathcal{A}$ one has $X'$ in $\mathcal{A}$;
\item For every conflation $X' \rightarrowtail X \twoheadrightarrow X''$ in $\mathcal{E}$ with $X', X$ in $\mathcal{B}$ one has that $X''$ lies in $\mathcal{B}$.
\end{eqc}
\end{lem}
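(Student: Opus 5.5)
The plan is to prove the cycle (i)$\Rightarrow$(ii)$\Rightarrow$(iii)$\Rightarrow$(i) and, dually, (ii)$\Rightarrow$(iv)$\Rightarrow$(i). The tools are the long exact sequences of Yoneda $\operatorname{Ext}$ attached to a conflation in $\mathcal{E}$, which exist in any exact category (see \cite{Buhler}), and dimension shifting by conflations of the form $B\rightarrowtail A'\twoheadrightarrow A$ and $B\rightarrowtail B'\twoheadrightarrow A$. The implication (i)$\Rightarrow$(ii) is trivial.

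For (ii)$\Rightarrow$(iii), take a conflation $X'\rightarrowtail X\twoheadrightarrow X''$ with $X,X''\in\mathcal{A}$ and fix $B\in\mathcal{B}$. The long exact sequence gives
\[
\Ext{\mathcal{E}}{1}{X}{B}\to\Ext{\mathcal{E}}{1}{X'}{B}\to\Ext{\mathcal{E}}{2}{X''}{B}.
\]
The outer terms vanish, the first because $X\in\mathcal{A}$ and the second by (ii). Hence $X'\in{}^\perp\mathcal{B}=\mathcal{A}$. The argument for (ii)$\Rightarrow$(iv) is the same, using the contravariant long exact sequence in the second variable.

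For (iii)$\Rightarrow$(i), I will show $\Ext{\mathcal{E}}{i}{A}{B}=0$ by induction on $i$; the case $i=1$ is the definition of a cotorsion pair. Since $\mathcal{A}$ is generating, for $A\in\mathcal{A}$ there is a deflation $A_0\twoheadrightarrow A$ with $A_0\in\mathcal{A}$. Its kernel $K$ exists, because deflations in an exact category have kernels and the resulting sequence is a conflation. By (iii), $K\in\mathcal{A}$. The long exact sequence yields
\[
\Ext{\mathcal{E}}{i}{K}{B}\to\Ext{\mathcal{E}}{i+1}{A}{B}\to\Ext{\mathcal{E}}{i+1}{A_0}{B},
\]
and I need this map to be exact and injective for $i\ge 1$. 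The left-hand term vanishes by induction. The right-hand term is not obviously zero, so instead I would argue at the level of Yoneda extensions. Every class in $\Ext{\mathcal{E}}{i+1}{A}{B}$ is a Yoneda product $\xi\cdot\eta$ with $\eta\in\Ext{\mathcal{E}}{1}{A}{Y}$ and $\xi\in\Ext{\mathcal{E}}{i}{Y}{B}$ for some object $Y$. This is the step where weak idempotent completeness of $\mathcal{E}$ is needed, as in \cite[Lem.~6.17]{Stovicek2013}. Pulling $\eta$ back along $A_0\twoheadrightarrow A$, or more cleanly dimension shifting in the other variable, then finishes the induction.

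Concretely, I plan to do the shift in the second variable. Since $\mathcal{B}$ is cogenerating, choose an inflation $B\rightarrowtail B_0$ with $B_0\in\mathcal{B}$ and cokernel $C$. Then $\Ext{\mathcal{E}}{i}{A}{C}\to\Ext{\mathcal{E}}{i+1}{A}{B}\to\Ext{\mathcal{E}}{i+1}{A}{B_0}$ is exact. Combining this with the first-variable shift via $K$ gives $\Ext{\mathcal{E}}{i+1}{A}{B}\cong\Ext{\mathcal{E}}{i}{K}{B}$ modulo the term $\Ext{\mathcal{E}}{i+1}{A_0}{B}$. The key point is that any $(i+1)$-fold extension of $A$ by $B$ factors as $\Ext{\mathcal{E}}{i}{Y}{B}\times\Ext{\mathcal{E}}{1}{A}{Y}$. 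Taking a special-type approximation of $Y$ is impossible without completeness, so I would instead use the generating property to replace $Y$ by a deflation from $\mathcal{A}$ and then apply (iii). I expect this step, making the dimension shift work using only ``generating'' and ``cogenerating'' rather than completeness, to be the main obstacle. If it resists, I would reduce (iii)$\Rightarrow$(ii) first, show (ii)$\Rightarrow$(i) by a symmetric induction using (iii) and (iv) together, and prove (iv)$\Rightarrow$(ii) dually.
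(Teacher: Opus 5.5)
You prove (i)$\Rightarrow$(ii), (ii)$\Rightarrow$(iii) and (ii)$\Rightarrow$(iv) correctly. The way back to (i) is never established, so the equivalence is not proved. You rightly observe that, for a fixed deflation $A_0\twoheadrightarrow A$ with kernel $K\in\mathcal{A}$, the sequence $\operatorname{Ext}^{i}_{\mathcal{E}}(K,B)\to\operatorname{Ext}^{i+1}_{\mathcal{E}}(A,B)\to\operatorname{Ext}^{i+1}_{\mathcal{E}}(A_0,B)$ gives nothing, because the vanishing of $\operatorname{Ext}^{i+1}_{\mathcal{E}}(A_0,B)$ for an arbitrary $A_0\in\mathcal{A}$ is exactly what is being proved. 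None of your proposed remedies gets around this:
\begin{itemize}
\item The shift along $B\rightarrowtail B_0\twoheadrightarrow C$ meets the same unknown term $\operatorname{Ext}^{i+1}_{\mathcal{E}}(A,B_0)$. Moreover, under (iii) alone you do not know that $C\in\mathcal{B}$.
\item Pulling $\eta$ back along a deflation onto $A$ chosen in advance, or covering $Y$ by an object of $\mathcal{A}$, kills nothing.
\item The fallback of first proving (iii)$\Rightarrow$(ii) and then running a symmetric induction with (iii) and (iv) hits the identical obstruction already at $i=1$.
\end{itemize}
Incidentally, writing $\zeta=\xi\cdot\eta$ is just unsplicing a Yoneda representative and needs no hypothesis on $\mathcal{E}$.

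The missing idea is to let the approximation depend on the class (effaceability). Given $\zeta\in\operatorname{Ext}^{i+1}_{\mathcal{E}}(A,B)$, write $\zeta=\xi\cdot\eta$, where $\eta$ is the class of a conflation $Y\rightarrowtail E\stackrel{\pi}{\twoheadrightarrow}A$ and $\xi\in\operatorname{Ext}^{i}_{\mathcal{E}}(Y,B)$. Apply the generating property to the middle term $E$, not to $A$ or $Y$: choose a deflation $g\colon A_0\twoheadrightarrow E$ with $A_0\in\mathcal{A}$.
\begin{itemize}
\item Then $p=\pi g\colon A_0\twoheadrightarrow A$ is a deflation, and its kernel $K$ lies in $\mathcal{A}$ by (iii).
\item Also $p^*\eta=0$, since $p$ factors through $\pi$. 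Hence $p^*\zeta=\xi\cdot p^*\eta=0$.
\item By exactness, $\zeta$ lies in the image of $\operatorname{Ext}^{i}_{\mathcal{E}}(K,B)$. This group vanishes by induction, the case $i=1$ being $K\in\mathcal{A}={}^\perp\mathcal{B}$.
\end{itemize}
Equivalently, the induced morphism $f\colon K\to Y$ gives $\eta=f_*\eta'$, with $\eta'$ the class of $K\rightarrowtail A_0\twoheadrightarrow A$, so $\zeta=(f^*\xi)\cdot\eta'=0$. This proves (iii)$\Rightarrow$(i) using only that $\mathcal{A}$ is generating.

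Dually, unsplice $\zeta=\eta\cdot\xi$ with $\eta$ the class of $B\rightarrowtail E\twoheadrightarrow Z$, and choose an inflation $E\rightarrowtail B_0$ with $B_0\in\mathcal{B}$. Applying (iv) to the cokernel of the composite inflation $B\rightarrowtail B_0$ gives (iv)$\Rightarrow$(i), using only that $\mathcal{B}$ is cogenerating.
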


Following {\v{S}}{\v{t}}ov{\'{\i}}{\v{c}}ek \cite[Def.~6.18]{Stovicek2013} a cotorsion pair $(\mathcal{A},\mathcal{B})$ in $\mathcal{E}$ is called \textbf{hereditary} if $\mathcal{A}$ is generating, $\mathcal{B}$ is cogenerating, and the equivalent conditions in \lemref{Stovicek} hold. 

%%%%%%%%%%%%%%%%%%%%%%%%%%%%%%%%%%%%%%%%%%%%%%%%%%%%%%%%%%

\subsection{Exact model structures}
\label{exact-model-structures}
\phantom{.} \vspace*{1ex}

Model structures on categories were introduced by Quillen \cite[I\S1 Def.~1]{Qui67} as a way to formally invert certain classes of morphisms. A model structure consists of three classes of morphisms: cofibrations, weak equivalences and fibrations subject to certain axioms. We refer to Hovey's book \cite{Hovey02} for details. Given a model structure on a category with a zero object, one defines the following three classes of objects:
\begin{itemlist}
\item the class $\mathcal{C}$ of \textbf{cofibrant objects}, made of the objects $X$ for which $0\rightarrow X$ is a cofibration;
\item the class $\mathcal{W}$ of \textbf{trivial objects}, made of the objects $X$ for which $0\rightarrow X$ is a weak equivalence (equivalently, the objects $X$ for which $X\rightarrow 0$ is a weak equivalence);
\item the class $\mathcal{F}$ of \textbf{fibrant objects}, made of the objects $X$ for which $X\rightarrow 0$ is a fibration.
\end{itemlist}
These classes of object do not, in general, determine the model structure.

If we are working with a model structure in an abelian or, more generally, in an exact category $\mathcal{E}$, it is natural to require certain compatibility conditions between the model structure and the exact structure. Model structures subject to such compatibility conditions are called \textbf{abelian model structures} (see \cite{Hovey02}) or, more generally, \textbf{exact model structures} (see \cite{MR2811572} and  \cite{Stovicek2013}). We omit what these compatibility conditions are, but the key point for our purposes is that the classes of cofibrant, trivial and fibrant objects of an abelian or exact model structure completely determine the model structure, as stated in the following theorem (the abelian version is due to Hovey \cite[Thm.~2.2]{Hovey02}). This allows us to work with classes of objects rather than classes of morphisms. Moreover, in a \emph{hereditary} exact model category $\mathcal{E}$ the subcategory of fibrant-cofibrant (= bifibrant) objects $\mathcal{C}\cap \mathcal{F}$ is a Frobenius category whose projective-injective objects are $\mathcal{C}\cap \mathcal{W} \cap \mathcal{F}$, and there is a triangle equivalence between the homotopy category $\operatorname{Ho}(\mathcal{E})$ of the exact model category and the stable category $\underline{\mathcal{C}\cap\mathcal{F}}$ (see \cite[Prop.~5.2(4) and Cor.~5.4]{MR2811572}). Recall that the stable category $\underline{\mathcal{G}}$ of a Frobenius exact category $\mathcal{G}$ is the quotient category of $\mathcal{G}$ modulo the ideal of morphisms that factor through a projective-injective object in $\mathcal{G}$.

Recall that a full subcategory $\mathcal{X}$ of an exact category $\mathcal{E}$ is called \textbf{thick} if it is closed under direct summands and has the property that if two out of three of the terms in a conflation of $\mathcal{E}$ belong to $\mathcal{X}$, then so does the third.

\begin{thm}[{Gillespie \cite[Cor.~3.4]{MR2811572}}]
  \label{thm:Gillespie-CWF}
Let $\mathcal{E}$ be a weakly idempotent complete exact category. There is a bijection between exact model structures in $\mathcal{E}$ and triples of subcategories $(\mathcal{C},\mathcal{W},\mathcal{F})$ for which $\mathcal{W}$ is a thick subcategories and $(\mathcal{C}\cap\mathcal{W},\mathcal{F})$ and $(\mathcal{C},\mathcal{W}\cap\mathcal{F})$ are complete cotorsion pairs.
\end{thm}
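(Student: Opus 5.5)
This is Gillespie's correspondence, the exact-category version of Hovey's theorem \cite[Thm.~2.2]{Hovey02}. I will follow Hovey's argument, replacing short exact sequences by conflations throughout.

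\emph{From a model structure to a triple.} Given an exact model structure, let $\mathcal{C}$, $\mathcal{W}$ and $\mathcal{F}$ be the cofibrant, trivial and fibrant objects. The compatibility axioms say four things: cofibrations are the inflations with cokernel in $\mathcal{C}$, trivial cofibrations are the inflations with cokernel in $\mathcal{C}\cap\mathcal{W}$, and dually for (trivial) fibrations and deflations.
\begin{itemlist}
\item Completeness of $(\mathcal{C},\mathcal{W}\cap\mathcal{F})$: factor $0\to X$ and $X\to 0$ as a cofibration followed by a trivial fibration. This gives conflations $K\rightarrowtail C\twoheadrightarrow X$ and $X\rightarrowtail K'\twoheadrightarrow C'$ with $K,K'\in\mathcal{W}\cap\mathcal{F}$ and $C,C'\in\mathcal{C}$.
\item Orthogonality: use the lifting property in the square built from a conflation representing an element of $\operatorname{Ext}^1(C,K)$, which shows the conflation splits.
\item Equality of the classes: it follows from the retract argument. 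For instance, an object of ${}^\perp(\mathcal{W}\cap\mathcal{F})$ is a direct summand of its approximation $C$. Here weak idempotent completeness is used, since it guarantees that summands of inflations and deflations stay admissible.
\item The pair $(\mathcal{C}\cap\mathcal{W},\mathcal{F})$ is handled symmetrically.
\item Thickness of $\mathcal{W}$: two-out-of-three follows from the 2-out-of-3 property of weak equivalences, applied to the maps in a conflation. Closure under summands follows from weak equivalences being closed under retracts.
\end{itemlist}

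\emph{From a triple to a model structure.} Conversely, given such a triple, define:
\begin{itemlist}
\item cofibrations as inflations with cokernel in $\mathcal{C}$;
\item fibrations as deflations with kernel in $\mathcal{F}$;
\item weak equivalences as composites $p\circ i$, where $i$ is an inflation with cokernel in $\mathcal{C}\cap\mathcal{W}$ and $p$ is a deflation with kernel in $\mathcal{W}\cap\mathcal{F}$.
\end{itemlist}

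\emph{Factorisations.} To factor an arbitrary $f\colon X\to Y$, first write it as the inflation $(1,f)^T\colon X\to X\oplus P$ followed by a map, using a special precover of $Y$. Then complete via pullbacks and pushouts of the approximation conflations, exactly as in Hovey's Lemma~5.6. Each step only uses the exact-category axioms (pushouts of inflations, pullbacks of deflations) and the completeness of the cotorsion pairs.

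\emph{Lifting.} The lifting axioms reduce to the vanishing of $\operatorname{Ext}^1$ between the relevant classes. The usual argument — a lift exists iff an induced class in $\operatorname{Ext}^1(\operatorname{Cok} i,\operatorname{Ker} p)$ vanishes — transfers verbatim to exact categories.

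\emph{The main obstacle.} The hard step is showing that the weak equivalences so defined satisfy 2-out-of-3 and are closed under retracts, and that trivial cofibrations are exactly the cofibrations that are weak equivalences. Hovey's route goes through the key lemma that an inflation with cokernel in $\mathcal{C}$ is trivial iff its cokernel lies in $\mathcal{W}$ (and dually). That lemma needs thickness of $\mathcal{W}$ together with the observations $\mathcal{C}\cap\mathcal{W}\subseteq\mathcal{W}$ and $\mathcal{W}\cap\mathcal{F}\subseteq\mathcal{W}$, and weak idempotent completeness to split off summands. I would establish this lemma first, imitating Hovey \S5, and then deduce 2-out-of-3 by case analysis on compositions.

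\emph{Bijectivity.} Both assignments are mutually inverse because the cofibrations, fibrations and weak equivalences of an exact model structure are determined by the cofibrant, trivial and fibrant objects via the descriptions above, as in \cite[Cor.~3.4]{MR2811572}.
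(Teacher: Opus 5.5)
The paper gives no argument of its own here: it quotes Gillespie, whose proof is Hovey's argument transplanted to weakly idempotent complete exact categories, so your overall route is the intended one. One step in the direction ``model structure $\Rightarrow$ triple'' does not go through as you justify it: thickness of $\mathcal{W}$.

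The 2-out-of-3 property of weak equivalences cannot give thickness on its own. Take a conflation $A\overset{f}{\rightarrowtail}B\overset{g}{\twoheadrightarrow}C$. If $A,B\in\mathcal{W}$, you do learn that $f$ is a weak equivalence, but nothing in 2-out-of-3 connects ``$f$ is a weak equivalence'' with ``$\operatorname{Cok} f$ is trivial''. If $A,C\in\mathcal{W}$, neither $f$ nor $g$ is known to be a weak equivalence, so 2-out-of-3 has nothing to act on. The compatibility of the model structure with conflations must be used, and this is where the real content lies.

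The missing ingredient is the lemma: \emph{an inflation which is a weak equivalence has trivial cokernel}.
\begin{itemlist}
\item Factor $f$ as $A\overset{\iota}{\rightarrowtail}X\overset{\pi}{\twoheadrightarrow}B$, with $\iota$ a trivial cofibration ($\operatorname{Cok}\iota\in\mathcal{C}\cap\mathcal{W}$) and $\pi$ a trivial fibration ($\operatorname{Ker}\pi\in\mathcal{W}\cap\mathcal{F}$).
\item Since $\iota$ splits the pullback of $f$ along $\pi$, the kernel of the deflation $g\pi$ is $\operatorname{Ker}\pi\oplus A$.
\item The Noether isomorphism for $A\rightarrowtail\operatorname{Ker}\pi\oplus A\rightarrowtail X$ then gives a conflation $\operatorname{Ker}\pi\rightarrowtail\operatorname{Cok}\iota\twoheadrightarrow\operatorname{Cok} f$.
\item Its deflation is a trivial fibration out of a trivial object, so $\operatorname{Cok} f\in\mathcal{W}$.
\end{itemlist}

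For thickness itself, push the conflation out along a special preenvelope $A\rightarrowtail K'\twoheadrightarrow C'$ with $K'\in\mathcal{W}\cap\mathcal{F}$ and $C'\in\mathcal{C}$. This gives an inflation $B\rightarrowtail B'$ with cokernel $C'$ and a trivial fibration $B'\twoheadrightarrow C$. The lemma and 2-out-of-3 give $A\in\mathcal{W}\iff C'\in\mathcal{W}$ and $C\in\mathcal{W}\iff B'\in\mathcal{W}$. Since $C'\in\mathcal{C}$, any two of $B,B',C'$ being trivial forces the third. All three cases of two-out-of-three follow.

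There are also two smaller points.
\begin{itemlist}
\item In your factorisation step, $(1,f)^T\colon X\to X\oplus P$ does not typecheck. For the cofibration/trivial fibration factorisation you want $(1,0)^T\colon X\rightarrowtail X\oplus C_Y$ followed by the deflation $(f,q)\colon X\oplus C_Y\twoheadrightarrow Y$, where $q$ is a special $\mathcal{C}$-precover of $Y$. Then push out along a special $\mathcal{W}\cap\mathcal{F}$-preenvelope of $\operatorname{Ker}(f,q)$.
\item Weak idempotent completeness is not what makes your retract argument work; the model-category retract axiom does that. It is needed in the converse direction, to show that your cofibrations and fibrations are closed under retracts, a check your outline leaves out. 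A retract $p'$ of a deflation $p$ satisfies $p'r=sp$ with $s$ a retraction, hence a deflation, so $p'$ is a deflation by \cite[Prop.~7.6]{Buhler}.
\end{itemlist}
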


Let us comment on the assignments. As the notation suggests, given an exact model structure on $\mathcal{E}$, one lets $\mathcal{C}$, $\mathcal{W}$, and $\mathcal{F}$ be the classes of cofibrant, trivial, and fibrant objects. It turns out that $\mathcal{W}$ is a thick subcategory of $\mathcal{E}$ and $(\mathcal{C},\mathcal{W} \cap \mathcal{F})$ and $(\mathcal{C} \cap \mathcal{W}, \mathcal{F})$ are complete cotorsion pairs in $\mathcal{E}$. Conversely, if $(\mathcal{C}, \mathcal{W}, \mathcal{F})$ is a triple of subcategories of $\mathcal{E}$ as in the theorem, then there exists an exact model structure on $\mathcal{E}$ given by:
\begin{itemlist}
\item A morphism $\varphi$ is a weak equivalence if and only if it admits a factorisation $\varphi = \pi\iota$
%\begin{equation*}
%  \xymatrix{
%    X \ar@{>->}[dr]_-{\iota} \ar[rr]^-{\varphi} & {} & Y
%    \\
%    {} & Z \ar@{->>}[ur]_-{\pi} & {}
%  }
%\end{equation*}  
where $\iota$ is an inflation with $\Coker{\iota} \in \mathcal{C} \cap \mathcal{W}$ and $\pi$ is an deflation with $\Ker{\pi} \in \mathcal{W} \cap \mathcal{F}$.
\item Cofibrations are inflations whose cokernel lies in $\mathcal{C}$.
\item Fibrations are deflations whose kernel lies in $\mathcal{F}$.
\end{itemlist}
Such triples $(\mathcal{C}, \mathcal{W}, \mathcal{F})$ are often dubbed \textbf{Hovey triples}. If the associated complete cotorsion pairs $(\mathcal{C},\mathcal{W} \cap \mathcal{F})$ and $(\mathcal{C} \cap \mathcal{W}, \mathcal{F})$ are both hereditary, then $(\mathcal{C}, \mathcal{W}, \mathcal{F})$ is called a \textbf{hereditary Hovey triple} and the corresponding exact model structure is called a \textbf{hereditary exact model structure}.

%%%%%%%%%%%%%%%%%%%%%%%%%%%%%%%%%%%%%%%%%%%%%%%%%%%%%%%%%%

\subsection{Gorenstein projective objects}
\label{Gorenstein-projective-objects}
\phantom{.} \vspace*{1ex}

We now review some basic properties about Gorenstein projective objects in exact categories. The properties recorded in Lemma \ref{lem:GPrj-Ext} and \prpref{GPrj-resolving} below are well-known in the category of modules over a ring, see \cite{HHl04a}. Similar properties have recently been established in the setting of extriangulated categories, a generalisation of exact categories, in \cite{HZZ}. We briefly recall these properties. 

We begin with the definition. Gorenstein projective modules over a ring were first defined in \cite{EEnOJn95b}. This definition can be nevertheless considered in any exact category $\mathcal{E}$. Throughout this subsection, we assume that $\mathcal{E}$ is an exact category with enough projectives.

\begin{dfn}
  \label{dfn:totally-acyclic}
  A \textbf{totally acyclic complex of projectives} in $\mathcal{E}$ is an acyclic complex (see Subsection \ref{efficient}),
\begin{equation*}
  P \,=\, 
  \xymatrix@C=1.6pc{
    \cdots \ar[r] & 
    P^{n-1} \ar[r]^-{\partial_P^{n-1}} &
    P^{n} \ar[r]^-{\partial_P^{n}} &
    P^{n+1} \ar[r] &
    \cdots
  },
\end{equation*}
where each $P^n$ is projective and, for any projective object $Q$ of $\mathcal{E}$, the complex $\Hom{\mathcal{E}}{P}{Q}$ of abelian groups is exact. An object $G$ of $\mathcal{E}$ is called \textbf{Gorenstein projective} if there exists a totally acyclic complex $P$ of projectives in $\mathcal{E}$ with $\Cy{0}{P} \cong G$. Note that in this case, $\Cy{n}{P}$ is Gorenstein projective for every $n \in \mathbb{Z}$. We denote the class of Gorenstein projective objects in $\mathcal{E}$ by $\GPrj{\mathcal{E}}$.
\end{dfn}

\begin{rmk}
  Let $G$ be a Gorenstein projective object in $\mathcal{E}$ and $P$ be a totally acyclic complex of projectives in $\mathcal{E}$ with $\Cy{0}{P} \cong G$. Further, let $X$ be any object in $\mathcal{E}$. We view $\Hom{\mathcal{E}}{P}{X}$ as a cochain complex of abelian groups by setting $\Hom{\mathcal{E}}{P}{X}^i = \Hom{\mathcal{E}}{P^{-i}}{X}$ for all $i$ in $\mathbb{Z}$. For every integer $n$, the sequence $\rho^n = \cdots \to P^{n-3} \to P^{n-2} \to P^{n-1} \to 0$, viewed as a complex with $P^{n-1}$ in degree zero, 
is a projective resolution of $\Cy{n}{P}$ in $\mathcal{E}$, so it can be used to compute $\Ext{\mathcal{E}}{*}{\Cy{n}{P}}{X}$. Since, in the complex $P$, $P^{n-1}$ sits in degree $n-1$, we have
\begin{equation}
  \label{eq:Ext-HHom}
  \Ext{\mathcal{E}}{i}{\Cy{n}{P}}{X}
  \;=\;
  \operatorname{H}^{i-n+1}\Hom{\mathcal{E}}{P}{X} 
  \qquad \text{for} \qquad
  n \in \mathbb{Z} \ \text{ and } \ i>0 \;.
\end{equation}
\end{rmk}

We begin with some easy properties of Gorenstein projective objects. To parse part \prtlbl{b} of the next result, recall that an object $M$ of an exact category is said to have \emph{finite projective dimension} if for some $n \geqslant 0$ there exists an acyclic complex, in the sense of Subsection \ref{efficient}, of the form $0 \to R_n \to \cdots \to R_0 \to M \to 0$ where $R_0,\ldots,R_n$ are projective objects. If no shorter such complex exists, then $n$ is the projective dimension of $M$.

\begin{lem}
\label{lem:GPrj-Ext}
Let $\mathcal{E}$ be an exact category with enough projectives. 
\begin{prt}
\item If $G$ is a Gorenstein projective object in $\mathcal{E}$, then for every object $M$ in $\mathcal{E}$ with finite projective dimension, and for all $i>0$, one has $\Ext{\mathcal{E}}{i}{G}{M} = 0$. 
\item If $\mathcal{E}$ has exact coproducts, then every coproduct of Gorenstein projective objects in $\mathcal{E}$ is Gorenstein projective.
\end{prt}
\end{lem}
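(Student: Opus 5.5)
For part \prtlbl{a}, I will argue by induction on the projective dimension $m$ of $M$, using dimension shifting along the totally acyclic complex. Fix a totally acyclic complex of projectives $P$ with $\Cy{0}{P}\cong G$.

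In the base case $m=0$, $M$ is projective. By \eqref{Ext-HHom} we have
\[
\Ext{\mathcal{E}}{i}{G}{M}=\operatorname{H}^{i+1}\Hom{\mathcal{E}}{P}{M}.
\]
This vanishes because $\Hom{\mathcal{E}}{P}{M}$ is exact by the definition of total acyclicity. The same formula shows that $\Ext{\mathcal{E}}{i}{\Cy{n}{P}}{M}=0$ for every $n$ and every $i>0$.

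For the inductive step, take an acyclic complex $0\to R_m\to\cdots\to R_0\to M\to 0$ with the $R_j$ projective. It splits into conflations $K\rightarrowtail R_0\twoheadrightarrow M$, where $K$ has projective dimension at most $m-1$. The long exact $\operatorname{Ext}$ sequence in the second variable gives
\[
\Ext{\mathcal{E}}{i}{G}{R_0}\to\Ext{\mathcal{E}}{i}{G}{M}\to\Ext{\mathcal{E}}{i+1}{G}{K}.
\]
Both outer terms vanish for $i>0$, the first by the base case and the last by induction, so the middle term vanishes too. Since $G$ has a genuine projective resolution (the truncation $\rho^0$ of $P$), the long exact sequence is available even if $\operatorname{Ext}$ is computed via resolutions. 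No real obstacle is expected here.

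For part \prtlbl{b}, let $\{G_\lambda\}$ be Gorenstein projective, with totally acyclic complexes $P_\lambda$. I will take $P=\coprod P_\lambda$, formed degreewise, and check four things.

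\begin{itemlist}
\item Each $\coprod_\lambda P^n_\lambda$ is projective, since $\Hom{\mathcal{E}}{\coprod P^n_\lambda}{-}=\prod\Hom{\mathcal{E}}{P^n_\lambda}{-}$ and coproducts of conflations are conflations. So a coproduct of projectives satisfies the lifting property against deflations.
\item $P$ is acyclic. The factorisations of the differentials through $\Cy{n}{P_\lambda}$ assemble to factorisations through $\coprod\Cy{n}{P_\lambda}$, and the sequences $\coprod\Cy{n}{P_\lambda}\rightarrowtail\coprod P^n_\lambda\twoheadrightarrow\coprod\Cy{n+1}{P_\lambda}$ are conflations by exactness of coproducts.
\item For a projective $Q$, $\Hom{\mathcal{E}}{P}{Q}\cong\prod_\lambda\Hom{\mathcal{E}}{P_\lambda}{Q}$ as complexes. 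A product of exact complexes of abelian groups is exact.
\item $\Cy{0}{P}\cong\coprod G_\lambda$.
\end{itemlist}

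The only point needing care is the first one. It relies on identifying projectivity, defined as $\Ext{\mathcal{E}}{1}{X}{-}=0$, with the lifting property against deflations, and on $\operatorname{Ext}^1$ converting coproducts in the first variable into products. I expect to handle this via the lifting characterisation (\cite[Prop.~11.3]{Buhler}), which is standard, so overall this is the mildest obstacle of the proof.
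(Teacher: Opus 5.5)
Your proof is correct and follows the paper's argument. For (a) you induct on projective dimension via the long exact $\operatorname{Ext}$ sequence, with the base case read off from \eqref{Ext-HHom}. For (b) you take the degreewise coproduct of totally acyclic complexes, using exact coproducts for acyclicity and $\Hom{\mathcal{E}}{\bigoplus P_\lambda}{Q}\cong\prod\Hom{\mathcal{E}}{P_\lambda}{Q}$ for total acyclicity. Taking $K$ as the syzygy in the given finite resolution, rather than the kernel of an arbitrary conflation $M'\rightarrowtail P\twoheadrightarrow M$ as the paper does, is a slight tidying: it avoids needing a Schanuel-type argument to bound the projective dimension of the kernel.
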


\begin{proof}
\proofoftag{a} We prove this by induction on the projective dimension, say $n$, of $M$. If $n=0$ then $M$ is a projective object. Let $P$ be a totally acyclic complex of projectives in $\mathcal{E}$ with $\Cy{0}{P} \cong G$. By \dfnref{totally-acyclic}, the complex $\Hom{\mathcal{E}}{P}{M}$ is exact, so \eqref{Ext-HHom} with $n=0$ yields $\Ext{\mathcal{E}}{i}{G}{M} = 0$ for all $i>0$. Now let $n>0$ and assume that the assertion is true for objects of projective dimension $n-1$. Consider a conflation $M' \rightarrowtail P \twoheadrightarrow M$ with $P$ projective and, consequently,  with $M'$ having projective dimension $n-1$. The long exact sequence obtained by applying $\Hom{\mathcal{E}}{G}{-}$ to this conflation shows that $\Ext{\mathcal{E}}{i}{G}{M} = 0$ for all $i>0$ since $\Ext{\mathcal{E}}{i}{G}{P} = 0$ as $P$ is projective and $\Ext{\mathcal{E}}{i}{G}{M'} = 0$ by the induction hypothesis.

\proofoftag{b} Since $\mathcal{E}$ has exact coproducts, if $\{P_i\}_{i \in I}$ is a family of totally acyclic complexes of projectives in $\mathcal{E}$, then the coproduct $P = \bigoplus_{i \in I} P_i$ is an acyclic complex of projectives with $\Cy{0}{P} \cong \bigoplus_{i \in I} \Cy{0}{P_i}$ (\cite[Cor.~11.7]{Buhler}). To conclude, it suffices to see that, for every $Q$ in $\Prj{\mathcal{E}}$, the complex $\Hom{\mathcal{E}}{P}{Q} \cong \prod_{i \in I}\Hom{\mathcal{E}}{P_i}{Q}$ is exact.
\
\end{proof}

%\hh{I have added \lemref{GPrj-coproduct} and \prpref[Propositions~]{GPrj-resolving} and \prpref[]{GPrj-summand} as we need them in the proof of \thmref{GPrj}.}

%Following Fu, Herzog, Hu, and Zhu \cite[Definition~5.1]{MR4358620} we say that an exact category $\mathcal{E}$ \emph{has exact (co)products} if (set-indexed) (co)products exist in $\mathcal{E}$ and are exact in the sense that the (co)product of any family of conflations in $\mathcal{E}$ is a conflation. 

The following proposition is well-known in both abelian and triangulated settings. It has been proved in \cite{HZZ} in the context of extriangulated categories (these are additive categories endowed with a structure that simultaneously generalise exact and triangulated categories). As remarked in \cite{HZZ} the result was new in the context of exact categories, and we will use it throughout the paper.

\begin{prp}[\cite{HZZ}]
\label{prp:GPrj-resolving}
Let $\mathcal{E}$ be an exact category with enough projectives and let $\GPrj{\mathcal{E}}$ denote the class of Gorenstein projective objects in $\mathcal{E}$. The following assertions hold.
\begin{prt}
\item $\GPrj{\mathcal{E}}$ is closed under extensions.
\item $\GPrj{\mathcal{E}}$ is closed under direct summands.
\item $\GPrj{\mathcal{E}}$ is closed under kernels of deflations. 
\end{prt}
\end{prp}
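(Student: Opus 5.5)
The plan is to first isolate a characterisation of Gorenstein projectives that only involves the "right half" of a complete resolution, and then deduce \prtlbl{a}--\prtlbl{c} from it. Call an object $M$ \emph{orthogonal} if $\Ext{\mathcal{E}}{i}{M}{Q}=0$ for all $Q$ in $\Prj{\mathcal{E}}$ and $i>0$. The preliminary claim is: $M$ is Gorenstein projective if and only if there are conflations $M_j \rightarrowtail P^j \twoheadrightarrow M_{j+1}$ ($j\geqslant 0$, $M_0=M$) with $P^j$ projective and every $M_j$ orthogonal.

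\emph{Necessity of the claim.} This follows from \eqref{Ext-HHom}.

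\emph{Sufficiency of the claim.}
\begin{itemlist}
\item Splice the given conflations with any projective resolution of $M$ (which exists since $\mathcal{E}$ has enough projectives). This gives an acyclic complex $P$ of projectives with $\Cy{0}{P}\cong M$.
\item For $n\leqslant 0$, the groups $\operatorname{H}\Hom{\mathcal{E}}{P}{Q}$ in the relevant degrees are $\Ext{\mathcal{E}}{>0}{M}{Q}$, computed via this resolution, and these vanish.
\item For $n>0$, exactness of $\Hom{\mathcal{E}}{P}{Q}$ at the splice points amounts to $\Ext{\mathcal{E}}{1}{M_{j}}{Q}=0$, which holds by orthogonality.
\end{itemlist}
Throughout I will use freely the long exact $\operatorname{Ext}$-sequences in $\mathcal{E}$.

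\prtlbl{a} Let $G'\rightarrowtail G\twoheadrightarrow G''$ be a conflation with outer terms Gorenstein projective. Orthogonality of $G$ is immediate from the long exact sequence. For the coresolution, choose conflations $G'\rightarrowtail P'^0\twoheadrightarrow G'_1$ and $G''\rightarrowtail P''^0\twoheadrightarrow G''_1$ from complete resolutions. Since $\Ext{\mathcal{E}}{1}{G''}{P'^0}=0$ by \lemref{GPrj-Ext}, the inflation $G'\to P'^0$ extends along $G'\rightarrowtail G$. Combining this extension with $G\to G''\to P''^0$ gives a morphism of conflations
\[
(G'\rightarrowtail G\twoheadrightarrow G'') \longrightarrow (P'^0\rightarrowtail P'^0\oplus P''^0\twoheadrightarrow P''^0).
\]
By the $3\times 3$ lemma in exact categories (B\"uhler), the middle map is an inflation, and its cokernel $G_1$ sits in a conflation $G'_1\rightarrowtail G_1\twoheadrightarrow G''_1$ of Gorenstein projectives. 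Iterating this step produces the required coresolution, whose cokernels are all orthogonal. The main technical obstacle is this step: checking carefully that the induced middle map is an inflation and that the cokernels form a conflation, using only the exact-category axioms.

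\prtlbl{c} Let $G'\rightarrowtail G\twoheadrightarrow G''$ be a conflation with $G,G''$ Gorenstein projective. Orthogonality of $G'$ follows from the long exact sequence: it is squeezed between $\Ext{\mathcal{E}}{i}{G}{Q}=0$ and $\Ext{\mathcal{E}}{i+1}{G''}{Q}=0$. Now take $G\rightarrowtail P^0\twoheadrightarrow G_1$ from a complete resolution, and compose to obtain an inflation $G'\rightarrowtail P^0$ with cokernel $C$. The Noether isomorphism (B\"uhler's $3\times3$ lemma) gives a conflation $G''\rightarrowtail C\twoheadrightarrow G_1$, so $C$ is Gorenstein projective by \prtlbl{a}. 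The conflation $G'\rightarrowtail P^0\twoheadrightarrow C$, continued by the coresolution of $C$, verifies the claim for $G'$.

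\prtlbl{b} Let $G=H\oplus K$ be Gorenstein projective. I avoid an Eilenberg swindle, since no coproducts are assumed. Orthogonality of $H$ is clear, being a summand. Choose $G\rightarrowtail P^0\twoheadrightarrow G_1$ as before. The split inflation $H\rightarrowtail G$ composed with $G\rightarrowtail P^0$ gives a conflation $H\rightarrowtail P^0\twoheadrightarrow C$, and the Noether isomorphism gives $K\rightarrowtail C\twoheadrightarrow G_1$. Adding the trivial conflation $H\rightarrowtail H\twoheadrightarrow 0$ to the latter yields a conflation $G\rightarrowtail C\oplus H\twoheadrightarrow G_1$. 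Hence $C\oplus H$ is Gorenstein projective by \prtlbl{a}, so $C$ is again a summand of a Gorenstein projective and is orthogonal. Repeating the argument with $C$ in place of $H$ builds, inductively, the coresolution required by the claim.
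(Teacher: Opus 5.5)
Your proposal is correct, and it takes a genuinely different route from the paper. The paper does not prove the proposition from scratch. Parts (a) and (b) are quoted from Hu--Zhang--Zhou's results on extriangulated categories. Part (c) is deduced from their two-out-of-three property for conflations that stay exact under $\operatorname{Hom}_{\mathcal{E}}(-,Q)$ for every projective $Q$, after observing that a conflation whose middle and right terms are Gorenstein projective has this property.

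You argue entirely inside the exact-category axioms, via a ``right-half'' criterion: a coresolution by projectives with $\operatorname{Ext}$-orthogonal cokernels, spliced with an arbitrary projective resolution. From this criterion:
\begin{itemize}
\item (a) follows from a horseshoe-type construction;
\item (c) follows by composing with $G\rightarrowtail P^0$ and using the Noether isomorphism together with (a);
\item (b) follows from the iterated trick $G\cong K\oplus H\rightarrowtail C\oplus H\twoheadrightarrow G_1$, which keeps each new cokernel a direct summand of a Gorenstein projective. This avoids an Eilenberg swindle, so no coproducts are needed, matching the hypotheses of the statement.
\end{itemize}
The paper's route is short and inherits the extriangulated generality. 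Yours is elementary and self-contained, and your criterion is essentially the mechanism the paper itself later uses to build $\varrho^+$ in the proof of \thmref{GPrj}.

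The step you flagged in (a) needs a different justification than the one you cite. B\"uhler's $3\times 3$-lemma assumes that all three columns (or all three rows) are already conflations, so by itself it does not show that $G\to P'^0\oplus P''^0$ is an inflation. The claim is nevertheless true in any exact category. Take a morphism $(f',f,f'')$ from a conflation $A'\rightarrowtail A\twoheadrightarrow A''$ to a conflation $B'\rightarrowtail B\twoheadrightarrow B''$, with $f'$ and $f''$ inflations, and factor $f$ through the pushout $E$ of $A'\rightarrowtail A$ along $f'$.
\begin{itemize}
\item The map $A\to E$ is a pushout of the inflation $f'$, so it is an inflation with cokernel $\operatorname{Cok} f'$.
\item The induced map $E\to B$ is part of a morphism of conflations that is the identity on $B'$. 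Hence the square it forms with $E\twoheadrightarrow A''$ and $f''$ is a pullback (dual of B\"uhler's Prop.~2.12).
\item Therefore $E\to B$ is the kernel of the composite deflation $B\twoheadrightarrow B''\twoheadrightarrow\operatorname{Cok} f''$, so it is an inflation with cokernel $\operatorname{Cok} f''$.
\end{itemize}
Thus $f$ is a composite of inflations. The Noether isomorphism then gives the conflation $\operatorname{Cok} f'\rightarrowtail\operatorname{Cok} f\twoheadrightarrow\operatorname{Cok} f''$ that your iteration needs.
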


\begin{proof}
Parts \prtlbl{a} and \prtlbl{b} can be found in \cite[Lem.~6 and Thm.~5(2)]{HZZ}. For \prtlbl{c} note that \cite[Cor.~3]{HZZ} shows that for any conflation $X' \rightarrowtail X \twoheadrightarrow X''$ which remains exact under the functor $\Hom{\mathcal{E}}{-}{Q}$ for any $Q$ in $\Prj{\mathcal{E}}$, one has that if any two of the objects $X',X$ and $X''$ belong to $\GPrj{\mathcal{E}}$ then so does the third. If we consider a conflation as above with $X$ and $X''$ in $\GPrj{\mathcal{E}}$, it follows from the proof of \lemref{GPrj-Ext}\prtlbl{a} that such a conflation remains exact when $\Hom{\mathcal{E}}{-}{Q}$ is applied to it, for any $Q$ in $\Prj{\mathcal{E}}$, so the result follows.
\end{proof}

Subcategories of an exact category satisfying the properties of Proposition \ref{prp:GPrj-resolving} are sometimes referred to as \textit{resolving subcategories}. Note, however, that some authors do not require the closure under direct summands in the definition of resolving subcategory.

The following result is also well-known; since we were not able to find a reference in the setting of exact categories, we have included a proof.

\begin{lem}
  \label{lem:proj-equiv}
  Let $\mathcal{E}$ be an exact category with enough projectives, let $G$ be a Gorenstein projective object in $\mathcal{E}$ and let $P$ and $\tilde{P}$ be totally acyclic complexes of projectives in $\mathcal{E}$ with $\Cy{0}{P} \cong G \,\cong \Cy{0}{\tilde{P}}$. For each $n$ in $\mathbb{Z}$, the Gorenstein projective objects $\Cy{n}{P}$ and $\Cy{n}{\tilde{P}}$ are projectively equivalent, that is, there exist projective objects $Q$ and $\tilde{Q}$ in $\mathcal{E}$ (which depend on $n$) such that $ \tilde{Q} \,\oplus\, \Cy{n}{P} \;\cong\; Q \,\oplus\, \Cy{n}{\tilde{P}}.$
\end{lem}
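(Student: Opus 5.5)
We will prove the statement by a Schanuel-type argument. For $n\geqslant 0$ the objects $\Cy{n}{P}$ and $\Cy{n}{\tilde{P}}$ arise as cosyzygies of $G$ on the right-hand side of the complexes, while for $n<0$ they are syzygies of $G$ on the left-hand side. We will therefore treat the two directions separately. The main tool in each direction is the generalised Schanuel lemma in an exact category: if $K \rightarrowtail R \twoheadrightarrow M$ and $K' \rightarrowtail R' \twoheadrightarrow M$ are conflations with $R,R'$ projective, then $K\oplus R'\cong K'\oplus R$. We will prove it, or quote it from \cite{Buhler}, by forming the pullback $R\times_M R'$. The projection $R\times_M R' \to R'$ is a deflation with kernel $K$, and it splits because $R'$ is projective. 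Symmetrically, the projection to $R$ splits with kernel $K'$. This gives $K\oplus R' \cong R\times_M R' \cong K'\oplus R$.

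\textbf{Negative indices.} We argue by induction. Fix an isomorphism $\Cy{0}{P}\cong \Cy{0}{\tilde{P}}$, and suppose $\Cy{n}{P}\oplus \tilde{Q}\cong \Cy{n}{\tilde{P}}\oplus Q$ for some $n\leqslant 0$. The conflations $\Cy{n-1}{P}\rightarrowtail P^{n-1}\twoheadrightarrow \Cy{n}{P}$ and $\Cy{n-1}{\tilde{P}}\rightarrowtail \tilde{P}^{n-1}\twoheadrightarrow \Cy{n}{\tilde{P}}$ can be enlarged by direct sums with split conflations of the form $0\rightarrowtail \tilde{Q}\twoheadrightarrow\tilde{Q}$ and $0 \rightarrowtail Q \twoheadrightarrow Q$. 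A direct sum of conflations is again a conflation. This yields two conflations with projective middle terms and isomorphic right-hand terms. Schanuel then gives
\[
\Cy{n-1}{P}\oplus \tilde{P}^{n-1}\oplus Q \;\cong\; \Cy{n-1}{\tilde{P}}\oplus P^{n-1}\oplus \tilde{Q},
\]
which is the required projective equivalence at index $n-1$.

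\textbf{Positive indices.} Here we use the dual Schanuel lemma for conflations $M\rightarrowtail R\twoheadrightarrow C$ with $R$ projective and with $\Ext{\mathcal{E}}{1}{C}{-}$ vanishing on projectives. In our situation $C=\Cy{n+1}{P}$ is Gorenstein projective, so $\Ext{\mathcal{E}}{1}{C}{R''}=0$ for every projective $R''$ by \lemref{GPrj-Ext}\prtlbl{a}. This is exactly what makes any inflation from $M$ into a projective split off after pushout. Concretely, given conflations $M\rightarrowtail R\twoheadrightarrow C$ and $M\rightarrowtail R'\twoheadrightarrow C'$ with $R,R'$ projective and $C,C'$ Gorenstein projective, we form the pushout $R\sqcup_M R'$. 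It sits in conflations $R'\rightarrowtail R\sqcup_M R'\twoheadrightarrow C$ and $R\rightarrowtail R\sqcup_M R'\twoheadrightarrow C'$. Both split because $\Ext{\mathcal{E}}{1}{C}{R'}=0=\Ext{\mathcal{E}}{1}{C'}{R}$. Hence $C\oplus R'\cong C'\oplus R$.

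The inductive step from $n\geqslant0$ to $n+1$ then proceeds as in the negative case. We add the split conflations $\tilde{Q}\rightarrowtail\tilde{Q}\twoheadrightarrow 0$ and $Q\rightarrowtail Q\twoheadrightarrow 0$ to $\Cy{n}{P}\rightarrowtail P^n\twoheadrightarrow\Cy{n+1}{P}$ and its tilde analogue. The cokernels are unchanged, and the left-hand terms become isomorphic. Applying the dual Schanuel lemma gives the projective equivalence at index $n+1$.

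We expect the main obstacle to be the bookkeeping in the exact-category setting. One must check that pullbacks of deflations and pushouts of inflations exist and give the stated conflations; this follows from the axioms in \cite{Buhler}. One must also check that a direct sum of two conflations is a conflation, which is standard, and that a conflation whose deflation admits a section splits as a direct sum. We will not need weak idempotent completeness, since every splitting we use comes from an explicit section or retraction.
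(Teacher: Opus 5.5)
Your proposal is correct and follows the paper's strategy: Schanuel's lemma handles $n<0$, and for $n\geqslant 0$ you induct by padding with trivial conflations and splitting via $\Ext{\mathcal{E}}{1}{\text{Gorenstein projective}}{\text{projective}}=0$. The only difference is how the key conflation is produced. You form the pushout $R\sqcup_M R'$ and split it twice. The paper instead lifts the isomorphism to a morphism of conflations and applies the dual of B\"uhler's Prop.~2.12 to obtain $R\rightarrowtail \tilde{R}\oplus\Cy{n+1}{P}\twoheadrightarrow\Cy{n+1}{\tilde{P}}$, which is the conflation your pushout gives once its first splitting is used.
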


\begin{proof}
  For $n<0$ this follows immediately from Schanuel's Lemma  for exact categories \cite[(dual of) Prop.~3.4]{MR4697475} applied to the projective resolutions 
  \[\cdots \to P^{-2} \to P^{-1} \to G \to 0\quad \text{and}\quad \cdots \to \tilde{P}^{-2} \to \tilde{P}^{-1} \to G \to 0.\] 
 For $n \geqslant 0$, we prove the projective equivalence of $\Cy{n}{P}$ and $\Cy{n}{\tilde{P}}$ by induction on $n$. For $n=0$ there is nothing to prove since $\Cy{0}{P} \cong G \cong \Cy{0}{\tilde{P}}$. Now assume that $\Cy{n}{P}$ and $\Cy{n}{\tilde{P}}$ are  projectively equivalent for some $n \geqslant 0$ and let \mbox{$\varphi \colon \tilde{Q} \oplus \Cy{n}{P} \to Q \oplus \Cy{n}{\tilde{P}}$} be an isomorphism where $Q$ and $\tilde{Q}$ lie in  $\Prj{\mathcal{E}}$. If we add the trivial conflations
\begin{equation}
  \label{eq:conf1}
  \xymatrix@C=1.5pc{
  \tilde{Q}\, \ar@{>->}[r]^-{=} & 
  \tilde{Q} \ar@{->>}[r] &
  0
  }
  \qquad \text{and} \qquad
  \xymatrix@C=1.5pc{
  Q\, \ar@{>->}[r]^-{=} & 
  Q \ar@{->>}[r] &
  0
  }\phantom{,}
\end{equation}
to the given conflations 
\begin{equation}
  \label{eq:conf2}
  \xymatrix@C=1.5pc{
  \Cy{n}{P}\, \ar@{>->}[r] & 
  P^n \ar@{->>}[r] &
  \Cy{n+1}{P}
  }
  \qquad \text{and} \qquad
  \xymatrix@C=1.5pc{
  \Cy{n}{\tilde{P}}\, \ar@{>->}[r] & 
  \tilde{P}^n \ar@{->>}[r] &
  \Cy{n+1}{\tilde{P}}
  },
\end{equation}    
then by \cite[Prop.~2.9]{Buhler} we get new conflations, which are the rows in the diagram:
\begin{equation}
  \label{eq:conf3}
  \begin{gathered}
  \xymatrix{
  X \ar[d]^-{\varphi}_-{\cong} \ar@{>->}[r] & 
  R \ar@{.>}[d] \ar@{->>}[r] &
  \Cy{n+1}{P} \ar@{.>}[d]
  \\
  \tilde{X} \ar@{>->}[r] & 
  \tilde{R} \ar@{->>}[r] &
  \Cy{n+1}{\tilde{P}}\;.\mspace{-8mu}
  }
  \end{gathered}
\end{equation}
Here we have $X = \tilde{Q} \oplus \Cy{n}{P}$ and $\tilde{X} = Q \oplus \Cy{n}{\tilde{P}}$ and the projective objects $R =  \tilde{Q} \oplus P^n$ and \mbox{$\tilde{R} = Q \oplus \tilde{P}^n$}. For every $L$ in $\Prj{\mathcal{E}}$, the functor~$\Hom{\mathcal{E}}{-}{L}$ maps the conflations in \eqref{conf1} and \eqref{conf2}, and hence also the rows in \eqref{conf3}, to short exact sequences of abelian groups. In particular, $\Hom{\mathcal{E}}{-}{\tilde{R}}$ maps the inflation $X \rightarrowtail R$ to a surjection, and it follows that there exist morphisms (dotted arrows) that make the diagram \eqref{conf3} commutative. Applying \cite[(dual of) Prop.~2.12]{Buhler} to this commutative diagram, where $\varphi$ is an isomorphism, we get a conflation in $\mathcal{E}$ of the form:
\begin{equation*}
  \xymatrix@C=1.5pc{
  R \, \ar@{>->}[r] & 
  \tilde{R} \oplus \Cy{n+1}{P} \ar@{->>}[r] &
  \Cy{n+1}{\tilde{P}}
  }.
\end{equation*}   
where $R$ is projective and $\Cy{n+1}{\tilde{P}}$ is Gorenstein projective. Since $\Ext{\mathcal{E}}{1}{\Cy{n+1}{\tilde{P}}}{R}=0$ by \lemref{GPrj-Ext}\prtlbl{a}, this conflation splits, showing that $\tilde{R} \oplus \Cy{n+1}{P} \cong R \oplus \Cy{n+1}{\tilde{P}}$. Hence, $\Cy{n+1}{P}$ and $\Cy{n+1}{\tilde{P}}$ are projectively equivalent.
\end{proof}

The lemma above shows that for a given Gorenstein projective object $G$ in an exact category $\mathcal{E}$ with enough projectives, the projective-equivalence class of $\Cy{n}P$, where $P$ is a totally acyclic complex with $\Cy{0}P\cong G$ depends only on $G$ and not on the choice of $P$. We denote an arbitrary representative of that projective-equivalence class by $\sigma^n G$. Note that it then makes sense (with some abuse of notation) to consider functors $\Ext{\mathcal{E}}{i}{\sigma^n G}{-}$, since these are independent from the choice of representative $\sigma^n G$. These functors will be important in the following section. 

\begin{dfn}
  \label{dfn:H}
  Let $G$ be a Gorenstein projective object in an exact category $\mathcal{E}$ with enough projectives, and let $n$ be an integer and $i>0$. We define \emph{cohomology functors} (with respect to $G$) as follows:
\begin{equation*}
  \HH{i}{G}{n} \;=\; \Ext{\mathcal{E}}{i}{\sigma^n G}{-}
  \colon \mathcal{E} \longrightarrow \mathsf{Ab} \;.
\end{equation*}
\end{dfn}

%\label{rmk:well-defined}
%By \lemref{proj-equiv} the object $\sigma^n G$ is uniquely determined up to projective summands. This implies that, indeed, the functor $\HH{i}{G}{n}$ is well-defined, as it is independent of the choice of the totally acyclic complex $P$. As noted in \dfnref{totally-acyclic}, each $\sigma^n G$ is a Gorenstein projective object in $\mathcal{E}$. Also 
Note that in the definition above, if the exact category $\mathcal{E}$ is $\Bbbk$-linear, where $\Bbbk$ is a commutative ring, then the functors $\HH{i}{G}{n}$ take values in the category $\lMod{\Bbbk}$ of $\Bbbk$-modules.

The following proposition establishes some expected properties of these cohomology functors.

\begin{prp}
  \label{prp:dimension-shifting}
  Let $G$ be a Gorenstein projective object in an exact category $\mathcal{E}$ with enough projectives. 
  \begin{prt}
  \item For every conflation $X' \rightarrowtail X \twoheadrightarrow X''$ in $\mathcal{E}$ and any integer $n$, there is a long exact sequence:
\begin{equation*}
  \qquad
  \HH{1}{G}{n}(X') \longrightarrow 
  \HH{1}{G}{n}(X) \longrightarrow 
  \HH{1}{G}{n}(X'') \longrightarrow 
  \HH{2}{G}{n}(X') \longrightarrow 
  \HH{2}{G}{n}(X) \longrightarrow 
  \cdots\;.
\end{equation*}

  \item For every $n \in \mathbb{Z}$, $i>0$, and $d \geqslant 0$ there is a natural isomorphism:
\begin{equation*}
   \HH{i}{G}{n} \;\cong\; \HH{i+d}{G}{n+d}\;.
\end{equation*}
  \end{prt}
\end{prp}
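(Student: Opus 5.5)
Both parts come down to the fact that $\HH{i}{G}{n}$ is, by \dfnref{H}, just $\Ext{\mathcal{E}}{i}{\sigma^n G}{-}$ for one fixed object $\sigma^n G$. By \lemref{proj-equiv}, changing the representative of $\sigma^n G$ only adds projective summands, and these do not change $\Ext{\mathcal{E}}{i}{-}{-}$ for $i>0$. So throughout I would fix a totally acyclic complex of projectives $P$ with $\Cy{0}{P}\cong G$ and put $\sigma^n G=\Cy{n}{P}$ for every $n$.

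For \prtlbl{a}, I would use the standard long exact sequence of Yoneda Ext in the second variable, which exists in any exact category (see \cite{Buhler}). Since $\mathcal{E}$ has enough projectives, one can also argue directly: take the projective resolution $\rho^n$ of $\Cy{n}{P}$ from the Remark preceding \lemref{GPrj-Ext}. Applying $\Hom{\mathcal{E}}{P^j}{-}$ with $P^j$ projective to the conflation $X'\rightarrowtail X\twoheadrightarrow X''$ gives short exact sequences of abelian groups, hence a short exact sequence of Hom complexes
\[
0\to\Hom{\mathcal{E}}{\rho^n}{X'}\to\Hom{\mathcal{E}}{\rho^n}{X}\to\Hom{\mathcal{E}}{\rho^n}{X''}\to 0 .
\]
The long exact cohomology sequence of this, read from degree $1$ onward, is the claimed sequence; by \eqref{Ext-HHom} its terms are $\HH{i}{G}{n}(-)$.

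For \prtlbl{b}, I would use dimension shifting along the conflations $\Cy{m}{P}\rightarrowtail P^m\twoheadrightarrow\Cy{m+1}{P}$. Applying $\Hom{\mathcal{E}}{-}{Y}$ gives a long exact sequence in the first variable, again available because $\mathcal{E}$ has enough projectives. Since $\Ext{\mathcal{E}}{j}{P^m}{Y}=0$ for $j>0$, the connecting maps give natural isomorphisms
\[
\Ext{\mathcal{E}}{i}{\Cy{m}{P}}{Y}\cong\Ext{\mathcal{E}}{i+1}{\Cy{m+1}{P}}{Y}\qquad\text{for } i>0,
\]
natural in $Y$. Applying this for $m=n,\dots,n+d-1$ and composing gives $\HH{i}{G}{n}\cong\HH{i+d}{G}{n+d}$. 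Alternatively, \eqref{Ext-HHom} gives this at once: both sides equal $\operatorname{H}^{i-n+1}\Hom{\mathcal{E}}{P}{Y}$, because $(i+d)-(n+d)+1=i-n+1$. This identification is visibly natural in $Y$.

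I expect the only real obstacle to be bookkeeping rather than mathematics: checking that the isomorphisms are natural and that they do not depend on the choice of $P$. The first point is handled by the naturality of the connecting maps. The second follows from \lemref{proj-equiv}: for two choices of totally acyclic complex, the objects $\Cy{n}{P}$ and $\Cy{n}{\tilde P}$ differ only by projective summands, so they have the same $\Ext{\mathcal{E}}{i}{-}{Y}$ for all $i>0$.
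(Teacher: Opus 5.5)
Your proposal is correct and follows the paper's proof. Part (a) is the long exact $\operatorname{Ext}$-sequence in the second variable for $\sigma^n G$, and part (b) is dimension shifting along the conflations $\Cy{m}{P}\rightarrowtail P^m\twoheadrightarrow\Cy{m+1}{P}$ for $m=n,\dots,n+d-1$; the paper packages these as the single acyclic complex $0\to\Cy{n}{P}\to P^n\to\cdots\to P^{n+d-1}\to\Cy{n+d}{P}\to 0$, and your alternative readings via $\rho^n$ and \eqref{Ext-HHom} are also valid.
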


\begin{proof}
  \proofoftag{a} This is the long exact sequence that arises by applying the functor $\Ext{\mathcal{E}}{*}{\sigma^n G}{-}$ to the given conflation.
  
\proofoftag{b} Let $P$ be a totally acyclic complex of projectives in $\mathcal{E}$ with $\Cy{0}{P} \cong G$. The complex 
\begin{equation*}
  0 \longrightarrow \Cy{n}{P} \longrightarrow P^{n} \longrightarrow P^{n+1} \longrightarrow \cdots \longrightarrow P^{n+d-1} \longrightarrow \Cy{n+d}{P} \longrightarrow 0
\end{equation*}
is acyclic in $\mathcal{E}$, so dimension shifting yields $\Ext{\mathcal{E}}{i}{\Cy{n}{P}}{-}\cong \Ext{\mathcal{E}}{i+d}{\Cy{n+d}{P}}{-}$, that is, ~$\HH{i}{G}{n}\cong \HH{i+d}{G}{n+d}$.
\end{proof}

%%%%%%%%%%% FINISHED REVIEWING ARTICLE UP TO HERE ON THE 9TH OF AUGUST 2024.
 
%%%%%%%%%%%%%%%%%%%%%%%%%%%%%%%%%%%%%%%%%%%%%%%%%%%%%%%%%%

\section{Exact model structures from Gorenstein projective objects}
\label{sec:projective_model_str}

In this section we construct an exact model structure from any given set $\mathscr{G}$ of Gorenstein projective (or injective) objects, see \thmref{main} (or \thmref{main-op}). In this exact model structure, the trivial objects and the weak equivalences can be detected by certain cohomology functors. Throughout this section, we work in the following setup.

\begin{stp}
  \label{stp:E}
  Let $\mathcal{E}$ be an exact category which is weakly idempotent complete and has enough projectives.
\end{stp}

We build exact model structures in $\mathcal{E}$ using \thmref{Gillespie-CWF}. For that purpose, we need some cotorsion pairs.

\begin{dfn}
  \label{dfn:CW}
Let $\mathscr{G}$ be a class of Gorenstein projective objects in $\mathcal{E}$ and let $(\mathcal{C}_{\mathscr{G}},\mathcal{W}_{\mathscr{G}})$ be the cotorsion pair in $\mathcal{E}$ generated by the class $\mathcal{S}_{\mathscr{G}} = \{\sigma^n G \ | \ G \in \mathscr{G},\, n \in \mathbb{Z} \}$. With the notation from \dfnref{H} we have:
\[
  \mathcal{W}_{\mathscr{G}}=\{W \in \mathcal{E} \ | \ \HH{1}{G}{n}(W)=0 \text{ for all } G \in \mathscr{G},\; n \in \mathbb{Z} \} \quad \text{ and } \quad
  \mathcal{C}_{\mathscr{G}}={}^\perp \mathcal{W}_\mathscr{G} \;.
  \]
\end{dfn}

Note that the notation set up in the definition does not require the class $\mathscr{G}$ to be a set. It is therefore not a priori clear whether the cotorsion pair $(\mathcal{C}_\mathscr{G},\mathcal{W}_\mathscr{G})$ is complete. %However see part \prtlbl{d} of the next result.

\begin{prp}
  \label{prp:CW-properties}
Let $\mathscr{G}$ be a class of Gorenstein projective objects in $\mathcal{E}$. The following statements hold.
\begin{prt}
\item An object $X$ of $\mathcal{E}$ lies in $\mathcal{W}_{\mathscr{G}}$ if and only if\, $\HH{i}{G}{n}(X)=0$ for all $G$ in $\mathscr{G}$, $n$ integer, and $i>0$.
\item The subcategory $\mathcal{W}_{\mathscr{G}}$ of $\mathcal{E}$ is thick.
\item There is an equality $\mathcal{C}_{\mathscr{G}} \cap \mathcal{W}_{\mathscr{G}} = \Prj{\mathcal{E}}$.
\item If $\mathcal{E}$ is efficient and $\mathscr{G}$ is a set, then the hereditary cotorsion pair $(\mathcal{C}_{\mathscr{G}},\mathcal{W}_{\mathscr{G}})$ is complete.
\end{prt}
\end{prp}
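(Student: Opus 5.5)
For part \prtlbl{a}, I will use the dimension shifting of \prpref{dimension-shifting}\prtlbl{b}. If $X \in \mathcal{W}_{\mathscr{G}}$, then for $i>0$ one has $\HH{i}{G}{n}(X) \cong \HH{1}{G}{n-i+1}(X)$, by taking $d=i-1$ and replacing $n$ by $n-i+1$. The right-hand side vanishes, since $n-i+1$ ranges over $\mathbb{Z}$. The converse is trivial by taking $i=1$.

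For \prtlbl{b}, the two-out-of-three property follows from the long exact sequence of \prpref{dimension-shifting}\prtlbl{a} combined with \prtlbl{a}. The sequence is infinite to the right, and all higher $\HH{i}{G}{n}$ vanish on two of the terms. The only delicate case is a conflation $X'\rightarrowtail X\twoheadrightarrow X''$ with $X,X''\in\mathcal{W}_{\mathscr{G}}$, where one must show $\HH{1}{G}{n}(X')=0$. Here I use the isomorphism $\HH{1}{G}{n}\cong \HH{2}{G}{n+1}$ together with the exact segment $\HH{1}{G}{n+1}(X'')\to\HH{2}{G}{n+1}(X')\to\HH{2}{G}{n+1}(X)$, whose outer terms vanish. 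Closure under direct summands is immediate because Ext is additive. Alternatively, thickness follows because $\mathcal{W}_\mathscr{G}$ is the right half of a cotorsion pair, hence closed under summands and extensions.

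For \prtlbl{c}, the inclusion $\supseteq$ holds because projectives lie in ${}^\perp\mathcal{L}$, and because $\Ext{\mathcal{E}}{1}{\sigma^nG}{P}=0$ for projective $P$ by \lemref{GPrj-Ext}\prtlbl{a}. For $\subseteq$, take $C \in \mathcal{C}_{\mathscr{G}}\cap\mathcal{W}_{\mathscr{G}}$ and a conflation $K\rightarrowtail P\twoheadrightarrow C$ with $P$ projective. By \prtlbl{b}, $K\in\mathcal{W}_{\mathscr{G}}$, since $P\in\mathcal{W}_\mathscr{G}$. Hence $\Ext{\mathcal{E}}{1}{C}{K}=0$, the conflation splits, and $C$ is a summand of $P$. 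Weak idempotent completeness then gives that $C$ is projective, because the retraction $P\to C$ has a kernel and the summand is projective.

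For \prtlbl{d}, heredity must be checked first. $\mathcal{C}_\mathscr{G}$ contains the projectives, so it is generating. $\mathcal{W}_\mathscr{G}$ must be cogenerating; completeness gives this, and completeness is obtained as follows. For completeness, use \cite[Thm.~2.13]{SaorinStovicek}: in an efficient exact category, the cotorsion pair generated by a set is complete, provided the set $\mathcal{S}_\mathscr{G}$ is a set, possibly after choosing representatives $\sigma^nG$, which is fine. One must also ensure the generating condition of that theorem, that $\mathcal{S}_\mathscr{G}$ together with the projectives or a generator is used. I may need to replace $\mathcal{S}_\mathscr{G}$ by $\mathcal{S}_\mathscr{G}$ together with a generating set; since $\mathcal{W}_\mathscr{G}$ is unchanged by adding projectives, provided a set of projective generators exists, this is harmless. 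This technical matching with the hypotheses in \cite{SaorinStovicek} is the main obstacle. Heredity then follows from \lemref{Stovicek}\eqclbl{iv}: $\mathcal{W}_\mathscr{G}$ is closed under cokernels of inflations by \prtlbl{b}.
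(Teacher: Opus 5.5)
Your proposal is correct and follows the paper's proof part by part. You use dimension shifting for (a), the long exact sequence together with $\HH{1}{G}{n}\cong\HH{2}{G}{n+1}$ for (b), splitting of a projective presentation for (c), and completeness for the set $\mathcal{S}_{\mathscr{G}}$ plus \lemref{Stovicek} for (d).

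Two small corrections. First, drop the ``alternatively'' remark in (b): a right-hand class of a cotorsion pair is closed under extensions and summands, but in general it does not have the two-out-of-three property, so the dimension-shifting argument really is needed. Second, the obstacle you flag in (d) disappears because $\mathcal{C}_{\mathscr{G}}\supseteq\Prj{\mathcal{E}}$ is already generating, so $\mathcal{S}_{\mathscr{G}}$ needs no modification. The paper simply cites \cite[Cor.~2.15(3)]{SaorinStovicek}; alternatively, the special preenvelopes upgrade to completeness by Salce's trick (\rmkref{complete}).
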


\begin{proof}
  \proofoftag{a} The non-trivial implication follows from \prpref{dimension-shifting}\prtlbl{b} since, for $G$ in $\mathscr{G}$, any integer $n$ and $i>0$, we have $\HH{1}{G}{n-i+1} \cong \HH{i}{G}{n}$.
  
\proofoftag{b} The functors $\HH{i}{G}{n}$ are additive, so it  follows from \dfnref{CW} that $\mathcal{W}_\mathscr{G}$ is closed under direct summands. It remains to show that if any two objects in a conflation $X' \rightarrowtail X \twoheadrightarrow X''$ of $\mathcal{E}$ belong to $\mathcal{W}_{\mathscr{G}}$, then so does the third. Using (a), the long exact sequence from \prpref{dimension-shifting}(a) shows that whenever $X'$ and one of $X$ or $X''$ lie in $\mathcal{W}_\mathscr{G}$, then so does the third of them. Now assume that $X$ and $X''$ lie in $\mathcal{W}_\mathscr{G}$. Again by part (a) and \prpref{dimension-shifting}(a) we get \smash{$\HH{i}{G}{n}(X')=0$} for every $G$ in $\mathscr{G}$, $n \in \mathbb{Z}$, and $i \geqslant 2$. To prove that \smash{$\HH{1}{G}{n}(X')$} is zero, note that \prpref{dimension-shifting}(b) gives \smash{$\HH{1}{G}{n}(X') \cong \HH{2}{G}{n+1}(X')$}, which is zero as already seen.

\proofoftag{c} The inclusion $\mathcal{C}_{\mathscr{G}} \supseteq \Prj{\mathcal{E}}$ is trivial, and the inclusion $\mathcal{W}_{\mathscr{G}} \supseteq \Prj{\mathcal{E}}$ holds by 
\lemref{GPrj-Ext}(a). For an object $X$ in $\mathcal{C}_{\mathscr{G}} \cap \mathcal{W}_{\mathscr{G}}$, consider a conflation $W \rightarrowtail P \twoheadrightarrow X$ with $P$ in $\Prj{\mathcal{E}}$. Since $P$ and $X$ lie in $\mathcal{W}_{\mathscr{G}}$, then so does $W$ by part (b). Since $X$ is also in $\mathcal{C}_{\mathscr{G}}$, we have $\Ext{\mathcal{E}}{1}{X}{W}=0$, and hence this conflation splits, showing that $X$ lies indeed in $\Prj{\mathcal{E}}$.

\proofoftag{d} If $\mathscr{G}$ is a set, then so is $\mathcal{S}_{\mathscr{G}}$ (see \dfnref{CW}). Thus, since $\mathcal{E}$ is efficient and the cotorsion pair $(\mathcal{C}_{\mathscr{G}},\mathcal{W}_{\mathscr{G}})$ is generated by a set, it follows that the cotorsion pair is complete by \cite[Cor.~2.15(3)]{SaorinStovicek}. In particular, it follows that $\mathcal{W}_{\mathscr{G}}$ is a cogenerating class in $\mathcal{E}$ (note that we already knew that $\mathcal{C}_\mathscr{G}$ is a generating class in $\mathcal{E}$ since it contains the class of projectives and $\mathcal{E}$ has enough projectives). Note that, since $\mathcal{W}_{\mathscr{G}}$ is thick by part (b), it follows from  \lemref{Stovicek} that $(\mathcal{C}_{\mathscr{G}},\mathcal{W}_{\mathscr{G}})$ is also hereditary.
\end{proof} 

We now prove the main result of this section. We refer the reader to  \rmkref[Remarks~]{efficient} and \rmkref[]{n-is-0} for some comments on the assumptions of the theorem. 

\begin{thm}
  \label{thm:main}
     Let $\mathscr{G}$ be a class of Gorenstein projective objects in a weakly idempotent complete exact category $\mathcal{E}$ with enough projectives and consider the subcategories: 
   \[
  \mathcal{W}_{\mathscr{G}}=\{W \in \mathcal{E} \ | \ \HH{1}{G}{n}(W)=0 \text{ for every } G \in \mathscr{G} \text{ and } n \in \mathbb{Z} \} \quad \text{ and } \quad
  \mathcal{C}_{\mathscr{G}}={}^\perp \mathcal{W}_\mathscr{G} \;.
  \]
If the cotorsion pair $(\mathcal{C}_\mathscr{G},\mathcal{W}_\mathscr{G})$ is complete (for example, when $\mathcal{E}$ is efficient and $\mathscr{G}$ is a set), then there is a hereditary exact model structure on $\mathcal{E}$ such that $\mathcal{C}_{\mathscr{G}}$ is the class of cofibrant objects, $\mathcal{W}_{\mathscr{G}}$ is the class of trivial objects, and every object in $\mathcal{E}$ is fibrant. Furthermore, in that case, we have:
\begin{rqm}
\item An object $X$ in $\mathcal{E}$ is trivial if and only if $\,\HH{i}{G}{n}(X)=0$ for every $G$ in $\mathscr{G}$, $n$ and $i$ integers, with $i>0$.
\item Given a morphism $\varphi$ in $\mathcal{E}$ the following conditions are equivalent:
\begin{eqc}
\item $\varphi$ is a weak equivalence;
\item $\HH{i}{G}{n}(\varphi)$ is an isomorphism for every $G$ in $\mathscr{G}$, $n$ and $i$ integers, with $i>0$;
\item \smash{$\HH{1}{G}{n}(\varphi)$} and \smash{$\HH{2}{G}{n}(\varphi)$} are isomorphisms for every $G$ in $\mathscr{G}$ and $n$ integer.
\end{eqc}
\item $\mathcal{C}_{\mathscr{G}}$ is a Frobenius exact category whose projective-injective objects are precisely $\Prj{\mathcal{E}}$, and the homotopy category of the this model structure $\operatorname{Ho}(\mathcal{E})$ is equivalent to the stable category $\underline{\mathcal{C}}_\mathscr{G}$.
\end{rqm}
\end{thm}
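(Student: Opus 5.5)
We apply \thmref{Gillespie-CWF} to the triple $(\mathcal{C}_{\mathscr{G}},\mathcal{W}_{\mathscr{G}},\mathcal{E})$. For this we need that $\mathcal{W}_{\mathscr{G}}$ is thick, and that $(\mathcal{C}_{\mathscr{G}},\mathcal{W}_{\mathscr{G}}\cap\mathcal{E})=(\mathcal{C}_{\mathscr{G}},\mathcal{W}_{\mathscr{G}})$ and $(\mathcal{C}_{\mathscr{G}}\cap\mathcal{W}_{\mathscr{G}},\mathcal{E})$ are complete cotorsion pairs. Thickness is \prpref{CW-properties}(b), and completeness of the first pair is the hypothesis; in the efficient case with $\mathscr{G}$ a set it is \prpref{CW-properties}(d). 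By \prpref{CW-properties}(c) the second pair is $(\Prj{\mathcal{E}},\mathcal{E})$. This is a cotorsion pair because ${}^\perp\mathcal{E}=\Prj{\mathcal{E}}$ and $(\Prj{\mathcal{E}})^\perp=\mathcal{E}$. It is complete since $\mathcal{E}$ has enough projectives, and for the special preenvelopes we use the trivial conflation $X\rightarrowtail X\twoheadrightarrow 0$.

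Heredity is checked as follows. Both $\mathcal{C}_{\mathscr{G}}$ and $\Prj{\mathcal{E}}$ are generating, and both $\mathcal{W}_{\mathscr{G}}$ (now complete) and $\mathcal{E}$ are cogenerating. For the pair $(\mathcal{C}_{\mathscr{G}},\mathcal{W}_{\mathscr{G}})$, condition (iv) of \lemref{Stovicek} holds because $\mathcal{W}_{\mathscr{G}}$ is thick. For $(\Prj{\mathcal{E}},\mathcal{E})$, condition (ii) holds trivially, since $\operatorname{Ext}^2_{\mathcal{E}}(P,-)=0$ for projective $P$ by dimension shifting. Gillespie's theorem then gives the model structure: cofibrant objects are $\mathcal{C}_{\mathscr{G}}$, trivial objects are $\mathcal{W}_{\mathscr{G}}$, and fibrant objects are all of $\mathcal{E}$. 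Part (1) is \prpref{CW-properties}(a).

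Part (3) follows from the recollection after \thmref{Gillespie-CWF}. The bifibrant objects are $\mathcal{C}_{\mathscr{G}}\cap\mathcal{E}=\mathcal{C}_{\mathscr{G}}$, which is Frobenius with projective-injectives $\mathcal{C}_{\mathscr{G}}\cap\mathcal{W}_{\mathscr{G}}\cap\mathcal{E}=\Prj{\mathcal{E}}$, and $\operatorname{Ho}(\mathcal{E})\simeq\underline{\mathcal{C}}_{\mathscr{G}}$.

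Part (2) is the main work. (ii)$\Rightarrow$(iii) is trivial.

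For (i)$\Rightarrow$(ii), factor $\varphi=\pi\iota$, where $\iota$ is an inflation with cokernel in $\mathcal{C}_{\mathscr{G}}\cap\mathcal{W}_{\mathscr{G}}=\Prj{\mathcal{E}}$ and $\pi$ is a deflation with kernel in $\mathcal{W}_{\mathscr{G}}$. The long exact sequences of \prpref{dimension-shifting}(a), combined with part (1) (a trivial object kills all $\HH{i}{G}{n}$, $i>0$), show that $\HH{i}{G}{n}(\iota)$ and $\HH{i}{G}{n}(\pi)$ are isomorphisms for all $i>0$.

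For (iii)$\Rightarrow$(i), first reduce to the case of a deflation. Choose a deflation $p\colon P\twoheadrightarrow Y$ with $P$ projective. Then $(\varphi,p)\colon X\oplus P\to Y$ is a deflation; here weak idempotent completeness is used, via the standard fact that $\varphi'$ is a deflation if $\varphi'\circ s$ is one for a suitable $s$ (cf.\ \cite[Prop.~7.6]{Buhler}). The inclusion $X\to X\oplus P$ is an inflation with projective cokernel, hence a weak equivalence. It also induces isomorphisms on all $\HH{i}{G}{n}$, $i>0$, because $\Prj{\mathcal{E}}\subseteq\mathcal{W}_{\mathscr{G}}$. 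By two-out-of-three for weak equivalences, and likewise for isomorphisms, it suffices to treat the deflation $\varphi'=(\varphi,p)$. Let $K=\Ker{\varphi'}$. The long exact sequence
\[
\HH{1}{G}{n}(K)\to\HH{1}{G}{n}(X')\to\HH{1}{G}{n}(Y)\to\HH{2}{G}{n}(K)\to\HH{2}{G}{n}(X')\to\HH{2}{G}{n}(Y),
\]
with $X'=X\oplus P$, gives $\HH{2}{G}{n}(K)=0$ for all $G$ and $n$: the connecting map is zero because $\HH{1}{G}{n}(\varphi')$ is surjective, and $\HH{2}{G}{n}(K)\to\HH{2}{G}{n}(X')$ is zero because $\HH{2}{G}{n}(\varphi')$ is injective. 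The needed exactness at the left end, namely the term $\operatorname{Hom}(\sigma^nG,Y)$ preceding $\HH{1}{G}{n}(K)$, is not required. By \prpref{dimension-shifting}(b), $\HH{1}{G}{n}(K)\cong\HH{2}{G}{n+1}(K)=0$, so $K\in\mathcal{W}_{\mathscr{G}}$. Since every object is fibrant, a deflation with trivial kernel is a trivial fibration, hence a weak equivalence.

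The delicate points I expect to be the main obstacle are the deflation-replacement step in the weakly idempotent complete setting and the bookkeeping of the long exact sequence.
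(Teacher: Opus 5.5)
Your proof is correct. The construction of the model structure and parts (1) and (3) follow the paper. Your (i)$\Rightarrow$(ii) is also the paper's argument, with one small omission. The sequence of \prpref{dimension-shifting}(a) starts at $\HH{1}{G}{n}(X)$, so it does not by itself give injectivity of $\HH{1}{G}{n}(\iota)$. You need one of two extra facts:
\begin{itemlist}
\item $\iota$ splits, because its cokernel is projective (this is what the paper uses);
\item the natural isomorphism $\HH{1}{G}{n}\cong\HH{2}{G}{n+1}$, which you use later anyway.
\end{itemlist}

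The genuine difference is in (iii)$\Rightarrow$(i).
\begin{itemlist}
\item \textbf{The paper's route.} It uses the factorisation axiom to write $\varphi=\pi\iota$, with $\iota$ a cofibration and $\pi$ a trivial fibration. It then reads off from $\HH{1}{G}{n}(X)\to\HH{1}{G}{n}(Z)\to\HH{1}{G}{n}(C)\to\HH{2}{G}{n}(X)\to\HH{2}{G}{n}(Z)$ that the cofibrant cokernel $C$ is trivial, so $\iota$ is a trivial cofibration.
\item \textbf{Your route.} You use the other factorisation, a trivial cofibration followed by a fibration. Here it can be written down explicitly as $\varphi=(\varphi,p)\circ j$, because $\mathcal{C}_{\mathscr{G}}\cap\mathcal{W}_{\mathscr{G}}=\Prj{\mathcal{E}}$ and every object is fibrant. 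Since you examine a kernel rather than a cokernel, the exact sequence only gives $\HH{2}{G}{n}(K)=0$. You correctly recover $\HH{1}{G}{n}(K)=0$ by dimension shifting over all $n$.
\end{itemlist}
The paper's argument is shorter and purely formal. However, it relies on the factorisation supplied by completeness of $(\mathcal{C}_{\mathscr{G}},\mathcal{W}_{\mathscr{G}})$. Yours is more explicit and, for this step, only needs enough projectives.

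On the step you flagged as delicate: weak idempotent completeness is not needed there. You can write $(\varphi,p)=(\varphi,1)\circ(1\oplus p)$.
\begin{itemlist}
\item $1\oplus p$ is a direct sum of deflations, hence a deflation.
\item $(\varphi,1)\colon X\oplus Y\to Y$ is the split projection precomposed with an automorphism of $X\oplus Y$, hence a deflation.
\end{itemlist}
So $(\varphi,p)$ is a composite of deflations in any exact category.
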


%\hh{We should probably say something about $\operatorname{Ho}(\mathcal{E})$; for example that it is equivalent to the stable category $\mathcal{C}_{\mathscr{G}}/\Prj{\mathcal{E}}$ of the Frobenius category $\mathcal{C}_{\mathscr{G}}$ by Gillespie \cite[Proposition~5.2(4) and Corollary~5.4]{MR2811572}.}

\begin{proof}
Since $\mathcal{E}$ has enough projectives, using \prpref{CW-properties}(c) and our assumption on the completeness of $(\mathcal{C}_\mathscr{G},\mathcal{W}_\mathscr{G})$ (see also the proof of \prpref{CW-properties}(d)) we have two complete hereditary cotorsion pairs:
  \begin{equation*}
     (\mathcal{C}_{\mathscr{G}} \cap \mathcal{W}_{\mathscr{G}},\mathcal{E}) \;=\; (\Prj{\mathcal{E}},\mathcal{E})
     \qquad \text{and} \qquad
     (\mathcal{C}_{\mathscr{G}}, \mathcal{W}_{\mathscr{G}} \cap\; \mathcal{E}) \;=\; 
     (\mathcal{C}_{\mathscr{G}}, \mathcal{W}_{\mathscr{G}}) \;.
  \end{equation*}
Note that indeed \prpref{CW-properties}(d) asserts that our assumption on the completeness of $(\mathcal{C}_\mathscr{G},\mathcal{W}_\mathscr{G})$ is fulfilled if $\mathcal{E}$ is efficient and $\mathscr{G}$ is a set. Moreover, the proof of it makes it clear that once we have the completeness of the cotorsion pair, heredity follows (as stated above). Since $\mathcal{W}_{\mathscr{G}}$ is thick by \prpref{CW-properties}(b), it follows from Gillespie's  \thmref{Gillespie-CWF} (for which we need $\mathcal{E}$ to be weakly idempotent complete) that there is indeed an hereditary exact model structure on $\mathcal{E}$ with cofibrant, trivial, and fibrant objects as asserted. We can now prove the statements about this model structure.
  
\proofoftag{1} The assertion follows immediately from \prpref{CW-properties}(a). 
 
\proofoftag{2} Consider a morphism $\varphi \colon X \to Y$~in~$\mathcal{E}$. Note that the implication \eqclbl{ii}$\;\Rightarrow\;$\eqclbl{iii} is trivial.
 
\proofofimp{i}{ii}  If $\varphi$ is a weak equivalence then, as noted after \thmref{Gillespie-CWF}, we have $\varphi = \pi\iota$ where 
$$\delta\colon\ \ X \smasharrow[\rightarrowtail]{\iota} Z \twoheadrightarrow P\quad {\rm and}\quad \epsilon\colon\ \ W \rightarrowtail Z \smasharrow[\twoheadrightarrow]{\pi} Y$$ 
are conflations with $P$ in $\mathcal{C}_{\mathscr{G}} \cap \mathcal{W}_{\mathscr{G}} = \Prj{\mathcal{E}}$ and $W$ in $\mathcal{W}_{\mathscr{G}}$. Fix an object $G$ in $\mathscr{G}$, and integers $i,n$ with $i>0$. By \prpref{dimension-shifting}(a), since $P$ and $W$ lie in $\mathcal{W}_{\mathscr{G}}$, we have \smash{$\HH{i}{G}{n}(P)=\HH{i}{G}{n}(W)=\HH{i+1}{G}{n}(W)=0$}. From the long exact sequence of cohomology of Proposition \ref{prp:dimension-shifting}(a), we immediately find that $\HH{i}{G}{n}(\iota)$ is an isomorphism for $i>1$ and $\HH{i}{G}{n}(\pi)$ is an isomorphism for $i>0$. Finally, note that since $P$ is projective, $\iota$ is a split monomorphism, and since $\HH{1}{G}{n}(P)=0$, we conclude that also $\HH{1}{G}{n}(\iota)$ is an isomorphism.

\proofofimp{iii}{i}  Every morphism $\varphi \colon X \to Y$ in a model category admits a factorisation $X \smasharrow{\iota} Z \smasharrow{\pi} Y$ where $\iota$ is a cofibration and $\pi$ is a trivial fibration (see the factorisation axiom in \cite[Def.~1.1.3(4)]{modcat}). By assumption, the map \smash{$\HH{i}{G}{n}(\varphi) = \HH{i}{G}{n}(\pi)\HH{i}{G}{n}(\iota)$} is an isomorphism for every $G$ in $\mathscr{G}$, $n$ in $\mathbb{Z}$, and $i=1,2$. As $\pi$ is, in particular, a weak equivalence, we know from the already established implication \eqclbl{i}$\;\Rightarrow\;$\eqclbl{ii} that \smash{$\HH{i}{G}{n}(\pi)$} is an isomorphism for any $i>0$ and, hence, that \smash{$\HH{i}{G}{n}(\iota)$} is an isomorphism for $i=1,2$, for any $G$ in $\mathscr{G}$ and any $n$ in $\mathbb{Z}$. We will now show that $\iota$ is a weak equivalence, and hence the composite $\varphi = \pi\iota$ is a weak equivalence too by the the 2-out-of-3 axiom \cite[Def.~1.1.3(1)]{modcat}. As $\iota$ is a cofibration in the exact model structure on $\mathcal{E}$ there exists a conflation $X \smasharrow[\rightarrowtail]{\iota} Z \twoheadrightarrow C$ in $\mathcal{E}$ where $C$ is a cofibrant object, that is, $C$ lies in $\mathcal{C}_{\mathscr{G}}$ (see discussion after \thmref{Gillespie-CWF}). By \prpref{dimension-shifting}(a) this conflation induces an exact sequence,
\begin{equation*}
  \xymatrix{
    \HH{1}{G}{n}(X) \ar[r]^-{\HH{1}{G}{n}(\iota)}_{\cong} &
    \HH{1}{G}{n}(Z) \ar[r] &
    \HH{1}{G}{n}(C) \ar[r] &
    \HH{2}{G}{n}(X) \ar[r]^-{\HH{2}{G}{n}(\iota)}_{\cong} &
    \HH{2}{G}{n}(Z)
  },
\end{equation*}
for every $G$ in $\mathscr{G}$ and $n$ in $\mathbb{Z}$. As \smash{$\HH{1}{G}{n}(\iota)$} and \smash{$\HH{2}{G}{n}(\iota)$} are isomorphisms, it follows that \smash{$\HH{1}{G}{n}(C)=0$}, and hence $C$ lies in $\mathcal{W}_{\mathscr{G}}$. Thus $C$ lies in the intersection $\mathcal{C}_{\mathscr{G}} \cap \mathcal{W}_{\mathscr{G}}$, which means that $\iota$ is a trivial cofibration (see \cite[Def.~3.1]{MR2811572}). So $\iota$ is a weak equivalence.

\proofoftag{3} The final assertion follows directly from  \cite[Prop.~5.2(4) and Cor.~5.4]{MR2811572}.
\end{proof}

%%%%%%%%%%%%%% VERSION REVISED UNTIL HERE ON THE 28 AUGUST 2024.

%\hh{\rmkref{efficient} and \thmref{main-op} are new; they grew out of and elaborate on \rmkref[Remarks~]{AAA} and \rmkref[]{class vs set}, (which are now perhaps obsolete).}

\begin{rmk}
  \label{rmk:efficient} 
  We will often use the theorem above for efficient exact categories. Nevertheless, we state it in its full generality for two reasons.
\begin{itemlist}
\item In some cases (see, for example, Proposition \ref{prp:injective cotorsion pair is complete}), one can prove directly that the cotorsion pair $(\mathcal{C}_{\mathscr{G}},\mathcal{W}_{\mathscr{G}})$ is complete even if $\mathscr{G}$ is not a set (but a proper class)

\item It allows for potential applications of \thmref{main} to the opposite exact category $\mathcal{E}^\mathrm{op}$ if $\mathcal{E}$ is weakly idempotent complete with enough injectives (note that $\mathcal{E}^\mathrm{op}$ is usually not efficient even if $\mathcal{E}$ is efficient).
\end{itemlist}
Let us explain the second bullet in greater detail. Let $\mathcal{E}$ be a weakly idempotent complete exact category with enough injectives. Then $\mathcal{E}^\mathrm{op}$ is a weakly idempotent complete exact category with enough projectives and hence the definitions and  results in this section apply to the category $\mathcal{E}^\mathrm{op}$. For example, \dfnref{totally-acyclic} dualises as follows: for every \textsl{Gorenstein injective} object $J$ in $\mathcal{E}$ there are well-defined cohomology functors,
\begin{equation}
  \label{eq:HHop}
  \HHop{i}{J}{n} \;=\; \Ext{\mathcal{E}}{i}{-}{\sigma^{-n} J}
  \colon \mathcal{E}^\mathrm{op} \longrightarrow \mathsf{Ab} \;,
\end{equation}
where $\sigma^{n} J$ is an arbitrary representative of the injective-equivalence class of $\Cy{n}I$ for any choice of totally acyclic complex 
%\begin{equation*}
 % I \,=\, 
  %\xymatrix@C=1.6pc{
   % \cdots \ar[r] & 
    %I^{n-1} \ar[r]^-{\partial_I^{n-1}} &
    %I^{n} \ar[r]^-{\partial_I^{n}} &
    %I^{n+1} \ar[r] &
    %\cdots
  %}
%\end{equation*}
of \textsl{injectives} in $\mathcal{E}$ with $\Cy{0}{I} \cong J$. In the dual version of \dfnref{CW} one starts with a class $\mathscr{J}$ of \textsl{Gorenstein injective} objects in $\mathcal{E}$ and considers the cotorsion pair $(\mathcal{W}^{\mathscr{J}},\mathcal{F}^{\mathscr{J}})$ in $\mathcal{E}$ \textsl{cogenerated} by the class $\mathcal{S}^{\mathscr{J}} = \{\sigma^n J \ | \ J \in \mathscr{J},\, n \in \mathbb{Z} \}$,~i.e.
\[
  \mathcal{W}^{\mathscr{J}}= \{W \in \mathcal{E} \ | \ \HHop{1}{J}{n}(W)=0 \text{ for every } J \in \mathscr{J} \text{ and } n \in \mathbb{Z} \}   \quad \text{ and } \quad 
 \mathcal{F}^{\mathscr{J}}=(\mathcal{W}^\mathscr{J})^\perp \;.
\]
It is also useful to note the analogue of \eqref{Ext-HHom}, which is:
\begin{equation}
  \label{eq:Ext-HHom-inj}
  \Ext{\mathcal{E}}{i}{X}{\Cy{-n}{J}}
  \;=\;
  \operatorname{H}^{i-n}\Hom{\mathcal{E}}{X}{J} 
  \qquad \text{for} \qquad
  n \in \mathbb{Z} \ \text{ and } \ i>0 \;.
\end{equation}
\end{rmk}

In view of \rmkref{efficient}, we get the following dual version of \thmref{main}.

\begin{thm}
  \label{thm:main-op}
  Let $\mathscr{J}$ be a class of Gorenstein injective objects in a weakly idempotent complete exact category $\mathcal{E}$ with enough injectives and consider the subcategories: 
 \[
  \mathcal{W}^{\mathscr{J}}= \{W \in \mathcal{E} \ | \ \HHop{1}{J}{n}(W)=0 \text{ for every } J \in \mathscr{J} \text{ and } n \in \mathbb{Z} \}   \quad \text{ and } \quad 
 \mathcal{F}^{\mathscr{J}}=(\mathcal{W}^\mathscr{J})^\perp \;.
\]
If the cotorsion pair $(\mathcal{W}^\mathscr{J},\mathcal{F}^\mathscr{J})$ is complete (for example, when $\mathcal{E}^\mathrm{op}$ is efficient and $\mathscr{J}$ is a set), then there is an hereditary exact model structure on $\mathcal{E}$ such that every object in $\mathcal{E}$ is cofibrant, $\mathcal{W}^{\mathscr{J}}$ is the class of trivial objects, and $\mathcal{F}^\mathscr{J}$ is the class of fibrant objects. Furthermore, in that case, we have:
\begin{rqm}
\item An object $X$ in $\mathcal{E}$ is trivial if and only if $\,\HHop{i}{J}{n}(X)=0$ for every $J$ in $\mathscr{J}$, $n$ and $i$ integers, with $i>0$.
\item Given a morphism $\varphi$ in $\mathcal{E}$ the following conditions are equivalent:
\begin{eqc}
\item $\varphi$ is a weak equivalence;
\item $\HHop{i}{J}{n}(\varphi)$ is an isomorphism for every  $J$ in $\mathscr{J}$, $n$ and $i$ integers, with $i>0$;
\item \smash{$\HHop{1}{J}{n}(\varphi)$} and \smash{$\HH{2}{J}{n}(\varphi)$} are isomorphisms for every $J$ in $\mathscr{J}$ and $n$ integer.
\end{eqc}
\item $\mathcal{F}^{\mathscr{J}}$ is a Frobenius exact category whose projective-injective objects are precisely $\Inj{\mathcal{E}}$, and the homotopy category of the this model structure $\operatorname{Ho}(\mathcal{E})$ is equivalent to the stable category $\underline{\mathcal{F}}^\mathscr{J}$.
\end{rqm}
\end{thm}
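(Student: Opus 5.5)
The plan is to deduce \thmref{main-op} formally from \thmref{main} applied to the opposite exact category $\mathcal{E}^{\mathrm{op}}$, as outlined in \rmkref{efficient}. Since $\mathcal{E}$ is weakly idempotent complete with enough injectives, $\mathcal{E}^{\mathrm{op}}$ is weakly idempotent complete (the condition is self-dual: retractions have kernels if and only if coretractions have cokernels) and has enough projectives, namely $\Prj{\mathcal{E}^{\mathrm{op}}}=\Inj{\mathcal{E}}$. Totally acyclic complexes of injectives in $\mathcal{E}$ are exactly totally acyclic complexes of projectives in $\mathcal{E}^{\mathrm{op}}$, so $\mathscr{J}$ becomes a class of Gorenstein projective objects in $\mathcal{E}^{\mathrm{op}}$. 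I then check the dictionary on cohomology functors: $\Ext{\mathcal{E}^{\mathrm{op}}}{i}{A}{B}=\Ext{\mathcal{E}}{i}{B}{A}$, and reindexing the complex (degree $n$ in $\mathcal{E}^{\mathrm{op}}$ corresponds to degree $-n$ in $\mathcal{E}$) identifies $\sigma^n J$ computed in $\mathcal{E}^{\mathrm{op}}$ with $\sigma^{-n}J$ computed in $\mathcal{E}$. Hence the functor $\HH{i}{J}{n}$ on $\mathcal{E}^{\mathrm{op}}$ is $\HHop{i}{J}{n}$ of \eqref{HHop}. Under these identifications, $\mathcal{W}_{\mathscr{J}}$ computed in $\mathcal{E}^{\mathrm{op}}$ equals $\mathcal{W}^{\mathscr{J}}$, and ${}^\perp$ in $\mathcal{E}^{\mathrm{op}}$ becomes $(-)^\perp$ in $\mathcal{E}$, so $\mathcal{C}_{\mathscr{J}}=\mathcal{F}^{\mathscr{J}}$.

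Next I verify that the hypotheses transfer. The cotorsion pair $(\mathcal{C}_{\mathscr{J}},\mathcal{W}_{\mathscr{J}})$ in $\mathcal{E}^{\mathrm{op}}$ is the pair $(\mathcal{F}^{\mathscr{J}},\mathcal{W}^{\mathscr{J}})$ with the order reversed. Its completeness in $\mathcal{E}^{\mathrm{op}}$ is literally completeness of $(\mathcal{W}^{\mathscr{J}},\mathcal{F}^{\mathscr{J}})$ in $\mathcal{E}$, because special precovers and special preenvelopes swap under duality. For the parenthetical case, where $\mathcal{E}^{\mathrm{op}}$ is efficient and $\mathscr{J}$ is a set, this is \prpref{CW-properties}(d) applied to $\mathcal{E}^{\mathrm{op}}$.

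\thmref{main} then yields a hereditary exact model structure on $\mathcal{E}^{\mathrm{op}}$. Its cofibrant objects are $\mathcal{F}^{\mathscr{J}}$, its trivial objects are $\mathcal{W}^{\mathscr{J}}$, and all objects are fibrant. Dualising a model structure swaps cofibrations and fibrations and keeps weak equivalences. Equivalently, via \thmref{Gillespie-CWF}, the Hovey triple $(\mathcal{F}^{\mathscr{J}},\mathcal{W}^{\mathscr{J}},\mathcal{E}^{\mathrm{op}})$ becomes the Hovey triple $(\mathcal{E},\mathcal{W}^{\mathscr{J}},\mathcal{F}^{\mathscr{J}})$ in $\mathcal{E}$: thickness is self-dual, and the cotorsion pairs $(\Inj{\mathcal{E}}\text{ in }\mathcal{E}^{\mathrm{op}}\text{ as projectives},\mathcal{E})$ and $(\mathcal{F},\mathcal{W})$ turn into $(\mathcal{E},\Inj{\mathcal{E}})$ and $(\mathcal{W},\mathcal{F})$. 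Heredity is also self-dual: conditions (iii) and (iv) of \lemref{Stovicek} swap. The model structure on $\mathcal{E}$ therefore has exactly the stated cofibrant, trivial and fibrant objects.

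Parts (1) and (2) transfer verbatim, since weak equivalences coincide and $\HH{i}{J}{n}=\HHop{i}{J}{n}$ (so the $\HH{2}{J}{n}$ in (iii) is to be read as $\HHop{2}{J}{n}$). For part (3), a Frobenius category's opposite is again Frobenius with the same projective-injective objects, here $\Prj{\mathcal{E}^{\mathrm{op}}}=\Inj{\mathcal{E}}$. The stable category of $(\mathcal{F}^{\mathscr{J}})^{\mathrm{op}}$ is the opposite of $\underline{\mathcal{F}}^{\mathscr{J}}$, and $\operatorname{Ho}(\mathcal{E}^{\mathrm{op}})=\operatorname{Ho}(\mathcal{E})^{\mathrm{op}}$, so the equivalence dualises. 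Alternatively, I can cite \cite[Prop.~5.2(4) and Cor.~5.4]{MR2811572} directly in $\mathcal{E}$.

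The only step requiring real care is the index bookkeeping $\sigma^n\leftrightarrow\sigma^{-n}$ in the identification of cohomology functors; I would check it against \eqref{Ext-HHom} and \eqref{Ext-HHom-inj}. Since $\mathcal{W}^{\mathscr{J}}$ quantifies over all $n\in\mathbb{Z}$, any sign error there is harmless for the conclusions anyway.
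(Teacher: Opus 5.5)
Your proposal is correct and follows the paper's route: the paper obtains \thmref{main-op} by applying \thmref{main} to $\mathcal{E}^{\mathrm{op}}$, as set up in \rmkref{efficient}. Your checks (that $\sigma^n$ in $\mathcal{E}^{\mathrm{op}}$ corresponds to $\sigma^{-n}$ in $\mathcal{E}$, that the perps swap, and that completeness, heredity and the Frobenius/stable-category statement transfer) spell out details the paper leaves implicit. You are also right that the $\HH{2}{J}{n}$ in (iii) should read $\HHop{2}{J}{n}$.
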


%\hh{\rmkref{n-is-0} is new. It addresses to what extend it suffices to consider $n=0$ in \thmref[Theorems~]{main} and \thmref[]{main-op}.}

\begin{rmk}
  \label{rmk:n-is-0}
In the setting of \thmref{main}, given an object $X$ in $\mathcal{E}$ we have by \dfnref{CW} that
\begin{equation*}
  X \in \mathcal{W}_{\mathscr{G}}
  \quad \iff \quad
  \HH{1}{G}{n}(X) \mspace{1mu}=\mspace{1mu} \Ext{\mathcal{E}}{1}{\sigma^n G}{X} \mspace{1mu}=\mspace{1mu} 0 \ \quad \forall\,G \in \mathscr{G} \ \text{ and } \ \forall\, n \in \mathbb{Z}\;.
\end{equation*}  
It follows from the proof of \prpref{CW-properties}(b) that it is crucial to consider all integers $n$ when computing the Ext-ortogonal of $\{\sigma^nG\colon n\in\mathbb{Z}\}$; indeed this is the reason why $\mathcal{W}_\mathscr{G}$ is thick, and without a thick subcategory of trivial objects we cannot hope to derive an exact model structure from it. Nevertheless, we will see that in some special cases (e.g.~\lemref{Ext-comparison-Prj}), we have
\begin{equation}
  \label{eq:important-equivalence}
  X \in \mathcal{W}_{\mathscr{G}}
  \quad \iff \quad
  \HH{1}{G}{0}(X) \mspace{1mu}=\mspace{1mu} \Ext{\mathcal{E}}{1}{G}{X} \mspace{1mu}=\mspace{1mu} 0 \ \quad \forall\,G \in \mathscr{G}.
\end{equation}  
In such cases, the proof of \thmref{main} carries on to show that conditions \eqclbl{i}--\eqclbl{iii} become also equivalent to
\begin{eqc}
\item[\eqclbl{iv}] \smash{$\HH{1}{G}{0}(\varphi)$} and \smash{$\HH{2}{G}{0}(\varphi)$} are isomorphisms for every $G \in \mathscr{G}$.
\end{eqc}
Similar observations can be made for \thmref{main-op}, should such a special situation arise (e.g.~\lemref{Ext-comparison-Inj}).
\end{rmk}

\section{Objectwise exact structure on \texorpdfstring{$\lMod{Q,A}$}{Q,A-Mod}}
\label{sec:GProj-Proj-in-Q,A-Mod}

For a suitable small $\Bbbk$-preadditive category $Q$, the category $\lMod{Q,A}$ of $\Bbbk$-linear functors from $Q$ to the category $\lMod{A}$ of modules over a $\Bbbk$-algebra $A$ admits model structures whose homotopy categories behave similarly to a (classical) derived category of a ring, as shown in \cite{HJ-JLMS}. We are interested in endowing this functor category with various exact structures defined objectwise and study the corresponding homotopy categories. For this purpose, we apply \thmref{main}. We study homological properties relative to a fixed objectwise exact structure on $\lMod{Q,A}$, such as (Gorenstein) projectivity and (Gorenstein) injectivity, making it possible to apply Theorem \ref{thm:main}.

\begin{stp}
\label{stp:HJ}
Throughout this section, we work with the following setup from \cite{HJ-JLMS}.

\begin{itemlist}
\item $\Bbbk$ is a commutative, noetherian, and hereditary ring (e.g.~$\Bbbk$ is a field or $\Bbbk=\mathbb{Z}$).
\item $Q$ is a small Hom-finite, locally bounded $\Bbbk$-preadditive category with a Serre functor $\mathbb{S}$ and the strong retraction property (see \cite[Setup~2.5 and Def.~7.3]{HJ-JLMS}). The hom-sets  in $Q$, which are $\Bbbk$-modules, are denoted by $Q(p,q)$ for $p$ and $q$ in $Q$.
\item The pseudo-radical $\mathfrak{r}$ in $Q$ is nilpotent; see \cite[Lem.~7.7]{HJ-JLMS}.
\item $A$ is any $\Bbbk$-algebra.
\end{itemlist}
\end{stp}

\begin{exa}
A typical example of one such category $Q$ is the mesh category of the repetitive quiver of $\mathbb{A}_n$, see \cite[Theorem 8.11]{HJ-JLMS}. This includes in particular the category $Q$ whose representations over a ring $\mathbb{K}$ are just complexes over $\mathbb{K}$: this is the mesh category of the repetitive quiver of $\mathbb{A}_2$. 
\end{exa}

\subsection{Preliminaries on \texorpdfstring{$Q$}{Q}-shaped derived categories}
\label{Prelim-on-Q-shaped}
\phantom{.} \vspace*{1ex}

As in \cite[Def.~3.1]{HJ-JLMS} we consider the abelian category of $\Bbbk$-linear functors
\begin{equation}\nonumber
%  \label{eq:lModQA}
  \lMod{Q,A} \,=\, \operatorname{Fun}_\Bbbk(Q,\lMod{A})\;,
\end{equation}
which  we call the \textbf{category of $Q$-shaped $A$-modules}.
We write $\lMod{Q} = \lMod{Q,\Bbbk}$ and $\rMod{Q} = \lMod{Q^\mathrm{op}}$ for short. 
By \cite[Cor.~3.9]{HJ-JLMS} there is for every $q$ in $Q$ an adjoint triple $(\Fq{q},\Eq{q},\Gq{q})$ of exact functors as follows:
  \begin{equation}
  \label{eq:FEG-def}
  \xymatrix@C=4pc{
    \lMod{Q,A}
    \ar[r]^-{\Eq{q}}
    &
    \lMod{A}
    \ar@/_1.8pc/[l]_-{\Fq{q}}
    \ar@/^1.8pc/[l]^-{\Gq{q}}     
  }
  \qquad \text{given by} \qquad
  {\setlength\arraycolsep{1.5pt}
   \renewcommand{\arraystretch}{1.2}
  \begin{array}{rcl}
  \Fq{q}(M) &=& Q(q,-) \otimes_\Bbbk M \\
  \Eq{q}(X) &=& X\mspace{1.5mu}(q) \\ 
  \Gq{q}(M) &=& \Hom{\Bbbk}{Q(-,q)}{M}\;.
  \end{array}
  }
  \end{equation}
In fact, it follows from the conditions imposed on $Q$ (namely, Hom-finiteness and the existence of a Serre functor $\mathbb{S}$) that $\Gq{\mathbb{S}q}\cong \Fq{q}$ (see \cite[Lem.~3.4]{HJ-TAMS}) for any $q$ in $Q$. Since $\mathbb{S}$ is an autoequivalence, the family of functors $\{\Fq{q}\}_{q\in Q}$ is a permutation of the family $\{\Gq{q}\}_{q\in Q}$. By \cite[Con.~5.6 and Prop.~5.7]{HJ-TAMS} there are useful short exact sequences
\begin{equation}
  \label{eq:varepsilon-eta}
  \mathfrak{f}_X\colon \ \ \xymatrix@C=1.3pc{
    0 \ar[r] & \mathbb{K}(X) \ar[r] & \mathbb{F}(X) \ar[r]^-{\varepsilon^X} & X \ar[r] & 0
  }
  \qquad \text{and} \qquad
 \mathfrak{g}_X\colon\ \  \xymatrix@C=1.3pc{
    0 \ar[r] & X \ar[r]^-{\eta^X} & \mathbb{G}(X) \ar[r] & \mathbb{C}(X) \ar[r] & 0
  }
\end{equation}  
where the functors $\mathbb{F}$, $\mathbb{G}$ are defined by
\begin{equation}
  \label{eq:FF-and-GG}
  \textstyle
  \mathbb{F}(X) \,=\, \bigoplus_{q \in Q} \Fq{q}(X(q))
  \qquad \text{and} \qquad
  \mathbb{G}(X) \,=\, \prod_{q \in Q} \Gq{q}(X(q)) 
\end{equation}
and where $\epsilon$ and $\eta$ are the unique natural transformations defined by the universal properties of coproducts and products, respectively, such that $\epsilon^X_q$ is the counit of $(\Fq{q},\Eq{q})$ at $X$ and $\eta^X_q$ is the unit of $(\Eq{q},\Gq{q})$ at $X$. The remaining objects in the sequences are defined by $\mathbb{K}(X) = \Ker{\varepsilon^X}$ and $\mathbb{C}(X) = \Coker{\eta^X}$. In \cite[Prop.~5.7]{HJ-TAMS}, these sequences are shown to be objectwise split, i.e.~applying $\Eq{q}$ to them yields split exact sequences, for any $q$ in $Q$. 

There is an important family of objects in $\lMod{Q}$ (respectively, $\rMod{Q}$), called the \textbf{left} (respectively, \textbf{right}) \textbf{stalk functors} at an object $q$ of $Q$ (see \cite[Def.~7.9 and Lem.~7.10]{HJ-JLMS}). They are defined by
\begin{equation}
  \label{eq:stalks}
\stalkco{q} = Q(q,-)/\mathfrak{r}(q,-)
  \qquad \textnormal{(respectively,} \quad 
\stalkcn{q} = Q(-,q)/\mathfrak{r}(-,q) \textnormal{)}\;.
\end{equation}
Considering \smash{$\operatorname{Ext}^*_Q$} to be the Ext functor in the abelian category $\lMod{Q}$, and \smash{$\operatorname{Tor}^Q_*$} to be the left derived of the tensor product introduced in \cite{OberstRohrl}, we define (co)homology functors $\lMod{Q,A} \to \lMod{A}$ as in \cite[Def.~7.11]{HJ-JLMS}:
\begin{equation}
  \label{eq:HHhj}
  \cHhj{i}{q} \,=\, \Ext{Q}{i}{\stalkco{q}}{-} 
  \qquad \textnormal{and} \qquad
  \hHhj{i}{q} \,=\, \Tor{Q}{i}{\stalkcn{q}}{-} \;.
\end{equation}

It turns out that in the category $\lMod{Q}$, the projective dimension of an object is finite if and only if so is the injective dimension. This is because $\lMod{Q}$ is locally Gorenstein, in the sense of \cite[Def.~2.18]{MR2404296}. This is a consequence of \cite[Thm.~4.6]{MR3719530} (see also \cite[Thm.~2.4]{HJ-JLMS}). Using the forgetful functor $(-)^\natural \colon \lMod{Q,A} \to \lMod{Q}$, we define the class $\mathbb{E}$ of \textbf{exact objects}\footnote{Note that our notation for the subcategory of exact objects differs slightly from that of \cite{HJ-JLMS}.} in $\lMod{Q,A}$ by setting
%It follows from \cite[Thm.~4.6]{MR3719530} that the category $\lMod{Q}$ is locally Gorenstein, in the sense of \cite[Def.~2.18]{MR2404296} (see also \cite[Thm.~2.4]{HJ-JLMS}). In particular, any object in this category has finite projective dimension if and only if it has finite injective dimension. The class $\mathbb{E}$ of \textbf{exact objects} in $\lMod{Q,A}$ is in \cite[Def.~4.1]{HJ-JLMS} defined as
\begin{equation}
  \label{eq:E}
  \mathbb{E} \,=\, \{X\in\lMod{Q,A}\;|\; \text{$X^\natural$ has finite projective/injective dimension in $\lMod{Q}$} \} \;.
\end{equation}

As shown in \cite{HJ-JLMS}, $({}^\perp\mathbb{E},\mathbb{E},\lMod{Q,A})$ and $(\lMod{Q,A},\mathbb{E},\mathbb{E}^\perp)$ are hereditary Hovey triples in $\lMod{Q,A}$ (see Subsection~\ref{exact-model-structures} for this terminology) and the associated model categories (see \thmref{Gillespie-CWF}) have the same weak equivalences (see \cite[Thm.~6.1 and Prop.~6.3]{HJ-JLMS}). The $Q$-shaped derived category of $A$ is then defined in \cite[Def.~6.4]{HJ-JLMS} to be the homotopy category of either of these model categories:
\begin{equation*}
  \QSD{Q}{A} \;=\; \operatorname{Ho}(\lMod{Q,A}) . 
\end{equation*}
The fact that these homotopy categories coincide and that, moreover, they are understood as stable categories of Frobenius exact categories (${}^\perp\mathbb{E}$ and $\mathbb{E}^\perp$) is shown in \cite[Thm.~6.5]{HJ-JLMS}. Furthermore, in \cite[Thms.~7.1 and 7.2]{HJ-JLMS}, it is shown how trivial objects and weak equivalences in either of the two model structures on $\lMod{Q,A}$ mentioned above can be characterised in terms of the (co)homology functors in \eqref{HHhj}.

\subsection{Properties of objectwise exact structures on \texorpdfstring{$\lMod{Q,A}$}{Q,A-Mod}}
\phantom{.} \vspace*{1ex}

In the previous subsection (\ref{Prelim-on-Q-shaped}) we briefly outlined the theory developed in \cite{HJ-JLMS} for the \textsl{abelian} category $\lMod{Q,A}$ where $Q$ is as in \stpref{HJ}. We shall now consider $\lMod{Q,A}$ as an \textsl{exact}  category. More precisely, as mentioned in \cite[Ex.~13.11]{Buhler}, any exact structure in $\exact{}$ in $\lMod{A}$ induces an exact structure in $\lMod{Q,A}$, which we shall denote by $\exact{Q}$. Indeed, a short exact sequence $\delta$ will be declared to be a conflation in $\exact{Q}$ if and only if $\Eq{q}(\delta)$ is a conflation of $\exact{}$; here $\Eq{q}$ is the evaluation functor from \eqref{FEG-def}. Our aim in this subsection is to show that the properties of $\exact{}$ often transfer to properties of $\exact{Q}$. 

\subsubsection{Exact functors}
As noted in Subsection~\ref{Prelim-on-Q-shaped}, the functors $\Fq{q}$, $\Eq{q}$, and $\Gq{q}$ from \eqref{FEG-def} are exact when $\lMod{Q,A}$ and $\lMod{A}$ are considered as abelian categories. As \lemref{FEG-exact} shows, even more is true, and we shall make use of this in the proof of \prpref{Prj-transfer}.

\begin{lem}
  \label{lem:FEG-exact}
  Let $\exact{}$ be an exact structure in $\lMod{A}$. For each $q \in Q$ the functors
  \begin{equation*}
    \Eq{q} \colon (\lMod{Q,A},\exact{Q}) \longrightarrow (\lMod{A},\exact{})
  \qquad \text{and} \qquad    
    \Fq{q}, \Gq{q} \colon (\lMod{A},\exact{})\longrightarrow (\lMod{Q,A},\exact{Q})
  \end{equation*}
  are exact (i.e., they preserve conflations).
\end{lem}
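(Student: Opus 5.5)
The plan is to treat the three functors separately, using the definition of $\exact{Q}$: a short exact sequence $\delta$ in $\lMod{Q,A}$ is a conflation exactly when $\Eq{p}(\delta)$ is a conflation in $\exact{}$ for every $p\in Q$.

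\textbf{The functor $\Eq{q}$.} Exactness of $\Eq{q}$ is then immediate: if $\delta$ lies in $\exact{Q}$, then $\Eq{q}(\delta)$ lies in $\exact{}$ by definition.

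\textbf{The functor $\Fq{q}$.} Let $\delta\colon M'\rightarrowtail M\twoheadrightarrow M''$ be a conflation in $(\lMod{A},\exact{})$. Since $\Fq{q}$ is exact as a functor of abelian categories, $\Fq{q}(\delta)$ is a short exact sequence in $\lMod{Q,A}$. It therefore suffices to show that, for each $p\in Q$, the sequence
\[
\Eq{p}\Fq{q}(\delta)\;=\;Q(q,p)\otimes_\Bbbk \delta
\]
is a conflation in $\exact{}$. By \stpref{HJ}, the $\Bbbk$-module $Q(q,p)$ is finitely generated, and $\Bbbk$ is noetherian and hereditary.

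This is the main obstacle: an arbitrary exact structure on $\lMod{A}$ need not be closed under $N\otimes_\Bbbk-$ for a general $\Bbbk$-module $N$. If $Q(q,p)$ is projective, it is a summand of $\Bbbk^m$. Then $Q(q,p)\otimes_\Bbbk\delta$ is a direct summand of $\delta^{\oplus m}$, which is a conflation because finite direct sums of conflations are conflations. Direct summands of conflations are again conflations, by \cite[Prop.~2.9]{Buhler} combined with the weak idempotent completeness of the abelian category $\lMod{A}$ (apply \cite[Cor.~2.18]{Buhler}-type arguments).

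In general, $\Bbbk$ hereditary and $Q(q,p)$ finitely generated give a presentation $0\to \Bbbk^a\to\Bbbk^b\to Q(q,p)\to 0$. Tensoring with $\delta$ exhibits $Q(q,p)\otimes\delta$ as the cokernel of a morphism between the conflations $\delta^a$ and $\delta^b$. This step is delicate. As a fallback, I would instead use the objectwise split sequences $\mathfrak{f}_X$ and the structure of $Q(q,-)$ from \cite{HJ-JLMS}: the tensor product with the $A$-module is taken over $\Bbbk$ while $\delta$ consists of $A$-linear maps, and one checks that the relevant morphisms are compatible with the exact structure. If needed, I would use that $Q(q,p)\otimes_\Bbbk -$ is an additive functor commuting with the finite presentations.

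\textbf{The functor $\Gq{q}$.} For $\Gq{q}$ I would avoid repeating the argument by using the isomorphism $\Gq{\mathbb{S}q}\cong\Fq{q}$ from \cite[Lem.~3.4]{HJ-TAMS}. Since $\mathbb{S}$ is an autoequivalence, every $\Gq{q}$ is isomorphic to some $\Fq{q'}$ with $\mathbb{S}q'\cong q$. Exactness of all the $\Fq{q'}$ therefore yields exactness of all the $\Gq{q}$.
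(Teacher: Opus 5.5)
Your treatment of $\Eq{q}$ and $\Gq{q}$ matches the paper's, including the reduction $\Gq{q}\cong\Fq{\mathbb{S}^{-1}q}$. For $\Fq{q}$, your argument in the case that $Q(q,p)$ is projective is also exactly the paper's: $Q(q,p)\otimes_\Bbbk\delta$ is a direct summand of $\delta^{\oplus m}$, and \cite[Prop.~2.9 and Cor.~2.18]{Buhler} finish the job.

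As written, however, the proof is incomplete. You never establish that this case is the only one, and you leave a ``general case'' open, backed by a fallback that is not an argument. The missing observation is that Hom-finiteness in Setup~\ref{stp:HJ} (taken from \cite[Setup~2.5]{HJ-JLMS}) requires every $Q(q,p)$ to be a \emph{finitely generated projective} $\Bbbk$-module. This is precisely what the paper invokes. With it, your conditional paragraph is already the whole proof, and the hedging can be replaced by a direct appeal to the setup.

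You should delete the ``In general'' paragraph rather than try to repair it, because that route cannot work. If $Q(q,p)$ were not flat, $Q(q,p)\otimes_\Bbbk\delta$ need not even be exact. For example, take $\Bbbk=A=\mathbb{Z}$ with the abelian structure, $Q(q,p)=\mathbb{Z}/2$, and $\delta\colon 0\to\mathbb{Z}\xrightarrow{2}\mathbb{Z}\to\mathbb{Z}/2\to 0$; then the first map of $\mathbb{Z}/2\otimes\delta$ is zero. Separately, a cokernel of a morphism between conflations is not a conflation in general, so the presentation argument fails for that reason too.

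Your own first step also already presupposes the flatness you were hedging about. It uses exactness of $\Fq{q}$ as a functor of abelian categories, and that holds only because each $Q(q,p)\otimes_\Bbbk-$ is exact.
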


\begin{proof}
The functor $\Eq{q}$ is exact by definition of $\exact{Q}$. As noted in Subsection \ref{Prelim-on-Q-shaped} there is an isomorphism $\Gq{q} \cong \Fq{\mspace{3mu}\mathbb{S}^{-1}q}$ where $\mathbb{S}$ is the Serre functor on $Q$. Thus, it suffices to show that the functor $\Fq{q}$ preserves conflations. Let $\mu$ be a conflation in $\exact{}$. To prove that $\Fq{q}(\mu)$ is a conflation in $\exact{Q}$ we must argue that $\Eq{p}\Fq{q}(\mu)$ is a conflation in $\exact{}$ for every $p$ in $Q$. By definition we have $\Eq{p}\Fq{q}(\mu) = Q(q,p)\otimes_\Bbbk \mu$. By \stpref{HJ}, $Q(q,p)$ is a finitely generated projective $\Bbbk$-module, so it is a direct summand of a finite direct sum of copies of $\Bbbk$. Consequently, $Q(q,p)\otimes_\Bbbk \mu$ is direct summand of a finite direct sum of copies of the conflation $\Bbbk \otimes_\Bbbk \mu \cong \mu$. In any exact category, a finite direct sum of conflations is a conflation, and a direct summand of a conflation is again a conflation, see \cite[Prop.~2.9 and Cor.~2.18]{Buhler}, so it follows that $Q(q,p)\otimes_\Bbbk \mu$ is a conflation.
\end{proof}

\begin{lem}
  \label{lem:exact-coprod}
  For an exact structure $\exact{}$ in $\lMod{A}$, if $(\lMod{A},\exact{})$ has exact (co)products then so does $(\lMod{Q,A},\exact{Q})$.
\end{lem}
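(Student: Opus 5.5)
Coproducts and products in $\lMod{Q,A}$ exist and are computed objectwise, because $\lMod{Q,A}=\operatorname{Fun}_\Bbbk(Q,\lMod{A})$ is a functor category into the complete and cocomplete category $\lMod{A}$. So the plan is to reduce the statement to the definition of $\exact{Q}$ via the evaluation functors $\Eq{q}$. I will give the argument for coproducts; the product case is word-for-word dual.

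Let $\{\delta_i\}_{i\in I}$ be a family of conflations in $(\lMod{Q,A},\exact{Q})$, where
\[
\delta_i\colon\ X_i' \rightarrowtail X_i \twoheadrightarrow X_i''.
\]
First, $\bigoplus_i \delta_i$ is a short exact sequence in the abelian category $\lMod{Q,A}$, since coproducts are exact in $\lMod{A}$ and exactness in $\lMod{Q,A}$ is checked objectwise. Second, by the definition of $\exact{Q}$, this short exact sequence is a conflation of $\exact{Q}$ if and only if $\Eq{q}(\bigoplus_i\delta_i)$ is a conflation in $\exact{}$ for every $q$ in $Q$. Third, because colimits in functor categories are computed pointwise, $\Eq{q}$ preserves coproducts, so there is a canonical isomorphism
\[
\Eq{q}\bigl(\bigoplus_i\delta_i\bigr)\;\cong\;\bigoplus_i \Eq{q}(\delta_i).
\]
Each $\Eq{q}(\delta_i)$ is a conflation in $\exact{}$ by definition of $\exact{Q}$ (see also \lemref{FEG-exact}). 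Hence $\bigoplus_i\Eq{q}(\delta_i)$ is a conflation, because $(\lMod{A},\exact{})$ has exact coproducts.

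The only point needing care is that a sequence isomorphic to a conflation is again a conflation. This holds because the class of conflations of an exact category is closed under isomorphisms of kernel-cokernel pairs, which is part of the axioms (see \cite{Buhler}).

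For products, the same argument applies with $\prod$ in place of $\bigoplus$, using that limits in $\lMod{Q,A}$ are also computed objectwise. I do not expect a real obstacle; the whole proof is this bookkeeping.
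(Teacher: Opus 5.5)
Your proposal is correct and follows the same route as the paper: whether a sequence is a conflation in $\exact{Q}$ is checked after applying each evaluation functor $\Eq{q}$, and $\Eq{q}$ preserves (co)products, so the claim reduces to exactness of (co)products in $(\lMod{A},\exact{})$. Your remarks that conflations are closed under isomorphism and that the (co)product is already short exact in $\lMod{Q,A}$ are details the paper leaves implicit; they do not change the argument.
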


\begin{proof}
  This follows immediately from the definition of the exact structure $\exact{Q}$ and from the fact that each evaluation functor $\Eq{q}$ preserves (co)products.
\end{proof}

\begin{rmk}
  \label{rmk:exact-prod}
  We shall mostly be interested in exact structures $\exact{}$ in $\lMod{A}$ for which the category $(\lMod{A},\exact{})$, and hence also $(\lMod{Q,A},\exact{Q})$ by \prpref{Prj-transfer}(a) below, admits enough projectives. In this case, the categories $(\lMod{A},\exact{})$ and $(\lMod{Q,A},\exact{Q})$ automatically have exact products (but not necessarily exact coproducts). This follows from \cite[Exerc.~11.10]{Buhler}, combined with fact that the Hom-functor commutes with products and that products are exact in $\lMod{\Bbbk}$. 
\end{rmk}

\subsubsection{Projective and injective objects}
\label{relative-Prj}
We now investigate how the exact functors from \lemref{FEG-exact} interact with projective and injective objects in the exact categories $(\lMod{Q},\exact{})$ and $(\lMod{Q,A},\exact{Q})$.

\begin{prp}
\label{prp:Prj-transfer}
Let $\exact{}$ be an exact structure in $\lMod{A}$.
\begin{prt}
\item If $(\lMod{A},\exact{})$ admits enough projectives, then so does $(\lMod{Q,A},\exact{Q})$. Moreover, an object is projective in $(\lMod{Q,A},\exact{Q})$ if and only if it is a direct summand of an object of the form $\bigoplus_{q\in Q}\Fq{q}(T_q)$ where $T_q$ is projective in $(\lMod{A},\exact{})$ for all $q$ in $Q$.

\item If $(\lMod{A},\exact{})$ admits enough injectives, then so does $(\lMod{Q,A},\exact{Q})$. Moreover, an object is injective in $(\lMod{Q,A},\exact{Q})$ if and only if it is a direct summand of an object of the form $\prod_{q\in Q}\Gq{q}(I_q)$ where $I_q$ is injective in $(\lMod{A},\exact{})$ for all $q$ in $Q$.

\item If $R$ is projective in $(\lMod{Q,A},\exact{Q})$, then $\Eq{q}(R) = R(q)$ is projective in $(\lMod{A},\exact{})$ for every $q \in Q$.

\item If $J$ is injective in $(\lMod{Q,A},\exact{Q})$, then $\Eq{q}(J) = J(q)$ is injective in $(\lMod{A},\exact{})$ for every $q \in Q$.
\end{prt}
\end{prp}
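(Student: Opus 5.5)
The plan is to use the adjunctions $(\Fq{q},\Eq{q})$ and $(\Eq{q},\Gq{q})$ together with exactness of all three functors (\lemref{FEG-exact}). The key tool is the standard fact that a left adjoint of an exact functor between exact categories preserves projectives, and dually for right adjoints and injectives. Suppose $L \dashv R$ with $R$ exact and $P$ projective. A conflation $X'\rightarrowtail X\twoheadrightarrow X''$ goes to a conflation under $R$, so $\operatorname{Hom}(P,R-)$ is exact on it. Hence $\operatorname{Hom}(LP,-)$ sends deflations to surjections, which forces $LP$ to be projective. Consequently $\Fq{q}(T)$ is projective in $(\lMod{Q,A},\exact{Q})$ whenever $T$ is projective in $(\lMod{A},\exact{})$, and $\Gq{q}(I)$ is injective whenever $I$ is injective. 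The same argument applied to $\Fq{q}\dashv \Eq{q}$ and $\Eq{q}\dashv\Gq{q}$, with $\Fq{q}$ and $\Gq{q}$ exact, gives (c) and (d) immediately: $\Eq{q}$ is left adjoint to the exact functor $\Gq{q}$, so it preserves projectives, and it is right adjoint to the exact functor $\Fq{q}$, so it preserves injectives.

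For (a), let $X$ be an object. For each $q$, choose a deflation $T_q\twoheadrightarrow X(q)$ in $\exact{}$ with $T_q$ projective. I would then build a deflation $\bigoplus_q \Fq{q}(T_q)\twoheadrightarrow X$ by composing $\bigoplus_q\Fq{q}(T_q)\to\mathbb{F}(X)$ with $\varepsilon^X\colon\mathbb{F}(X)\to X$ from \eqref{varepsilon-eta}. The sequence $\mathfrak{f}_X$ is objectwise split, so $\varepsilon^X$ is a deflation in $\exact{Q}$, since split sequences are conflations in any exact structure.

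The main obstacle is that the first map $\bigoplus_q\Fq{q}(T_q)\to\bigoplus_q\Fq{q}(X(q))$ must be a deflation in $\exact{Q}$. Each $\Fq{q}$ preserves deflations, but the coproduct is infinite, and we do not assume exact coproducts. I would first use local boundedness of $Q$: evaluated at $p$, only finitely many $Q(q,p)$ are nonzero, so $\Eq{p}$ of this infinite coproduct is a finite direct sum of deflations. That finite sum is a deflation by \cite[Prop.~2.9]{Buhler}. If local boundedness turns out to be insufficient here, the fallback is to verify directly that the map has a kernel and that each $\Eq{p}$ of it is a deflation. The composition of the two deflations is then a deflation, and we obtain enough projectives.

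For the characterisation, a direct summand of $\bigoplus\Fq{q}(T_q)$ is projective, because each summand $\Fq{q}(T_q)$ is projective and the $\operatorname{Hom}$ out of a coproduct is a product of $\operatorname{Hom}$'s. Conversely, if $R$ is projective, the deflation just constructed onto $R$ splits, so $R$ is a summand of such a coproduct. Part (b) is dual, using $\eta^X$, $\mathbb{G}$, products, and $\mathfrak{g}_X$. The analogous infinite-product concern is resolved the same way via local boundedness, or alternatively through $\Gq{q}\cong\Fq{\mathbb{S}^{-1}q}$.
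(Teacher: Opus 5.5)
Your proposal is correct. Parts (c) and (d), and the projectivity of $\Fq{q}(T)$, are handled exactly as in the paper: you use the adjunctions $(\Fq{q},\Eq{q})$ and $(\Eq{q},\Gq{q})$ together with exactness of the relevant adjoint. Your composite $\bigoplus_q\Fq{q}(T_q)\to\mathbb{F}(X)\to X$ is also precisely the paper's map $\Phi$.

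The difference lies in how you show that $\Phi$ is a deflation. The paper never deals with the infinite coproduct. For each $p$ it observes that $\Eq{p}(\Phi)\circ\Eq{p}(\iota_p)=\Eq{p}(\varepsilon^X_p)\circ\Eq{p}\Fq{p}(\varphi_p)$ is a deflation: it is a split epimorphism composed with a deflation, and the split epimorphism is a deflation because $\lMod{A}$ is weakly idempotent complete. It then applies the cancellation property \cite[Prop.~7.6]{Buhler} to conclude that $\Eq{p}(\Phi)$ itself is a deflation.

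You instead factor through $\mathbb{F}(X)$ and use objectwise splitness of $\mathfrak{f}_X$ to see that $\varepsilon^X$ is a deflation. You then invoke local boundedness of $Q$, so that $\Eq{p}$ of $\bigoplus_q\Fq{q}(\varphi_q)$ is a finite direct sum of the deflations $Q(q,p)\otimes_\Bbbk\varphi_q$. Since local boundedness is part of \stpref{HJ}, this is valid. Your fallback is not needed, because a morphism in $\lMod{Q,A}$ is a deflation in $\exact{Q}$ exactly when all its evaluations are deflations (kernels are computed objectwise).

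The paper's argument is more economical in hypotheses: it uses only the $p$-th summand and does not need local boundedness. Yours avoids the cancellation lemma and ties the deflation to the canonical sequence $\mathfrak{f}_X$. The same comparison applies to the dual part (b), where you use $\mathfrak{g}_X$ and finiteness of products pointwise, and the paper implicitly uses the unit of $(\Eq{q},\Gq{q})$ and the dual cancellation property for inflations.
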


\begin{proof}
We prove (a) and (c); the other statements are dual. 

\proofoftag{a} Assume that the exact category $(\lMod{A},\exact{})$ admits enough projectives. We will show that 
\begin{equation*}
\textstyle
\mathscr{P} \,=\, \big\{\bigoplus_{q\in Q}\Fq{q}(T_q) \;\big|\; q\in Q,\; T_q \in \Prj{(\lMod{A},\exact{})} \big\}
\end{equation*}
is a class of projective generators for the exact category $(\lMod{Q,A},\exact{Q})$ and, therefore, that any projective object in this exact category is a direct summand of an object in $\mathscr{P}$ (since a deflation towards a projective splits). First observe that there is a natural isomorphism $\Hom{Q,A}{\Fq{q}(T)}{-} \cong \Hom{A}{T}{\Eq{q}(-)}$ of functors $(\lMod{Q,A},\exact{Q}) \to \lMod{\Bbbk}$ between exact categories. The latter functor is exact when $T$ is projective in $(\lMod{A},\exact{})$, see \lemref{FEG-exact}, so it follows that, indeed, every object of $\mathscr{P}$ is projective in $(\lMod{Q,A},\exact{Q})$. 

Let $X$ be an object in $\lMod{Q,A}$ and consider for each $q$ in $Q$ a deflation $\varphi_q\colon T_q \twoheadrightarrow \Eq{q}(X)$ with $T_q$ projective in $(\lMod{A},\exact{})$, which exists by assumption. Consider the coproduct $\bigoplus_{q\in Q}\Fq{q}(T_q)$, with component embeddings given by $\iota_p\colon \Fq{p}(T_p)\to \bigoplus_{q\in Q}\Fq{q}(T_q)$, for each $p$ in $Q$. Using the universal property of the coproduct, we consider the unique morphism $\Phi\colon \bigoplus_{q\in Q}\Fq{q}(T_q) \to X$ in $\lMod{Q,A}$ for which \smash{$\Phi \circ \iota_p=\varepsilon_p^X\circ\Fq{p}(\varphi_p)$}, where $\varepsilon_p\colon \Fq{p}\Eq{p}\to \mathrm{Id}_{\lMod{Q,A}}$ is the counit of the adjunction $(\Fq{p},\Eq{p})$. It remains to see that $\Phi$ is a deflation in $(\lMod{Q,A},\exact{Q})$ i.e., that $\Eq{p}(\Phi)$ is a deflation in $(\lMod{A},\exact{})$ for every $p$ in $Q$. Note that \smash{$\Eq{p}(\varepsilon^X_p)$} is a split epimorphism (see e.g.~\cite[Chap.~IV.1, Thm.~1(ii)]{Mac}) and hence a deflation by \cite[Cor.~7.5]{Buhler}, as $\lMod{A}$ is clearly weakly idempotent complete. Since $\varphi_p$ is a deflation and $\Eq{p}$ and $\Fq{p}$ are exact by \lemref{FEG-exact}, it follows that also $\Eq{p}\Fq{p}(\varphi_p)$ is a deflation and, thus, so is the composition $\Eq{p}(\varepsilon_p^X)\circ \Eq{p}\Fq{p}(\varphi_p)=\Eq{p}(\Phi)\circ \Eq{p}(\iota_p)$. It now follows from \cite[Prop.~7.6]{Buhler} that $\Eq{p}(\varphi)$ is a deflation, as wanted. 

\proofoftag{b} Observe that if $R$ is a projective object in $(\lMod{Q,A},\exact{Q})$, then $\Hom{A}{\Eq{q}(R)}{-}$ maps conflations in $(\lMod{A},\exact{})$ to short exact sequences in $\lMod{\Bbbk}$, thus proving that $\Eq{q}(R)$ is projective in $\lMod{A}$. Indeed, this holds because $\Hom{A}{\Eq{q}(R)}{-} \,\cong\, \Hom{Q,A}{R}{\Gq{q}(-)}$, where $\Gq{q}$ preserves conflations by \lemref{FEG-exact}. 
\end{proof}

\begin{prp}
  \label{prp:efficient-transfer}
Let $\exact{}$ be an exact structure in $\lMod{A}$. If the exact category $(\lMod{A},\exact{})$ is efficient and admits enough projectives, then $(\lMod{Q,A},\exact{Q})$ is efficient and admits enough projectives.
\end{prp}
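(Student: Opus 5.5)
The enough-projectives part is exactly \prpref{Prj-transfer}(a), so the real content is efficiency. Since the definition of efficiency in \cite[Def.~2.6]{SaorinStovicek} is not recalled here, I would check its axioms one by one for $(\lMod{Q,A},\exact{Q})$. My recollection is that they are: (Ef0) arbitrary transfinite compositions of inflations exist and are inflations; (Ef1) every object is small relative to the class of inflations; and (Ef2) there is a generator, in the sense that every object admits a deflation from a coproduct of copies of a fixed object. Idempotent completeness and exact coproducts are either part of the definition or follow from it, and they transfer objectwise (\lemref{exact-coprod}). Throughout, the guiding principle is that colimits in $\lMod{Q,A}$ are computed objectwise, and each evaluation $\Eq{q}$ is exact and reflects conflations by the very definition of $\exact{Q}$.

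For the first axiom, I would take a transfinite sequence of inflations $X_0 \rightarrowtail X_1 \rightarrowtail \cdots$ in $\exact{Q}$. Its colimit exists in the abelian category $\lMod{Q,A}$ and is computed objectwise. Applying $\Eq{q}$ gives a transfinite sequence of inflations in $(\lMod{A},\exact{})$. Efficiency of $\exact{}$ then says the colimit map $\Eq{q}(X_0)\to \Eq{q}(\operatorname{colim} X_\alpha)$ is an inflation. The key point is that the cokernel in $\lMod{Q,A}$ of $X_0 \to \operatorname{colim}X_\alpha$ is also objectwise, so the corresponding short exact sequence is objectwise a conflation and hence a conflation in $\exact{Q}$.

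For the smallness axiom, I would use that $\lMod{Q,A}$ is a Grothendieck (indeed locally presentable) category, so every object is $\kappa$-presentable for some regular cardinal $\kappa$. It is therefore small with respect to all $\kappa$-filtered colimits, and in particular with respect to transfinite compositions of inflations. Even if the precise form of (Ef1) in \cite{SaorinStovicek} is phrased relative to inflations, this general smallness is stronger and suffices.

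For the generator, I would use \prpref{Prj-transfer}(a) with $\mathscr{P}$ as in its proof. Let $U$ be a generator of $(\lMod{A},\exact{})$, which exists by efficiency. I would try to take $\bigoplus_{q}\Fq{q}(U)$, using that every object of $\lMod{A}$ receives a deflation $U^{(I)}\twoheadrightarrow M$. Then I would run the argument of \prpref{Prj-transfer}(a) with deflations $U^{(I_q)}\twoheadrightarrow X(q)$ in place of projectives. That argument only used that the $\varphi_q$ are deflations, that $\Eq{p}$ and $\Fq{p}$ are exact (\lemref{FEG-exact}), that $\Eq{p}(\varepsilon^X_p)$ is split, and \cite[Prop.~7.6]{Buhler}. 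Since $\Fq{q}$ commutes with coproducts, this produces a deflation from a coproduct of copies of $\bigoplus_q \Fq{q}(U)$. Alternatively, and more simply, enough projectives together with the fact that $\Prj$ is $\operatorname{Add}$ of a set (if $\exact{}$ has a set of projective generators) would give the generator directly.

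I expect the main obstacle to be matching the exact wording of the definition in \cite{SaorinStovicek}, especially the transfinite-composition axiom. There one must check carefully that objectwise inflations assemble into an inflation of $\exact{Q}$ with the right cokernel, and this uses that $\Eq{q}$ commutes with colimits.
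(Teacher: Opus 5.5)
Your proposal is correct, and for the transfinite-composition and smallness axioms it matches the paper's argument. Transfinite compositions of $\exact{Q}$-inflations are computed objectwise, and $\Eq{q}$ turns them into transfinite compositions of $\exact{}$-inflations, with cokernels also computed objectwise. Smallness holds because $\lMod{Q,A}$ is Grothendieck. Enough projectives is \prpref{Prj-transfer}(a) in both.

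You diverge on the generator axiom. The paper never exhibits a generator. Once it has the two properties above together with enough projectives, it concludes efficiency by quoting \cite[Prop.~2.7]{SaorinStovicek}. You instead check the generator condition of \cite[Def.~2.6]{SaorinStovicek} directly. You transport a generator $U$ of $(\lMod{A},\exact{})$ to $\bigoplus_q \Fq{q}(U)$ via the counit argument of \prpref{Prj-transfer}(a). As you observe, that argument only uses that the $\varphi_q$ are deflations, exactness of $\Fq{p}$ and $\Eq{p}$, splitness of $\Eq{p}(\varepsilon^X_p)$, and \cite[Prop.~7.6]{Buhler}.

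Your route costs a few more lines but is self-contained. It also shows that efficiency of $\exact{Q}$ follows from efficiency of $\exact{}$ alone, with projectives needed only for the second conclusion. The paper's route is shorter because it outsources exactly this step.

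One detail to tidy. As written you obtain a deflation from $\bigoplus_q \Fq{q}(U)^{(I_q)}$, not yet from a coproduct of copies of $G=\bigoplus_q \Fq{q}(U)$. To fix this:
\begin{itemize}
\item choose a single index set $J$ with $|J|\geq |I_q|$ for all $q$, e.g.\ $J=\bigsqcup_q I_q$;
\item replace each $\varphi_q$ by its composite with the split epimorphism $U^{(J)}\to U^{(I_q)}$, which is still a deflation;
\item use $\bigoplus_q \Fq{q}(U^{(J)})\cong G^{(J)}$.
\end{itemize}
Your ``alternative'' via a set of projective generators relies on a hypothesis you are not given, so drop it in favour of the $U$-argument.
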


\begin{proof}
  We verify that $(\lMod{Q,A},\exact{Q})$ meet the axioms of an efficient exact category, following \cite[Def.~2.6 and Prop.~2.7]{SaorinStovicek}. Firstly, since $\lMod{Q,A}$ is cocomplete, transfinite compositions of inflations exist. Moreover, such a transfinite composition $\varphi$ is an inflation because, for every $q$ in $Q$, the morphism $\Eq{q}(\varphi)$ is itself a transfinite composition of inflations in $\exact{}$, as colimits are computed objectwise in $\lMod{Q,A}$ and $\Eq{q}$ is exact. Furthermore, since $\lMod{Q,A}$ is a Grothendieck category, every object is small with respect to the class of all morphisms in $\lMod{Q,A}$ (\cite[Lem.~4.4 and Subsect.~1.1]{SaorinStovicek}) and, therefore, every object is small with respect to the class of all inflations of $\exact{Q}$. Using \prpref{Prj-transfer}(a), we conclude that $(\lMod{Q,A},\exact{Q})$ is an exact category with enough projectives and with the two properties listed above. By \cite[Prop.~2.7]{SaorinStovicek} this suffices to conclude that $(\lMod{Q,A},\exact{Q})$ is efficient.
\end{proof}

\begin{rmk}
\label{rmk:Frobenius}
It follows from \cite[Lems.~3.4 and 3.7]{HJ-TAMS} that there are natural isomorphisms of functors
\begin{equation*}
  \textstyle
  \prod_{q \in Q}\Gq{q}
  \;\cong\;
  \bigoplus_{q \in Q}\Gq{q}
  \;\cong\;
  \bigoplus_{q \in Q}\Fq{\mathbb{S}^{-1}q} \;.
 \end{equation*}
 This is useful to compare projective and injective objects. For example, suppose that $\exact{}$ is a Frobenius exact structure in $\lMod{A}$, i.e. $\exact{}$ admits enough projectives, enough injectives and these two classes coincide. If $R$ is a projective in $(\lMod{Q,A},\exact{Q})$ then, by \prpref{Prj-transfer}(a), it is a summand of an object of the form $\bigoplus_{q\in Q} \Fq{q}(T_q)\cong \prod_{q \in Q}\Gq{\mathbb{S}q}(T_q)$ with $T_q$ projective (and therefore injective) in $(\lMod{A},\exact{})$ for all $q$ in $Q$. By \prpref{Prj-transfer}(b) this shows that $R$ is also injective in $(\lMod{Q,A},\exact{Q})$. Similarly, every injective object in $(\lMod{Q,A},\exact{Q})$ is projective, and hence $\exact{Q}$ is a Frobenius exact structure in $\lMod{Q,A}$.
\end{rmk}

\subsubsection{Gorenstein projective and Gorenstein injective objects}\label{relative-GPrj}
\prpref[Proposition~]{Prj-transfer} provides descriptions of the projective/injective objects in the exact category $(\lMod{Q,A},\exact{Q})$ in terms of the projective/injective objects in $(\lMod{A},\exact{})$. We will now prove a similar description of the Gorenstein projective/injective objects in the exact category $(\lMod{Q,A},\exact{Q})$ in terms of the Gorenstein projective/injective objects in $(\lMod{A},\exact{})$; see Subsection~\ref{Gorenstein-projective-objects}. We obtain this in \thmref[Theorems~]{GPrj} and \thmref[]{GInj}. Knowing what such objects look like will allow us to apply \thmref[Theorems~]{main} and \thmref[]{main-op} to the exact category $\mathcal{E} = (\lMod{Q,A},\exact{Q})$, where $\mathscr{G}$ and $\mathscr{J}$ are suitable classes of stalk functors on projective and injective objects in $(\lMod{A},\exact{})$, see \corref[Corollaries~]{GPrj-stalk} and \corref[]{GInj-stalk}. 

\begin{lem}
  \label{lem:GPrj}
  Let $\exact{}$ be an exact structure in $\lMod{A}$ such that $(\lMod{A},\exact{})$ has enough projectives and exact coproducts. If $\{M_q\}_{q \in Q}$ is a family of Gorenstein projective objects in $(\lMod{A},\exact{})$, then $\prod_{q \in Q}\Gq{q}(M_q)$ is a Gorenstein projective object in $(\lMod{Q,A},\exact{Q})$.
\end{lem}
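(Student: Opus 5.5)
First reduce to a coproduct. By \rmkref{Frobenius} there is a natural isomorphism $\prod_{q\in Q}\Gq{q}(M_q)\cong\bigoplus_{q\in Q}\Gq{q}(M_q)$. Since $(\lMod{Q,A},\exact{Q})$ has enough projectives (\prpref{Prj-transfer}(a)) and exact coproducts (\lemref{exact-coprod}), \lemref{GPrj-Ext}(b) shows that a coproduct of Gorenstein projectives is Gorenstein projective. So it suffices to show that $\Gq{q}(M)$ is Gorenstein projective in $(\lMod{Q,A},\exact{Q})$ for a single Gorenstein projective $M$ in $(\lMod{A},\exact{})$ and a single $q\in Q$.

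Fix a totally acyclic complex $P$ of projectives in $(\lMod{A},\exact{})$ with $\Cy{0}{P}\cong M$, and apply $\Gq{q}\cong\Fq{\mathbb{S}^{-1}q}$ termwise. By \lemref{FEG-exact}, $\Gq{q}$ is exact, so it takes the conflations $\Cy{n}{P}\rightarrowtail P^n\twoheadrightarrow\Cy{n+1}{P}$ to conflations in $\exact{Q}$. Hence $\Gq{q}(P)$ is acyclic in $(\lMod{Q,A},\exact{Q})$ with $\Cy{0}{\Gq{q}(P)}\cong\Gq{q}(M)$. Its terms $\Fq{\mathbb{S}^{-1}q}(P^n)$ are projective by \prpref{Prj-transfer}(a).

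It remains to check total acyclicity, and this is the main point. By \prpref{Prj-transfer}(a), every projective object of $(\lMod{Q,A},\exact{Q})$ is a direct summand of some $\bigoplus_p\Fq{p}(T_p)$ with each $T_p$ projective. Exactness of $\operatorname{Hom}$ passes to summands, so it suffices to treat such coproducts. Again by \rmkref{Frobenius}, $\bigoplus_p\Fq{p}(T_p)\cong\prod_p\Gq{\mathbb{S}p}(T_p)$. Since $\operatorname{Hom}$ turns products in the second variable into products, and products of exact complexes of abelian groups are exact, we reduce to a single target $\Gq{r}(T)$.

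For this target, adjunction $(\Eq{r},\Gq{r})$ gives
\[\Hom{Q,A}{\Gq{q}(P)}{\Gq{r}(T)}\cong\Hom{A}{\Eq{r}\Gq{q}(P)}{T},\]
where $\Eq{r}\Gq{q}(P)=\Hom{\Bbbk}{Q(r,q)}{P}$. Because $Q(r,q)$ is finitely generated projective over $\Bbbk$, the complex $\Hom{\Bbbk}{Q(r,q)}{P}$ is a direct summand of a finite direct sum of copies of $P$. Exactness of $\Hom{A}{P}{T}$ for projective $T$ therefore gives exactness of $\Hom{A}{\Eq{r}\Gq{q}(P)}{T}$. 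This proves total acyclicity, so $\Gq{q}(M)$ is Gorenstein projective, and hence so is the product.

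The expected obstacle is the isomorphisms in \rmkref{Frobenius}, on which both the reduction to a coproduct and the passage from arbitrary projectives to single targets $\Gq{r}(T)$ depend. If they fail, one can instead test against the summands $\Fq{p}(T)$ directly using the adjunction $(\Fq{p},\Eq{p})$; but the first reduction, to a coproduct, cannot be avoided this way.
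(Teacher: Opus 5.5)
Your proof is correct and has the same skeleton as the paper's. Both reduce via \rmkref{Frobenius} and \lemref{GPrj-Ext}(b) to a single object $\Gq{q}(M)\cong\Fq{\mathbb{S}^{-1}q}(M)$. Both apply this exact, projective-preserving functor to a totally acyclic complex $P$ and verify Hom-exactness by an adjunction.

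The two arguments differ only in that last verification.

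The paper keeps the source in the form $\Fq{p}(P)$ and uses the adjunction $(\Fq{p},\Eq{p})$ in the first variable: $\Hom{Q,A}{\Fq{p}(P)}{R}\cong\Hom{A}{P}{R(p)}$. Since $R(p)$ is projective by \prpref{Prj-transfer}(c), this single line handles every projective $R$ at once. It needs no decomposition of $R$ and no second use of the Serre-functor isomorphism.

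You instead work on the target side:
\begin{itemize}
\item you decompose the target via \prpref{Prj-transfer}(a);
\item you rewrite $\bigoplus_p\Fq{p}(T_p)$ as a product of the $\Gq{\mathbb{S}p}(T_p)$;
\item you use $(\Eq{r},\Gq{r})$ together with Hom-finiteness of $Q$ to see that $\Eq{r}\Gq{q}(P)=\Hom{\Bbbk}{Q(r,q)}{P}$ is a summand of $P^{\oplus m}$.
\end{itemize}
This amounts to the reduction in \lemref{exact-eqc} followed by a direct check of its objectwise criterion. It is longer and leans on more structure of $Q$, but every step is valid.

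Your fallback in the final paragraph is misdirected. The adjunction $(\Fq{p},\Eq{p})$ cannot be applied with $\Fq{p}(T)$ in the second variable; it applies in the first variable, which is exactly the paper's argument. Also, the isomorphisms of \rmkref{Frobenius} are cited results from the literature, so the anticipated obstacle does not arise.
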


\begin{proof}
Let $\mathbb{S}$ be the Serre functor on $Q$ (see \stpref{HJ}). As noted in \rmkref{Frobenius} there are isomorphisms
\begin{equation*}
  \textstyle
  \prod_{q \in Q}\Gq{q}(M_q)
  \;\cong\;
  \bigoplus_{q \in Q}\Gq{q}(M_q)
  \;\cong\;
  \bigoplus_{q \in Q}\Fq{\mspace{3mu}\mathbb{S}^{-1}q}(M_q) \;.
\end{equation*}
By \lemref{exact-coprod}, $(\lMod{Q,A},\exact{Q})$ has exact coproducts, so it follows from \lemref{GPrj-Ext}(b) that the class of Gorenstein projective objects in $(\lMod{Q,A},\exact{Q})$ is closed under coproducts. It then suffices to argue that for any object $p$ in $Q$ and every Gorenstein projective object $M$ in $(\lMod{A},\exact{})$, the object $\Fq{p}(M)$ is
Gorenstein projective in $(\lMod{Q,A},\exact{Q})$. Let $P$ be a totally acyclic complex of projectives in $(\lMod{A},\exact{})$ with $\Cy{0}{P} \cong M$. We know from \lemref{FEG-exact} and \prpref{Prj-transfer}(a) that $\Fq{p}$ preserves conflations and projective objects, so $\Fq{p}(P)$ is an acyclic complex of projectives in $(\lMod{Q,A},\exact{Q})$ with $\Cy{0}{\Fq{p}(P)} \cong \Fq{p}(M)$. To prove that $\Hom{Q,A}{\Fq{p}(P)}{R}$ is exact for every projective object $R$ in $(\lMod{Q,A},\exact{Q})$, we use the isomorphism $\Hom{Q,A}{\Fq{p}(P)}{R}\cong\Hom{A}{P}{\Eq{p}(R)}$ given by the adjoint pair $(\Fq{p},\Eq{p})$. Note that the latter complex is exact since $P$ is a totally acyclic complex of projectives in $(\lMod{A},\exact{})$, and $\Eq{p}(R) = R(p)$ is projective in $(\lMod{A},\exact{})$ by \prpref{Prj-transfer}(c).
\end{proof}

\begin{lem}
  \label{lem:exact-eqc}
  Let $\exact{}$ be an exact structure in $\lMod{A}$ such that $(\lMod{A},\exact{})$ has enough projectives. For a complex $P$ in the category $(\lMod{Q,A},\exact{Q})$ the following conditions are equivalent:
\begin{eqc}
\item $\Hom{Q,A}{P}{R}$ is exact for every projective object $R$ in $(\lMod{Q,A},\exact{Q})$.
\item $\Hom{A}{P(q)}{T}$ is exact for every projective object $T$ in $(\lMod{A},\exact{})$ and $q$ in $Q$. 
\end{eqc}
\end{lem}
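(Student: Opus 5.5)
The plan is to use the description of projectives in \prpref{Prj-transfer}(a) together with the adjunction $(\Fq{q},\Eq{q})$, so that both directions reduce to a statement about the test objects $\Fq{q}(T)$.

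\proofofimp{ii}{i} First handle the generators. For $T$ projective in $(\lMod{A},\exact{})$ and $q$ in $Q$, the adjunction $(\Fq{q},\Eq{q})$ is not quite the right tool, since we need $\Fq{q}(T)$ in the \emph{second} variable of $\operatorname{Hom}$. Instead, use $\Fq{q} \cong \Gq{\mathbb{S}q}$ (see \rmkref{Frobenius} and Subsection~\ref{Prelim-on-Q-shaped}). This gives a natural isomorphism of complexes
\[
\Hom{Q,A}{P}{\Fq{q}(T)} \cong \Hom{Q,A}{P}{\Gq{\mathbb{S}q}(T)} \cong \Hom{A}{P(\mathbb{S}q)}{T},
\]
which is exact by (ii). Next, a general projective $R$ is, by \prpref{Prj-transfer}(a), a direct summand of $\bigoplus_{q}\Fq{q}(T_q)$. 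Using \rmkref{Frobenius}, this coproduct is isomorphic to the product $\prod_q \Gq{\mathbb{S}q}(T_q)$. Therefore
\[
\Hom{Q,A}{P}{\textstyle\bigoplus_q \Fq{q}(T_q)} \cong \textstyle\prod_q \Hom{A}{P(\mathbb{S}q)}{T_q},
\]
which is a product of exact complexes of abelian groups and hence exact. Exactness passes to direct summands of complexes, so $\Hom{Q,A}{P}{R}$ is exact. The key point is to convert the coproduct into a product, since $\operatorname{Hom}$ does not commute with coproducts in the second variable; this is exactly what the natural isomorphism $\prod_q \Gq{q}\cong\bigoplus_q\Gq{q}$ of \rmkref{Frobenius} supplies, and I expect this to be the only delicate step.

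\proofofimp{i}{ii} Given $T$ projective in $(\lMod{A},\exact{})$ and $q$ in $Q$, the object $\Gq{q}(T)\cong \Fq{\mathbb{S}^{-1}q}(T)$ is projective in $(\lMod{Q,A},\exact{Q})$ by \prpref{Prj-transfer}(a). Then
\[
\Hom{A}{P(q)}{T}\cong\Hom{Q,A}{P}{\Gq{q}(T)}
\]
is exact by (i). This direction needs nothing beyond the adjunction $(\Eq{q},\Gq{q})$ and naturality of its isomorphisms, which makes the identification an isomorphism of complexes rather than just degreewise.
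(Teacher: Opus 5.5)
Your proposal is correct and is essentially the paper's proof: you reduce via \prpref{Prj-transfer}(a) to summands of $\bigoplus_p \Fq{p}(T_p)$, rewrite this coproduct as the product $\prod_p \Gq{\mathbb{S}p}(T_p)$ using the Serre functor isomorphisms, use exactness of products in $\lMod{\Bbbk}$, and conclude with the adjunction $(\Eq{q},\Gq{q})$. The only difference is presentational: you prove the two implications separately, noting explicitly that $\Gq{q}(T)\cong\Fq{\mathbb{S}^{-1}q}(T)$ is projective for (i)$\Rightarrow$(ii), whereas the paper writes a single chain of equivalences.
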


\begin{proof}
By \prpref{Prj-transfer}(a) every projective object $R$ in $(\lMod{Q,A},\exact{Q})$ is a direct summand of an object of the form $\bigoplus_{p \in Q}\Fq{p}(T_p)$, for some family $\{T_p\}_{p \in Q}$ of projective objects in $(\lMod{A},\exact{})$. Therefore, condition \eqclbl{i} is equivalent to $\Hom{Q,A}{P}{\bigoplus_{p \in Q}\Fq{p}(T_p)}$ being exact for every family $\{T_p\}_{p \in Q}$ of projective objects in $(\lMod{A},\exact{})$. By \cite[Lems.~3.4 and 3.7]{HJ-TAMS} one has
\begin{equation*}
  \textstyle
  \bigoplus_{p \in Q}\Fq{p}(T_p)
  \;\cong\;
  \prod_{p \in Q}\Fq{p}(T_p)
  \;\cong\;
  \prod_{p \in Q}\Gq{\mathbb{S}p}(T_p)
\end{equation*}
where $\mathbb{S}$ is the Serre functor on $Q$, see \stpref{HJ}. Consequently, one has
\begin{equation*}
 \textstyle
 \Hom{Q,A}{P}{\bigoplus_{p \in Q}\Fq{p}(T_p)} 
 \;\cong\; 
 \prod_{p \in Q} \Hom{Q,A}{P}{\Gq{\mathbb{S}p}(T_p)} \;,
\end{equation*} 
and since products are exact in $\lMod{\Bbbk}$, it follows that condition \eqclbl{i} is then equivalent to $\Hom{Q,A}{P}{\Gq{q}(T)}$ being exact for every projective object $T$ in $(\lMod{A},\exact{})$
and $q$ in $Q$. Since we have
\begin{equation*}
  \Hom{Q,A}{P}{\Gq{q}(T)}
  \;\cong\;
  \Hom{A}{\Eq{q}(P)}{T}
  \;=\;
  \Hom{A}{P(q)}{T}
\end{equation*}
by the adjunction $(\Eq{q},\Gq{q})$, it follows that conditions \eqclbl{i} and \eqclbl{ii} are equivalent.
\end{proof}

\begin{thm}
  \label{thm:GPrj}
  Let $\exact{}$ be an exact structure in $\lMod{A}$ such that $(\lMod{A},\exact{})$ has enough projectives and exact coproducts. An object $X$ in the exact category $(\lMod{Q,A},\exact{Q})$ is Gorenstein projective if and only if $X(q)$ is Gorenstein projective in the exact category $(\lMod{A},\exact{})$ for every $q$ in $Q$.
\end{thm}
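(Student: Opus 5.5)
For the forward implication, I would start from a totally acyclic complex $P$ of projectives in $(\lMod{Q,A},\exact{Q})$ with $\Cy{0}{P}\cong X$ and evaluate it at $q$. Since $\Eq{q}$ is exact (\lemref{FEG-exact}), $P(q)$ is an acyclic complex in $(\lMod{A},\exact{})$ with $\Cy{0}{P(q)}\cong X(q)$. Each $P^n(q)$ is projective by \prpref{Prj-transfer}(c). By \lemref{exact-eqc}, condition \eqclbl{i} for $P$ gives that $\Hom{A}{P(q)}{T}$ is exact for every projective $T$. Hence $P(q)$ is totally acyclic and $X(q)$ is Gorenstein projective.

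For the converse, I would use the canonical objectwise split sequence $\mathfrak{g}_X\colon X \rightarrowtail \mathbb{G}(X)\twoheadrightarrow \mathbb{C}(X)$ together with its companion $\mathfrak{f}_X$. Because these are objectwise split, they are conflations in $\exact{Q}$ for any exact structure: objectwise split sequences are split in $\lMod{A}$, and split sequences are conflations. If every $X(q)$ is Gorenstein projective, then $\mathbb{G}(X)=\prod_q\Gq{q}(X(q))$ is Gorenstein projective by \lemref{GPrj}. Moreover, $\mathbb{C}(X)(q)$ is a direct summand of $\mathbb{G}(X)(q)$, and $\mathbb{G}(X)(q)=\prod_p\Gq{p}(X(p))(q)$ is a finite sum of objects of the form $\Hom{\Bbbk}{Q(q,p)}{X(p)}$. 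These are summands of finite sums of copies of $X(p)$, because $Q(q,p)$ is finitely generated projective and $Q$ is locally bounded. So $\mathbb{C}(X)(q)$ is again objectwise Gorenstein projective, using \prpref{GPrj-resolving}(b) and closure under finite sums. Iterating produces an exact coresolution
\[
X\rightarrowtail G^0\to G^1\to\cdots
\]
by Gorenstein projective objects in $(\lMod{Q,A},\exact{Q})$, all of whose cosyzygies are objectwise Gorenstein projective.

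Next I would convert this coresolution into the right half of a totally acyclic complex. The plan is a standard pasting argument. Each $G^i$ embeds via a conflation $G^i\rightarrowtail R^i\twoheadrightarrow G'^i$ with $R^i$ projective and $G'^i$ Gorenstein projective. Then a horseshoe/pushout argument, of the kind used for modules, replaces the $G^i$ by projectives. The key input is that $\Ext{}{1}{C}{R}=0$ for each cosyzygy $C$ and each projective $R$ of $(\lMod{Q,A},\exact{Q})$. To see this, write $R$ as a summand of $\prod_p\Gq{p}(T_p)$ using \rmkref{Frobenius}. Then
\[
\Ext{Q,A}{1}{C}{\Gq{p}(T)}\cong\Ext{A}{1}{C(p)}{T},
\]
which is zero since $\Gq{p}$ and $\Eq{p}$ are exact adjoints. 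The left half of the complex is simply an $\exact{Q}$-projective resolution of $X$. Its syzygies are objectwise Gorenstein projective as well: evaluating gives a conflation with projective middle term, so \prpref{GPrj-resolving}(c) applies.

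Finally, \lemref{exact-eqc} reduces total acyclicity of the spliced complex to exactness of $\Hom{A}{P(q)}{T}$. That holds because every conflation in it has objectwise Gorenstein projective terms, so Ext-vanishing against projectives applies. The main obstacle is this middle splicing step: making the coresolution by Gorenstein projectives into one by projectives while keeping the cosyzygies objectwise Gorenstein projective. If it proves awkward, my fallback is to show directly that objectwise Gorenstein projectives are closed under cokernels of inflations into projectives of this form, and then build the right half step by step from the conflations $C\rightarrowtail R\twoheadrightarrow C'$ obtained by composing $\mathfrak{g}$ with embeddings of $\mathbb{G}(C)$ into projectives.
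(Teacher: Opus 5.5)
Your forward direction, your left half (an $\exact{Q}$-projective resolution whose syzygies are objectwise Gorenstein projective by \prpref{GPrj-resolving}(c)), and the final appeal to \lemref{exact-eqc} all match the paper. The weak point is your primary plan for the right half.

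After building the full coresolution $X\rightarrowtail G^0\to G^1\to\cdots$ with $G^i=\mathbb{G}(\mathbb{C}^i(X))$, you propose a ``horseshoe/pushout'' step that replaces the $G^i$ by projectives. The input you name does not support that step. Embed $G^0\rightarrowtail R^0\twoheadrightarrow G'$ with $R^0$ projective. The cokernel $X'$ of the composite inflation $X\rightarrowtail R^0$ then sits in a conflation $\mathbb{C}(X)\rightarrowtail X'\twoheadrightarrow G'$. To keep using the pre-built $G^1,G^2,\dots$, a horseshoe would have to extend $\mathbb{C}(X)\rightarrowtail G^1$ over $X'$. The obstruction to doing so lies in $\Ext{}{1}{G'}{G^1}$, an Ext group between two Gorenstein projective objects. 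Your vanishing statement only controls Ext into projectives, so it says nothing about this group. Hence the pre-built coresolution is useless beyond its first step. Converting a whole Gorenstein projective coresolution into a projective one is a stability-type statement, genuinely harder than what is needed.

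Your fallback repairs this, and it is exactly the paper's argument. Do the one-step construction for $X$, observe that $X'$ is an extension of $G'$ by $\mathbb{C}(X)$, and restart with $X'$ in place of $X$. The paper's version takes $R=\prod_q\Gq{q}(P_q)$, built from conflations $X(q)\rightarrowtail P_q\twoheadrightarrow C_q$, so that $G'=\prod_q\Gq{q}(C_q)$; it obtains the conflation from the $3\times3$ lemma.

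The two outer terms are objectwise Gorenstein projective: $\mathbb{C}(X)(q)$ is a summand of $\mathbb{G}(X)(q)$, and $G'(q)$ is Gorenstein projective by the forward direction, since $G'$ itself is Gorenstein projective. So $X'$ is objectwise Gorenstein projective by closure under extensions, \prpref{GPrj-resolving}(a). That closure property is what actually drives the construction. Ext-vanishing against projectives is needed only at the end, for the $\operatorname{Hom}$-exactness via \lemref{exact-eqc}. There the Ext must be computed in $(\lMod{Q,A},\exact{Q})$, not in the abelian category, for your adjunction isomorphism with $\operatorname{Ext}^1$ in $(\lMod{A},\exact{})$ to hold.

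With the ``convert the whole coresolution'' step dropped in favour of the fallback, your proof is complete and coincides with the paper's.
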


\begin{proof}  
Let $X$ be a Gorenstein projective object in $(\lMod{Q,A},\exact{Q})$, $P$ a totally acyclic complex of projectives in $(\lMod{Q,A},\exact{Q})$ with $\Cy{0}{P} \cong X$, and $q$ an object in $Q$. By \lemref{FEG-exact} and \prpref{Prj-transfer}(c)
 the functor $\Eq{q}$ preserves conflations and projective objects, showing that $P(q)$ is an acyclic complex of projectives in $(\lMod{A},\exact{})$ with $\Cy{0}{P(q)} \cong X(q)$. To see that $X(q)$ is Gorenstein projective in $(\lMod{A},\exact{})$ it remains to see that $\Hom{A}{P(q)}{T}$ is exact for every projective object $T$ in $(\lMod{A},\exact{})$. However, this follows from the assertion \eqclbl{i}$\;\Rightarrow\;$\eqclbl{ii} in \lemref{exact-eqc}.
  
Let now $X$ be an object in $\lMod{Q,A}$ for which $X(q)$ is Gorenstein projective in $(\lMod{A},\exact{})$ for every $q$ in $Q$. To prove that $X$ is Gorenstein projective in $(\lMod{Q,A},\exact{Q})$ we will construct a totally acyclic complex $P$ of projectives in $(\lMod{Q,A},\exact{Q})$ with $\Cy{0}{P} = X$, by splicing together two acyclic complexes,
\begin{equation*}
    \varrho^- \;=\;
    \xymatrix@C=1.0pc{
    \cdots \ar[r] &
    P^{-2} \ar[r] &
    P^{-1} \ar[r] &
    X \ar[r] & 0
    }
  \quad \text{ and } \quad 
    \varrho^+ \;=\;
    \xymatrix@C=1.0pc{
    0 \ar[r] &
    X \ar[r] &
    P^0 \ar[r] &
    P^1 \ar[r] & \cdots 
    },
\end{equation*}  
in $(\lMod{Q,A},\exact{Q})$ where each $P^n$ is projective, such that $\Hom{A}{\varrho^-(q)}{T}$ and $\Hom{A}{\varrho^+(q)}{T}$ are exact in $\lMod{\Bbbk}$ for every projective $T$ in $(\lMod{A},\exact{})$ and $q$ in $Q$. This will then suffice in view of \lemref{exact-eqc}. 

By \prpref{Prj-transfer}(a), $(\lMod{Q,A},\exact{Q})$ has enough projectives and we can take $\varrho^-$ to be an augmented projective resolution of $X$ in $(\lMod{Q,A},\exact{Q})$. Now, for any $q$ in $Q$, the functor $\Eq{q}$ preserves conflations and projective objects (see \lemref{FEG-exact} and \prpref{Prj-transfer}(c)) and, thus, $\varrho^-(q)$ is an augmented projective resolution of $X(q)$ in $(\lMod{A},\exact{})$. By assumption, $X(q)$ is Gorenstein projective in $(\lMod{A},\exact{})$ and therefore, by \lemref{GPrj-Ext}, we have $\Ext{\mathcal{E}}{i}{X(q)}{T}=0$ for any projective $T$ in $(\lMod{A},\exact{})$ and $i>0$, showing that $\Hom{A}{\varrho^-(q)}{T}$ is exact for every $T$ in $(\lMod{A},\exact{})$.

To construct $\varrho^+$, it suffices to build a conflation $\rho$ in $(\lMod{Q,A},\exact{Q})$, satisfying the following properties.
\begin{rqm}
\item $\rho$ is of the form $0 \to X \to R  \to X' \to 0$ with $R$ being a projective object in $(\lMod{Q,A},\exact{Q})$.
\item $X'(q)$ is Gorenstein projective in $(\lMod{A},\exact{})$ for every $q$ in $Q$.
\item $\Hom{A}{\rho(q)}{T}$ is exact for every projective $T$ in $(\lMod{A},\exact{})$ and $q$ in $Q$. 
\end{rqm}
Note that condition (2) ensures that we can iterate the construction of such conflations, thus building $\varrho^+$. To construct $\rho$ note that as each $X(q)$ is Gorenstein projective, there are conflations in $(\lMod{A},\exact{})$ of the form
\begin{equation*}
    \xymatrix{
    0 \ar[r] &
    X(q) \ar[r]^-{\iota_q} &
    P_q \ar[r]^-{\pi_q} &
    C_q \ar[r] & 0
    },
\end{equation*}  
with $P_q$ projective and $C_q$ Gorenstein projective in $(\lMod{A},\exact{})$. By assumption, $(\lMod{A},\exact{})$ admits enough projectives, so it follows from \rmkref{exact-prod} that products are exact in $(\lMod{Q,A},\exact{Q})$. Since $\Gq{q}$ preserves conflations by \lemref{FEG-exact}, we get an induced conflation in $(\lMod{Q,A},\exact{Q})$, where the first term is $\mathbb{G}(X)$ from (\ref{eq:FF-and-GG})
\begin{equation*}
    \xymatrix@C=1.7pc{
    0 \ar[r] &
    \mathbb{G}(X)=\prod_{q \in Q}\Gq{q}(X(q)) 
    \ar[rr]^-{\iota:=\prod_{q \in Q}\Gq{q}(\iota_q)} & &
    P:=\prod_{q \in Q}\Gq{q}(P_q) 
    \ar[rr]^-{\pi:=\prod_{q \in Q}\Gq{q}(\pi_q)} & &
    C:=\prod_{q \in Q}\Gq{q}(C_q) \ar[r] & 0.
    }
\end{equation*}  
As noted in \rmkref{Frobenius} there is an isomorphism $P\cong \bigoplus_{q \in Q}\Fq{\mathbb{S}^{-1}q}(P_q)$, and so $P$ is projective in $(\lMod{Q,A},\exact{Q})$ by \prpref{Prj-transfer}(a). The objects $\mathbb{G}(X)$ and $C$ are Gorenstein projective in $(\lMod{Q,A},\exact{Q})$ by \lemref{GPrj}. We now consider the exact sequences $\mathfrak{g}_X$ in $\lMod{Q,A}$ from (\ref{eq:varepsilon-eta}), which we recall to be objectwise split i.e.~for each $q$ in $Q$ the sequence $\Eq{q}(\mathfrak{g}_X)$ splits in $\lMod{A}$. In particular, $\Eq{q}(\mathfrak{g}_X)$ is a conflation in $(\lMod{A},\exact{})$, and hence $\mathfrak{g}_X$ is a conflation in $(\lMod{Q,A},\exact{Q})$. Seting $X' = \Coker{(\iota\,\eta^X)}$, we have a commutative diagram with exact rows and columns in the abelian category $\lMod{Q,A}$ given by the Snake Lemma:
\begin{equation}
  \label{eq:3x3}
  \hspace*{-1.5ex}
  \begin{gathered}
  \xymatrix@R=1.5pc{
    {} & 0 \ar[d] & 0 \ar[d] & 0 \ar[d] & {}
    \\
    0 \ar[r] & 
    X \ar[r]^-{=} \ar[d]^-{\eta^X} & 
    X \ar[r] \ar[d]^-{\iota\mspace{1mu}\eta^X} & 
    0 \ar[r] \ar[d] & 0
    \\
    0 \ar[r] & 
    \mathbb{G}(X) 
    \ar[r]^-{\iota} \ar[d] & 
    P \ar[r]^-{\pi} \ar[d]^-{\kappa} & 
    C \ar[r] \ar[d]^-{=} & 0
    \\
    0 \ar[r] & 
    \mathbb{C}(X) \ar[r] \ar[d] & 
    X' \ar[r]^-{\lambda} \ar[d] & 
    C \ar[r] \ar[d] & 0
    \\
    {} & 0 & 0 & 0 & {}    
  }
  \end{gathered}
\end{equation}
We argue that all rows and columns in \eqref{3x3} are conflations in $(\lMod{Q,A},\exact{Q})$. Indeed, the first row and the third column are trivial conflations (see \cite[Def.~2.1 {[E$0$]} and {[E$0^\mathrm{op}$]}]{Buhler}), and we have already seen that the second row and the first column are conflations in $(\lMod{Q,A},\exact{Q})$. We now show that  the third row is also a conflation and the \mbox{$3\!\times\!3$}-Lemma for exact categories (see \cite[Cor.~3.6(ii)]{Buhler}, with rows and columns interchanged) will then guarantee that the middle column in \eqref{3x3}, which we will call $\rho$, is also a conflation in $(\lMod{Q,A},\exact{Q})$. Indeed, to argue that the third row is a conflation in $(\lMod{Q,A},\exact{Q})$, it suffices to show that $\lambda$ is a deflation. However, as $\lambda\kappa = \pi$ is a deflation, \cite[Prop.~7.6]{Buhler} yields hat $\lambda$ is a deflation, as wanted.

It remains to show that this conflation $\rho$ has the properties (1)--(3). As already noted, $P$ is a projective object in $(\lMod{Q,A},\exact{Q})$. Fix a projective object $T$ in $(\lMod{A},\exact{})$ and $q$ in $Q$. Let us prove property (2). Since $\Eq{q}$ is exact (\lemref{FEG-exact}), applying it to the third row in \eqref{3x3} yields a conflation 
\begin{equation*}
  0 \longrightarrow \mathbb{C}(X)(q) \longrightarrow X'(q) \longrightarrow C(q) \longrightarrow 0
\end{equation*} 
in $(\lMod{A},\exact{})$. To see that $X'(q)$ is Gorenstein projective in $(\lMod{A},\exact{})$ it suffices by \prpref{GPrj-resolving} to show that $\mathbb{C}(X)(q)$ and $C(q)$ are Gorenstein projective. As noted above, both $\mathbb{G}(X)$ and $C$ are Gorenstein projective in $(\lMod{Q,A},\exact{Q})$ so, by the first part of this proof, we have that $\mathbb{G}(X)(q)$ and $C(q)$ are Gorenstein projective in $(\lMod{A},\exact{})$. Since $\Eq{q}(\mathfrak{g}_X)$ splits, $\mathbb{C}(X)(q)$ is a direct summand of $\mathbb{G}(X)(q)$ and it is therefore Gorenstein projective, again by \prpref{GPrj-resolving}. 
%This is the only place in the proof we use the assumption that $\lMode{A}{\tau}$ has exact coproducts.
Finally, let us prove property (3). The conflation $\rho$ yields a conflation $\rho(q)=\Eq{q}(\rho)$ in the exact category $\mathcal{E}= (\lMod{A},\exact{})$. As $X'(q)$ is Gorenstein projective in $\mathcal{E}$ by condition (2), and $T$ is projective in $\mathcal{E}$, one has $\Ext{\mathcal{E}}{1}{X'(q)}{T}=0$ by Lemma \ref{lem:GPrj-Ext}. It follows that the sequence $\Hom{A}{\rho(q)}{T}$ is exact.
\end{proof}

We consider now the following functors defined using the left and right stalks mentioned in equation \eqref{stalks}:
\begin{equation*}
\Sq{q} 
\;=\; 
\stalkco{q} \otimes_\Bbbk - 
\;\cong\;
\Hom{\Bbbk}{\stalkcn{q}}{-}
\colon \lMod{A} \longrightarrow \lMod{Q,A}.
\end{equation*}
Recall from \cite[Prop.~7.15]{HJ-JLMS} that $\Sq{q}$ admits a left adjoint $\Cq{q}$ and a right adjoint $\Kq{q}$.

\begin{cor}
  \label{cor:GPrj-stalk}
Let $\exact{}$ be an exact structure in $\lMod{A}$ such that $(\lMod{A},\exact{})$ has enough projectives and exact coproducts. Then $\Sq{q}(T)$ is a Gorenstein projective object in $(\lMod{Q,A},\exact{Q})$ for every Gorenstein projective (in particular, for every projective) object $T$ in $(\lMod{A},\exact{})$ and $q$ in $Q$.
\end{cor}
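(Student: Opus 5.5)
The plan is to use \thmref{GPrj}, which reduces Gorenstein projectivity in $(\lMod{Q,A},\exact{Q})$ to an objectwise condition. It then suffices to show that, for every $p$ in $Q$, the $A$-module $\Sq{q}(T)(p)$ is Gorenstein projective in $(\lMod{A},\exact{})$. The hypotheses of \thmref{GPrj} (enough projectives and exact coproducts in $(\lMod{A},\exact{})$) are exactly those assumed in the corollary, so it applies directly.

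Evaluating at $p$ gives
\[
\Sq{q}(T)(p) \;=\; \stalkco{q}(p)\otimes_\Bbbk T ,
\qquad
\stalkco{q}(p)=Q(q,p)/\mathfrak{r}(q,p).
\]
The key step is to show that $\stalkco{q}(p)$ is a finitely generated projective $\Bbbk$-module. I expect this to follow from the structure of $Q$ in \stpref{HJ}. Under the strong retraction property one has $\mathfrak{r}(q,p)=Q(q,p)$ for $p\neq q$, so $\stalkco{q}(p)=0$. For $p=q$, the module $\stalkco{q}(q)=Q(q,q)/\mathfrak{r}(q,q)$ is isomorphic to $\Bbbk$, since the strong retraction property gives a splitting $Q(q,q)=\Bbbk\cdot 1_q\oplus\mathfrak{r}(q,q)$ (see \cite[Def.~7.3 and Lem.~7.10]{HJ-JLMS}). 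Even without this precise computation, it would be enough to know that $\stalkco{q}(p)$ is a direct summand of $Q(q,p)$, which is finitely generated projective over $\Bbbk$.

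Granting this, $\stalkco{q}(p)\otimes_\Bbbk T$ is a direct summand of a finite direct sum $T^{\oplus m}$, as in the proof of \lemref{FEG-exact}. Concretely, it is $0$ for $p\neq q$ and $T$ for $p=q$. Finite direct sums of Gorenstein projectives are Gorenstein projective by \lemref{GPrj-Ext}(b), or simply by closure under extensions in \prpref{GPrj-resolving}(a). Direct summands are Gorenstein projective by \prpref{GPrj-resolving}(b). Hence each $\Sq{q}(T)(p)$ is Gorenstein projective, and \thmref{GPrj} concludes.

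The parenthetical claim needs one more remark: a projective $T$ is Gorenstein projective, via the totally acyclic complex $0\to T\xrightarrow{=}T\to0$ placed so that $\Cy{0}=T$. The only real obstacle is justifying that $\stalkco{q}(p)$ is $\Bbbk$-projective, which I would extract from the cited properties of the pseudo-radical in \cite{HJ-JLMS}.
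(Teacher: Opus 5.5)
Your proposal is correct and is essentially the paper's proof. The paper also observes that $\Sq{q}(T)(q)=T$ and $\Sq{q}(T)(p)=0$ for $p\neq q$ (citing \cite[Lem.~7.10 and Prop.~7.15]{HJ-JLMS}), notes that $T$ and $0$ are Gorenstein projective, and concludes by \thmref{GPrj}; your fallback via direct summands and your explicit complex $0\to T\xrightarrow{=}T\to 0$ are valid but not needed.
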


\begin{proof}
By \cite[Lem.~7.10 and Prop.~7.15]{HJ-JLMS} one has $\Sq{q}(T)(q) = T$ and $\Sq{q}(T)(p) = 0$ for $p \neq q$ in $Q$. The objects $T$ and $0$ are Gorenstein projective in $(\lMod{A},\exact{})$, so the claim then follows from \thmref{GPrj}.
\end{proof}

The next theorem and its corollary are proved dually to \thmref{GPrj} and \corref{GPrj-stalk}.

\begin{thm}
  \label{thm:GInj}
  Let $\exact{}$ be an exact structure in $\lMod{A}$ such that $(\lMod{A},\exact{})$ has enough injectives and exact products. An object $X$ in the exact category $(\lMod{Q,A},\exact{Q})$ is Gorenstein injective if and only if $X(q)$ is Gorenstein injective in the exact category $(\lMod{A},\exact{})$ for every $q$ in $Q$. \qed
\end{thm}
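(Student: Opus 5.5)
The plan is to dualise the proof of \thmref{GPrj} step by step, exchanging the roles of $\Fq{q}$ and $\Gq{q}$, coproducts and products, and projectives and injectives. First I will establish the duals of the auxiliary lemmas.

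\textbf{Dual of \lemref{exact-eqc}.} For a complex $I$ in $(\lMod{Q,A},\exact{Q})$, the complex $\Hom{Q,A}{J}{I}$ is exact for every injective $J$ in $(\lMod{Q,A},\exact{Q})$ if and only if $\Hom{A}{T}{I(q)}$ is exact for every injective $T$ in $(\lMod{A},\exact{})$ and every $q$ in $Q$. By \prpref{Prj-transfer}(b), every injective is a summand of $\prod_{p}\Gq{p}(T_p)$. \rmkref{Frobenius} gives $\prod_p\Gq{p}(T_p)\cong\bigoplus_p\Fq{\mathbb{S}^{-1}p}(T_p)$. Mapping out of a coproduct turns it into a product, and products are exact in $\lMod{\Bbbk}$. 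The adjunction $(\Fq{q},\Eq{q})$ then reduces the condition to $\Hom{A}{T}{I(q)}$.

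\textbf{Dual of \lemref{GPrj}.} Let $\{M_q\}$ be Gorenstein injectives in $(\lMod{A},\exact{})$. Then $\bigoplus_q\Fq{q}(M_q)\cong\prod_q\Gq{\mathbb{S}q}(M_q)$ is Gorenstein injective. Products of Gorenstein injectives are Gorenstein injective by the dual of \lemref{GPrj-Ext}(b), using exact products, which transfer to $\lMod{Q,A}$ by \lemref{exact-coprod}. For a single factor $\Gq{p}(M)$, apply $\Gq{p}$ to a totally acyclic complex of injectives. Injectives are preserved by \prpref{Prj-transfer}(b), and total acyclicity is checked via $\Hom{Q,A}{J}{\Gq{p}(I)}\cong\Hom{A}{J(p)}{I}$ together with \prpref{Prj-transfer}(d). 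I also need the duals of \prpref{GPrj-resolving} and \lemref{GPrj-Ext}(a) for Gorenstein injectives, which hold by applying them in the opposite category.

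\textbf{Main theorem.} If $X$ is Gorenstein injective, evaluating a totally acyclic complex of injectives at $q$ gives one for $X(q)$, by \lemref{FEG-exact}, \prpref{Prj-transfer}(d) and the dual lemma above. For the converse, let $\varrho^+$ be an injective coresolution of $X$ in $(\lMod{Q,A},\exact{Q})$. Its value at $q$ is an injective coresolution of $X(q)$, and $\Hom{A}{T}{\varrho^+(q)}$ is exact because $\Ext{}{i}{T}{X(q)}=0$ for injective $T$.

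The key step, and the expected main obstacle, is building the left half. I need a conflation $X'\rightarrowtail R\twoheadrightarrow X$ with $R$ injective and each $X'(q)$ Gorenstein injective.

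1. Choose conflations $K_q\rightarrowtail I_q\twoheadrightarrow X(q)$ with $I_q$ injective and $K_q$ Gorenstein injective.
2. Apply $\bigoplus_q\Fq{q}$, which is exact by exact coproducts. This gives $K\rightarrowtail I\twoheadrightarrow\mathbb{F}(X)$, where $I\cong\prod\Gq{\mathbb{S}q}(I_q)$ is injective.
3. Compose with the objectwise split deflation $\varepsilon^X\colon\mathbb{F}(X)\to X$ from $\mathfrak{f}_X$. This composite is a deflation, and I set $X'=\Ker(\varepsilon^X\pi)$.
4. The dual $3\times3$ diagram gives a conflation $K\rightarrowtail X'\twoheadrightarrow\mathbb{K}(X)$. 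Here $\mathbb{K}(X)(q)$ is a summand of the Gorenstein injective $\mathbb{F}(X)(q)$, so $X'(q)$ is Gorenstein injective by extension closure.
5. Exactness of $\Hom{A}{T}{\rho(q)}$ follows from $\Ext{}{1}{T}{X'(q)}=0$.

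Iterating this construction and splicing the result with $\varrho^+$ yields a totally acyclic complex of injectives with $X$ as its zero-th syzygy, which completes the proof.
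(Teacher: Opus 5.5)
Your proposal is correct and is the dualisation of the proof of \thmref{GPrj} that the paper intends (it only says \thmref{GInj} is ``proved dually''): $\mathbb{F}$, $\varepsilon^X$ and $\mathfrak{f}_X$ replace $\mathbb{G}$, $\eta^X$ and $\mathfrak{g}_X$. Defining $X'$ as the kernel of the composite deflation $\varepsilon^X\pi$ and then extracting $K\rightarrowtail X'\twoheadrightarrow\mathbb{K}(X)$ is just a harmless reordering of the paper's $3\times3$ argument.

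Two small points should be made explicit. First, exactness of $\bigoplus_q\Fq{q}$ needs a justification: it follows from the dual of \rmkref{exact-prod} (enough injectives force exact coproducts), or directly from $\bigoplus_q\Fq{q}\cong\prod_q\Gq{\mathbb{S}q}$ together with the assumed exact products. Second, before invoking extension closure you should note that $K(q)$ is Gorenstein injective, which follows from your dual of \lemref{GPrj} and the forward direction.
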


\begin{cor}
  \label{cor:GInj-stalk}
Let $\exact{}$ be an exact structure in $\lMod{A}$ such that $(\lMod{A},\exact{})$ has enough injectives and exact products. Then $\Sq{q}(I)$ is a Gorenstein injective object in $(\lMod{Q,A},\exact{Q})$ for every Gorenstein injective (in particular, for every injective) object $I$ in $(\lMod{A},\exact{})$ and $q$ in $Q$. \qed
\end{cor}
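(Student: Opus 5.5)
The statement to prove is Corollary~\ref{cor:GInj-stalk}. I would obtain it as a direct consequence of Theorem~\ref{thm:GInj}, in exactly the way Corollary~\ref{cor:GPrj-stalk} follows from Theorem~\ref{thm:GPrj}. So the plan is to check that $\Sq{q}(I)$ satisfies the objectwise criterion of Theorem~\ref{thm:GInj}.

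First, the hypotheses of Theorem~\ref{thm:GInj} are precisely those of the corollary: $(\lMod{A},\exact{})$ has enough injectives and exact products. It therefore suffices to show that $\Sq{q}(I)(p)$ is Gorenstein injective in $(\lMod{A},\exact{})$ for every $p$ in $Q$.

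By \cite[Lem.~7.10 and Prop.~7.15]{HJ-JLMS}, as already used in the proof of Corollary~\ref{cor:GPrj-stalk}, we have $\Sq{q}(I)(q)\cong I$ and $\Sq{q}(I)(p)=0$ for $p\neq q$. This uses the description $\Sq{q}=\stalkco{q}\otimes_\Bbbk -$, where $\stalkco{q}(q)\cong\Bbbk$ and $\stalkco{q}(p)=0$ for $p \neq q$. The object $I$ is Gorenstein injective by assumption. The zero object is Gorenstein injective because the zero complex is a totally acyclic complex of injectives whose zeroth cycle is $0$.

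Finally, if $I$ is merely injective, it is still Gorenstein injective. This is the dual of the observation in Corollary~\ref{cor:GPrj-stalk}: take the complex $\cdots\to 0\to I \xrightarrow{=} I\to 0\to\cdots$, which is contractible and hence remains exact after applying $\Hom{\mathcal{E}}{J}{-}$ for any injective $J$. Choose its placement so that $\Cy{0}$ is $I$.

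Applying Theorem~\ref{thm:GInj} then gives the conclusion. I expect no real obstacle; the only point needing care is the identification of the values of the stalk functor, and that is quoted from \cite{HJ-JLMS}.
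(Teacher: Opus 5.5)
Your proposal is correct and follows the paper's own route: the paper proves this corollary dually to Corollary~\ref{cor:GPrj-stalk}, i.e.\ by observing that $\Sq{q}(I)$ takes the value $I$ at $q$ and $0$ elsewhere and then applying Theorem~\ref{thm:GInj}. Your extra checks, that $0$ is Gorenstein injective and that an injective $I$ is Gorenstein injective via the contractible complex $I \xrightarrow{=} I$, are valid details that the paper leaves implicit.
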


\subsection{Examples}
\label{examples}
\phantom{.} \vspace*{1ex}

\noindent
To apply \thmref{GPrj} and its \corref{GPrj-stalk} we need an exact structure $\exact{}$ in $\lMod{A}$ such that 
\begin{rqm}
\item[($*$)] $(\lMod{A},\exact{})$ has enough projectives and exact coproducts.
\end{rqm}
In this case, if $\mathscr{T}$ is a class of projective objects in $(\lMod{A},\exact{})$, then \corref{GPrj-stalk} shows that 
\begin{equation}
  \label{eq:G-stalk}
  \mathscr{G} \,=\, 
  \mathscr{G}(\mathscr{T}) \,=\,
  \{\Sq{q}(T) \,|\, T \in \mathscr{T},\; q \in Q\}
\end{equation}  
is a class of Gorenstein projective objects in $(\lMod{Q,A},\exact{Q})$. Our goal in the next section is to apply \thmref{main} to such the class $\mathscr{G}$ in the exact category $\mathcal{E} = (\lMod{Q,A},\exact{Q})$. As stated in \thmref{main}, this can be done in either of the following situations:
\begin{rqm}
\item[($\dagger$)] If $\mathscr{T}$ (and hence $\mathscr{G}$) is a \textsl{set} and $(\lMod{Q,A},\exact{Q})$ is efficient or, 
\item[($\ddagger$)] If the cotorsion pair $(\mathcal{C}_\mathscr{G},\mathcal{W}_\mathscr{G})$ is complete.
\end{rqm}
Thus, we are searching for situations where \,($*$)\,+\,($\dagger$)\, or \,($*$)\,+\,($\ddagger$)\, holds. In this section, we present examples of such situations, see \prpref{Mod-theta}(d) and \exaref{Prj-PPrj-Mod}.

\begin{dfn}
  \label{dfn:tau-exact}
  Let $\theta$ be a class of left $A$-modules. There is an exact structure on $\lMod{A}$ where the conflations are the short exact sequences $0 \to M' \to M \to M'' \to 0$ in the abelian category $\lMod{A}$ for which 
\begin{equation*}
  0 \longrightarrow
  \Hom{A}{T}{M'} \longrightarrow 
  \Hom{A}{T}{M} \longrightarrow 
  \Hom{A}{T}{M''} \longrightarrow 
  0 
\end{equation*}  
is exact in $\lMod{\Bbbk}$ for every $T$ in $\theta$ (see e.g.~\cite[Prop.~3.2]{MR4658661}). We call this the \textbf{\mbox{$\theta$-exact} structure} on $\lMod{A}$. When considering $\lMod{A}$ endowed with this exact structure, we write $\lMode{A}{\theta}$. The exact structure induced in $\lMod{Q,A}$ by the $\theta$-exact structure on $\lMod{A}$ is called the \textbf{objectwise $\theta$-exact structure} on $\lMod{Q,A}$. When considering $\lMod{Q,A}$ endowed with this exact structure, we write $\lMode{Q,A}{\theta}$.
\end{dfn}

Note that, for any class $\theta$, the objects in $\theta$ are projective in the exact category $\lMode{A}{\theta}$, and it is easy to see that 
\begin{equation}
  \label{eq:saturation}
 \lMode{A}{\theta} \;=\; \lMode{A}{\Prjp{\lMode{A}{\theta}}} \;.
\end{equation}  
We call $\Prjp{\lMode{A}{\theta}}$ the \textbf{saturation} of $\theta$ and say that $\theta$ is \textbf{saturated} if $\theta=\Prjp{\lMode{A}{\theta}}$.  It follows that, in order to consider all exact structures of this kind, it suffices to consider saturated classes in $\lMod{A}$.  Every saturated class of $A$-modules contains $\lPrj{A}$ (the projective modules in the abelian category $\lMod{A}$) and is closed under coproducts and direct summands. 
In fact, we can say more under some assumptions on  $\theta$.

\begin{lem}\label{lem:saturation}\label{lem:precovering}
Let $\theta$ be a class of left $A$-modules.
\begin{enumerate}
    \item If $\theta$ is a precovering class in $\lMod{A}$, then $\lMode{A}{\theta}$ has enough projectives.
    \item If $\theta$ is a set (not a proper class), then $\lMode{A}{\theta}$ has enough projectives and $\Prjp{\lMode{A}{\theta}}=
  \mathrm{Add}(A \,\oplus\, \bigoplus_{T\in\theta}T).$
\end{enumerate}
\end{lem}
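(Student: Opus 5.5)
The plan is to prove (1) and the first half of (2) by the same mechanism. For a precovering class $\theta$, each module has a $\theta$-precover, and this precover is already a deflation in $\lMode{A}{\theta}$. For a set $\theta$, the class $\operatorname{Add}(\theta)$ turns out to be precovering.

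For (1), fix $M$ in $\lMod{A}$ and a $\theta$-precover $f\colon T\to M$. This map need not be surjective, so I replace it with
\[
(f,\pi)\colon T\oplus F\to M,
\]
where $\pi\colon F\to M$ is an epimorphism from a free module $F$. Let $K$ be the kernel. I claim that $0\to K\to T\oplus F\to M\to 0$ is a conflation in $\lMode{A}{\theta}$. To see this, take $T'$ in $\theta$ and $g\colon T'\to M$. The precover property says that $g$ factors through $f$, so $\Hom{A}{T'}{T\oplus F}\to\Hom{A}{T'}{M}$ is surjective. Left exactness of $\Hom{A}{T'}{-}$ gives the rest. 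So we have a deflation onto $M$, and it remains to check that $T\oplus F$ is projective in $\lMode{A}{\theta}$. First, $T$ is projective because it lies in $\theta$: by definition of the $\theta$-exact structure, $\Hom{A}{T}{-}$ sends conflations to short exact sequences. Second, $F$ is projective because it is projective in the abelian category, so $\Hom{A}{F}{-}$ is exact on all short exact sequences. Finite sums of projectives are projective.

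For (2), set $U=A\oplus\bigoplus_{T\in\theta}T$. This is a legitimate module because $\theta$ is a set. I will first show that $\operatorname{Add}(\theta\cup\{A\})$ is precovering. Given $M$, form
\[
\bigoplus_{T\in\theta\cup\{A\}}T^{(\Hom{A}{T}{M})}\to M,
\]
the canonical evaluation map. Every map from some $T$ in $\theta\cup\{A\}$ to $M$ factors through it by construction. Maps from arbitrary coproducts of such $T$ then factor as well, and so do maps from direct summands of those coproducts. Including $A$ in the indexing set makes this map surjective.

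I then apply the argument of (1) to this map. The source is a coproduct of copies of $A$ and of members of $\theta$. It is projective in $\lMode{A}{\theta}$ because $\Hom{A}{-}{-}$ turns coproducts in the first variable into products, and products of short exact sequences of abelian groups are exact. This gives enough projectives, and it shows $\operatorname{Add}(U)\subseteq\Prjp{\lMode{A}{\theta}}$, since projectives are closed under coproducts (as just argued) and under summands.

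For the reverse inclusion, let $P$ be projective in $\lMode{A}{\theta}$. The deflation just built onto $P$ splits, so $P$ is a direct summand of a coproduct of copies of $A$ and of members of $\theta$, and hence $P$ lies in $\operatorname{Add}(U)$.

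I expect the only delicate point to be the surjectivity issue: precovers need not be epimorphisms. The fix is to adjoin a free module or the summand $A$, and to check that this preserves both the precover property and projectivity.
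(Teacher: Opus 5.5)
Your proof is correct and follows essentially the same route as the paper. In (1) you adjoin a free module to a $\theta$-precover and check surjectivity of $\Hom{A}{T'}{-}$. In (2) you split an $\operatorname{Add}(\theta\cup\{A\})$-precover onto a projective $P$; the paper gets the same precover by replacing $\theta$ with $\operatorname{Add}(\bigoplus_{T\in\theta}T)$ and asserting it is precovering. You also spell out details the paper leaves implicit: that the source of the deflation is projective in $\lMode{A}{\theta}$, and the inclusion $\operatorname{Add}(U)\subseteq\Prjp{\lMode{A}{\theta}}$.
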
 
\begin{proof}
(1): Given an object $M$ in $\lMode{A}{\theta}$, consider a $\theta$-precover $\alpha\colon T\longrightarrow M$. Let $\beta\colon A^{(I)}\longrightarrow M$ be an epimorphism of $A$-modules, for a suitable set $I$. We claim that $\gamma\colon T\oplus A^{(I)}\longrightarrow M$ sending $(t,a)$ to $\gamma(t,a)=\alpha(t)+\beta(a)$, for any $(t,a)$ in  $T\oplus A^{(I)}$ is a deflation in $\lMode{A}{\theta}$. It is clearly an epimorphism, and for any $T'$ in $\theta$ and any $f\colon T'\longrightarrow M$, the map $f$ factors through the $\theta$-precover $\alpha$. Thus, $f$ factors through $\gamma$, and $\Hom{A}{T'}{\gamma}$ is surjective, as wanted. 

(2) Note that, without loss of generality, we may assume $\theta=\operatorname{Add}(T)$ for $T=\oplus_{T\in\theta}T$, since $\lMode{A}{\theta}=\lMode{A}{\operatorname{Add}(T)}$. Assuming this, it follows that $\theta$ is precovering and, by (1),  we get that $\lMode{A}{\theta}$ has enough projectives. Now, for any $P$ in $\Prjp{\lMode{A}{\theta}}$, consider a $\operatorname{Add}(A\oplus T)$-precover $\psi$ of $P$. This is, in particular, an $\operatorname{Add}(T)=\theta$-precover and, thus, $\mathsf{Hom}(T',\psi)$ is surjective for all $T'$ in $\theta$, showing that $\psi$ is a deflation. Since $P$ is projective, $\psi$ splits, as wanted.
%Set \mbox{$M=A \,\oplus\, \bigoplus_{T\in\theta}T$} and let $P$ be an object in $\Prjp{\lMode{A}{\theta}}$. Consider the set $H=\Hom{A}{M}{P}$ and the (universal) $\mathrm{Add}(M)$-precover $\psi\colon M^{(H)}\longrightarrow P$. If we show that $\psi$ is a deflation in $\lMode{A}{\theta}$, then it splits since $P$ is projective, thus proving our claim. First, since $A$ is a direct summand of $M$, the map $\psi$ is surjective. Secondly, since $M^{(H)}$ is a direct sum of projectives in $\lMode{A}{\theta}$, $M^{(H)}$ is projective. Now, for any $T$ in $\theta$, the homomorphism $\Hom{A}{T}{\psi}$ is surjective by the fact that $\psi$ is an $\mathrm{Add}(M)$-precover, thus proving that $\psi$ is a deflation, as wanted.
\end{proof}

Note that if $\theta$ is the saturation of a set $\alpha$ of finitely presented $A$-modules containing $A$, then from \lemref{saturation} we get that $\alpha$ is a set of projective generators in $\lMode{A}{\theta}$.

The following result lists some useful properties of the exact categories $\lMode{A}{\theta}$ and $\lMode{Q,A}{\theta}$.

\begin{prp}
\label{prp:Mod-theta}
Let $\theta$ be a saturated class of left $A$-modules. The following statements hold.
\begin{prt}
\item The exact categories $\lMode{A}{\theta}$ and $\lMode{Q,A}{\theta}$ have exact products.

\item If the exact category $\lMode{A}{\theta}$ has exact coproducts, then so does $\lMode{Q,A}{\theta}$.

\item If $\theta$ is precovering, then $\lMode{A}{\theta}$ has enough projectives and one has:
\begin{rqm}
\item $\lMode{Q,A}{\theta}$ has enough projectives.
\item $\Prjp{\lMode{Q,A}{\theta}}\,=\,\operatorname{Add}\,\{\,\Fq{q}(T) \;|\; T \in \theta,\;  q \in Q \,\}
%=\operatorname{Add}\,\{\,\Gq{q}(T) \;|\; T \in \theta,\;  q \in Q \,\}
$.

\item If $P$ is a projective object in $\lMode{Q,A}{\theta}$, then $\Eq{q}(P)=P(q)$ is projective in $\lMode{A}{\theta}$ for all $q$ in $Q$.
\end{rqm}

\item If $\theta$ be is the saturation of a class of finitely presented $A$-modules, then $\lMode{A}{\theta}$ and $\lMode{Q,A}{\theta}$ have enough projectives and are efficient; in particular, they have exact coproducts.
\end{prt}
\end{prp}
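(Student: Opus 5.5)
Part (a) is essentially formal and I will dispose of it first. By \rmkref{exact-prod}, existence of enough projectives would already give exact products, but I do not want to assume (c). Instead I argue directly. A product of short exact sequences of $A$-modules is short exact, and $\Hom{A}{T}{\prod_i M_i}\cong\prod_i\Hom{A}{T}{M_i}$. Products are exact in $\lMod{\Bbbk}$, so a product of $\theta$-conflations is again a $\theta$-conflation. Hence $\lMode{A}{\theta}$ has exact products. For $\lMode{Q,A}{\theta}$ the same holds, because products in $\lMod{Q,A}$ are computed objectwise and each $\Eq{q}$ preserves them; this is exactly the product half of \lemref{exact-coprod}. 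Part (b) is then just the coproduct half of \lemref{exact-coprod}.

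For (c), the first clause follows from \lemref{precovering}(1). Items (1) and (3) are then \prpref{Prj-transfer}(a) and (c) applied to $\exact{}=\theta$-exact. For item (2), \prpref{Prj-transfer}(a) says the projectives are the direct summands of objects $\bigoplus_q\Fq{q}(T_q)$ with $T_q\in\Prjp{\lMode{A}{\theta}}$. Since $\theta$ is saturated, this class equals $\theta$. Each $\Fq{q}(T_q)$ lies in $\{\Fq{q}(T)\}$, so the projectives are contained in the stated $\operatorname{Add}$. For the converse inclusion I need that $\operatorname{Add}$ of projectives consists of projectives. Coproducts of $\Fq{q}(T)$ can be regrouped as $\bigoplus_q\Fq{q}(\bigoplus_j T_{q,j})$, since $\Fq{q}$ commutes with coproducts (it is a tensor functor and a left adjoint). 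Because $\theta$ is closed under coproducts, this object is of the displayed form and hence projective. Summands of projectives are projective.

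For (d), let $\theta$ be the saturation of a class $\alpha$ of finitely presented modules. Up to isomorphism finitely presented modules form a set, so I may assume $\alpha$ is a set; replacing $\alpha$ by $\alpha\cup\{A\}$ changes nothing, since $A\in\theta$ anyway. Then \lemref{saturation}(2) gives enough projectives in $\lMode{A}{\theta}$ with $\theta=\operatorname{Add}(A\oplus\bigoplus_{T\in\alpha}T)$.

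The main obstacle is efficiency of $\lMode{A}{\theta}$. I plan to verify the criterion of \cite[Prop.~2.7]{SaorinStovicek}, as in the proof of \prpref{efficient-transfer}, which needs two things:
\begin{itemlist}
\item \emph{Transfinite compositions of inflations are inflations.} Direct limits of short exact sequences are exact, and $\Hom{A}{T}{-}$ commutes with direct limits for $T$ finitely presented. So a direct limit of $\alpha$-exact sequences is $\alpha$-exact, and $\alpha$-exact coincides with $\theta$-exact. This shows transfinite compositions of inflations are inflations, and also that coproducts are exact, since coproducts are direct limits of finite sums.
\item \emph{Smallness.} Every object is small relative to all morphisms in the Grothendieck category $\lMod{A}$, hence relative to inflations.
\end{itemlist}
Together with enough projectives this gives efficiency. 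Then $\lMode{Q,A}{\theta}$ is efficient with enough projectives by \prpref{efficient-transfer}. Exact coproducts in both categories follow from \cite[Lem.~1.4]{SaorinStovicek}, or directly from (b).
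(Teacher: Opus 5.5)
Your proposal is correct and follows the paper's proof essentially step by step. Parts (a) and (b) come from the definition of the $\theta$-exact structure and \lemref{exact-coprod}; (c) from \lemref{precovering} and \prpref{Prj-transfer}; and (d) by reducing to a set of finitely presented modules, checking the two conditions of \cite[Prop.~2.7]{SaorinStovicek} for $\lMode{A}{\theta}$, and transferring efficiency via \prpref{efficient-transfer}. Your extra details only make explicit what the paper calls ``special cases'': the direct argument for exact products, and the regrouping that shows $\operatorname{Add}\{\Fq{q}(T)\}$ consists of projectives.
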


\begin{proof}
Part (a) follows immediately from \dfnref{tau-exact}, and part (b) is a special case of \lemref{exact-coprod}.

\proofoftag{c} \lemref{precovering} shows that $\lMode{A}{\theta}$ has enough projectives if $\theta$ is precovering. Parts (1) and (2) are now special cases \prpref{Prj-transfer}(a)
% and \cite[Lem.~3.4]{HJ-TAMS}; the latter asserts that $\Gq{q} \cong \Fq{\mspace{3mu}\mathbb{S}^{-1}q}$ where $\mathbb{S}$ is the Serre functor on $Q$ (see \stpref{HJ}). 
and part (3) is a special case of \prpref{Prj-transfer}(c).

\proofoftag{d} Since there is only a set of isomorphism classes of finitely presented $A$-modules, $\theta$ is also the saturation of a set(!) of finitely presented $A$-modules. Note that $\lMode{A}{\theta}$ (and, therefore, by (c), $\lMode{Q,A}{\theta}$) has enough projectives by \lemref{saturation} and \eqref{saturation}. We show that $\lMode{A}{\theta}$ is efficient and the efficiency of $\lMode{Q,A}{\theta}$ then follows   \prpref{efficient-transfer}. Suppose now that $\theta=\Prjp{\lMode{A}{\alpha}}$ for some set $\alpha$ of finitely presented $A$-modules. Since $\Hom{A}{T}{-}$ commutes with direct limits for any $T$ in $\alpha$ and direct limits are exact in the category of $\Bbbk$-modules, it follows that transfinite compositions of inflations in $\lMode{A}{\theta}$ are still inflations. Since $\lMod{A}$ is a Grothendieck category, every object is small with respect to all morphisms (see \cite[Lem.~4.4 and Subsect.~1.1]{SaorinStovicek}) and, thus, small with respect to the class of inflations in $\lMode{A}{\theta}$. This concludes the proof that $\lMode{A}{\theta}$ is efficient, by \cite[Prop.~2.7]{SaorinStovicek}.
\end{proof}

\begin{exa}
\label{exa:Prj-PPrj-Mod}
  We mention a few canonical examples of saturated classes of $A$-modules $\theta$ for which the exact categories $\lMode{A}{\theta}$ and $\lMode{Q,A}{\theta}$ have enough projectives and are efficient (in particular, they have exact coproducts).
  \begin{prt}
  \item If $\theta = {}_A\operatorname{Prj}$ is the class of projective $A$-modules, then the conflations in $\lMode{A}{\theta}$ are precisely the usual exact sequences in the abelian structure. As $\theta$ is the saturation of the singleton set $\{A\}$ (see  \lemref{saturation}) $\lMode{A}{\theta}$ and $\lMode{Q,A}{\theta}$ are efficient. (This is well-known from \cite[Ex.~2.8(1)]{SaorinStovicek} as, in this case, $\lMode{A}{\theta}$ and $\lMode{Q,A}{\theta}$ are just the abelian categories $\lMod{A}$ and $\lMod{Q,A}$, which are Grothendieck).
  
  \item If $\theta = {}_A\operatorname{PPrj}$ is the class of pure projective $A$-modules, then the conflations in $\lMode{A}{\theta}$ are precisely the pure exact sequences \cite[Def.~6.5]{mta}. As $\theta$ is the saturation of the class of finitely presented $A$-modules, see \lemref{saturation}, both $\lMode{A}{\theta}$ and $\lMode{Q,A}{\theta}$ are efficient with enough projectives by \prpref{Mod-theta}(d). 

  \item If $\theta = \lMod{A}$ is the class of all $A$-modules, then the conflations in $\lMode{A}{\theta}$ are precisely the split exact sequences. The class $\theta$ is trivially precovering and, therefore, $\lMode{A}{\theta}$ and $\lMode{Q,A}{\theta}$ have enough projectives by \prpref{Mod-theta}(c). It is also easy to check that the split exact structure in a Grothendieck abelian category (in fact, in any accessible additive category with set-indexed coproducts, see \cite[Ex.~2.8(2)]{SaorinStovicek}), like the category of $A$-modules, is efficient. Thus, both $\lMode{A}{\theta}$ and $\lMode{Q,A}{\theta}$ are efficient. Moreover, it follows from \rmkref{Frobenius} that both $\lMode{A}{\theta}$ and $\lMode{Q,A}{\theta}$ are Frobenius.
  \end{prt}
  
%More generally, for any set(!) $\mathbb{T}$ of $A$-modules that contains $A$, it is easily seen that the class $\tau = \operatorname{Add}(\mathbb{T})$ satisfies conditions (1) and (2) in \prpref{Prj}. Further, if every module in $\mathbb{T}$ is finitely generated, then $\lMode{A}{\tau}$ and $\lMode{Q,A}{\tau}$ have exact coproducts. Note that the examples in (a) and (b) have this form with $\mathbb{T}=\{A\}$ and $\mathbb{T}$ a set of representatives for the isomorphism classes of all finitely presented $A$-modules.
%The class $\tau = {}_A\operatorname{Flat}$ of flat $A$-modules also satisfies conditions (1) and (2) in \prpref{Prj}; indeed, every $A$-module has a flat cover by Bican, El Bashir, and Enochs \cite{BEE-01}. However, we do not expect $\lMode{A}{\tau}$ and $\lMode{Q,A}{\tau}$ to have exact coproducts in this case.
\end{exa}

\begin{prp}\label{prp:injective cotorsion pair is complete}
Consider the class $\theta=\lMod{A}$ and the exact categories $\lMode{A}{\theta}$ and $\lMode{Q,A}{\theta}$. Then, the injective (complete) cotorsion pair $(\lMode{Q,A}{\theta},\Inj{\lMode{Q,A}{\theta}})$ coincides with the cotorsion pair in $\lMode{Q,A}{\theta}$ generated by the set
%Set $\theta = \lMod{A}$. The cotorsion pair in the exact category $\lMode{Q,A}{\theta}$ generated by the class 
\begin{equation*}
  \mathscr{G} \,=\, 
  \{\Sq{p}(M) \;|\; M \in \theta,\; p \in Q\}.
\end{equation*}  
%is the injective cotorsion pair $(\lMode{Q,A}{\theta},\Inj{(\lMode{Q,A}{\theta})})$, and it is therefore complete.
\end{prp}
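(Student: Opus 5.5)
The plan is to show that the right-hand class of the cotorsion pair generated by $\mathscr{G}$ is exactly the class of injectives, namely
\[
\mathscr{G}^{\perp}=\Inj{\lMode{Q,A}{\theta}}.
\]
Once that holds, the left-hand class is ${}^\perp(\Inj{\lMode{Q,A}{\theta}})$. This is all of $\lMode{Q,A}{\theta}$, since $\Ext{}{1}{-}{I}=0$ for every injective $I$. So the two cotorsion pairs coincide. Completeness of $(\lMode{Q,A}{\theta},\Inj{\lMode{Q,A}{\theta}})$ is automatic: by \exaref{Prj-PPrj-Mod}(c) and \rmkref{Frobenius}, $\lMode{Q,A}{\theta}$ is Frobenius, so it has enough injectives, and any object $X$ has the trivial special precover $0\rightarrowtail X\twoheadrightarrow X$.

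The inclusion $\Inj{\lMode{Q,A}{\theta}}\subseteq\mathscr{G}^\perp$ is trivial. For the reverse, fix $X\in\mathscr{G}^\perp$, so that $\Ext{}{1}{\Sq{p}(M)}{X}=0$ for all $A$-modules $M$ and all $p\in Q$. I would use the objectwise split sequence $\mathfrak{g}_X\colon X\rightarrowtail \mathbb{G}(X)\twoheadrightarrow\mathbb{C}(X)$ from \eqref{varepsilon-eta}. Because it is objectwise split, it is a conflation in $\lMode{Q,A}{\theta}$. Its middle term $\mathbb{G}(X)=\prod_q\Gq{q}(X(q))$ is injective by \prpref{Prj-transfer}(b), since every $A$-module is injective for the split exact structure. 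Hence it suffices to prove $\Ext{}{1}{\mathbb{C}(X)}{X}=0$, for then $\mathfrak{g}_X$ splits and $X$ is a summand of an injective. More generally, I would aim to show $\Ext{}{1}{Y}{X}=0$ for every $Y$, by d\'evissage from the stalks.

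For the d\'evissage I would use the nilpotency of the pseudo-radical $\mathfrak{r}$ (\stpref{HJ}). Every $Y$ has a finite filtration $0=Y_0\subseteq Y_1\subseteq\cdots\subseteq Y_m=Y$, for instance the socle filtration $Y_{k}=\{y : \mathfrak{r}^{k}y=0\}$. Each subquotient $Y_k/Y_{k-1}$ is annihilated by $\mathfrak{r}$, so by \cite[Lem.~7.10]{HJ-JLMS} it is of the form $\bigoplus_p\Sq{p}(M_p)$. This is a product as well, by local boundedness, so it lies in $\mathscr{G}^{\perp}$'s left orthogonal: $\Ext{}{1}{-}{X}$ commutes with these finite-support sums, and $\mathscr{G}$ is closed under direct sums in each variable $M$. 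If the short sequences $Y_{k-1}\rightarrowtail Y_k\twoheadrightarrow Y_k/Y_{k-1}$ are conflations, then the long exact Ext sequence and induction on $m$ give $\Ext{}{1}{Y}{X}=0$.

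The main obstacle is exactly this last point. The inclusions $Y_{k-1}(q)\subseteq Y_k(q)$ need not be split as $A$-modules for an arbitrary $Y$, so the socle filtration need not consist of conflations in the objectwise split structure. My first attempt would be to avoid general $Y$ and filter only $Y=\mathbb{C}(X)$. Alternatively, I would replace $Y$ by $\mathbb{F}(Y)$, or by objects of the form $\Gq{q}(M)=\Hom{\Bbbk}{Q(-,q)}{M}$. For these, the radical filtration is induced by the filtration of the finitely generated projective $\Bbbk$-modules $Q(-,q)$, so it should be objectwise split: it comes from tensoring, or taking $\Hom{\Bbbk}{}{}$, with a $\Bbbk$-split filtration, using that $\Bbbk$ is hereditary and the radical layers of $Q(-,q)$ are projective in the Hom-finite setting. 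Once $\Ext{}{1}{\Gq{q}(M)}{X}=0$ is established for all $q$ and $M$, the same holds for $\mathbb{G}(Y)$. Then $X$ is Ext-orthogonal to all injectives, and in particular $\Ext{}{1}{\mathbb{C}(X)}{X}=0$ after a further dimension-shifting step through $\mathfrak{g}_{\mathbb{C}(X)}$, which closes the argument.
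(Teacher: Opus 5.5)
Your reduction is correct. $\mathfrak{g}_X$ is a conflation whose middle term $\mathbb{G}(X)$ is injective, so it suffices to show $\Ext{}{1}{\mathbb{C}(X)}{X}=0$. Neither route you propose establishes this, so there is a genuine gap.

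\emph{First route.} You give no reason why the socle filtration of $\mathbb{C}(X)$ should be objectwise split. Already for $Q=Q^{\text{cpx}}$ one computes $\mathbb{C}(X)\cong\Sigma X$. Its socle filtration is the inclusion of the cycles of $X$, and degreewise splitness of these inclusions is not available a priori for $X\in\mathscr{G}^\perp$; it is part of what must be proved.

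\emph{Fallback route.} This one is vacuous. By \rmkref{Frobenius} the category $\lMode{Q,A}{\theta}$ is Frobenius, and $\Gq{q}(M)$, $\mathbb{F}(Y)$, $\mathbb{G}(Y)$ are projective-injective. Hence $\Ext{}{1}{\Gq{q}(M)}{X}=0$ holds for \emph{every} $X$, with no d\'evissage and no use of $X\in\mathscr{G}^\perp$. (The claimed $\Bbbk$-projectivity of the radical layers of $Q(-,q)$ is also not justified by the setup, but this is moot.) Being Ext-orthogonal to all injectives carries no information in a Frobenius category. Dimension shifting along $\mathfrak{g}_{\mathbb{C}(X)}$ only gives $\Ext{}{1}{\mathbb{C}(X)}{X}\cong\Ext{}{2}{\mathbb{C}^2(X)}{X}$, not vanishing. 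If this last step worked, it would use only properties shared by all objects and would show every object is injective, which is false (a nonzero stalk complex is not contractible).

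The missing idea is to use nilpotency of $\mathfrak{r}$ on the \emph{second} variable rather than filtering the first. The tool is the right adjoint $\Kq{q}$ of $\Sq{q}$, the socle at $q$.

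\begin{enumerate}
\item Since $\Hom{Q,A}{\Sq{q}(M)}{-}\cong\Hom{A}{M}{\Kq{q}(-)}$, the vanishing $\Ext{}{1}{\Sq{q}(M)}{X}=0$ for all $M$ implies that $\Kq{q}$ sends every conflation starting at $X$ to a split exact sequence of $A$-modules.
\item Apply this to $\mathfrak{g}_X$, using $\Kq{q}\Gq{q}\cong\mathrm{id}$ and $\Kq{q}\Gq{p}=0$ for $p\neq q$. This makes $\Kq{q}(X)\to\Kq{q}(\mathbb{G}(X))\cong X(q)$ a split monomorphism.
\item Choosing retractions $\pi_q$ gives $\varphi\colon X\to\prod_q\Gq{q}\Kq{q}(X)$ with $\Kq{q}(\varphi)=\mathrm{id}$ for all $q$.
\item $\varphi$ is monic because the left exact functors $\Kq{q}$ jointly detect zero objects; this is where nilpotency enters.
\item $\varphi$ is epic because $\Kq{q}(\Coker{\varphi})$ embeds into $\mathbb{R}^1\Kq{q}(X)\cong\Ext{}{1}{\Sq{q}(A)}{X}$. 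This vanishes, since abelian extensions of $\Sq{q}(A)$ by $X$ are automatically objectwise split.
\end{enumerate}

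Hence $X\cong\prod_q\Gq{q}\Kq{q}(X)$ is injective.
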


\begin{proof}
Let $\mathscr{E}$ denote the exact structure in $\lMode{A}{\theta}$ (recall that this is the split exact structure) and $\exact{Q}$ denote the exact structure in $\lMode{Q,A}{\theta}$ (recall that this is the objectwise split exact structure). We show that, in the exact category $\lMode{Q,A}{\theta}$, we have an equality $\mathscr{G}^\perp=\Inj{\lMode{Q,A}{\theta}}$. One inclusion is clear. For the other, we take $X$ in $\mathscr{G}^\perp$ and we will show that $X\cong \prod_{q\in Q}\Gq{q}(I_q)$ for a family of injective objects $(I_q)_{q\in Q}$ in $\lMode{A}{\theta}$, concluding the proof by Proposition \ref{prp:Prj-transfer}(b). Note that every object in $\lMode{A}{\theta}$ is injective, and we will in fact show that $X\cong \prod_{q\in Q}\Gq{q}\Kq{q}(X)$, where $\Kq{q}$ is the right adjoint of $\Sq{q}$, for any $q$ in $Q$ (see \cite[Prop.~7.15]{HJ-JLMS}).

We first claim that $\Kq{q}$ sends a conflation $\xi$ in $\Ext{\exact{Q}}{1}{Z}{X}$ to a conflation $\Kq{q}(\xi)$ in $\mathscr{E}$. Applying $\Hom{\exact{Q}}{\Sq{q}(M)}{-}$ for an arbitrary left $A$-module $M$ to the the sequence $\xi$ we get a short exact sequence $\Hom{\exact{Q}}{\Sq{q}(M)}{\xi}$ since $\Ext{\exact{Q}}{1}{\mathscr{G}}{X}=0$. By adjunction, we know that $\Hom{\exact{Q}}{\Sq{q}(M)}{\xi}\cong \Hom{\mathscr{E}}{M}{\Kq{q}(\xi)}=\Hom{A}{M}{\Kq{q}(\xi)}$. This shows, as wanted, that $\Kq{q}(\xi)$ is a conflation in $\mathscr{E}$.

Recall the short exact sequence $\mathfrak{g}_X$ from (\ref{eq:varepsilon-eta}). This is an objectwise split exact sequence in $\lMod{Q,A}$ and, hence, a conflation in $\exact{Q}$. Note that by the paragraph above, we have that $\Kq{q}(\mathfrak{g}_X)$ is a conflation in $\exact{}$ and, therefore, $\Kq{q}(\eta^x)$ is a split monomorphism, for any $q$ in $Q$.  Now, we have
%Now, for each $p$ in $Q$ there is by \cite[Prop.~7.18]{HJ-JLMS} a canonical monomorphism $\iota^X_p \colon \Kq{p}(X) \to X(p)$. We will show that this is even a split monomorphism. Note that 
\begin{equation}
  \label{eq:Kp-prod}
  \textstyle
  \Kq{q}(\mathbb{G}(X)) \;=\;
  \Kq{q}\big(\prod_{p \in Q} \Gq{p}(X(p)) \big) \;\cong\;
  \prod_{p \in Q} \Kq{q}(\Gq{p}(X(p))) \;\cong\;  
  X(q)\;.
\end{equation}
where the equality holds by the definition of $\mathbb{G}$ (see \eqref{FF-and-GG}), while the first isomorphism follows from the fact that $\Kq{p}$, being a right adjoint, preserves products. The second isomorphism holds by \cite[Prop.~7.28(b)]{HJ-JLMS}, where it is shown that $\Kq{q}\Gq{p}=0$ if $p\neq q$ and $\Kq{q}\Gq{q}=\mathrm{id}$, for any $p$ and $q$ in $Q$. Since these isomorphisms are natural, we identify $\Kq{q}(\eta^X)$ with a split monomorphism $\Kq{q}(X)\rightarrow X(q)$, and we choose a left inverse $\pi_q$ for it. We define the composition $\varphi_q\colon X\rightarrow \Gq{q}\Kq{q}(X)$ as follows
%It follows that the map $\iota^X_p$ can be identified with \smash{$\Kq{p}(\eta^X)$}, which we know is split monic by the first part of the proof.
%Now, choose for each $p \in Q$ a left inverse $\pi_p$ of $\iota^X_p$ and let $\varphi_p$ be the composite
\begin{equation*}
\varphi_q\colon   \xymatrix@C=2.5pc{
    X \ar[r]^-{\eta_q^X} &
    \Gq{q}\Eq{q}(X) \,=\, \Gq{q}(X(q)) \ar[r]^-{\Gq{q}(\pi_q)} &
    \Gq{q}\Kq{q}(X)
  },
\end{equation*}
where $\eta^X_p$ is the unit of the adjunction $(\Eq{p},\Gq{p})$. Again using \cite[Prop.~7.28(b)]{HJ-JLMS} we see that
\begin{equation*}
  \label{eq:Kp-varphi}
  \Kq{q}(\varphi_q) \;=\; 
  \Kq{q}(\Gq{q}(\pi_q) \circ \eta_q^X) \;=\;
  \Kq{q}\Gq{q}(\pi_q) \circ \Kq{q}(\eta_q^X) \;=\;
  \pi_q \circ \Kq{q}(\eta_q^X) \;=\; \mathrm{id}_{\Kq{q}(X)}\;.
\end{equation*}
The family $\{\varphi_q\}_{q \in Q}$ induces a unique morphism $\varphi \colon X \to \prod_{q \in Q} \Gq{q}\Kq{q}(X)$ by the universal property of the product. It is clear that, since $\Kq{q}$ commutes with products, \cite[Prop.~7.28(b)]{HJ-JLMS} also implies that $\Kq{q}(\varphi)=\Kq{q}(\varphi_q)=\mathrm{id}_{\Kq{q}(X)}$.

%with $\rho_p\varphi = \varphi_p$ for every $p \in Q$ where $\rho_p \colon \prod_{q \in Q} \Gq{q}\Kq{q}(X) \twoheadrightarrow \Gq{p}\Kq{p}(X)$ is the projection corresponding to $p$. As in \eqref{Kp-prod} one gets
%\begin{equation*}
 % \textstyle
 % \Kq{p}(\prod_{q \in Q} \Gq{q}\Kq{q}(X)) 
 % \;=\; 
  %\Kq{p}(X) 
  %\qquad \text{and hence} \qquad \Kq{p}(\rho_p) \;=\; %\mathrm{id}_{\Kq{p}(X)}\;. 
%\end{equation*}
%As $\rho_p\varphi = \varphi_p$ it follows that $\Kq{p}(\rho_p)\Kq{p}(\varphi) = \Kq{p}(\varphi_p)$. As shown above, $\Kq{p}(\rho_p)$ and $\Kq{p}(\varphi_p)$ are both the identity map on $\Kq{p}(X)$ and hence so is $\Kq{p}(\varphi)$.

We prove that $\varphi$ is an isomorphism. Since for any $q$ in $Q$, $\Kq{q}$ is left exact (as it is a right adjoint), we have that $\Kq{q}(\Ker{\varphi})=\Ker{\Kq{q}(\varphi)}$, which is $0$ since $\Kq{q}(\varphi)$ is an isomorphism. The functors $\{\Kq{q}\colon q\in Q\}$ vanish on an object if and only if that object is zero (see \cite[Prop.~7.19]{HJ-JLMS}) and, hence, we conclude that $\varphi$ is a monomorphism. Consider now the short exact sequence in $\lMod{Q,A}$ induced by $\varphi$. Applying $\Kq{q}$ to it, we get a long exact sequence
\[
\textstyle
0\longrightarrow K_q(X)\stackrel{K_q(\varphi)}{\longrightarrow}K_q(\prod_{p\in Q}G_pK_p(X))\stackrel{0}{\longrightarrow}K_q(\Coker{\varphi})\longrightarrow \mathbb{R}^1\Kq{q}(X)\longrightarrow\cdots\,,
\]
where the third arrow represents the zero map since $K_q(\varphi)$ is an isomorphism. It follows from \cite[Lem.~4.3]{HJ-JLMS} (taking $G=\stalkco{p}$ therein) that the first right derived functor of $\Kq{q}$ satisfies $\mathbb{R}^1\Kq{q}(X) \cong \Ext{\lMod{Q,A}}{1}{\Sq{p}(A)}{X}$. We claim that the latter is zero, showing then that also $\Kq{q}(\Coker{\varphi})=0$ for all $q$ in $Q$ and, thus, that $\Coker{\varphi}$ is an epimorphism, as wanted. Indeed, note that 
  $  \Ext{\lMod{Q,A}}{1}{\Sq{q}(A)}{X} 
    \,\cong\,
    \Ext{\exact{Q}}{1}{\Sq{q}(A)}{X}.$
Since $A$ is projective, every short exact sequence $0 \to X \to E \to \Sq{q}(A) \to 0$ in the abelian exact structure is automatically objectwise split exact, and hence a conflation in $\lMode{Q,A}{\theta}$. The latter Ext-group then vanishes by assumption on $X$, as wanted.
\end{proof}

\subsection{The canonical totally acyclic complex of projectives / injectives}
\label{the-canonical-tac}
\phantom{.} \vspace*{1ex}

The following construction is taken from \cite[Sect.~5]{HJ-TAMS}. It is \emph{a priori} a construction in the abelian category $\lMod{Q,A}$, but we argue that it is still applicable when $\lMod{Q,A}$ is equipped with the objectwise exact structure $\exact{Q}$ induced by an exact structure $\exact{}$ in $\lMod{A}$. The aim is to build, at least in some cases, totally acyclic complexes with respect to the exact structure $\exact{Q}$ whose zero-th cycle is a fixed Gorenstein projective object.

\begin{con}
  \label{con:sigma-computation}
Let $\exact{}$ be an exact structure in $\lMod{A}$ such that $(\lMod{A},\exact{})$ has enough projectives and exact coproducts. Consider an object $X$ in $\lMod{Q,A}$ with the property:
\begin{center}
($\dagger$) $X(q)$ is a projective object in $(\lMod{A},\exact{})$ for every $q \in Q$.
\end{center}
By \thmref{GPrj} this implies that $X$ is a Gorenstein projective object in the exact category $(\lMod{Q,A},\exact{Q})$, so by \dfnref{totally-acyclic} there exists a totally acyclic complex of projectives in $(\lMod{Q,A},\exact{Q})$ with $X$ as its zeroth cycle. Below we give an explicit construction of such a complex $\mathbb{T}(X)$, which we call the canonical totally acyclic complex of projectives for $X$. Note that this construction is functorial in $X$. 

First we observe that under the assumption $(\dagger)$, we have that
\begin{enumerate}
    \item the objects $\mathbb{F}(X)$ and $\mathbb{G}(X)$ from (\ref{eq:FF-and-GG}) are projective objects in $(\lMod{Q,A},\exact{Q})$;
    \item for every $q$ in $Q$, the objects $\mathbb{K}(X)(q)$ and $\mathbb{C}(X)(q)$ (see Subsection \ref{Prelim-on-Q-shaped}) are projective object in $(\lMod{A},\exact{})$.
\end{enumerate}
Indeed, as each $X(q)$ is projective in $(\lMod{A},\exact{})$, the definition of $\mathbb{F}(X)$ and \prpref{Prj-transfer}(a) shows that $\mathbb{F}(X)$ is projective in $(\lMod{Q,A},\exact{Q})$. In particular, each $\mathbb{F}(X)(q)$ is projective in $(\lMod{A},\exact{})$ by \prpref{Prj-transfer}(c), and since $\mathbb{K}(X)(q)$ is a direct summand of $\mathbb{F}(X)(q)$, it follows that $\mathbb{K}(X)(q)$ is projective as well. The definition of $\mathbb{G}(X)$, combined with the isomorphisms in \rmkref{Frobenius} and another application of \prpref{Prj-transfer}(a), shows that $\mathbb{G}(X)$ is projective in $(\lMod{Q,A},\exact{Q})$. As in the previous case, it now follows that each $\mathbb{C}(X)(q)$ is projective in $(\lMod{A},\exact{})$.

By part (2) above, the objects $\mathbb{K}(X)$ and $\mathbb{C}(X)$ have the same property ($\dagger$) as $X$. In particular they are Gorenstein projective in $(\lMod{Q,A},\exact{Q})$ by \thmref{GPrj}, so the procedure above can be iterated. Thus, if we write the $i$-fold compositions of $\mathbb{K}$ and $\mathbb{C}$ as $\mathbb{K}^i \,=\, \mathbb{K} \circ \cdots \circ \mathbb{K}$
and $\mathbb{C}^i \,=\, \mathbb{C} \circ \cdots \circ \mathbb{C}$, then the objects $\mathbb{K}^i(X)$ and $\mathbb{C}^i(X)$ have the property that $\mathbb{K}^i(X)(q)$ and $\mathbb{C}^i(X)(q)$ are projective in $(\lMod{A},\exact{})$ for every $q$ in $Q$, and $\mathbb{F}(\mathbb{K}^i(X))$ and $\mathbb{G}(\mathbb{C}^i(X))$ are projective in $(\lMod{Q,A},\exact{Q})$. Note that the short exact sequences of the form
\begin{equation*}
  \xymatrix@C=1.0pc{
    0 \ar[r] & \mathbb{K}^{i+1}(X) \ar[r] & \mathbb{F}(\mathbb{K}^i(X)) \ar[r] & \mathbb{K}^i(X) \ar[r] & 0
  }
  \quad \text{ and } \quad
  \xymatrix@C=1.0pc{
    0 \ar[r] & \mathbb{C}^i(X) \ar[r] & \mathbb{G}(\mathbb{C}^i(X)) \ar[r] & \mathbb{C}^{i+1}(X) \ar[r] & 0
  }
\end{equation*} 
for $i \geqslant 0$ are all objectwise split (see Subsection \ref{Prelim-on-Q-shaped}) and, therefore, they are conflations in $\exact{Q}$. If we glue them together, we obtain an acyclic complex of projectives in $(\lMod{Q,A},\exact{Q})$ of the form
\begin{equation*}
  \xymatrix@R=1.0ex@C=1.1pc{
    {} & {} &
    \mathbb{T}^{-3}(X) \ar@{=}[d] &
    \mathbb{T}^{-2}(X) \ar@{=}[d] &
    \mathbb{T}^{-1}(X) \ar@{=}[d] &
    \mathbb{T}^0(X) \ar@{=}[d] &
    \mathbb{T}^1(X) \ar@{=}[d] &
    \mathbb{T}^2(X) \ar@{=}[d] &
    \\  
    \mathbb{T}(X) \,=  & \cdots \ar[r] &
    \mathbb{F}(\mathbb{K}^2(X)) \ar[r] &
    \mathbb{F}(\mathbb{K}(X)) \ar[r] &
    \mathbb{F}(X) \ar[r] &
    \mathbb{G}(X) \ar[r] &
    \mathbb{G}(\mathbb{C}(X)) \ar[r] &
    \mathbb{G}(\mathbb{C}^2(X)) \ar[r] &
    \cdots
  }
\end{equation*}
 with $\Cy{0}{\,(\mathbb{T}(X))} = X$. This complex is furthermore totally acyclic by Lemma \ref{lem:exact-eqc}, since for all $q$ in $Q$, $\mathbb{T}(X)(q)$ is a splicing of split exact sequences. We call this complex the \textbf{canonical totally acyclic complex of projectives for $X$.} Following the notation set before \dfnref{H} we can choose the following representatives of the projective equivalence class of the cycles of $\mathbb{T}(X)$ as follows:
\begin{equation}
  \label{eq:sigma-computation}
  \begin{gathered}
  \left\{
  \setlength{\arraycolsep}{2pt}
  \renewcommand{\arraystretch}{1.1}    
  \begin{array}{rclclcl}
    \sigma^{-n}X &=& \Cy{-n}{\,(\mathbb{T}(X))} &=& \mathbb{K}^n(X) & \ \ \text{for} \ \ & n>0    \\
    \sigma^0X &=& \Cy{0}{\,(\mathbb{T}(X))} &=& X & & 
    \\
    \sigma^nX &=& \Cy{n}{\,(\mathbb{T}(X))} &=& \mathbb{C}^n(X) & \ \ \text{for} \ \ & n>0\;.
  \end{array}
  \right.
  \end{gathered}
\end{equation}
Note that since $\mathbb{K}^0$ and $\mathbb{C}^0$ are both equal to the identity functor we have $\mathbb{K}^0(X) = X = \mathbb{C}^0(X)$.

Dually, if an object $Y$ is an object of $\lMod{Q,A}$ with the property:
\begin{center}
($\ddagger$) $Y(q)$ is an injective object in $(\lMod{A},\exact{})$ for every $q \in Q$,
\end{center}
then we know from \thmref{GInj} that $Y$ is a Gorenstein injective object in the exact category $(\lMod{Q,A},\exact{Q})$. In this case, $\mathbb{T}(Y)$ becomes the canonical totally acyclic complex of \textsl{injectives} in $(\lMod{Q,A},\exact{Q})$ for the object $Y$, and the formulas \eqref{sigma-computation} are still valid, cf.~\rmkref{efficient}.
\end{con}

\begin{rmk} Under the weaker assumption that each $X(q)$ is a \textsl{Gorenstein projective} object in $(\lMod{A},\exact{})$, $X$ would still be a Gorenstein projective object  in $(\lMod{Q,A},\exact{Q})$ by \thmref{GPrj}, and hence is the zeroth cycle of a totally acyclic complex of projectives in $(\lMod{Q,A},\exact{Q})$. However, under this weaker assumption, the complex $\mathbb{T}(X)$ would \textsl{not}, in general, consist of projectives in $(\lMod{Q,A},\exact{Q})$ because part (1) of the construction fails.
\end{rmk}

Recall from \cite[Def.~5.1]{HJ-TAMS} that the \textbf{support} of an object $X$ in $\lMod{Q,A}$ is $\supp{X} = \{q \in Q \;|\; X(q) \neq 0 \}$.

\begin{rmk}
  \label{rmk:values}
  Adopt the setup of \conref{sigma-computation}, including the assumption ($\dagger$). One has:
\begin{prt}
\item The functors $\mathbb{F}(X)$, $\mathbb{G}(X)$, $\mathbb{K}(X)$, and $\mathbb{C}(X)$ always take values in $\operatorname{Add} \{X(q) \,|\, q \in Q \} \subseteq \Prj{(\lMod{A},\exact{})}$.
\item If $X$ has finite support, then $\mathbb{F}(X)$, $\mathbb{G}(X)$, $\mathbb{K}(X)$, and $\mathbb{C}(X)$ have finite support and these functors even take values in $\operatorname{add} \{X(q) \,|\, q \in Q \}$.
\end{prt}  
Indeed, for the functors $\mathbb{F}(X)$ and $\mathbb{G}(X)$, the claim in part (a) follows immediately from the definitions in \eqref{FF-and-GG} and the isomorphisms in \rmkref{Frobenius}. For the functors $\mathbb{K}(X)$ and $\mathbb{C}(X)$, the claim in part (a) now follows as the sequences in \eqref{varepsilon-eta} are objectwise split. Part (b) follows immediately from \cite[Lem.~5.8]{HJ-TAMS} and its proof. 

Note that if $X$ has finite support, then successive applications of part (b) shows that for every $i \geqslant 0$ the functors $\mathbb{F}^i(X)$, $\mathbb{G}^i(X)$, $\mathbb{K}^i(X)$ and $\mathbb{C}^i(X)$ also have finite support and they take values in $\operatorname{add} \{X(q) \,|\, q \in Q \}$.
\end{rmk}

We now establish a useful property of the canonical totally acyclic complex of projectives.

\begin{prp}
  \label{prp:canonical-tac-Prj}
Let $\exact{}$ be an exact structure in $\lMod{A}$ such that $(\lMod{A},\exact{})$ has enough projectives and exact coproducts. If $U$ is an object in $\lMod{Q}$ such that $U(q)$ is a projective $\Bbbk$-module for every $q \in Q$, and $T$ is projective in the exact category $(\lMod{A},\exact{})$, then the functor $U \otimes_\Bbbk T$ in $\lMod{Q,A}$ takes values in $\Prj{(\lMod{A},\exact{})}$ and 
\begin{equation*}
  \mathbb{T}(U \otimes_\Bbbk T)
  \;\cong\;
  \mathbb{T}(U) \otimes_\Bbbk T
  \qquad \text{and} \qquad
  \sigma^n(U \otimes_\Bbbk T) \;\cong\; 
  (\sigma^n U) \otimes_\Bbbk T
  \ \text{ for } \
  n \in \mathbb{Z}\;.
\end{equation*}
\end{prp}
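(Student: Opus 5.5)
The plan is to show that the functor $-\otimes_\Bbbk T\colon \lMod{Q}\to\lMod{Q,A}$ commutes with all the ingredients of \conref{sigma-computation}. These ingredients are $\mathbb{F}$, $\mathbb{G}$, $\mathbb{K}$ and $\mathbb{C}$, together with the natural maps $\varepsilon$ and $\eta$. Once this is in place, the isomorphisms $\mathbb{T}(U\otimes_\Bbbk T)\cong\mathbb{T}(U)\otimes_\Bbbk T$ and $\sigma^n(U\otimes_\Bbbk T)\cong(\sigma^nU)\otimes_\Bbbk T$ follow by induction using \eqref{sigma-computation}.

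First I would check that $U\otimes_\Bbbk T$ satisfies $(\dagger)$. Since $Q$ is Hom-finite and locally bounded, I expect $U(q)$ to be finitely generated in the cases of interest. In that case $U(q)$ is a summand of $\Bbbk^m$, so $U(q)\otimes_\Bbbk T$ is a summand of $T^m$ and hence projective in $(\lMod{A},\exact{})$. If $U(q)$ is only projective, I would instead use that it is a summand of a free module $\Bbbk^{(I)}$. Then $U(q)\otimes_\Bbbk T$ is a summand of $T^{(I)}$, which is projective because projectives are closed under coproducts; this uses that $\exact{}$ has exact coproducts.

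Next come the isomorphisms for $\mathbb{F}$ and $\mathbb{G}$. By associativity of tensor products,
\[
\Fq{q}(U(q)\otimes_\Bbbk T)=Q(q,-)\otimes_\Bbbk U(q)\otimes_\Bbbk T\cong \Fq{q}(U(q))\otimes_\Bbbk T,
\]
and tensor products commute with coproducts, so $\mathbb{F}(U\otimes_\Bbbk T)\cong\mathbb{F}(U)\otimes_\Bbbk T$. Under this identification the counit $\varepsilon^{U\otimes T}$ corresponds to $\varepsilon^U\otimes T$.

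For $\mathbb{G}$ I would not handle products and $\Hom{\Bbbk}{Q(-,q)}{-}$ directly. Instead I would use \rmkref{Frobenius}, which gives $\mathbb{G}\cong\bigoplus_q\Fq{\mathbb{S}^{-1}q}\Eq{q}$. This reduces the claim to the $\mathbb{F}$-type computation. Alternatively, since $Q(-,q)$ is finitely generated projective over $\Bbbk$, one has $\Hom{\Bbbk}{Q(-,q)}{M\otimes_\Bbbk T}\cong \Hom{\Bbbk}{Q(-,q)}{M}\otimes_\Bbbk T$. In either approach I must also verify that $\eta$ is compatible, that is, $\eta^{U\otimes T}$ corresponds to $\eta^U\otimes T$. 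This follows from naturality of the unit of $(\Eq{q},\Gq{q})$ with respect to the functor $-\otimes_\Bbbk T$.

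For $\mathbb{K}$ and $\mathbb{C}$, the sequences $\mathfrak{f}_U$ and $\mathfrak{g}_U$ are objectwise split. Hence $-\otimes_\Bbbk T$ sends them to exact sequences, and comparing with $\mathfrak{f}_{U\otimes T}$ and $\mathfrak{g}_{U\otimes T}$ gives $\mathbb{K}(U\otimes T)\cong\mathbb{K}(U)\otimes T$ and $\mathbb{C}(U\otimes T)\cong\mathbb{C}(U)\otimes T$. Moreover, $\mathbb{K}(U)$ and $\mathbb{C}(U)$ again take values in projective $\Bbbk$-modules, being objectwise summands of $\mathbb{F}(U)$ and $\mathbb{G}(U)$. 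So the argument can be iterated: induction on $i$ gives $\mathbb{K}^i$ and $\mathbb{C}^i$ commuting with $-\otimes_\Bbbk T$, and the splicing in \conref{sigma-computation} yields $\mathbb{T}(U\otimes T)\cong\mathbb{T}(U)\otimes T$. Reading off cycles via \eqref{sigma-computation} gives the statement about $\sigma^n$.

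I expect the main obstacle to be the $\mathbb{G}$ step, namely the interchange of products and $\Hom{\Bbbk}{Q(-,q)}{-}$ with $\otimes_\Bbbk T$ together with the compatibility of $\eta$. I would address it first through the identification in \rmkref{Frobenius}, checking that the isomorphism $\Gq{q}\cong\Fq{\mathbb{S}^{-1}q}$ is natural and $\Bbbk$-linear in the module variable, so that it commutes with $-\otimes_\Bbbk T$.
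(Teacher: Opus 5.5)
Your proposal is correct and takes essentially the same route as the paper. You check $(\dagger)$ via $U(q)\in\operatorname{Add}(\Bbbk)$, identify $\mathbb{F}(U)\otimes_\Bbbk T\cong\mathbb{F}(U\otimes_\Bbbk T)$ by associativity and commutation with coproducts, handle $\mathbb{G}$ through the isomorphism in \rmkref{Frobenius}, and use objectwise splitness of $\mathfrak{f}_U,\mathfrak{g}_U$ to transport $\mathbb{K}$ and $\mathbb{C}$; your explicit remark that $\mathbb{K}(U),\mathbb{C}(U)$ again take projective values, so that the step iterates, simply spells out what the paper leaves to ``by construction''.
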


\begin{proof}
  For each $q \in Q$, since $U(q)$ lies in $\Prj{(\lMod{\Bbbk})} = \operatorname{Add}(\Bbbk)$ we have that 
  $(U \otimes_\Bbbk T)(q) \,=\, U(q) \otimes_\Bbbk T$ lies in  
  $\operatorname{Add}(T) \,\subseteq\, \Prj{(\lMod{A},\exact{})}$.  
  It then makes sense to consider its canonical totally acyclic complex of projectives in $(\lMod{Q,A},\exact{Q})$ following  \conref{sigma-computation}. To prove our assertion, it suffices (by construction) to argue that if we apply the functor $- \otimes_\Bbbk T$ to the canonical short exact sequences 
\begin{equation}
  \label{eq:canonical-ses-1}
  \xymatrix@C=1.5pc{
    \mathbb{K}(U) \ar@{>->}[r] & \mathbb{F}(U) \ar@{->>}[r]^-{\varepsilon^{U}} & U
  }
  \qquad \text{and} \qquad
  \xymatrix@C=1.5pc{
    U \ar@{>->}[r]^-{\eta^{U}} & \mathbb{G}(U) \ar@{->>}[r] & \mathbb{C}(U)
  }
\end{equation}  
in the abelian category $\lMod{Q}$, then we get, up to isomorphism, the canonical conflations  
\begin{equation}
  \label{eq:canonical-ses-2}
  \xymatrix@C=1.7pc{
    \mathbb{K}(U \otimes_\Bbbk T) \ar@{>->}[r] & \mathbb{F}(U \otimes_\Bbbk T) \ar@{->>}[r]^-{\varepsilon^{U \otimes_\Bbbk T}} & U \otimes_\Bbbk T
  }
  \quad \text{ and } \quad
  \xymatrix@C=1.7pc{
    U \otimes_\Bbbk T\ar@{>->}[r]^-{\eta^{U \otimes_\Bbbk T}} & \mathbb{G}(U \otimes_\Bbbk T) \ar@{->>}[r] & \mathbb{C}(U \otimes_\Bbbk T)
  }
\end{equation}  
in the exact category $(\lMod{Q,A},\exact{Q})$. First, observe that since the short exact sequences in \eqref{canonical-ses-1} are objectwise split, by tensoring with $T$ we get new short exact sequences with the same property, namely
\begin{equation}
  \label{eq:canonical-ses-3}
  \xymatrix@C=1.8pc{
    \mathbb{K}(U) \otimes_\Bbbk T \ar@{>->}[r] & \mathbb{F}(U) \otimes_\Bbbk T \ar@{->>}[r]^-{\varepsilon^{U} \otimes_\Bbbk T} & U \otimes_\Bbbk T
  }
  \quad \text{and} \quad
  \xymatrix@C=1.8pc{
    U \otimes_\Bbbk T\ar@{>->}[r]^-{\eta^{U} \otimes_\Bbbk T} & \mathbb{G}(U) \otimes_\Bbbk T \ar@{->>}[r] & \mathbb{C}(U) \otimes_\Bbbk T.
  }
\end{equation}  
The fact that they are objectwise split implies that they  are conflations in $(\lMod{Q,A},\exact{Q})$. For $q$ in $Q$, the definition \eqref{FEG-def} of $\Fq{q}$ and the associativity of the tensor product for $\Bbbk$-modules yield:
\begin{equation*}
  \Fq{q}(U(q)) \otimes_\Bbbk T
  \;\cong\;
  (Q(q,-) \otimes_\Bbbk U(q)) \otimes_\Bbbk T
  \,\cong\, 
  Q(q,-) \otimes_\Bbbk (U(q) \otimes_\Bbbk T)
  \,\cong\, 
  \Fq{q}((U\otimes_\Bbbk T)(q)) \;.
\end{equation*}
It now follows from the definition of $\mathbb{F}$, see \eqref{FF-and-GG}, and the fact that $- \otimes_\Bbbk T$ commutes with coproducts, that $\mathbb{F}(U) \otimes_\Bbbk T \cong \mathbb{F}(U \otimes_\Bbbk T)$. This isomorphism identifies $\varepsilon^U \otimes_\Bbbk T$ with $\varepsilon^{U \otimes_\Bbbk T}$, so it follows that the first sequence in \eqref{canonical-ses-2} is isomorphic to the first sequence in \eqref{canonical-ses-3}. Using the definition of $\mathbb{G}$, see \eqref{FF-and-GG}, and the isomorphism from \rmkref{Frobenius}, it now follows that $\mathbb{G}(U) \otimes_\Bbbk T \cong \mathbb{G}(U \otimes_\Bbbk T)$. This isomorphism identifies $\eta^U \otimes_\Bbbk T$ with $\eta^{U \otimes_\Bbbk T}$, so the second sequence in \eqref{canonical-ses-2} is isomorphic to the second sequence in \eqref{canonical-ses-3}.
\end{proof}

We define some useful functors, one of them based on Definition \ref{dfn:H}, generalising those from \eqref{HHhj}.

\begin{dfn}
  \label{dfn:new-HH}
  For a Gorenstein projective object $U$ of $\rMod{Q}$, $q$ in $Q$, $n$ and integer and $i>0$ we set
\begin{equation*} 
  \sHH{i}{U}{n} \;=\;
  \Ext{Q}{i}{\sigma^n U}{-} 
  \colon \lMod{Q} \longrightarrow \lMod{\Bbbk}\ \ \ \ \ \ \ \ \ \ 
   \sHHt{i}{U}{n} \;=\;
  \Tor{Q}{i}{\sigma^n U}{-} 
  \colon \lMod{Q} \longrightarrow \lMod{\Bbbk}\;.
\end{equation*}  
Note that \smash{$\sHH{i}{U}{n}$} is nothing but the functor $\HH{i}{U}{n}$ from \dfnref{H} in the special case where $\mathcal{E}$ is the abelian category $\lMod{Q}$. It is worth noting that the functors $\sHH{i}{U}{n}$ and $\sHHt{i}{U}{n}$ can be viewed as functors from $\lMod{Q,A} \to \lMod{A}$.
\end{dfn}
 
\begin{rmk}
\label{rmk:comparison-of-H}
The functors from \dfnref{new-HH} generalise the functors from    \eqref{HHhj}. Indeed, we have
\begin{equation*}  
  \sHH{i}{\stalkco{q}}{0} \;=\; \cHhj{i}{q}
  \qquad \text{and} \qquad
  \sHHt{i}{\stalkcn{q}}{0} \;=\; \hHhj{i}{q} \;.
\end{equation*} 
%Note that the category $Q$ is not reflected in the symbols..... 
\end{rmk}
By combining \prpref{canonical-tac-Prj} with the adjunctions from \cite[Props.~3.4 and 3.8]{HJ-JLMS}, we get two useful Ext formulas. Note that \corref{Ext-formula-exact-Prj} gives a formula for Ext in the (general) exact category $(\lMod{Q,A},\exact{Q})$, whereas \corref{Ext-formula-abelian-Prj} only gives a (different) formula for Ext in the abelian category $\lMod{Q,A}$.

\begin{cor}  
  \label{cor:Ext-formula-exact-Prj}
Let $\exact{}$ be an exact structure in $\lMod{A}$ such that $(\lMod{A},\exact{})$ has enough projectives and exact coproducts. Let $U$ be an object in $\lMod{Q}$ such that the $\Bbbk$-module $U(q)$ is projective for every $q$ in $Q$, let $T$ be a projective object in $(\lMod{A},\exact{})$, and $X$ be any object in $\lMod{Q,A}$. For every integer $n$ and $i > 0$ we have
\begin{equation*}
\Ext{(\lMod{Q,A},\exact{Q})}{i}{\sigma^n(U \otimes_\Bbbk T)}{X} 
\;\cong\; 
\Ext{Q}{i}{\sigma^n U}{\Hom{A}{T}{X}} \;.
\end{equation*}
In the notation of \dfnref[Definitions~]{H} and \dfnref[]{new-HH},
this reads
  $\HH{i}{U \otimes_\Bbbk T}{n}(X)\cong \sHH{i}{U}{n}(\Hom{A}{T}{X})$
where \textnormal{`$\Hletterone$'} and \textnormal{`$\mathbb{H}$'} are, respectively, the cohomology functors in the exact category $(\lMod{Q,A},\exact{Q})$  and in the abelian category $\lMod{Q}$.
\end{cor}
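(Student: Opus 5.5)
By \prpref{canonical-tac-Prj}, $U\otimes_\Bbbk T$ takes values in $\Prj{(\lMod{A},\exact{})}$. Its canonical totally acyclic complex of projectives in $(\lMod{Q,A},\exact{Q})$ is $\mathbb{T}(U\otimes_\Bbbk T)\cong\mathbb{T}(U)\otimes_\Bbbk T$, and its cycles are $\sigma^n(U\otimes_\Bbbk T)\cong(\sigma^nU)\otimes_\Bbbk T$. Here $\mathbb{T}(U)$ is the canonical totally acyclic complex of projectives for $U$ in the abelian category $\lMod{Q}$, i.e.\ \conref{sigma-computation} with $A=\Bbbk$ and the abelian exact structure. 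The plan is to compute $\operatorname{Ext}$ on both sides via formula \eqref{Ext-HHom}, using these two complexes, and to identify the resulting Hom-complexes by an adjunction. The left side is computed in $(\lMod{Q,A},\exact{Q})$, the right side in $\lMod{Q}$.

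Concretely, \eqref{Ext-HHom} applied in $(\lMod{Q,A},\exact{Q})$ gives
\[
\Ext{(\lMod{Q,A},\exact{Q})}{i}{\sigma^n(U\otimes_\Bbbk T)}{X}\cong \operatorname{H}^{i-n+1}\Hom{Q,A}{\mathbb{T}(U)\otimes_\Bbbk T}{X},
\]
and applied in $\lMod{Q}$ it gives
\[
\Ext{Q}{i}{\sigma^nU}{\Hom{A}{T}{X}}\cong \operatorname{H}^{i-n+1}\Hom{Q}{\mathbb{T}(U)}{\Hom{A}{T}{X}}.
\]
Two facts justify the second formula. First, $U(q)$ is projective over $\Bbbk$ for every $q$, so by \conref{sigma-computation} (with $\exact{}$ abelian on $\lMod{\Bbbk}$) $\mathbb{T}(U)$ is a totally acyclic complex of projectives in $\lMod{Q}$ with $\Cy{0}=U$. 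Second, the functors $\sHH{i}{U}{n}$ are independent of the choice of $\sigma^nU$ by \lemref{proj-equiv}.

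The remaining step is the natural isomorphism of complexes
\[
\Hom{Q,A}{V\otimes_\Bbbk T}{X}\cong\Hom{Q}{V}{\Hom{A}{T}{X}},
\]
natural in $V\in\lMod{Q}$. This is the tensor-Hom adjunction of \cite[Props.~3.4 and 3.8]{HJ-JLMS}: it holds objectwise as the usual $\Bbbk$-linear adjunction $\Hom{A}{V(q)\otimes_\Bbbk T}{X(q)}\cong\Hom{\Bbbk}{V(q)}{\Hom{A}{T}{X(q)}}$, and it is compatible with the actions of $Q$. Applying it degreewise to $\mathbb{T}(U)$ gives an isomorphism of Hom complexes, because naturality makes it commute with the differentials. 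Taking cohomology in degree $i-n+1$ yields the claim.

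The main point needing care is the left-hand computation. The projective resolution of $\sigma^n(U\otimes_\Bbbk T)$ extracted from $\mathbb{T}(U)\otimes_\Bbbk T$ has to be a genuine projective resolution in the exact category $(\lMod{Q,A},\exact{Q})$: it must consist of $\exact{Q}$-conflations and projectives. This is exactly what \prpref{canonical-tac-Prj} and \conref{sigma-computation} provide, since the constituent sequences are objectwise split. With that in hand, the rest is formal.
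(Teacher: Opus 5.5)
Your proposal is correct and follows the paper's proof. Like the paper, you compute both Ext groups via \eqref{Ext-HHom}, using $\mathbb{T}(U\otimes_\Bbbk T)\cong\mathbb{T}(U)\otimes_\Bbbk T$ from \prpref{canonical-tac-Prj}, and then identify the two Hom complexes degreewise through the adjunction $\Hom{Q,A}{-\otimes_\Bbbk T}{X}\cong\Hom{Q}{-}{\Hom{A}{T}{X}}$ of \cite[Prop.~3.4]{HJ-JLMS} before taking cohomology in degree $i-n+1$.
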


\begin{proof}
  Let $\mathbb{T}(U)$ be the canonical totally acyclic complex of projectives for $U$ in $\lMod{Q}$, see \conref{sigma-computation}. We know from \prpref{canonical-tac-Prj} that $\mathbb{T}(U) \otimes_\Bbbk T\cong \mathbb{T}(U \otimes_\Bbbk T)$, as totally acyclic complexes of projective objects in $(\lMod{Q,A},\exact{Q})$. Together with \eqref{Ext-HHom}, and using an adjunction from \cite[Prop.~3.4]{HJ-JLMS}, we get
  \begin{align*}
      \Ext{(\lMod{Q,A},\exact{Q})}{i}{\sigma^n(U \otimes_\Bbbk T)}{X} &
  \;\cong\; \operatorname{H}^{i-n+1}\Hom{Q,A}{\mathbb{T}(U \otimes_\Bbbk T)}{X}\; \cong\;
  \operatorname{H}^{i-n+1}\Hom{Q,A}{\mathbb{T}(U) \otimes_\Bbbk T}{X}\;\cong \\ 
 & \;\cong\;
  \operatorname{H}^{i-n+1}\Hom{Q}{\mathbb{T}(U)}{\Hom{A}{T}{X}}
  \;\cong\; 
  \Ext{Q}{i}{\sigma^n U}{\Hom{A}{T}{X}} \;. \qedhere
\end{align*}  
\end{proof}

%\begin{rmk}
%  \label{rmk:A-structure}
%For a Gorenstein projective object $U$ in the abelian category $\lMod{Q}$, the cohomology functor \smash{$\HH{i}{U}{n}(-) = \Ext{Q}{i}{\sigma^n U}{-}$} is defined to be a functor $\lMod{Q} \to \lMod{\Bbbk}$, see \dfnref{H} and \rmkref{well-defined}. However, $\HH{i}{U}{n}$ can be extended to a functor $\lMod{Q,A} \to \lMod{A}$, and this is used in the next result. Indeed, if $X \in \lMod{Q,A}$ then we can compute \smash{$\Ext{Q}{i}{\sigma^n U}{X}$} using a projective resolution $\cdots \to P_1 \to P_0 \to \sigma^n U \to 0$ in $\lMod{Q}$, and for each $i$ the $\Bbbk$-module $\Hom{Q}{P_i}{X}$ of $\Bbbk$-linear natural transformations from $P_i$ to $X$ has a natural $A$-module structure as $X$ takes values in $\lMod{A}$ (and not just in $\lMod{\Bbbk}$).
%\end{rmk}

\begin{cor}  
  \label{cor:Ext-formula-abelian-Prj}
Let $U$ be an object in $\lMod{Q}$ such that the $\Bbbk$-module $U(q)$ is projective for every $q \in Q$, let $L$ be a projective left $A$-module, and $X$ be any object in $\lMod{Q,A}$. For every integer $n$ and $i > 0$ we have an isomorphism
\begin{equation*}
\Ext{Q,A}{i}{\sigma^n(U \otimes_\Bbbk L)}{X} \;\cong\; 
\Hom{A}{L}{\Ext{Q}{i}{\sigma^n U}{X}}.
\end{equation*}
In the notation of \dfnref[Definitions~]{H} and \dfnref[]{new-HH}
this reads $\HH{i}{U \otimes_\Bbbk L}{n}(X)
  \ \cong \
  \Hom{A}{L}{\sHH{i}{U}{n}(X)}$
where \textnormal{`$\Hletterone$'} and \textnormal{`$\mathbb{H}$'} are, respectively, the cohomology functors in the abelian category $\lMod{Q,A}$ and in the abelian category $\lMod{Q}$. 
\end{cor}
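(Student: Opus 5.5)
We specialise the argument of \corref{Ext-formula-exact-Prj} to the abelian exact structure, i.e.\ the $\theta$-exact structure with $\theta={}_A\operatorname{Prj}$ of \exaref{Prj-PPrj-Mod}(a). This structure has enough projectives and exact coproducts, and $L$ is projective in it. So \prpref{canonical-tac-Prj} applies with $T=L$. It gives that $U\otimes_\Bbbk L$ takes projective values, and that
\[
\mathbb{T}(U\otimes_\Bbbk L)\cong\mathbb{T}(U)\otimes_\Bbbk L
\]
is a totally acyclic complex of projectives in the abelian category $\lMod{Q,A}$ whose $n$-th cycle represents $\sigma^n(U\otimes_\Bbbk L)$.

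Next we apply \eqref{Ext-HHom} in $\lMod{Q,A}$ to this complex:
\[
\Ext{Q,A}{i}{\sigma^n(U\otimes_\Bbbk L)}{X}\cong \operatorname{H}^{i-n+1}\Hom{Q,A}{\mathbb{T}(U)\otimes_\Bbbk L}{X}.
\]
Rather than the adjunction used in \corref{Ext-formula-exact-Prj}, which would give $\Hom{A}{L}{X}$ in the second variable, we use the other adjunction from \cite[Props.~3.4 and 3.8]{HJ-JLMS}. For each term $P$ of $\mathbb{T}(U)$ it yields
\[
\Hom{Q,A}{P\otimes_\Bbbk L}{X}\cong\Hom{A}{L}{\Hom{Q}{P}{X}}.
\]
Here $\Hom{Q}{P}{X}$ carries its natural $A$-module structure, inherited from $X$ taking values in $\lMod{A}$. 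This isomorphism is natural in $P$, so it is an isomorphism of complexes
\[
\Hom{Q,A}{\mathbb{T}(U)\otimes_\Bbbk L}{X}\cong \Hom{A}{L}{\Hom{Q}{\mathbb{T}(U)}{X}}.
\]

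Since $L$ is a projective $A$-module, $\Hom{A}{L}{-}$ is exact and therefore commutes with cohomology. Hence
\[
\operatorname{H}^{i-n+1}\Hom{A}{L}{\Hom{Q}{\mathbb{T}(U)}{X}}\cong \Hom{A}{L}{\operatorname{H}^{i-n+1}\Hom{Q}{\mathbb{T}(U)}{X}}.
\]
Applying \eqref{Ext-HHom} once more, now in $\lMod{Q}$ with the totally acyclic complex $\mathbb{T}(U)$ (whose existence is guaranteed by \conref{sigma-computation} for the abelian structure on $\lMod{\Bbbk}$), identifies the inner cohomology with $\Ext{Q}{i}{\sigma^n U}{X}$. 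This gives the claimed isomorphism.

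The main obstacle is checking that the identification $\operatorname{H}^{i-n+1}\Hom{Q}{\mathbb{T}(U)}{X}\cong\Ext{Q}{i}{\sigma^nU}{X}$ respects the $A$-module structures. For this we compute Ext in $\lMod{Q}$ using the projective resolution given by the left half of the truncated complex $\mathbb{T}(U)$. The $A$-action on $X$ acts by $\lMod{Q}$-endomorphisms, so by functoriality of Ext in the second variable all the identifications are $A$-linear. We also check that the adjunction isomorphism is natural in $P$, which follows from its construction.
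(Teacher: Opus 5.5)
Your proposal is correct and follows the paper's proof essentially verbatim: the paper runs the same chain of isomorphisms. It applies \eqref{Ext-HHom} to $\mathbb{T}(U\otimes_\Bbbk L)\cong\mathbb{T}(U)\otimes_\Bbbk L$, then the adjunction from \cite[Prop.~3.8]{HJ-JLMS}, then exactness of $\Hom{A}{L}{-}$, and finally \eqref{Ext-HHom} again in $\lMod{Q}$. Your extra check that the last identification is $A$-linear is a detail the paper leaves implicit.
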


\begin{proof}
  Let $\mathbb{T}(U)$ be the canonical totally acyclic complex of projectives for $U$ in $\lMod{Q}$, see \conref{sigma-computation}. A chain of isomorphisms as in the proof of   \corref{Ext-formula-exact-Prj} follows, again using an adjunction from \cite[Prop.~3.8]{HJ-JLMS} and using the fact that $\Hom{A}{L}{-} \colon \lMod{A} \to \lMod{\Bbbk}$ is exact and hence commutes with cohomology.
\begin{align*}
  \Ext{Q,A}{i}{\sigma^n(U \otimes_\Bbbk L)}{X} 
  &\cong 
  \operatorname{H}^{i-n+1}\Hom{Q,A}{\mathbb{T}(U \otimes_\Bbbk T)}{X}\cong \operatorname{H}^{i-n+1}\Hom{Q,A}{\mathbb{T}(U) \otimes_\Bbbk L}{X}
  \\
  \cong
  \operatorname{H}^{i-n+1}\Hom{A}{L}{\Hom{Q}{\mathbb{T}(U)}{X}} &\cong 
  \Hom{A}{L}{\operatorname{H}^{i-n+1}\Hom{Q}{\mathbb{T}(U)}{X}}
  \cong 
  \Hom{A}{L}{\Ext{Q}{i}{\sigma^n U}{X}} \;. \qedhere
\end{align*}  
\end{proof}

\prpref{canonical-tac-Prj} and \corref[Corollaries~]{Ext-formula-exact-Prj} and \corref[]{Ext-formula-abelian-Prj} have dual versions, which are given in \prpref[]{canonical-tac-Inj}, \corref[]{Ext-formula-exact-Inj}, and \corref[]{Ext-formula-abelian-Inj} below. Note that in these dual results, the object $U$ in $\lMod{Q}$ is replaced by an object $R$ in $\rMod{Q}$.

\begin{prp}
  \label{prp:canonical-tac-Inj}
Let $\exact{}$ be an exact structure in $\lMod{A}$ such that $(\lMod{A},\exact{})$ has enough injectives and exact products. If $R$ is an object in $\rMod{Q}$ be an object such that $R(q)$ is projective for every $q \in Q$, and $I$ is injective in the exact category $(\lMod{A},\exact{})$, then the functor $\Hom{\Bbbk}{R}{I} \in \lMod{Q,A}$ takes values in $\Inj{(\lMod{A},\exact{})}$ and 
\begin{equation*}
  \mathbb{T}(\Hom{\Bbbk}{R}{I})
  \;\cong\;
  \upSigma\mspace{1mu} \Hom{\Bbbk}{\mathbb{T}(R)}{I}
  \qquad \text{and} \qquad
  \sigma^{-n}\Hom{\Bbbk}{R}{I} \;\cong\; 
  \Hom{\Bbbk}{\sigma^n R}{I}
  \ \text{ for } \
  n \in \mathbb{Z}\;.
\end{equation*}
\end{prp}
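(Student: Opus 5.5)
The plan is to dualise the proof of \prpref{canonical-tac-Prj}, with $-\otimes_\Bbbk T$ replaced by the contravariant functor $\Hom{\Bbbk}{-}{I}\colon \rMod{Q}\to\lMod{Q,A}$. This functor turns the right $Q$-module $R$ into a left $Q$-shaped $A$-module. I first check that $\Hom{\Bbbk}{R}{I}$ takes injective values. Since $R(q)$ is a projective $\Bbbk$-module, it is a summand of some $\Bbbk^{(J)}$. Hence $\Hom{\Bbbk}{R(q)}{I}$ is a summand of $I^J$, which lies in $\Inj{(\lMod{A},\exact{})}$ because $(\lMod{A},\exact{})$ has exact products, so products of injectives are injective. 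Therefore \conref{sigma-computation}, in its dual form ($\ddagger$), applies to $Y=\Hom{\Bbbk}{R}{I}$ and produces the canonical totally acyclic complex of injectives $\mathbb{T}(Y)$.

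Next I apply $\Hom{\Bbbk}{-}{I}$ to the two canonical objectwise split sequences for $R$ in $\rMod{Q}$,
\[
\mathbb{K}(R)\rightarrowtail\mathbb{F}(R)\twoheadrightarrow R \qquad\text{and}\qquad R\rightarrowtail\mathbb{G}(R)\twoheadrightarrow\mathbb{C}(R).
\]
Because they are objectwise split, the results are again objectwise split, hence conflations in $\exact{Q}$. The contravariance reverses them: the first becomes $Y\rightarrowtail \Hom{\Bbbk}{\mathbb{F}(R)}{I}\twoheadrightarrow \Hom{\Bbbk}{\mathbb{K}(R)}{I}$, and the second becomes $\Hom{\Bbbk}{\mathbb{C}(R)}{I}\rightarrowtail\Hom{\Bbbk}{\mathbb{G}(R)}{I}\twoheadrightarrow Y$. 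The key identifications I need are
\[
\Hom{\Bbbk}{\mathbb{F}(R)}{I}\cong\mathbb{G}(Y) \qquad\text{and}\qquad \Hom{\Bbbk}{\mathbb{G}(R)}{I}\cong\mathbb{F}(Y),
\]
compatibly with the unit and counit maps.

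For the first identification I use that $\Hom{\Bbbk}{-}{I}$ turns coproducts into products. Tensor--Hom adjunction then gives, in the right-module version of \eqref{FEG-def},
\[
\Hom{\Bbbk}{Q(-,q)\otimes_\Bbbk R(q)}{I}\cong \Hom{\Bbbk}{Q(-,q)}{\Hom{\Bbbk}{R(q)}{I}}=\Gq{q}(Y(q)).
\]
So $\Hom{\Bbbk}{\mathbb{F}(R)}{I}\cong\prod_q \Gq{q}(Y(q))=\mathbb{G}(Y)$, and the counit $\varepsilon^R$ corresponds to the unit $\eta^Y$. For the second identification I first rewrite $\mathbb{G}(R)$ as a coproduct of $\Fq{}$'s using \rmkref{Frobenius}, namely $\prod\Gq{q}\cong\bigoplus\Fq{\mathbb{S}^{-1}q}$. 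Then the same Hom-tensor argument, together with the fact that the $Q(-,q)$ are finitely generated projective over $\Bbbk$, gives $\Hom{\Bbbk}{\mathbb{G}(R)}{I}\cong\mathbb{F}(Y)$, with $\eta^R$ corresponding to $\varepsilon^Y$.

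Granting these identifications, the dualised sequences are isomorphic to the canonical sequences $\mathfrak{g}_Y$ and $\mathfrak{f}_Y$. Hence $\Hom{\Bbbk}{\mathbb{K}(R)}{I}\cong\mathbb{C}(Y)$ and $\Hom{\Bbbk}{\mathbb{C}(R)}{I}\cong\mathbb{K}(Y)$. Iterating this (each $\mathbb{K}^i(R)$ and $\mathbb{C}^i(R)$ again has $\Bbbk$-projective values, by \rmkref{values}) shows that $\Hom{\Bbbk}{\mathbb{T}(R)}{I}$, which is indexed contravariantly, coincides with $\mathbb{T}(Y)$ up to a shift and reindexing. I will set this up so that the shift is the suspension $\upSigma$ in the statement. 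Reading off cycles via \eqref{sigma-computation} then yields $\sigma^{-n}Y\cong\Hom{\Bbbk}{\sigma^nR}{I}$.

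I expect the main obstacle to be the second identification $\Hom{\Bbbk}{\mathbb{G}(R)}{I}\cong\mathbb{F}(Y)$, because it requires verifying that the composite of the isomorphisms from \rmkref{Frobenius} and \cite[Lems.~3.4, 3.7]{HJ-TAMS} really sends $\Hom{\Bbbk}{\eta^R}{I}$ to $\varepsilon^Y$, not merely matching objects. I would check this componentwise via the universal properties defining $\varepsilon$ and $\eta$, using the adjunctions $(\Fq{q},\Eq{q})$ and $(\Eq{q},\Gq{q})$. A smaller, bookkeeping-level difficulty is getting the degree convention right, so that the shift comes out exactly as $\upSigma$ and $\sigma^{-n}$ rather than off by one.
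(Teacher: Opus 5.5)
Your proposal is correct and follows the same route as the paper. The paper likewise dualises the proof of \prpref{canonical-tac-Prj} by applying $\Hom{\Bbbk}{-}{I}$ to the canonical objectwise split sequences for $R$, and only writes out the cycle computation $\Cy{-n}{(\upSigma\Hom{\Bbbk}{\mathbb{T}(R)}{I})}\cong\Cy{-n+1}{\Hom{\Bbbk}{\mathbb{T}(R)}{I}}\cong\Hom{\Bbbk}{\Cy{n}{\mathbb{T}(R)}}{I}$ that accounts for the shift $\upSigma$; your explicit matching of $\Hom{\Bbbk}{\varepsilon^R}{I}$ with $\eta^Y$ and of $\Hom{\Bbbk}{\eta^R}{I}$ with $\varepsilon^Y$ fills in what the paper leaves implicit, and the second matching is easier than you fear, since by local boundedness each $\mathbb{G}(R)(p)$ is a finite direct sum and the check can be done objectwise using that every $Q(q,p)$ is finitely generated projective over $\Bbbk$.
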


\begin{proof}
This proof is similar to that of \prpref{canonical-tac-Prj}. However, note that a shift involved since one has
\begin{equation*}
  \Cy{-n}{\,(\upSigma\mspace{1mu}\Hom{\Bbbk}{\mathbb{T}(R)}{I})}
  \;\cong\;  
  \Cy{-n+1}{\,\Hom{\Bbbk}{\mathbb{T}(R)}{I}} 
  \;\cong\;
  \Hom{\Bbbk}{\Cy{n}{\,(\mathbb{T}(R))}}{I} \;.
\end{equation*}
Hence, it is the shifted complex $\upSigma\mspace{1mu}\Hom{\Bbbk}{\mathbb{T}(R)}{I}$, whose zeroth cycle is
\begin{equation*}
  \Cy{0}{\,(\upSigma\mspace{1mu}\Hom{\Bbbk}{\mathbb{T}(R)}{I})}
  \;\cong\;
  \Hom{\Bbbk}{\Cy{0}{\,(\mathbb{T}(R))}}{I} 
  \;=\;
  \Hom{\Bbbk}{R}{I}\;,
\end{equation*}
 that becomes the canonical totally acyclic complex of injectives for $\Hom{\Bbbk}{R}{I}$ in $(\lMod{Q,A},\exact{Q})$.
%{\color{red}
%We have:
%\begin{equation*}
%  \Hom{\Bbbk}{\mathbb{T}(R)}{I}^{-n} \;=\;
%  \Hom{\Bbbk}{\mathbb{T}^{n}(R)}{I}
%  \qquad \text{and} \qquad
%  \partial^{-n}_{\Hom{}{\mathbb{T}(R)}{I}} \;=\;
%  \Hom{\Bbbk}{\partial^{n-1}_{\mathbb{T}(R)}}{I} \;,
%\end{equation*}  
%so left exactness of $\Hom{\Bbbk}{-}{I}$ (see also the end of Subsection~\ref{efficient}) yields:
%\begin{align*}
%  \Cy{-n}{\,\Hom{\Bbbk}{\mathbb{T}(R)}{I}} &\;=\;
%  \Ker{\partial^{-n}_{\Hom{}{\mathbb{T}(R)}{I}}} \;=\;
%  \Ker{\Hom{\Bbbk}{\partial^{n-1}_{\mathbb{T}(R)}}{I}} 
%  \\
%  &\;\cong\;
%  \Hom{\Bbbk}{\Coker{\partial^{n-1}_{\mathbb{T}(R)}}}{I} \;=\;
%  \Hom{\Bbbk}{\Cy{n+1}{\,(\mathbb{T}(R))}}{I} \;.
%\end{align*}
%Thus, the shifted complex $\upSigma\mspace{1mu}\Hom{\Bbbk}{\mathbb{T}(R)}{I}$ satisfies
%\begin{equation*}
%  \Cy{-n}{\,(\upSigma\mspace{1mu}\Hom{\Bbbk}{\mathbb{T}(R)}{I})} \;\cong\;
%  \Hom{\Bbbk}{\Cy{n}{\,(\mathbb{T}(R))}}{I} \;.
%\end{equation*}
%}
\end{proof}

\begin{cor}  
  \label{cor:Ext-formula-exact-Inj}  
Let $\exact{}$ be an exact structure in $\lMod{A}$ such that $(\lMod{A},\exact{})$ has enough injectives and exact products. Let $R$ be an object in $\rMod{Q}$ such that the $\Bbbk$-module $R(q)$ is projective for every $q$ in $Q$, let $I$ be an injective object in $(\lMod{A},\exact{})$, and $X$ be any object in $\lMod{Q,A}$. For every integer $n$ and $i > 0$ we have
\begin{equation*}
  \Ext{(\lMod{Q,A},\exact{Q})}{i}{X}{\sigma^{-n}\Hom{\Bbbk}{R}{I}}
\;\cong\; 
\Ext{Q^\mathrm{op}}{i}{\sigma^n R}{\Hom{A}{X}{I}} \;.
\end{equation*}
In the notation of \eqref{HHop} and \dfnref[Definitions~]{H} and \dfnref[]{new-HH}, this reads  
  $\HHop{i}{\Hom{\Bbbk}{R}{I}}{n}(X)
  \ \cong \
  \sHH{i}{R}{n}(\Hom{A}{X}{I})$,
where \textnormal{`$\check{\Hletterone}$'} and \textnormal{`$\mathbb{H}$'} are, respectively, the (dual) cohomology functor in the exact category $(\lMod{Q,A},\exact{Q})$ and the (``projective'') cohomology functor in the abelian category $\lMod{Q^\mathrm{op}} = \rMod{Q}$.
\end{cor}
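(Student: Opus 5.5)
We mimic the proof of \corref{Ext-formula-exact-Prj}, dualised. By \prpref{canonical-tac-Inj}, the complex $\upSigma\mspace{1mu}\Hom{\Bbbk}{\mathbb{T}(R)}{I}$ is (isomorphic to) the canonical totally acyclic complex of injectives $\mathbb{T}(\Hom{\Bbbk}{R}{I})$ in $(\lMod{Q,A},\exact{Q})$. Moreover, $\sigma^{-n}\Hom{\Bbbk}{R}{I}\cong\Hom{\Bbbk}{\sigma^n R}{I}$ is its $(-n)$-th cycle, since $\Cy{-n}{}$ of the shifted complex is $\Hom{\Bbbk}{\Cy{n}{\,(\mathbb{T}(R))}}{I}$.

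First we compute the left-hand Ext via the injective analogue \eqref{Ext-HHom-inj}. The part of $\mathbb{T}(\Hom{\Bbbk}{R}{I})$ in degrees $\geqslant -n$ is an injective coresolution of the $(-n)$-th cycle in the exact category. Therefore
\[
\Ext{(\lMod{Q,A},\exact{Q})}{i}{X}{\sigma^{-n}\Hom{\Bbbk}{R}{I}}\cong \operatorname{H}^{i-n}\Hom{Q,A}{X}{\upSigma\mspace{1mu}\Hom{\Bbbk}{\mathbb{T}(R)}{I}},
\]
with the indexing adjusted so that the shift $\upSigma$ absorbs the discrepancy between \eqref{Ext-HHom-inj} and \eqref{Ext-HHom}.

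Next we use the Hom–tensor adjunction from \cite[Prop.~3.8]{HJ-JLMS}, namely the natural isomorphism $\Hom{Q,A}{X}{\Hom{\Bbbk}{R'}{I}}\cong\Hom{Q^{\mathrm{op}}}{R'}{\Hom{A}{X}{I}}$ for $R'$ in $\rMod{Q}$. Applied degreewise, it gives an isomorphism of complexes
\[
\Hom{Q,A}{X}{\Hom{\Bbbk}{\mathbb{T}(R)}{I}}\cong\Hom{Q^{\mathrm{op}}}{\mathbb{T}(R)}{\Hom{A}{X}{I}}.
\]
Here $\mathbb{T}(R)$ is the canonical totally acyclic complex of projectives for $R$ in the abelian category $\rMod{Q}$; this makes sense because each $R(q)$ is $\Bbbk$-projective, so \conref{sigma-computation} applies with the abelian structure on $\lMod{\Bbbk}$. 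By \eqref{Ext-HHom} in $\rMod{Q}$, the cohomology of the right-hand complex in degree $i-n+1$ is $\Ext{Q^{\mathrm{op}}}{i}{\sigma^n R}{\Hom{A}{X}{I}}$. Combining this with the previous display, the shift $\upSigma$ converts $\operatorname{H}^{i-n}$ into $\operatorname{H}^{i-n+1}$, which matches.

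The main obstacle is bookkeeping of indices and signs. We must check that the shift in \prpref{canonical-tac-Inj}, the contravariance of $\Hom{\Bbbk}{-}{I}$ (which turns degree $m$ of $\mathbb{T}(R)$ into degree $-m$), and the conventions of \eqref{Ext-HHom-inj} and \eqref{Ext-HHom} all line up to give exactly $\sigma^n R$ on the right for $\sigma^{-n}$ on the left. We would verify this concretely in the simplest case, $n=0$ and $i=1$. There both sides reduce to the cokernel of $\Hom{Q,A}{X}{\mathbb{T}^{-1}}\to\Hom{Q,A}{X}{\mathbb{T}^{0}}$-type maps computed via the adjunction, and this check fixes the conventions; the general case then follows from \prpref{dimension-shifting}(b) on both sides. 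Naturality of the adjunction in $R'$ ensures the differentials correspond, so the isomorphism is one of complexes and passes to cohomology.
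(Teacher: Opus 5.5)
Your proposal is correct and matches the paper's argument, which it describes as the dual of the proof of \corref{Ext-formula-exact-Prj}: use \prpref{canonical-tac-Inj} to identify $\upSigma\Hom{\Bbbk}{\mathbb{T}(R)}{I}$ as the totally acyclic complex of injectives, compute the left side as $\operatorname{H}^{i-n}$ via \eqref{Ext-HHom-inj}, absorb the shift to get $\operatorname{H}^{i-n+1}\Hom{Q,A}{X}{\Hom{\Bbbk}{\mathbb{T}(R)}{I}}$, apply the Hom--tensor adjunction degreewise, and conclude with \eqref{Ext-HHom} in $\rMod{Q}$. Your closing plan to check $n=0$, $i=1$ and then invoke \prpref{dimension-shifting}(b) is superfluous, since the isomorphism of complexes already handles every $i$ and $n$ at once; moreover, dimension shifting from $(1,0)$ would only reach the pairs $(1+d,d)$.
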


\begin{proof}
  Similar to the proof of \corref{Ext-formula-exact-Prj} (using   \prpref{canonical-tac-Inj} instead of \prpref[]{canonical-tac-Prj}).
\end{proof}

%%%%%%%%%%%%%%%%%%%%%%%%%%%%%%%%%%%%%%%%%%%%%%%%%%%%%%%%%%
%           DO NOT DELETE THE PROOF BELOW                %
%%%%%%%%%%%%%%%%%%%%%%%%%%%%%%%%%%%%%%%%%%%%%%%%%%%%%%%%%%
%\begin{proof}
%Let $P$ be the canonical totally acyclic complex of projectives for $R \in \rMod{Q}$, see \conref{sigma-computation}. We know from \prpref{canonical-tac-Inj} that $\upSigma\mspace{1mu}\Hom{\Bbbk}{P}{I}$ is a (even the canonical) totally acyclic complex of injectives for $\Hom{\Bbbk}{R}{I} \in (\lMod{Q,A},\exact{Q})$. In view of \eqref{Ext-HHom-inj} and \eqref{Ext-HHom}, this explains the first and last isomorphisms below. The second isomorphism is trivial, and the third isomorphism follows from ...
%\begin{align*}  
%  \Ext{(\lMod{Q,A},\exact{Q})}{i}{X}{\sigma^{-n}\Hom{\Bbbk}{R}{I}}
%  &\;\cong\;
%  \operatorname{H}^{i-n}\Hom{Q,A}{X}{\,\upSigma\mspace{1mu}\Hom{\Bbbk}{P}{I}}
%  \;\cong\;
%  \operatorname{H}^{i-n+1}\Hom{Q,A}{X}{\Hom{\Bbbk}{P}{I}}
%  \\
%  &\;\cong\;
%  \operatorname{H}^{i-n+1}\Hom{Q^\mathrm{op}}{P}{\Hom{A}{X}{I}}
%  \;\cong\; 
%  \Ext{Q^\mathrm{op}}{i}{\sigma^n R}{\Hom{A}{X}{I}} \;. \qedhere
%\end{align*}  
%\end{proof}

\begin{cor}  
  \label{cor:Ext-formula-abelian-Inj}
Let $R$ be an object in $\rMod{Q}$ such that the $\Bbbk$-module $R(q)$ is projective for every $q$ in $Q$, let $I$ be an injective left $A$-module, and $X$ be any object in $\lMod{Q,A}$. For every integer $n$ and $i > 0$ we have an isomorphism
\begin{equation*}
\Ext{Q,A}{i}{X}{\sigma^{-n}\Hom{\Bbbk}{R}{I}} \;\cong\; 
\Hom{A}{\Tor{Q}{i}{\sigma^n R}{X}}{I} \;.
\end{equation*}
In the notation of \eqref{HHop} and \dfnref{new-HH} this reads $\HHop{i}{\Hom{\Bbbk}{R}{I}}{n}(X) \cong \Hom{A}{\sHHt{i}{R}{n}(X)}{I}$,
where \textnormal{`$\check{\Hletterone}$'} is the (``injective'') cohomology functor in the abelian category $\lMod{Q,A}$.
\end{cor}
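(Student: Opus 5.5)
The plan is to mirror the proof of \corref{Ext-formula-abelian-Prj}, dualised as in \corref{Ext-formula-exact-Inj}, working throughout in the abelian category $\lMod{Q,A}$. Here $\exact{}$ is the abelian structure on $\lMod{A}$, so $I$ is an ordinary injective $A$-module. First, let $P=\mathbb{T}(R)$ be the canonical totally acyclic complex of projectives for $R$ in $\rMod{Q}$ from \conref{sigma-computation}. This applies because each $R(q)$ is a projective $\Bbbk$-module, i.e.\ projective in $\lMod{\Bbbk}$ with the abelian structure.

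By \prpref{canonical-tac-Inj}, applied to the abelian exact structure on $\lMod{A}$, the shifted complex $\upSigma\mspace{1mu}\Hom{\Bbbk}{P}{I}$ is a totally acyclic complex of injectives in $\lMod{Q,A}$. Its $(-n)$-th cycle is $\Hom{\Bbbk}{\sigma^n R}{I}\cong\sigma^{-n}\Hom{\Bbbk}{R}{I}$. Using \eqref{Ext-HHom-inj} and then unshifting, we obtain
\begin{equation*}
\Ext{Q,A}{i}{X}{\sigma^{-n}\Hom{\Bbbk}{R}{I}}\;\cong\;\operatorname{H}^{i-n}\Hom{Q,A}{X}{\upSigma\mspace{1mu}\Hom{\Bbbk}{P}{I}}\;\cong\;\operatorname{H}^{i-n+1}\Hom{Q,A}{X}{\Hom{\Bbbk}{P}{I}}.
\end{equation*}

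Next, invoke the tensor--Hom adjunction from \cite[Prop.~3.8]{HJ-JLMS}. Together with the $\Bbbk$-$A$ version of Hom-tensor adjunction, it gives a natural isomorphism of complexes
\begin{equation*}
\Hom{Q,A}{X}{\Hom{\Bbbk}{P}{I}}\cong\Hom{A}{P\otimes_Q X}{I}.
\end{equation*}
Since $I$ is an injective $A$-module, $\Hom{A}{-}{I}$ is exact. It therefore commutes with cohomology, converting the cohomology in degree $i-n+1$ of this cochain complex into $\Hom{A}{\operatorname{H}_{\ast}(P\otimes_Q X)}{I}$ at the corresponding homological degree.

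Finally, the truncation $\cdots\to P^{n-2}\to P^{n-1}\to 0$ is a projective resolution of $\sigma^n R=\Cy{n}{P}$ in $\rMod{Q}$. Its homology after applying $-\otimes_Q X$ computes $\Tor{Q}{i}{\sigma^n R}{X}$, by the same degree bookkeeping as in \eqref{Ext-HHom} (homological version). Here $\operatorname{Tor}^Q$ is computed by projective resolutions in $\rMod{Q}$, as in \cite{OberstRohrl}. Assembling the isomorphisms gives the claim.

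The main obstacle is the index bookkeeping. We must check that the shift in \prpref{canonical-tac-Inj}, the degree convention in \eqref{Ext-HHom-inj}, and the homological indexing of $P\otimes_Q X$ line up, so that degree $i-n+1$ corresponds exactly to $\operatorname{Tor}_i$ of the $n$-th cycle for $i>0$. This requires $i>0$, so that only the projective part $P^{\le n-1}$ of the complete resolution contributes. A secondary point is that the adjunction must be natural in $P$, so that it commutes with the differentials.
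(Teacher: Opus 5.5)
Your proposal is correct and is essentially the paper's own proof. It uses the same chain: the canonical complex $\mathbb{T}(R)$, \prpref{canonical-tac-Inj} with \eqref{Ext-HHom-inj} and the shift, the second adjunction of \cite[Prop.~3.8]{HJ-JLMS} (which already covers the extra $\Bbbk$--$A$ adjunction you mention), exactness of $\Hom{A}{-}{I}$, and the identification $\operatorname{H}^{n-i-1}(\mathbb{T}(R)\otimes_Q X)\cong\Tor{Q}{i}{\sigma^n R}{X}$ for $i>0$ via the truncation at $P^{n-1}$.
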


\begin{proof}
Let $\mathbb{T}(R)$ be the canonical totally acyclic complex of projectives for $R$ in $\rMod{Q}$, see \conref{sigma-computation}. We know from \prpref{canonical-tac-Inj} that $\upSigma\mspace{1mu}\Hom{\Bbbk}{\mathbb{T}(R)}{I}$ is the canonical totally acyclic complex of injectives for the functor $\Hom{\Bbbk}{R}{I}$ of $\lMod{Q,A}$. We have then the following sequence of isomorphisms
\begin{align*}  
  \Ext{Q,A}{i}{X}{\sigma^{-n}\Hom{\Bbbk}{R}{I}}
  &\;\cong\;
  \operatorname{H}^{i-n}\Hom{Q,A}{X}{\,\upSigma\mspace{1mu}\Hom{\Bbbk}{\mathbb{T}(R)}{I}}
  \;\cong\;
  \operatorname{H}^{i-n+1}\Hom{Q,A}{X}{\Hom{\Bbbk}{\mathbb{T}(R)}{I}}
  \\
  \;\cong\;
  \operatorname{H}^{i-n+1}\Hom{A}{\mathbb{T}(R) \otimes_Q X}{I}
  &\;\cong\; 
  \Hom{A}{\operatorname{H}^{n-i-1}(\mathbb{T}(R) \otimes_Q X)}{I} \;\cong\; \Hom{A}{\Tor{Q}{i}{\sigma^n R}{X}}{I}. 
\end{align*}  
Indeed, \eqref{Ext-HHom-inj} explains the first isomorphism, the second isomorphism is clear, the third one follows from the second adjunction in \cite[Prop.~3.8]{HJ-JLMS} and the fourth isomorphism holds as the functor $\Hom{A}{-}{I} \colon \lMod{A} \to \lMod{\Bbbk}$ is exact and, thus, commutes with cohomology, that is, $\operatorname{H}^{\ell}\Hom{A}{-}{I} \cong \Hom{A}{\operatorname{H}^{-\ell}(-)}{I}$. The final isomorphism uses the definition of $\Tor{Q}{*}{-}{X}$ as the left derived functors of $- \otimes_Q X \colon \rMod{Q} \to \lMod{A}$, finishing the proof.  
\end{proof}

\section{The \texorpdfstring{$Q$}{Q}-shaped derived category revisited and generalised}\label{sec:Q-shaped derived category}

In this section, we show how \thmref[Theorems~]{main} and \thmref[]{main-op} can be applied to recover the main results of \cite{HJ-JLMS} on $Q$-shaped derived categories. This is explained in \thmref[Theorems~]{Q-shaped-relative-Prj} and \thmref[]{Q-shaped-relative-Inj} and subsequent examples.

\subsection{The projective exact model structure on the exact category \texorpdfstring{$(\lMod{Q,A},\exact{Q})$}{(lMod(Q,A),exact(Q))}}
\label{projective-model-structure}
\phantom{.} \vspace*{1ex}

We begin with the result promised in Remark \ref{rmk:n-is-0}, setting up the groundwork to make sure an equivalence of of the form (\ref{eq:important-equivalence}) holds.
\begin{lem}
  \label{lem:Ext-comparison-Prj}
Let $\exact{}$ be an exact structure in $\lMod{A}$ such that $(\lMod{A},\exact{})$ has enough projectives and exact coproducts. Let $\mathscr{T}$ be any class of projective objects in $(\lMod{A},\exact{})$ and let $i>0$ be a fixed integer. For every $X$ in the exact category $(\lMod{Q,A},\exact{Q})$, the following conditions are equivalent:
\begin{eqc}
\item $\HH{i}{\Sq{q}(T)}{n}(X) := \Ext{\exact{Q}}{i}{\sigma^n\Sq{q}(T)}{X}=0$ for every $T$ in $\mathscr{T}$, $q$ in $Q$, and $n$ integer;

\item $\HH{i}{\Sq{q}(T)}{0}(X) := \Ext{\exact{Q}}{i}{\Sq{q}(T)}{X}=0$ for every $T$ in $\mathscr{T}$ and $q$ in $Q$.
\end{eqc}
\end{lem}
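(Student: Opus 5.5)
The implication \eqclbl{i}$\;\Rightarrow\;$\eqclbl{ii} is trivial (take $n=0$), so the work is in \eqclbl{ii}$\;\Rightarrow\;$\eqclbl{i}. My plan is to transport everything to the abelian category $\lMod{Q}$ with \corref{Ext-formula-exact-Prj}, and there to reduce the vanishing against the syzygies $\sigma^n\stalkco{q}$ to vanishing against the stalks themselves, using a finite filtration.

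Write $\Sq{q}(T)=\stalkco{q}\otimes_\Bbbk T$. Under \stpref{HJ} (the strong retraction property), the values $\stalkco{q}(p)$ are finitely generated projective $\Bbbk$-modules. Hence \corref{Ext-formula-exact-Prj} applies with $U=\stalkco{q}$ and gives, for every $n\in\mathbb{Z}$,
\begin{equation*}
\HH{i}{\Sq{q}(T)}{n}(X)\;\cong\;\Ext{Q}{i}{\sigma^n\stalkco{q}}{Y_T},\qquad Y_T=\Hom{A}{T}{X}.
\end{equation*}
So, fixing $T\in\mathscr{T}$, it suffices to prove the following claim in $\lMod{Q}$. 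If $\Ext{Q}{i}{\stalkco{p}}{Y}=0$ for all $p\in Q$, then $\Ext{Q}{i}{\sigma^n\stalkco{q}}{Y}=0$ for all $q$ and $n$. Here $\sigma^n\stalkco{q}$ is computed via the canonical complex of \conref{sigma-computation}, namely $\mathbb{C}^n(\stalkco{q})$ or $\mathbb{K}^{-n}(\stalkco{q})$.

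By \rmkref{values}(b), each $\sigma^n\stalkco{q}$ has finite support, and its values lie in $\operatorname{add}\{\stalkco{q}(p)\}$; in particular they are finitely generated projective $\Bbbk$-modules. Since the pseudo-radical $\mathfrak{r}$ is nilpotent, such an object $Z$ carries a finite filtration $Z\supseteq \mathfrak{r}Z\supseteq\mathfrak{r}^2Z\supseteq\cdots\supseteq 0$. Each layer is annihilated by $\mathfrak{r}$, so it is a finite direct sum of objects of the form $\stalkco{p}\otimes_\Bbbk M_p$. Once the $M_p$ are known to be finitely generated projective, each $\stalkco{p}\otimes_\Bbbk M_p$ is a summand of $\stalkco{p}^m$, so $\Ext{Q}{i}{-}{Y}$ vanishes on every layer. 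Induction along the filtration then gives vanishing on $Z$: in the exact segment $\Ext{Q}{i}{Z''}{Y}\to\Ext{Q}{i}{Z}{Y}\to\Ext{Q}{i}{Z'}{Y}$ the outer terms vanish, so the middle one does. Note that only the fixed degree $i$ is used here, which matches the statement being for a fixed $i$.

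The main obstacle is guaranteeing that the $\Bbbk$-modules $M_p$ in the radical layers are projective. Radical layers of an object with projective values need not have projective values, and for a fixed $i$ I cannot simply resolve $M_p$ over the hereditary ring $\Bbbk$, since that would shift the Ext degree. My first attempt is to use the strong retraction property of $Q$. It should imply that $\mathfrak{r}(p,p')$ is a $\Bbbk$-direct summand of $Q(p,p')$, and the same for its powers. That would make the radical filtration of $\mathbb{F}$-, $\mathbb{G}$-type objects objectwise split. Because the canonical sequences defining $\mathbb{K}$ and $\mathbb{C}$ are objectwise split, this property should then pass to $\mathbb{K}^n(\stalkco{q})$ and $\mathbb{C}^n(\stalkco{q})$, so all layers would have projective values. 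If this fails, my fallback is to filter $\sigma^n\stalkco{q}$ instead through the iterated objectwise split sequences of \conref{sigma-computation}, keeping track of stalk pieces directly.
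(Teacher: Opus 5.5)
There is a genuine gap, and it sits exactly at the step you flag yourself. The transport to $\lMod{Q}$ via \corref{Ext-formula-exact-Prj} is sound, and so is the d\'evissage in the fixed degree $i$. What you never establish is that the layers $\mathfrak{r}^jZ/\mathfrak{r}^{j+1}Z$ of $Z=\sigma^n\stalkco{q}$ have projective values, and the route you sketch does not work.

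First, the strong retraction property is a condition on the endomorphism algebras $Q(p,p)$ and only controls $\mathfrak{r}$ itself. Nothing in \stpref{HJ} forces the powers $\mathfrak{r}^j(p,p')$, $j\geqslant 2$, to be $\Bbbk$-direct summands of $Q(p,p')$.

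Second, and more seriously, even granting this, objectwise splitness of $\mathbb{K}(X)\rightarrowtail\mathbb{F}(X)\twoheadrightarrow X$ and $X\rightarrowtail\mathbb{G}(X)\twoheadrightarrow\mathbb{C}(X)$ does not transmit the property, because $Z\mapsto\mathfrak{r}^jZ$ is not exact. For example, take $Q=Q^{\text{cpx}}$ and $\Bbbk=\mathbb{Z}$. Every $\mathbb{F}(X)$ is a sum of disks, so its radical filtration is objectwise split. Yet for $X=(\mathbb{Z}\xrightarrow{2}\mathbb{Z})$ one finds $\mathbb{K}(X)\cong(\mathbb{Z}\xrightarrow{-2}\mathbb{Z})$, whose top radical layer has $\mathbb{Z}/2$ as a value.

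Your fallback does not help either. The middle terms $\mathbb{F}(\cdot)$ and $\mathbb{G}(\cdot)$ in \conref{sigma-computation} are projective in $\lMod{Q}$, so those sequences only give dimension shifting, which moves you out of the fixed degree $i$.

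The missing idea is to filter from the bottom by socles rather than from the top by radical powers. This is what the paper does, via \cite[Constr.~7.21(a)]{HJ-JLMS} with $A=\Bbbk$: there are short exact sequences $0\to\bigoplus_{p\in Q}\Sq{p}\Kq{p}(U^\ell)\to U^\ell\to U^{\ell+1}\to 0$ with $U^0=\sigma^n\stalkco{q}$.
\begin{itemize}
\item $\Kq{p}(U^\ell)=\Hom{Q}{\stalkco{p}}{U^\ell}$ is the submodule of $U^\ell(p)$ killed by $\mathfrak{r}(p,-)$.
\item $U^{\ell+1}(p)=U^\ell(p)/\Kq{p}(U^\ell)$ embeds, via the action of $\mathfrak{r}(p,-)$, into a finite direct sum of the modules $\Hom{\Bbbk}{\mathfrak{r}(p,p')}{U^\ell(p')}$.
\end{itemize}
Both are therefore \emph{submodules} of finitely generated projective $\Bbbk$-modules. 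Since $\Bbbk$ is noetherian and hereditary, they lie in $\lprj{\Bbbk}$ \cite[Prop.~7.20(a)]{HJ-JLMS}. This is exactly where the hypotheses on $\Bbbk$ enter, and exactly what fails for radical layers, which are quotients.

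Nilpotency of $\mathfrak{r}$ gives $U^N=0$, and each layer is a direct summand of a finite sum of stalks $\stalkco{p}$. Your fixed-degree induction then goes through unchanged. The paper runs it directly in $(\lMod{Q,A},\exact{Q})$, using objectwise splitness so that $-\otimes_\Bbbk T$ turns the sequences into conflations; running it in $\lMod{Q}$ after your transport works equally well.
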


\begin{proof}
  Clearly \eqclbl{i} implies \eqclbl{ii} as $\sigma^0\Sq{q}(T) = \Sq{q}(T)$. Let us now assume condition \eqclbl{ii} and fix $T$ in $\mathscr{T}$, $q$ in  $Q$, and an integer $n$. Note that the additivity of the Ext-functors implies that condition $(ii)$ also holds when $T$ is an object of $\operatorname{add}\mathscr{T}$. By definition we have $\Sq{q}(T) = \stalkco{q} \otimes_\Bbbk T$ and so it follows from \prpref{canonical-tac-Prj} that 
  $\sigma^n\Sq{q}(T) \cong (\sigma^n\stalkco{q}) \otimes_\Bbbk T$
where $\sigma^n\Sq{q}(T)$ is computed in $(\lMod{Q,A},\exact{Q})$ and $\sigma^n\stalkco{q}$ in the abelian category $\lMod{Q}$.
%$P \otimes_\Bbbk T$ is the canonical totally acyclic complex of projectives for $\Sq{q}(T)$ in the exact category $(\lMod{Q,A},\exact{Q})$, and thus
%\begin{equation*}  
%  \sigma^n\Sq{q}(T) 
%  \,=\, 
%  \Cy{n}{\,(P \otimes_\Bbbk T)} 
%  \,=\,
%  (\Cy{n}{P}) \otimes_\Bbbk T 
%  \,=\,
%  (\sigma^n\stalkco{q}) \otimes_\Bbbk T \;.
%\end{equation*}  
Denote by $U$ the object $\sigma^n\stalkco{q}$ of $\lMod{Q}$. By the isomorphism above, our goal is to prove that
\begin{equation}
  \label{eq:Ext-goal}
  \Ext{\exact{Q}}{i}{U \otimes_\Bbbk T}{X} \,=\, 0 \;.
\end{equation}
Applying \cite[Construction 7.21(a)]{HJ-JLMS} (with $A=\Bbbk$) to the object $U$ in $\lMod{Q}$ we may define inductively, for $l\geq 0$, a family of short exact sequences in the abelian category $\lMod{Q}$ of the form
\begin{equation}
  \label{eq:xi-ell}
  \textstyle
  \xi^\ell \;=\;
  0 \longrightarrow \bigoplus_{p \in Q} \, \Sq{p}\Kq{p}(U^\ell) \longrightarrow U^\ell \longrightarrow U^{\ell+1} \longrightarrow 0
\end{equation}
 where $U^0 = U$. The functor $\stalkco{q}$ has finite support, namely $\{q\}$, and takes values in $\{0,\Bbbk\}$, so \eqref{sigma-computation} and \rmkref{values} show that the functor $U^0 = U = \sigma^n\stalkco{q}$ has finite support and takes values in the category $\operatorname{add}(\Bbbk) = \lprj{\Bbbk}$ of finitely generated projective $\Bbbk$-modules. As $\Bbbk$ is assumed to be noetherian and hereditary, see \stpref{HJ}, the class $\mathcal{G} = \lprj{\Bbbk}$ is closed under extensions and submodules, and hence successive applications of \cite[Prop.~7.20(a)]{HJ-JLMS} (with $A=\Bbbk$) show that $\Kq{p}(U^\ell)$ and  $U^\ell(p)$ lie in $\lprj{\Bbbk}$ for every $p$ in $Q$ and $\ell \geqslant 0$. Since $U^{\ell+1}(p)$ is projective for every $p$ in $Q$, the sequence \eqref{xi-ell}
 is objectwise split, and hence $\xi^\ell \otimes_\Bbbk T$ is an objectwise split exact sequence in $\lMod{Q,A}$. In particular, it is a conflation in the exact category $(\lMod{Q,A},\exact{Q})$. By \cite[Prop.~7.15]{HJ-JLMS} we have
\begin{equation*}
  \Sq{p}\Kq{p}(U^\ell) \otimes_\Bbbk T
  \;=\;
  (\stalkco{p} \otimes_\Bbbk \Kq{p}(U^\ell)) \otimes_\Bbbk T
  \;\cong\;
  \stalkco{p} \otimes_\Bbbk (\Kq{p}(U^\ell) \otimes_\Bbbk T)
  \;=\;
  \Sq{p}(\Kq{p}(U^\ell) \otimes_\Bbbk T)\;,
\end{equation*}
and since $-\otimes_\Bbbk T$ commutes with direct sums, we conclude that the conflation $\xi^\ell \otimes_\Bbbk T$ is given by
\begin{equation}
  \label{eq:xi-ell-T}
  \textstyle
  \xi^\ell \otimes_\Bbbk T \;=\;
  0 \longrightarrow \bigoplus_{p \in Q} \, \Sq{p}(T^\ell_{\!p}) \longrightarrow U^\ell \otimes_\Bbbk T \longrightarrow U^{\ell+1} \otimes_\Bbbk T \longrightarrow 0 \;,
\end{equation}
where $T^\ell_{\!p} = \Kq{p}(U^\ell) \otimes_\Bbbk T$. Since $\Kq{p}(U^\ell)$ is a projective $\Bbbk$-module, we get $T^\ell_{\!p}$ lies in $\operatorname{add}(T) \subseteq \operatorname{add}\mathscr{T}$ for every $\ell \geqslant 0$ and $p$ in $Q$, so our assumption  yields:
\begin{equation}
  \label{eq:Ext-stalks}
  \textstyle
  \Ext{\exact{Q}}{i}{\bigoplus_{p \in Q} \, \Sq{p}(T^\ell_{\!p})}{X} \,\cong\,
  \prod_{p \in Q}\Ext{\exact{Q}}{i}{\Sq{p}(T^\ell_{\!p})}{X} \,=\, 0 \;.
\end{equation}
By \stpref{HJ}, there is  $N>0$ such that $\mathfrak{r}^N=0$. As in the proof of \cite[Thm.~7.25]{HJ-JLMS} it follows that $U^N=0$ and thus, trivially, we get $\Ext{\exact{Q}}{i}{U^N \otimes_\Bbbk T}{X} \,=\, 0$. Suppose now that for some $0 \leqslant \ell < N-1$ we know that \smash{$\Ext{\exact{Q}}{i}{U^{\ell+1} \otimes_\Bbbk T}{X} = 0$}. As \eqref{xi-ell-T} is an $\exact{Q}$-conflation, it induces a long exact sequence, part of which reads:
\begin{equation*}
  \textstyle
  \Ext{\exact{Q}}{i}{U^{\ell+1} \otimes_\Bbbk T}{X} 
  \longrightarrow
  \Ext{\exact{Q}}{i}{U^\ell \otimes_\Bbbk T}{X} 
  \longrightarrow
  \Ext{\exact{Q}}{i}{\bigoplus_{p \in Q} \, \Sq{p}(T^\ell_{\!p})}{X} \;. 
\end{equation*}
The first group is zero by hypothesis, and the last group is zero by \eqref{Ext-stalks}. It follows that the middle group is zero. Inductively, we conclude that $\Ext{(\lMod{Q,A},\exact{Q})}{i}{U^\ell \otimes_\Bbbk T}{X} \,=\, 0$ for all $l\leq N$ and, thus, for $\ell=0$, as wanted.
\end{proof}

\begin{thm}
  \label{thm:Q-shaped-relative-Prj}
Let $\exact{}$ be an exact structure in $\lMod{A}$ such that $(\lMod{A},\exact{})$ has enough projectives and exact coproducts and let $\mathscr{T}$ be a class of projective objects in $(\lMod{A},\exact{})$. Consider in the exact category $(\lMod{Q,A},\exact{Q})$ the cotorsion pair $(\mathcal{C}(\mathscr{T}),\mathcal{W}(\mathscr{T}))$ generated by the class
\begin{equation*}
  \mathscr{G} \,=\, \mathscr{G}(\mathscr{T}) 
  \,=\, \{ \Sq{q}(T) \;|\; T \in \mathscr{T},\; q \in Q \} \;.
\end{equation*}
If this cotorsion pair is complete, then there is a hereditary exact model structure on $(\lMod{Q,A},\exact{Q})$ where the classes $\mathcal{W}(\mathscr{T})$ of trivial objects and of weak equivalences are described in parts (1) and (2) below, the class of cofibrant objects is $\mathcal{C}(\mathscr{T}) = {}^\perp(\mathcal{W}(\mathscr{T}))$, and every object in $(\lMod{Q,A},\exact{Q})$ is fibrant. 
\begin{rqm}
\item An object $X$ in $\lMod{Q,A}$ is trivial if and only if it satisfies the equivalent conditions:
\begin{eqc}
\setlength{\itemsep}{0pt}
\item $\HH{i}{\Sq{q}(T)}{n}(X)=0$ for all $T$ in $\mathscr{T}$, $q$ in $Q$, $n$ integer and $i>0$;

\item[\eqclbl{i$\,'$}] $\sHH{i}{\stalkco{q}}{n}(\Hom{A}{T}{X})=0$ for all $T$ in $\mathscr{T}$, $q$ in $Q$, $n$ integer and $i>0$;

\item $\HH{1}{\Sq{q}(T)}{0}(X)=0$ for all $T$ in $\mathscr{T}$ and $q$ in $Q$;

\item[\eqclbl{ii$\,'$}] $\sHH{1}{\stalkco{q}}{0}(\Hom{A}{T}{X})=0$ for all $T$ in $\mathscr{T}$ and $q$ in $Q$.
\end{eqc}

\item  A morphism $\varphi$ in $\lMod{Q,A}$ is a weak equivalence if and only if it satisfies the equivalent conditions:

\begin{eqc}
\setlength{\itemsep}{0pt}
\item $\HH{i}{\Sq{q}(T)}{n}(\varphi)$ is an isomorphism for all $T$ in $\mathscr{T}$, $q$ in $Q$, $n$ integer and $i>0$;.

\item[\eqclbl{i$\,'$}] $\sHH{i}{\stalkco{q}}{n}(\Hom{A}{T}{\varphi})$ is an isomorphism for all $T$ in $\mathscr{T}$, $q$ in $Q$, $n$ integer and $i>0$;

\item $\HH{1}{\Sq{q}(T)}{0}(\varphi)$ and $\HH{2}{\Sq{q}(T)}{0}(\varphi)$ are isomorphisms for all $T$ in $\mathscr{T}$ and $q$ in $Q$;

\item[\eqclbl{ii$\,'$}] $\sHH{1}{\stalkco{q}}{0}(\Hom{A}{T}{\varphi})$ and $\sHH{2}{\stalkco{q}}{0}(\Hom{A}{T}{\varphi})$ are isomorphisms for all $T$ in $\mathscr{T}$ and $q$ in $Q$.
\end{eqc}

\item $\mathcal{C}(\mathscr{T})$ is a Frobenius exact category whose class of projective-injective objects is precisely the class 
\begin{equation*}
  \Prj{(\lMod{Q,A},\exact{Q})} \;=\; 
  \operatorname{Add}\,\{\,\Fq{q}(T) \;|\; T \in \Prj{(\lMod{A},\exact{})},\;  q \in Q \,\} \;,
\end{equation*}
and the homotopy category $\operatorname{Ho}(\lMod{Q,A},\exact{Q})$ is equivalent to the stable category $\underline{\mathcal{C}}(\mathscr{T})$.
\end{rqm}
The functors $\Hletterone$ appearing in conditions of type \eqclbl{i} and  \eqclbl{ii} are cohomology functors in  $(\lMod{Q,A},\exact{Q})$, while the functors $\mathbb{H}$ appearing in conditions of type \eqclbl{i$\,'$} and \eqclbl{ii$\,'$} are cohomology functors in the abelian category $\lMod{Q}$.
\end{thm}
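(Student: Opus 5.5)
The plan is to apply \thmref{main} to the exact category $\mathcal{E}=(\lMod{Q,A},\exact{Q})$ and the class $\mathscr{G}=\mathscr{G}(\mathscr{T})$, and then to translate its conclusions using \lemref{Ext-comparison-Prj} and \corref{Ext-formula-exact-Prj}.

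First I check the hypotheses of \thmref{main}.
\begin{itemlist}
\item $\lMod{Q,A}$ is abelian, hence weakly idempotent complete. Being weakly idempotent complete is a property of the underlying additive category, so it does not depend on the exact structure.
\item $\mathcal{E}$ has enough projectives by \prpref{Prj-transfer}(a).
\item Each $\Sq{q}(T)$ with $T\in\mathscr{T}$ is Gorenstein projective in $\mathcal{E}$ by \corref{GPrj-stalk}, which uses the assumption of exact coproducts.
\item By \dfnref{CW}, the cotorsion pair generated by $\mathcal{S}_{\mathscr{G}}=\{\sigma^n G\}$ has right class $\mathcal{W}_{\mathscr{G}}=\{X \mid \HH{1}{G}{n}(X)=0 \text{ for all } G\in\mathscr{G},\ n\in\mathbb{Z}\}$.
\end{itemlist}
The theorem speaks of the pair generated by $\mathscr{G}$ alone, whose right class is $\mathscr{G}^\perp$, i.e.\ the objects satisfying condition (1)(ii). \lemref{Ext-comparison-Prj} with $i=1$ shows that $\mathscr{G}^\perp=\mathcal{W}_{\mathscr{G}}$. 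Hence $(\mathcal{C}(\mathscr{T}),\mathcal{W}(\mathscr{T}))=(\mathcal{C}_{\mathscr{G}},\mathcal{W}_{\mathscr{G}})$, and the completeness hypothesis is exactly the one required in \thmref{main}. \thmref{main} then gives the hereditary model structure, with cofibrant objects $\mathcal{C}(\mathscr{T})$, trivial objects $\mathcal{W}(\mathscr{T})$, and every object fibrant.

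For part (1), the equivalence (i)$\iff$(ii) holds for the following reasons.
\begin{itemlist}
\item \thmref{main}(1) says that $X$ is trivial if and only if (i) holds.
\item The identification $\mathcal{W}(\mathscr{T})=\mathscr{G}^\perp$ from the first step says that $X$ is trivial if and only if (ii) holds.
\end{itemlist}
For the primed versions I apply \corref{Ext-formula-exact-Prj} with $U=\stalkco{q}$. Its values lie in $\{0,\Bbbk\}$, so they are projective $\Bbbk$-modules, and $\stalkco{q}\otimes_\Bbbk T=\Sq{q}(T)$. This gives a natural isomorphism
\[
\HH{i}{\Sq{q}(T)}{n}(X)\cong \sHH{i}{\stalkco{q}}{n}(\Hom{A}{T}{X}),
\]
so (i)$\iff$(i$'$) and (ii)$\iff$(ii$'$).

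For part (2), \thmref{main}(2) gives that $\varphi$ is a weak equivalence if and only if (i) holds. The primed conditions follow from naturality of the isomorphism in \corref{Ext-formula-exact-Prj} in $X$, which should be clear from its proof via $\operatorname{H}^{i-n+1}\Hom{}{\mathbb{T}(U)\otimes_\Bbbk T}{-}$ and adjunction. The remaining condition (ii), which involves only $n=0$, is where \rmkref{n-is-0} applies. The argument of \thmref{main}, implication (iii)$\Rightarrow$(i), goes through as before:
\begin{itemlist}
\item factor $\varphi=\pi\iota$ with $\iota$ a cofibration and $\pi$ a trivial fibration;
\item $\pi$ induces isomorphisms on every $\HH{i}{\Sq{q}(T)}{0}$, so $\iota$ induces isomorphisms on $\HH{1}{\Sq{q}(T)}{0}$ and $\HH{2}{\Sq{q}(T)}{0}$;
\item the long exact sequence for the conflation $X\rightarrowtail Z\twoheadrightarrow C$ then gives $\HH{1}{\Sq{q}(T)}{0}(C)=0$;
\item by part (1)(ii), $C$ is trivial, so $C\in\mathcal{C}\cap\mathcal{W}=\Prj{\mathcal{E}}$ and $\iota$ is a trivial cofibration.
\end{itemlist}
The converse direction is immediate from (i).

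For part (3), the Frobenius statement and the equivalence $\operatorname{Ho}\simeq\underline{\mathcal{C}}(\mathscr{T})$ are \thmref{main}(3). The explicit description of $\Prj{\mathcal{E}}$ comes from \prpref{Prj-transfer}(a): each $\bigoplus_q\Fq{q}(T_q)$ is a coproduct of the $\Fq{q}(T_q)$, so the class of its summands is $\operatorname{Add}\{\Fq{q}(T)\}$.

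The main point needing care is the identification of the cotorsion pair generated by $\mathscr{G}$ with the one generated by $\mathcal{S}_{\mathscr{G}}$, which rests on \lemref{Ext-comparison-Prj}. The second point to check is the naturality of the isomorphism in \corref{Ext-formula-exact-Prj}, which is needed for the morphism-level statements.
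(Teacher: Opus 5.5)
Your proposal is correct and follows the paper's own proof essentially step for step. You reduce to \thmref{main} by identifying the cotorsion pair generated by $\mathscr{G}$ with $(\mathcal{C}_{\mathscr{G}},\mathcal{W}_{\mathscr{G}})$ via \lemref{Ext-comparison-Prj} with $i=1$, pass to the primed conditions through \corref{Ext-formula-exact-Prj} with $U=\stalkco{q}$, and treat the $n=0$ weak-equivalence criterion exactly as \rmkref{n-is-0} prescribes; your explicit re-run of the (iii)$\Rightarrow$(i) argument and your remark on naturality only spell out details the paper leaves implicit.
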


\begin{proof}
  The exact category $\mathcal{E}:=(\lMod{Q,A},\exact{Q})$ is weakly idempotent complete (as $\lMod{Q,A}$ is abelian) and has enough projectives by \prpref{Prj-transfer}(a). By \corref{GPrj-stalk}, $\mathscr{G} = \mathscr{G}(\mathscr{T})$ is a class of Gorenstein projective objects in $\mathcal{E}$. Thus, we are in the setup of \thmref{main}. Consider the cotorsion pair $(\mathcal{C}_\mathscr{G},\mathcal{W}_\mathscr{G})$ in $\mathcal{E}$ from \dfnref{CW}:
\begin{equation*}
  \mathcal{W}_{\mathscr{G}}=\{W \in \mathcal{E} \ | \ \HH{1}{\Sq{q}(T)}{n}(W)=0 \text{ for all } T \in \mathscr{T},\; q \in Q,\; n \in \mathbb{Z} \} \quad \text{ and } \quad
  \mathcal{C}_{\mathscr{G}}={}^\perp \mathcal{W}_\mathscr{G} \;.
\end{equation*}
\lemref{Ext-comparison-Prj} with $i=1$ shows that one has $\mathcal{W}_\mathscr{G} = \mathcal{W}(\mathscr{T})$, so the cotorsion pairs $(\mathcal{C}_\mathscr{G},\mathcal{W}_\mathscr{G})$ and $(\mathcal{C}(\mathscr{T}),\mathcal{W}(\mathscr{T}))$ coincide. We have assumed completenes of $(\mathcal{C}(\mathscr{T}),\mathcal{W}(\mathscr{T}))$, so the cotorsion pair $(\mathcal{C}_\mathscr{G},\mathcal{W}_\mathscr{G})$ is also complete. We can  therefore apply \thmref{main}, which yields an hereditary exact model structure on $\mathcal{E}$ where $\mathcal{C}_\mathscr{G} = \mathcal{C}(\mathscr{T})$ is the class of cofibrant objects, $\mathcal{W}_\mathscr{G} = \mathcal{W}(\mathscr{T})$ is the class of trivial
objects, and every object is fibrant. 

(1) It follows from \thmref{main}(1) and \lemref{Ext-comparison-Prj} (see also \rmkref{n-is-0}) that conditions  \eqclbl{i} and \eqclbl{ii} are equivalent and that these equivalent conditions characterise the trivial objects. Since one has $\Sq{q}(T) = \stalkco{q} \otimes_\Bbbk T$, \corref{Ext-formula-exact-Prj} with $U=\stalkco{q}$ yield the equivalences \eqclbl{i}$\;\Leftrightarrow\;$\eqclbl{i$\,'$} and \eqclbl{ii}$\;\Leftrightarrow\;$\eqclbl{ii$\,'$}.

(2) Similar to part (1), using \thmref{main}(2),  \lemref{Ext-comparison-Prj}, \rmkref{n-is-0}, and \corref{Ext-formula-exact-Prj}.

(3) This follows directly from \thmref{main}(3) in view of \prpref{Prj-transfer}(a).
\end{proof}

\begin{prp}
  \label{prp:assumptions-ok-Prj}
  Consider the case where the exact category $(\lMod{A},\exact{})$ is $\lMode{A}{\theta}$, and hence $(\lMod{Q,A},\exact{Q})$ is $\lMode{Q,A}{\theta}$, where $\theta$ is the saturation of a class of finitely presented $A$-modules. Further let $\mathscr{T}$ be any set(!) of modules from $\theta$. In this case, the assumptions in \thmref{Q-shaped-relative-Prj} hold, that is:
\begin{itemlist}
\item The category $(\lMod{A},\exact{}) = \lMode{A}{\theta}$ has enough projectives and exact coproducts.
\item The cotorsion pair $(\mathcal{C}(\mathscr{T}),\mathcal{W}(\mathscr{T}))$ in $(\lMod{Q,A},\exact{Q}) = \lMode{Q,A}{\theta}$ is complete.
\end{itemlist}  
Hence the conclusions in \thmref{Q-shaped-relative-Prj} are valid.
\end{prp}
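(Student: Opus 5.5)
The first bullet is essentially a restatement of \prpref{Mod-theta}(d). Since $\theta$ is the saturation of a class of finitely presented $A$-modules, that result tells us that $\lMode{A}{\theta}$ and $\lMode{Q,A}{\theta}$ have enough projectives and are efficient. Efficient exact categories have exact coproducts by \cite[Lem.~1.4]{SaorinStovicek}. So $(\lMod{A},\exact{})=\lMode{A}{\theta}$ satisfies the standing hypotheses of \thmref{Q-shaped-relative-Prj}. In addition, $\mathcal{E}=\lMode{Q,A}{\theta}$ is efficient with enough projectives.

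For the second bullet, note that $\mathscr{T}$ is a set of projective objects of $\lMode{A}{\theta}$, because the saturated class $\theta$ coincides with $\Prjp{\lMode{A}{\theta}}$. It follows that
\[
\mathscr{G}(\mathscr{T})=\{\Sq{q}(T)\mid T\in\mathscr{T},\ q\in Q\}
\]
is a set: $Q$ is small and $\mathscr{T}$ is a set. By \corref{GPrj-stalk}, whose hypotheses were checked in the first bullet, $\mathscr{G}(\mathscr{T})$ consists of Gorenstein projective objects of $\mathcal{E}$.

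Next I identify the two cotorsion pairs. As in the proof of \thmref{Q-shaped-relative-Prj}, \lemref{Ext-comparison-Prj} with $i=1$ shows that $\mathcal{W}(\mathscr{T})=\mathscr{G}(\mathscr{T})^\perp$ equals $\mathcal{W}_{\mathscr{G}}$. Hence $(\mathcal{C}(\mathscr{T}),\mathcal{W}(\mathscr{T}))$ equals the cotorsion pair $(\mathcal{C}_\mathscr{G},\mathcal{W}_\mathscr{G})$ of \dfnref{CW}. The latter is generated by the set $\mathcal{S}_\mathscr{G}=\{\sigma^nG\}$, which has one representative for each of the set-many $G$ and each $n$.

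Completeness then follows from \prpref{CW-properties}(d): $\mathcal{E}$ is efficient and $\mathscr{G}$ is a set, so the cotorsion pair is complete. Equivalently, one can apply \cite[Cor.~2.15(3)]{SaorinStovicek} directly to the cotorsion pair generated by the set $\mathscr{G}(\mathscr{T})$ in the efficient category $\mathcal{E}$, with no need to pass through $\mathcal{S}_\mathscr{G}$.

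No step here is a genuine obstacle. The only point needing care is confirming that the generating class is a set, and that efficiency transfers from $\lMode{A}{\theta}$ to $\lMode{Q,A}{\theta}$; \prpref{efficient-transfer} and \prpref{Mod-theta}(d) already supply the latter. Once both bullets hold, the conclusions of \thmref{Q-shaped-relative-Prj} apply verbatim.
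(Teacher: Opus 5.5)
Your proof is correct and essentially matches the paper's. The paper gets the first bullet from Proposition~\ref{prp:Mod-theta}(d), and the second by applying \cite[Cor.~2.15(3)]{SaorinStovicek} directly to the set $\mathscr{G}(\mathscr{T})$ in the efficient category $\lMode{Q,A}{\theta}$, which is exactly your ``equivalently'' remark. Your main route through Lemma~\ref{lem:Ext-comparison-Prj} and Proposition~\ref{prp:CW-properties}(d) is valid but an unnecessary detour here, since identifying $\mathcal{W}(\mathscr{T})$ with $\mathcal{W}_{\mathscr{G}}$ is only needed inside the proof of Theorem~\ref{thm:Q-shaped-relative-Prj} itself.
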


\begin{proof}
  Since $\theta$ be the saturation of a class of finitely presented $A$-modules, \prpref{Mod-theta}(d) shows that the exact categories $\lMode{A}{\theta}$ and $\lMode{Q,A}{\theta}$ have enough projectives and are efficient; in particular, they have exact coproducts. By definition, the cotorsion pair $(\mathcal{C}(\mathscr{T}),\mathcal{W}(\mathscr{T}))$ is generated by $\mathscr{G} = \mathscr{G}(\mathscr{T})$, which is a set as $\mathscr{T}$ is assumed to be a set. Hence this cotorsion pair is complete by \cite[Cor.~2.15(3)]{SaorinStovicek}.
\end{proof}

\begin{exa}\label{exa:Q-shaped derived category revisited}
  Let $\theta = {}_A\operatorname{Prj}$, in which case $\lMode{A}{\theta}$ and $\lMode{Q,A}{\theta}$ are just the abelian categories $\lMod{A}$ and $\lMod{Q,A}$, see \exaref{Prj-PPrj-Mod}(a). Note that with $\mathscr{T}=\{A\}$ and hence
\begin{equation*}
  \mathscr{G} \,=\, \mathscr{G}(\mathscr{T}) 
  \,=\, \{ \Sq{q}(A) \;|\; q \in Q \} \;,
\end{equation*}
  part (1) of \thmref{Q-shaped-relative-Prj} specialises, in view of \rmkref{comparison-of-H} and \prpref{assumptions-ok-Prj}, to yield the equivalence of conditions (i), (ii), (ii') in \cite[Thm.~7.1]{HJ-JLMS} whereas part (2) of \thmref{Q-shaped-relative-Prj} specialises to yield the equivalence of (i), (ii), (ii') in \cite[Thm.~7.2]{HJ-JLMS}. Part (3) of \thmref{Q-shaped-relative-Prj} recovers the leftmost equivalence in \cite[Thm.~6.5]{HJ-JLMS}.
\end{exa}

\subsection{The injective exact model structure on the exact category \texorpdfstring{$(\lMod{Q,A},\exact{Q})$}{(lMod(Q,A),exact(Q))}}
\label{injective-model-structure}
\phantom{.} \vspace*{1ex}

The results of the previous subsection are dualisable. We include the dual statements for completeness.
\begin{lem}
  \label{lem:Ext-comparison-Inj}
Let $\exact{}$ be an exact structure in $\lMod{A}$ such that $(\lMod{A},\exact{})$ has enough injectives and exact products. Let $\mathscr{I}$ be any class of injective objects in $(\lMod{A},\exact{})$ and let $i>0$ be a fixed integer. For every $X$ in the exact category $(\lMod{Q,A},\exact{Q})$, the following conditions are equivalent:
\begin{eqc}
\item $\HHop{i}{\Sq{q}(I)}{n}(X) := \Ext{(\lMod{Q,A},\exact{Q})}{i}{X}{\sigma^{-n}\Sq{q}(I)}=0$ for every $I$ in $\mathscr{I}$, $q$ in $Q$, and $n$ integer;

\item $\HHop{i}{\Sq{q}(I)}{0}(X) = \Ext{(\lMod{Q,A},\exact{Q})}{i}{X}{\Sq{q}(I)}$ for every $I$  in $\mathscr{I}$ and $q$ in $Q$.
\end{eqc}
\end{lem}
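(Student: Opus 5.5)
The plan is to dualise the proof of \lemref{Ext-comparison-Prj}. The implication \eqclbl{i}$\;\Rightarrow\;$\eqclbl{ii} is trivial, since $\sigma^0\Sq{q}(I)=\Sq{q}(I)$. For the converse, fix $I$ in $\mathscr{I}$, $q$ in $Q$ and an integer $n$. As in the projective case, additivity of Ext extends \eqclbl{ii} to all $I$ in $\operatorname{add}\mathscr{I}$.

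The first step is to rewrite $\Sq{q}(I)$ in the form covered by \prpref{canonical-tac-Inj}. Using $\Sq{q}(I)\cong\Hom{\Bbbk}{\stalkcn{q}}{I}$, where $R=\stalkcn{q}$ lies in $\rMod{Q}$ and takes values in $\{0,\Bbbk\}$, \prpref{canonical-tac-Inj} gives
\[
\sigma^{-n}\Sq{q}(I)\cong\Hom{\Bbbk}{V}{I},\qquad V=\sigma^{n}\stalkcn{q}\ \text{computed in the abelian category } \rMod{Q}.
\]
So it suffices to prove that $\Ext{\exact{Q}}{i}{X}{\Hom{\Bbbk}{V}{I}}=0$. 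By \eqref{sigma-computation} and \rmkref{values}, applied to $Q^{\mathrm{op}}$, the object $V$ has finite support and takes values in $\lprj{\Bbbk}$.

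Next I would apply the filtration of \cite[Construction~7.21(a)]{HJ-JLMS} to $V$ in $\rMod{Q}=\lMod{Q^{\mathrm{op}}}$, with $A=\Bbbk$. This requires $Q^{\mathrm{op}}$ to satisfy \stpref{HJ}, which should hold symmetrically (Serre functor $\mathbb{S}^{-1}$). It yields short exact sequences
\[
0\to\bigoplus_{p}\Sq{p}\Kq{p}(V^\ell)\to V^\ell\to V^{\ell+1}\to0,\qquad V^0=V,
\]
where the stalks are now the right stalks $\stalkcn{p}$. As before, $\Kq{p}(V^\ell)$ and $V^\ell(p)$ lie in $\lprj{\Bbbk}$, so these sequences are objectwise split, and $V^N=0$ by nilpotency of $\mathfrak{r}$. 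Applying the contravariant functor $\Hom{\Bbbk}{-}{I}$ gives objectwise split sequences, hence $\exact{Q}$-conflations,
\[
0\to\Hom{\Bbbk}{V^{\ell+1}}{I}\to\Hom{\Bbbk}{V^\ell}{I}\to\prod_p\Sq{p}(I^\ell_p)\to0.
\]
Here $I^\ell_p=\Hom{\Bbbk}{\Kq{p}(V^\ell)}{I}\in\operatorname{add}(I)$. The identification of the right-hand term uses that only finitely many $p$ contribute (finite support), so the sum becomes a product, together with the Hom–tensor adjunction $\Hom{\Bbbk}{\stalkcn{p}\otimes_\Bbbk K}{I}\cong\Hom{\Bbbk}{\stalkcn{p}}{\Hom{\Bbbk}{K}{I}}$.

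Finally, $\Ext{\exact{Q}}{i}{X}{-}$ commutes with finite products, so \eqclbl{ii} kills the Ext into the right-hand term. Descending induction on $\ell$, starting from $V^N=0$ and using the long exact sequence
\[
\Ext{}{i}{X}{\Hom{\Bbbk}{V^{\ell+1}}{I}}\to\Ext{}{i}{X}{\Hom{\Bbbk}{V^{\ell}}{I}}\to\Ext{}{i}{X}{\textstyle\prod_p\Sq{p}(I^\ell_p)},
\]
then gives vanishing at $\ell=0$.

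I expect the main obstacle to be the bookkeeping in the dualisation. This covers checking that Construction~7.21 and Prop.~7.20 apply to $Q^{\mathrm{op}}$, and matching the correct stalk: the left stalk $\stalkco{p}$ for $\Sq{p}$ versus $\Hom{\Bbbk}{\stalkcn{p}}{-}$. It also covers confirming the index convention $\sigma^{-n}$ versus $\sigma^{n}$ in \prpref{canonical-tac-Inj}. The convention only permutes $n$, so it is harmless, since $n$ ranges over all of $\mathbb{Z}$.
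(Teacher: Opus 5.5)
Your proposal is correct and is the dualisation the paper intends; its proof of this lemma only says ``similar to the proof of \lemref{Ext-comparison-Prj}''. You reduce via \prpref{canonical-tac-Inj} to $\Hom{\Bbbk}{\sigma^{n}\stalkcn{q}}{I}$, apply $\Hom{\Bbbk}{-}{I}$ to the objectwise split filtration of \cite[Construction~7.21(a)]{HJ-JLMS} over $Q^{\mathrm{op}}$, and induct downwards from $V^N=0$. One cosmetic point: $\Hom{\Bbbk}{\bigoplus_p(-)}{I}\cong\prod_p\Hom{\Bbbk}{-}{I}$ holds without any finiteness assumption, so finite support is needed only to pull $\operatorname{Ext}$ through the product.
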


\begin{proof}
  Similar to the proof of \lemref{Ext-comparison-Prj}.
\end{proof}

\begin{thm}
  \label{thm:Q-shaped-relative-Inj}
Let $\exact{}$ be an exact structure in $\lMod{A}$ such that $(\lMod{A},\exact{})$ has enough injectives and exact products and let $\mathscr{I}$ be a class of injective objects in $(\lMod{A},\exact{})$. Consider in the exact category $(\lMod{Q,A},\exact{Q})$ the cotorsion pair $(\mathcal{W}(\mathscr{I}),\mathcal{F}(\mathscr{I}))$ cogenerated by the class
\begin{equation*}
  \mathscr{J} \,=\, \mathscr{J}(\mathscr{I}) 
  \,=\, \{ \Sq{q}(I) \;|\; I \in \mathscr{I},\; q \in Q \} \;.
\end{equation*}
If this cotorsion pair is complete, then there is a hereditary exact model structure on $(\lMod{Q,A},\exact{Q})$ where the classes $\mathcal{W}(\mathscr{I})$ of trivial objects and of weak equivalences are described in parts (1) and (2) below, the class of fibrant objects is $\mathcal{F}(\mathscr{I}) = \mathcal{W}(\mathscr{I})^\perp$, and every object in $(\lMod{Q,A},\exact{Q})$ is cofibrant. 
\begin{rqm}
\item An object $X$ in $\lMod{Q,A}$ is trivial if and only if it satisfies the equivalent conditions:
\begin{eqc}
\setlength{\itemsep}{0pt}
\item $\HHop{i}{\Sq{q}(I)}{n}(X)=0$ for all $I$ in $\mathscr{I}$, $q$ in $Q$, $n$ integer, and $i>0$;

\item[\eqclbl{i$\,'$}] $\sHH{i}{\stalkcn{q}}{n}(\Hom{A}{X}{I})=0$ for all $I$ in $\mathscr{I}$, $q$ in $Q$, $n$ integer, and $i>0$;

\item $\HHop{1}{\Sq{q}(I)}{0}(X)=0$ for all $I$ in $\mathscr{I}$ and $q$ in $Q$;

\item[\eqclbl{ii$\,'$}] $\sHH{1}{\stalkcn{q}}{0}(\Hom{A}{X}{I})=0$ for all $I$ in $\mathscr{I}$, $q$ in $Q$.
\end{eqc}

\item  A morphism $\varphi$ in $\lMod{Q,A}$ is a weak equivalence if and only if it satisfies the equivalent conditions:

\begin{eqc}
\setlength{\itemsep}{0pt}
\item $\HHop{i}{\Sq{q}(I)}{n}(\varphi)$ is an isomorphism for all $I$ in $\mathscr{I}$, $q$ in $Q$, $n$ integer, and $i>0$;

\item[\eqclbl{i$\,'$}] $\sHH{i}{\stalkcn{q}}{n}(\Hom{A}{\varphi}{I})$ is an isomorphism for all $I$ in $\mathscr{I}$, $q$ in $Q$, $n$ integer, and $i>0$;

\item $\HHop{1}{\Sq{q}(I)}{0}(\varphi)$ and $\HH{2}{\Sq{q}(I)}{0}(\varphi)$ are isomorphisms for all $I$ in $\mathscr{I}$ and $q$ in $Q$;

\item[\eqclbl{ii$\,'$}] $\sHH{1}{\stalkcn{q}}{0}(\Hom{A}{\varphi}{I})$ and $\sHH{2}{\stalkcn{q}}{0}(\Hom{A}{\varphi}{I})$ are isomorphisms for all $I$ in $\mathscr{I}$ and $q$ in $Q$.
\end{eqc}

\item $\mathcal{F}(\mathscr{I})$ is a Frobenius exact category whose class of projective-injective objects is precisely the class 
\begin{equation*}
  \Inj{(\lMod{Q,A},\exact{Q})} \;=\; 
  \operatorname{Prod}\,\{\,\Gq{q}(I) \;|\; I \in \Inj{(\lMod{A},\exact{})},\;  q \in Q \,\} \;,
\end{equation*}
and the homotopy category $\operatorname{Ho}(\lMod{Q,A},\exact{Q})$ is equivalent to the stable category $\underline{\mathcal{F}}(\mathscr{I})$.
\end{rqm}
The functors $\check{\Hletterone}$ appearing in conditions of type \eqclbl{i} and  \eqclbl{ii} are cohomology functors in the exact category $(\lMod{Q,A},\exact{Q})$, while the functors $\mathbb{H}$ appearing in conditions of type \eqclbl{i$\,'$} and \eqclbl{ii$\,'$} are cohomology functors in the abelian category $\lMod{Q}$.
\end{thm}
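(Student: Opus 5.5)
The plan is to dualise the proof of \thmref{Q-shaped-relative-Prj}, using \thmref{main-op} in place of \thmref{main}. First I would check the setup of \thmref{main-op}. The category $\mathcal{E}:=(\lMod{Q,A},\exact{Q})$ is weakly idempotent complete because $\lMod{Q,A}$ is abelian, and it has enough injectives by \prpref{Prj-transfer}(b). By \corref{GInj-stalk}, which needs enough injectives and exact products in $(\lMod{A},\exact{})$, the class $\mathscr{J}=\mathscr{J}(\mathscr{I})$ consists of Gorenstein injective objects of $\mathcal{E}$. The cotorsion pair $(\mathcal{W}^{\mathscr{J}},\mathcal{F}^{\mathscr{J}})$ of \rmkref{efficient} is cogenerated by $\{\sigma^n J\}$ and defined by vanishing of $\HHop{1}{J}{n}$ for all $n$. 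By \lemref{Ext-comparison-Inj} with $i=1$, this class agrees with the class cogenerated by $\mathscr{J}$ alone. Hence $(\mathcal{W}^{\mathscr{J}},\mathcal{F}^{\mathscr{J}})=(\mathcal{W}(\mathscr{I}),\mathcal{F}(\mathscr{I}))$, which is complete by hypothesis. Applying \thmref{main-op} then gives a hereditary exact model structure in which every object is cofibrant, the trivial objects are $\mathcal{W}(\mathscr{I})$, and the fibrant objects are $\mathcal{F}(\mathscr{I})$.

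For part (1), \thmref{main-op}(1) together with \lemref{Ext-comparison-Inj} (applied for each $i>0$, as in \rmkref{n-is-0}) gives \eqclbl{i}$\Leftrightarrow$\eqclbl{ii}. For the primed conditions I would use $\Sq{q}(I)\cong\Hom{\Bbbk}{\stalkcn{q}}{I}$. Here $\stalkcn{q}$ is an object of $\rMod{Q}$ with values in $\{0,\Bbbk\}$, hence projective values. \corref{Ext-formula-exact-Inj} with $R=\stalkcn{q}$ then gives $\HHop{i}{\Sq{q}(I)}{n}(X)\cong\sHH{i}{\stalkcn{q}}{n}(\Hom{A}{X}{I})$. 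This yields \eqclbl{i}$\Leftrightarrow$\eqclbl{i$\,'$} and \eqclbl{ii}$\Leftrightarrow$\eqclbl{ii$\,'$}.

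Part (2) follows in the same way from \thmref{main-op}(2), \lemref{Ext-comparison-Inj}, \rmkref{n-is-0} and the naturality in $X$ of the isomorphism in \corref{Ext-formula-exact-Inj}. Read \eqclbl{ii} as involving $\HHop{2}{\Sq{q}(I)}{0}$; I take the $\HH{2}{\Sq{q}(I)}{0}$ in the statement to be a typo for $\HHop{}{}{}$.

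For the \eqclbl{iv}-type reduction in part (2), I would argue dually to \rmkref{n-is-0}. Factor $\varphi$ as a trivial cofibration followed by a fibration $\pi$ with fibrant kernel $F$. If $\HHop{1}{}{0}$ and $\HHop{2}{}{0}$ of $\pi$ are isomorphisms, the long exact sequence forces $\HHop{1}{\Sq{q}(I)}{0}(F)=0$. The lemma then upgrades this to all $n$, so $F$ is trivial.

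Part (3) follows from \thmref{main-op}(3) and the description of the injectives in \prpref{Prj-transfer}(b), namely summands of products of $\Gq{q}(I_q)$, written as $\operatorname{Prod}$. The main obstacle I expect is the equivalence of the primed conditions. It relies on the shift in \prpref{canonical-tac-Inj}, namely $\sigma^{-n}\Hom{\Bbbk}{R}{I}\cong\Hom{\Bbbk}{\sigma^nR}{I}$. I need to check that the indexing convention $\HHop{i}{J}{n}=\operatorname{Ext}^i(-,\sigma^{-n}J)$ matches $\sHH{i}{\stalkcn{q}}{n}$ with the same $n$. Since conditions (i$'$) and (ii$'$) range over all $n$, or only $n=0$, a sign discrepancy would be harmless anyway.
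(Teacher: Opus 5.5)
Your proposal is correct and follows the paper's own proof, which simply dualises the proof of \thmref{Q-shaped-relative-Prj}. Both arguments use \thmref{main-op}, identify the cotorsion pairs via \lemref{Ext-comparison-Inj} with $i=1$, obtain the primed conditions from \corref{Ext-formula-exact-Inj} with $R=\stalkcn{q}$, and get part (3) from \prpref{Prj-transfer}(b). Your reading of $\HH{2}{\Sq{q}(I)}{0}$ as a typo for its $\check{\Hletterone}$ counterpart is right. The indexing in \corref{Ext-formula-exact-Inj} does pair $\sigma^{-n}\mathrm{Hom}_{\Bbbk}(R,I)$ with $\sigma^n R$, so the same $n$ appears on both sides and the concern you raised does not arise.
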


\begin{proof}
Note that $\Sq{q}(I) = \Hom{\Bbbk}{\stalkcn{q}}{I}$. Now the proof is similar to that of \thmref{Q-shaped-relative-Prj}, using \thmref{main-op} instead of \thmref{main} and 
\corref{Ext-formula-exact-Inj} (with $R=\stalkcn{q}$) instead of \corref{Ext-formula-exact-Prj} (with $U=\stalkco{q}$). 
\end{proof}

\begin{exa}\label{exmp:injective abelian model}
  Let $\theta = {}_A\operatorname{Prj}$, in which case $\lMode{A}{\theta}$ and $\lMode{Q,A}{\theta}$ are just the abelian categories $\lMod{A}$ and $\lMod{Q,A}$, see \exaref{Prj-PPrj-Mod}(a). Note that the category $\lMod{A}$ has enough injectives and exact products.  Let $\mathscr{I}=\{I\}$ where $I$ is any injective cogenerator of $\lMod{A}$ and set
\begin{equation*}
  \mathscr{J} \,=\, \mathscr{J}(\mathscr{I}) 
  \,=\, \{ \Sq{q}(I) \;|\; q \in Q \} \;.
\end{equation*}
  By \corref{Ext-formula-exact-Inj} with $R=\stalkcn{q}$ we have $\HHop{i}{\Sq{q}(I)}{n}(X)
  \ \cong \
  \Hom{A}{\sHHt{i}{\stalkcn{q}}{n}(X)}{I}$. Since $I$ is an injective cogenerator, it follows that for every object $X$ and morphism $\varphi$ in $\lMod{Q,A}$ one has
  \begin{align*}
    \HHop{i}{\Sq{q}(I)}{n}(X)=0
    & \quad \iff \quad 
    \sHHt{i}{\stalkcn{q}}{n}(X)=0
    \\
    \HHop{i}{\Sq{q}(I)}{n}(\varphi) \text{ is an isomorphism}
    &\quad \iff \quad
    \sHHt{i}{\stalkcn{q}}{n}(\varphi) \text{ is an isomorphism}.
  \end{align*}
  Recall from \rmkref{comparison-of-H} that for $n=0$ we have $\sHHt{i}{\stalkcn{q}}{0}=\hHhj{i}{q}$. In view of this, the cotorsion pair $(\mathcal{W}(\mathscr{I}),\mathcal{F}(\mathscr{I}))$ cogenerated by the class $\mathscr{J}$, coincides with the cotorsion pair $(\mathbb{E},\mathbb{E}^{\perp})$ from \cite{HJ-JLMS} (see equation \eqref{E} and the discussion below it), which is proven to be complete in \cite[Thm.~5.9]{HJ-JLMS}.

  These considerations show that part (1) of \thmref{Q-shaped-relative-Inj} specialises to yield the equivalence of conditions (i), (iii), (iii') in \cite[Thm.~7.1]{HJ-JLMS} whereas part (2) of \thmref{Q-shaped-relative-Inj} specialises to yield the equivalence of (i), (iii), (iii') in \cite[Thm.~7.2]{HJ-JLMS}. Part (3) of \thmref{Q-shaped-relative-Inj} recovers the rightmost equivalence in \cite[Thm.~6.5]{HJ-JLMS}.
\end{exa}

\section{Between the \texorpdfstring{$Q$}{Q}-shaped homotopy category and the \texorpdfstring{$Q$}{Q}-shaped derived category}
\label{sec:q-shaped homotopy}

Note that exact structures in $\lMod{A}$ can be ordered by the inclusion of the classes of conflations. The minimal element for this partial order is clearly the split exact structure, while the maximal element is the abelian structure of $\lMod{A}$. In the notation of the previous section the minimal element is the exact structure in $\lMode{A}{\lMod{A}}$ and the maximal element is $\lMode{A}{\lPrj{A}}$. These exact structures give rise to a collection of partially ordered exact structures in $\lMod{Q,A}$. These range from the \textit{objectwise split} exact structure in $\lMod{Q,A}$ and the abelian exact structure in $\lMod{Q,A}$ (which we write simply as $\lMod{Q,A}$ instead of $\lMode{Q,A}{\lPrj{A}}$). We begin in a more general setting.
\subsection{Homotopical functors and Verdier localisations}
\label{homotopical-functors}
\phantom{.} \vspace*{1ex}

Let $\mathcal{A}$ be an additive category, and $\exact{}_1$ and $\exact{}_2$ two exact structures on $\mathcal{A}$. If $(\mathcal{A}_1,\exact{}_1)$ and $(\mathcal{A}_2,\exact{}_2)$ are endowed with exact model structures, recall that a functor $F\colon (\mathcal{A}_1,\exact{}_1)\longrightarrow(\mathcal{A}_2,\exact{}_2)$ is called \textbf{homotopical} if it preserves weak equivalences. It is proven in \cite[Lem.~4.3]{MR2811572} that any homotopical functor $F\colon (\mathcal{A}_1,\exact{}_1)\longrightarrow(\mathcal{A}_2,\exact{}_2)$ induces a unique functor $F_*=\operatorname{Ho}(F)\colon\operatorname{Ho}(\mathcal{A}_1,\exact{}_1)\longrightarrow\operatorname{Ho}(\mathcal{A}_2,\exact{}_2)$, between the associated homotopy categories, that makes the following diagram commute:
\begin{equation}\label{eq:Ho-Ho}
\begin{tikzcd}
	{(\mathcal{A}_1,\exact{}_1)} && {(\mathcal{A}_2,\exact{}_2)} \\
	\\
	{\operatorname{Ho}(\mathcal{A}_1,\exact{}_1)} && {\operatorname{Ho}(\mathcal{A}_2,\exact{}_2)}.
	\arrow["F", from=1-1, to=1-3]
	\arrow["{q_1}"', from=1-1, to=3-1]
	\arrow["{q_2}", from=1-3, to=3-3]
	\arrow["{F_*}", dashed, from=3-1, to=3-3]
\end{tikzcd}
\end{equation}

 Assume now that $\exact{}_2\subseteq \exact{}_1$ and that  $(\mathcal{A},\exact{}_1)$ and $(\mathcal{A},\exact{}_2)$ have enough projectives and enough injectives. For each $\ell=1,2$ suppose that there are two hereditary Hovey triples of the form $(\mathcal{C}_\ell,\mathcal{W}_\ell,\mathcal{A})$ and $(\mathcal{A},\mathcal{W}_\ell,\mathcal{F}_\ell)$ in  $(\mathcal{A},\exact{}_\ell)$, with the same trivial objects, such that $\mathcal{C}_\ell\cap\mathcal{W}_\ell=\Prj{(\mathcal{A},\exact{}_\ell)}$ and $\mathcal{W}_\ell\cap \mathcal{F}_\ell=\Inj{(\mathcal{A},\exact{}_\ell)}$.

\begin{lem}
  \label{lem:inclusions-1}\label{lem:inclusions-2}
  In the setup described above, suppose that $\mathcal{W}_1 \supseteq \mathcal{W}_2$. Then we have that 
  \begin{enumerate}
      \item $\mathcal{C}_1 \subseteq \mathcal{C}_2$ and $\mathcal{F}_1\subseteq\mathcal{F}_2$;
      \item $\mathcal{C}_1\cap \mathcal{W}_1 =\Prj{(\mathcal{A},\exact{}_1)} \subseteq \Prj{(\mathcal{A},\exact{}_2)} = \mathcal{C}_2\cap \mathcal{W}_2$ and $\mathcal{F}_1\cap\mathcal{W}_1 =\Inj{(\mathcal{A},\exact{}_1)}\subseteq\Inj{(\mathcal{A},\exact{}_2)}=\mathcal{F}_2\cap\mathcal{W}_2$;
      \item the identity functor $I \colon (\mathcal{A},\exact{}_2) \longrightarrow (\mathcal{A},\exact{}_1)$, which is exact by the assumption $\exact{}_2 \subseteq \exact{}_1$, is homotopical.
  \end{enumerate}
  
\end{lem}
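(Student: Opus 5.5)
Part (1) follows from the cotorsion pairs. Since $(\mathcal{C}_\ell,\mathcal{W}_\ell,\mathcal{A})$ is a Hovey triple, $(\mathcal{C}_\ell,\mathcal{W}_\ell)$ is a cotorsion pair in $(\mathcal{A},\exact{}_\ell)$. Hence $\mathcal{C}_\ell={}^{\perp_\ell}\mathcal{W}_\ell$, where $\perp_\ell$ is computed with $\operatorname{Ext}^1$ in $\exact{}_\ell$. Now $\exact{}_2\subseteq\exact{}_1$ gives a natural inclusion $\operatorname{Ext}^1_{\exact{}_2}(X,Y)\subseteq\operatorname{Ext}^1_{\exact{}_1}(X,Y)$ as Yoneda extensions. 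So if $X\in\mathcal{C}_1$, then $\operatorname{Ext}^1_{\exact{}_1}(X,W)=0$ for all $W\in\mathcal{W}_1\supseteq\mathcal{W}_2$. In particular $\operatorname{Ext}^1_{\exact{}_2}(X,W)=0$ for all $W\in\mathcal{W}_2$, which gives $X\in\mathcal{C}_2$. The dual argument, using the cotorsion pair $(\mathcal{W}_\ell,\mathcal{F}_\ell)$ from the injective Hovey triple, gives $\mathcal{F}_1\subseteq\mathcal{F}_2$.

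For part (2), the equalities hold by the standing assumptions. The inclusion $\Prj{(\mathcal{A},\exact{}_1)}\subseteq\Prj{(\mathcal{A},\exact{}_2)}$ follows from the same Ext inclusion: if $\operatorname{Ext}^1_{\exact{}_1}(P,-)=0$ then $\operatorname{Ext}^1_{\exact{}_2}(P,-)=0$. Equivalently, every $\exact{}_2$-conflation is an $\exact{}_1$-conflation, so $\Hom{}{P}{-}$ keeps it exact. Injectives are handled dually. The cleaner route avoids Ext comparison altogether: combine part (1) with $\mathcal{W}_1\supseteq\mathcal{W}_2$ only if needed. Here the projectivity description is already direct, so I would use it.

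For part (3), I will show that weak equivalences of $(\mathcal{A},\exact{}_2)$ in the projective triple $(\mathcal{C}_2,\mathcal{W}_2,\mathcal{A})$ are weak equivalences for $\exact{}_1$. By the description after \thmref{Gillespie-CWF}, a weak equivalence $\varphi$ factors as $\varphi=\pi\iota$. Here $\iota$ is an $\exact{}_2$-inflation with cokernel in $\mathcal{C}_2\cap\mathcal{W}_2=\Prj{(\mathcal{A},\exact{}_2)}$, and $\pi$ is an $\exact{}_2$-deflation with kernel in $\mathcal{W}_2$.

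Since $\exact{}_2\subseteq\exact{}_1$, both maps are also $\exact{}_1$-inflations and $\exact{}_1$-deflations. The deflation $\pi$ has kernel in $\mathcal{W}_2\subseteq\mathcal{W}_1=\mathcal{W}_1\cap\mathcal{A}$, so $\pi$ is a trivial fibration for $\exact{}_1$ and hence a weak equivalence. The main obstacle is $\iota$, because its cokernel $P$ is $\exact{}_2$-projective but not necessarily $\exact{}_1$-projective. I avoid this by noting that $\Prj{(\mathcal{A},\exact{}_2)}=\mathcal{C}_2\cap\mathcal{W}_2\subseteq\mathcal{W}_1$.

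Consider the $\exact{}_1$-conflation $X\rightarrowtail Z\twoheadrightarrow P$ with $P\in\mathcal{W}_1$. In any exact model structure with thick $\mathcal{W}$, an inflation with trivial cokernel is a weak equivalence. To see this, factor $\iota$ in the $\exact{}_1$ model structure as a cofibration followed by a trivial fibration, and use thickness of $\mathcal{W}_1$. Alternatively, cite Gillespie's characterisation \cite{MR2811572} that a map is a weak equivalence iff it factors as a monic with trivial cokernel followed by an epic with trivial kernel. Then the 2-out-of-3 property gives that $\varphi=\pi\iota$ is a weak equivalence in $(\mathcal{A},\exact{}_1)$.

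Since both triples for each $\ell$ share $\mathcal{W}_\ell$ and hence the same weak equivalences (they are determined by $\mathcal{W}_\ell$ in this characterisation), this settles homotopicality regardless of which triple is used.
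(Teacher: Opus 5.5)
Your proof is correct and follows the paper's argument essentially step by step. Parts (1) and (2) rest on the fact that an $\exact{}_2$-conflation is an $\exact{}_1$-conflation, so vanishing of $\operatorname{Ext}^1_{\exact{}_1}$ forces $\exact{}_2$-conflations to split. Part (3) uses the factorisation $\varphi=\pi\iota$, the observation $\operatorname{Coker}\iota\in\mathcal{C}_2\cap\mathcal{W}_2\subseteq\mathcal{W}_1$, and Gillespie/Hovey's result that an inflation with trivial cokernel is a weak equivalence. Your closing remark that, for each $\ell$, both Hovey triples share the same weak equivalences is a point the paper leaves implicit. The sentence in part (2) about a ``cleaner route'' is confusing and can simply be deleted.
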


\begin{proof}
    (1) For an object $X$ in $\mathcal{C}_1$ we must show that $\Ext{\exact{}_2}{1}{X}{W}=0$ for every $W$ in $\mathcal{W}_2$, i.e.~that any conflation 
    $$\xi = 0 \to W \to E \to X \to 0$$ 
    in $\mathscr{E}_2$ splits. By assumption, $\exact{}_2 \subseteq \exact{}_1$ and $\mathcal{W}_1 \supseteq \mathcal{W}_2$, so $\xi$ is also a conflation in $\exact{}_1$ and $W$ is also an object in $\mathcal{W}_1$. Since $X$ lies in $\mathcal{C}_1$, we have $\Ext{(\mathcal{A},\exact{}_1)}{1}{X}{W}=0$, and so $\xi$ splits. The proof of $\mathcal{F}_1\subseteq \mathcal{F}_2$ is dual.

 (2) The assumptions on the model exact structures guarantee that $\mathcal{C}_\ell\cap\mathcal{W}_\ell=\Prj{(\mathcal{A},\exact{}_\ell)}$. The containments $\Prj{(\mathcal{A},\exact{}_1)}\subseteq \Prj{(\mathcal{A},\exact{}_2)}$ and $\Inj{(\mathcal{A},\exact{}_1)}\subseteq\Inj{(\mathcal{A},\exact{}_2)}$ follow from the fact that $\exact{}_2\subseteq \exact{}_1$.

 (3) Let $\varphi$ be a weak equivalence in $(\mathcal{A},\exact{}_2)$. As mentioned in Subsection~\ref{exact-model-structures} this means that 
 $\varphi$ admits a factorisation $\varphi = \pi\iota$ where $\iota$ is an inflation in $\exact{}_2$ with $\Coker{\iota}$ in $\mathcal{C}_2 \cap \mathcal{W}_2 = \Prj{(\mathcal{A},\exact{}_2)}$ and $\pi$ is an deflation in $\exact{}_2$ with $\Ker{\pi}$ in $\mathcal{W}_2$. Note that $\iota$ is also an inflation in $\exact{}_1$ and $\pi$ is a deflation in $\exact{}_1$ since $\exact{}_2 \subseteq \exact{}_1$. Since $\mathcal{W}_1 \supseteq \mathcal{W}_2$ we also have that $\Ker{\pi}$ lies in $\mathcal{W}_1$. Further, we have $\mathcal{C}_2 \cap \mathcal{W}_2 \subseteq \mathcal{W}_2 \subseteq \mathcal{W}_1$, and so $\iota$ is an inflation in $\exact{}_1$ with $\Coker{\iota}$ in $\mathcal{W}_1$. This implies that $\iota$ is a weak equivalence in $(\mathcal{A},\exact{}_1)$ by \cite[proof of Thm.~3.3]{MR2811572}, see also \cite[Lem.~5.8]{Hovey02}. Since $\varphi = \pi\iota$ is a composition of weak equivalences in $(\mathcal{A},\exact{}_1)$, it is itself a weak equivalence in $(\mathcal{A},\exact{}_1)$.
\end{proof}

% The following notions are defined in \cite[Def.~1.2]{BondalKapranov}. Let $\mathscr{T}$ be a triangulated category and $\mathscr{S}$ a strictly full subcategory of $\mathscr{T}$. We call $\mathscr{S}$ a \textbf{right} (resp. left) \textbf{admissible subcategory} of $\mathscr{T}$ if every object $X$ in $\mathscr{T}$ fits into a distinguished triangle $S\rightarrow X\rightarrow S'\rightarrow\Sigma S$ with $S\in \mathscr{S}$ and $S'\in\mathscr{S}^\perp$ (resp. $S'\rightarrow X\rightarrow S\rightarrow\Sigma S$ with $S\in \mathscr{S}$ and $S'\in{}^\perp\mathscr{S}$). We call $\mathscr{S}$ an \textbf{admissible subcategory} of $\mathscr{T}$ given that it is both left and right admissible.

\begin{prp}\label{prp:Ho-verdier-abstract}
  Let $\mathcal{A}$ be an additive category, $\exact{}_2 \subseteq \exact{}_1$ be two exact structures on $\mathcal{A}$, such that $(\mathcal{A},\exact{}_\ell)$, for $\ell=1,2$ have enough projectives. Suppose that, for each $\ell=1,2$, there is a hereditary Hovey triple $(\mathcal{C}_\ell,\mathcal{W}_\ell,\mathcal{A})$ in the exact category $(\mathcal{A},\exact{}_\ell)$ with $\mathcal{C}_\ell\cap\mathcal{W}_\ell=\Prj{(\mathcal{A},\exact{}_\ell)}$, and assume that $\mathcal{W}_1 \supseteq \mathcal{W}_2$. Then the following statements hold.
  \begin{prt}
  \item  $\mathcal{W}_1$, viewed as a full subcategory of $\operatorname{Ho}(\mathcal{A},\exact{}_2)$, is a thick triangulated subcategory; 
  \item The functor
  \(
      I_* \colon \operatorname{Ho}(\mathcal{A},\exact{}_2) \longrightarrow \operatorname{Ho}(\mathcal{A},\exact{}_1)
  \)
  induced by the identity functor $I \colon (\mathcal{A},\exact{}_2) \longrightarrow (\mathcal{A},\exact{}_1)$ (which is homotopical) can be identified with the Verdier localisation functor $V \colon \operatorname{Ho}(\mathcal{A},\exact{}_2) \longrightarrow \operatorname{Ho}(\mathcal{A},\exact{}_2)/\mathcal{W}_1$;
  \item There is a colocalisation sequence of triangulated categories as follows: %Assume that $\mathcal{C}_1\cap\mathcal{W}_1\subseteq\mathcal{C}_2\cap\mathcal{W}_2$. Then, one has that the diagram
  \[\begin{tikzcd}
	{\mathcal{W}_1} && {\operatorname{Ho}(\mathcal{A},\exact{}_2)} && {\operatorname{Ho}(\mathcal{A},\exact{}_1)}
	\arrow["J"{description}, from=1-1, to=1-3]
	%\arrow["{J_r}", bend left=30pt, from=1-3, to=1-1]
	\arrow["{J_\ell}"', bend right=30pt, from=1-3, to=1-1]
	\arrow["{I_*}"{description}, from=1-3, to=1-5]
	%\arrow["{I^r}", shift left=3, from=1-5, to=1-3]
	\arrow["{I^\ell}"', bend right=30pt, from=1-5, to=1-3]
\end{tikzcd}.\]
  \end{prt}
\end{prp}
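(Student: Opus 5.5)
The plan is to realise $I_*$ as a functor with a fully faithful left adjoint, and then to read off (a)--(c) from the standard theory of Bousfield (co)localisations. A triangle functor with a fully faithful left adjoint is, up to equivalence, the Verdier quotient by its kernel, its kernel is thick, and one gets a colocalisation sequence (Krause's results on localisation). So the work splits into three tasks: show that $I_*$ is a triangle functor, construct the left adjoint $I^\ell$ and show it is fully faithful, and identify $\operatorname{Ker}I_*$ with $\mathcal{W}_1$.

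First, the argument of \lemref{inclusions-1}(1) uses only the projective Hovey triples, so it gives $\mathcal{C}_1\subseteq\mathcal{C}_2$ here. Likewise $\Prj{(\mathcal{A},\exact{}_1)}\subseteq\Prj{(\mathcal{A},\exact{}_2)}$, because $\exact{}_2\subseteq\exact{}_1$. The argument of \lemref{inclusions-1}(3) also goes through, so $I$ is homotopical and $I_*$ exists by \eqref{Ho-Ho}.

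For triangulatedness, recall that distinguished triangles in $\operatorname{Ho}(\mathcal{A},\exact{}_2)$ are, up to isomorphism, those coming from $\exact{}_2$-conflations. Every $\exact{}_2$-conflation is an $\exact{}_1$-conflation, so $I_*$ sends standard triangles to standard triangles. For the suspension, I would use a conflation $X\rightarrowtail P\twoheadrightarrow \Sigma X$ in $\exact{}_2$ with $P\in\Prj{(\mathcal{A},\exact{}_2)}\subseteq\mathcal{W}_2\subseteq\mathcal{W}_1$. Since every object is fibrant, such $P$ exist (via cofibrant replacement and the Frobenius structure on $\mathcal{C}_2$), and this conflation also computes $\Sigma X$ in $\operatorname{Ho}(\mathcal{A},\exact{}_1)$. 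I expect this bookkeeping, making sure the identifications $\operatorname{Ho}\simeq\underline{\mathcal{C}}_\ell$ from \thmref{Gillespie-CWF} and the accompanying discussion are compatible with $I_*$, to be the most delicate routine point.

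Next, I define $I^\ell$ on $\operatorname{Ho}(\mathcal{A},\exact{}_1)\simeq\underline{\mathcal{C}_1}$ by sending $X$ to its $\exact{}_1$-cofibrant replacement $QX\in\mathcal{C}_1\subseteq\mathcal{C}_2$. Since every object is fibrant, for $C\in\mathcal{C}_\ell$ one has $\operatorname{Hom}_{\operatorname{Ho}_\ell}(C,Y)=\Hom{\mathcal{A}}{C}{Y}$ modulo maps factoring through $\Prj{(\mathcal{A},\exact{}_\ell)}$. The key step, which I expect to be the main obstacle, is the following claim: for $C\in\mathcal{C}_1$, a map $C\to Y$ factoring through some $P\in\Prj{(\mathcal{A},\exact{}_2)}$ already factors through an $\exact{}_1$-projective. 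To prove it, choose an $\exact{}_1$-deflation $R\twoheadrightarrow P$ with $R$ projective and kernel $K$. Since $P,R\in\mathcal{W}_1$ and $\mathcal{W}_1$ is thick, $K\in\mathcal{W}_1$. Then $\Ext{\exact{}_1}{1}{C}{K}=0$, so $C\to P$ lifts to $R$. This gives
\[
\operatorname{Hom}_{\operatorname{Ho}(\mathcal{A},\exact{}_2)}(I^\ell X,Y)\cong\operatorname{Hom}_{\operatorname{Ho}(\mathcal{A},\exact{}_1)}(X,I_*Y),
\]
naturally in $X$ and $Y$. Taking $Y=I^\ell X'$ and using $I_*I^\ell X'\cong X'$ (because $QX'\to X'$ is a weak equivalence in $\exact{}_1$), it follows that $I^\ell$ is fully faithful.

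Finally, $\operatorname{Ker}I_*$ consists of the objects isomorphic to $0$ in $\operatorname{Ho}(\mathcal{A},\exact{}_1)$, which are exactly the $\exact{}_1$-trivial objects $\mathcal{W}_1$. Together with the general result cited above, this yields (a), (b) and (c). For (c), $J_\ell$ is built from the cone of the counit $I^\ell I_*Y\to Y$, whose cone lies in $\mathcal{W}_1$.
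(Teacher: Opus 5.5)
Your proposal is correct, but it is organised differently from the paper's proof.

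\textbf{How the paper argues.} It proves (a) and (b) without any adjoint. Part (a) is $\operatorname{Ker}I_*=\mathcal{W}_1$ together with thickness of kernels of triangle functors. For (b), it factors $I_*=FV$ through the Verdier quotient. It then builds an inverse $G$ from the universal property of $q_1$, after checking that $Vq_2$ inverts $\exact{}_1$-weak equivalences: the cone of $q_2(\varphi)$ lies in $\operatorname{Ker}I_*=\mathcal{W}_1$. Only in (c) does it construct the left adjoint, working in the stable categories $\underline{\mathcal{C}_\ell}$. There the inclusion $\underline{\mathcal{C}_1}\to\underline{\mathcal{C}_2}$ is shown to be left adjoint to $\exact{}_1$-cofibrant replacement of the target. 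This uses three appeals to Gillespie's Prop.~4.4(5), to turn factorisations through trivial objects into factorisations through $\mathcal{C}_1\cap\mathcal{W}_1$.

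\textbf{How your route differs, and what each buys.} You construct $I^\ell$ first and obtain (a)--(c) at once from the general theory of Bousfield colocalisations. Your Hom computation is leaner. Every object is fibrant in both structures, so you never need to replace the target. Bijectivity of $I_*$ on $\operatorname{Hom}(C,-)$ for $C\in\mathcal{C}_1$ then reduces to your lifting lemma. That lemma (an $\exact{}_1$-projective deflation onto $P$, thickness of $\mathcal{W}_1$, and $C\in{}^\perp\mathcal{W}_1$) is an elementary, self-contained proof of exactly the special case of Gillespie's result that is needed. The paper's route buys generality for (a) and (b). Its argument uses only that $I$ is homotopical and that $I_*$ is a triangle functor with kernel $\mathcal{W}_1$. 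So those two statements hold for any pair of exact model structures with $\exact{}_2\subseteq\exact{}_1$ and $\mathcal{W}_2\subseteq\mathcal{W}_1$. In your approach, (b) depends on first having the fully faithful adjoint.

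\textbf{One small point in your triangulatedness sketch.} An $\exact{}_2$-conflation $X\rightarrowtail P\twoheadrightarrow\Sigma X$ with $P$ projective need not exist when $X$ is not cofibrant. For example, the stalk complex $\mathbb{Q}$ does not embed into a projective object of $\mathbf{Ch}(\mathbb{Z})$. So you must pass to a cofibrant replacement first, as your parenthetical suggests. More simply, use the special preenvelope $X\rightarrowtail W\twoheadrightarrow C$ from the complete cotorsion pair $(\mathcal{C}_2,\mathcal{W}_2)$. It is also an $\exact{}_1$-conflation, and $W\in\mathcal{W}_2\subseteq\mathcal{W}_1$, so it computes the suspension in both homotopy categories. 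The paper itself simply asserts that $I_*$ is a triangle functor, so this is extra care on your part rather than a gap relative to the paper.
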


\begin{proof}
 \proofoftag{a} An object $X$ of $\mathcal{A}$) is isomorphic to zero in $\operatorname{Ho}(\mathcal{A},\exact{}_1)$ if and only if $X$ lies in $\mathcal{W}_1$. Thus, $\Ker{I_*}$ is the full subcategory $\mathcal{W}_1$ of $\operatorname{Ho}(\mathcal{A},\exact{}_2)$. Since $I_*$ is a triangle functor, it follows from \cite[Rem.~2.1.7]{Nee} that $\mathcal{W}_1$ is thick.

  \proofoftag{b} By the universal property of the Verdier localisation, see \cite[Thm.~2.1.8]{Nee}, there is a unique functor $F$ that makes the following diagram commutative:
  \begin{equation}\label{eq:diag-chase1}
     \begin{gathered}
      \xymatrix{
        \operatorname{Ho}(\mathcal{A},\exact{}_2)
        \ar[r]^-{I_*} 
        \ar[d]_-{V}
        &
        \operatorname{Ho}(\mathcal{A},\exact{}_1)
        \\
        \operatorname{Ho}(\mathcal{A},\exact{}_2)/\mathcal{W}_1 
        \ar@{.>}[ur]_-{F}
        & {}
      }
      \end{gathered}
  \end{equation}
Note that, by the proof of (a), $\Ker{F}=\{0\}$. To construct a functor in the opposite direction, consider the composite
  \begin{equation*}
      \xymatrix{
        \mathcal{A}
        \ar[r]^-{q_2} 
        &
        \operatorname{Ho}(\mathcal{A},\exact{}_2)
        \ar[r]^-{V}         
        &
        \operatorname{Ho}(\mathcal{A},\exact{}_2)/\mathcal{W}_1 
      }
  \end{equation*}
  where $q_2$ is the canonical projection functor. We claim that $Vq_2$ maps weak equivalences in $(\mathcal{A},\exact{}_1)$ to isomorphisms in $\operatorname{Ho}(\mathcal{A},\exact{}_2)/\mathcal{W}_1$. To prove this, let $\varphi \colon X \to Y$ be a weak equivalence in $(\mathcal{A},\exact{}_1)$. Consider the morphism $q_2(\varphi) \colon X \to Y$ and complete it to a distinguished triangle 
  \begin{equation}
  \label{eq:q2-triangle}
   \xymatrix{
      q_2(X) \ar[r]^-{q_2(\varphi)} & q_2(Y) \ar[r] & C \ar[r] & \Sigma q_2(X)
   }   
  \end{equation}
  in $\operatorname{Ho}(\mathcal{A},\exact{}_2)$. Applying the triangle functor $I_*$, which is the identity on objects, to this triangle and using the fact that $I_*q_2=q_1$, where $q_1$ is the canonical projection functor, see \eqref{Ho-Ho}, we get a 
  distinguished triangle in $\operatorname{Ho}(\mathcal{A},\exact{}_1)$:
  \begin{equation*}
   \xymatrix{
      q_1(X) \ar[r]^-{q_1(\varphi)} & q_1(Y) \ar[r] & I_*C \ar[r] & \Sigma q_1(X)
   }   .
  \end{equation*}
By assumption, $\varphi$ is a weak equivalence in $(\mathcal{A},\exact{}_1)$, so the latter triangle shows that $I_*C\cong 0$. Thus $FV(C)$ is isomorphic to zero and $\Ker{F}=\{0\}$, we get that $V(C)\cong 0$ in $\operatorname{Ho}(\mathcal{A},\exact{}_2)/\mathcal{W}_1$ and, thus, the triangle \eqref{q2-triangle} shows that $Vq_2(\varphi)$ is an isomorphism, as claimed. By the universal property of the homotopy category $\operatorname{Ho}(\mathcal{A},\exact{}_1)$ we get a unique triangle functor $G$ that makes the following diagram commutative:
  \begin{equation}\label{eq:diag-chase2}
      \begin{gathered}
      \xymatrix@C=2.5pc{
        \mathcal{A} 
        \ar[d]_-{q_1}
        \ar[r]^-{q_2}
        &
        \operatorname{Ho}(\mathcal{A},\exact{}_2)
        \ar[d]^-{V}
        \\
        \operatorname{Ho}(\mathcal{A},\exact{}_1)
        \ar@{.>}[r]_-{G}
        &
        \operatorname{Ho}(\mathcal{A},\exact{}_2)/\mathcal{W}_1
      }
      \end{gathered}
.\end{equation}
  It now follows from the commutativity of diagrams \eqref{diag-chase1} and \eqref{diag-chase2} that
$FGq_1\,\cong\,FVq_2\,\cong\,I_*q_2\,\cong\,q_1$
  which by the universal property of $q_1$ one has $FG\,\cong\,\mathbf{1}$. Similarly, it holds that $GFVq_2\,\cong\,GI_*q_2\,\cong\,Gq_1\,\cong\,Vq_2,$
and by the universal property of $q_2$ and $V$ we conclude that $GF\,\cong\,\mathbf{1}$. In particular, $F$ is an equivalence, which by the commutative diagram \eqref{diag-chase1} identifies $I_*$ with the Verdier localisation $V$, as claimed.

\proofoftag{c} As reviewed in Subsection \ref{exact-model-structures}, the inclusions of $\mathcal{C}_\ell$ into $\mathcal{A}$ induce equivalences of triangulated categories $\theta_\ell\colon\underline{\mathcal{C}_\ell}\rightarrow \operatorname{Ho}(\mathcal{A},\exact{}_\ell)$, with a quasi-inverse $\eta_\ell$ defined by taking $\mathcal{C}_\ell$-cofibrant replacements. Recall that the projective-injective objects in $\mathcal{C}_\ell$ are precisely the trivial cofibrant objects $\mathcal{C}_\ell\cap \mathcal{W}_\ell$. By the first two assertions of \lemref{inclusions-1}, the inclusion functor $\mathcal{C}_1\rightarrow \mathcal{C}_2$ induces therefore a well-defined functor $F\colon \underline{\mathcal{C}_1}\rightarrow \underline{\mathcal{C}_2}$.
We claim that $\theta_2\circ F\circ \eta_1$ is a left adjoint to $I_*$ or, equivalently, that $F$ is left adjoint to $G:=\eta_1\circ I_*\circ \theta_2$. Let $C_1$ be an object in $\underline{\mathcal{C}_1}$ and $C_2$ an object in $\underline{C}_2$. We will show that there are isomorphisms
\[
  \xymatrix{
  \operatorname{Hom}_{\underline{\mathcal{C}_2}}(F(C_1),C_2)  
  \ar@<0.7ex>[r]^-{\varphi}
  &
  \operatorname{Hom}_{\underline{\mathcal{C}_1}}(C_1,G(C_2))
  \ar@<0.7ex>[l]^-{\psi}
  }.
\]

Note that $F(C_1) = C_1$, by definition of $F$. Recall that there is an $\exact{}_1$-conflation
\[
  \xi:=\xymatrix{
    W_1 \ar@{>->}[r]^-{\alpha} & 
    \widetilde{C_2} \ar@{->>}[r]^-{\beta} & 
    C_2
  }
\]
with $W_1$ in $\mathcal{W}_1$ and $\widetilde{C_2}$ in $\mathcal{C}_1$. As explained above $G(C_2)$ is nothing but the image of $\widetilde{C}_2$ in the stable category $\underline{\mathcal{C}_2}$. To define $\psi$, for any morphism $g \colon C_1 \to \widetilde{C_2}$ in $\mathcal{A}$, we set $\psi([g]_1)=[\beta g]_2$, where $[\alpha]_\ell$ denotes the equivalence class of the morphism $\alpha$ is the stable category $\underline{\mathcal{C}_\ell}$. Note that $\psi$ is well-defined: if $[g]_1 = [g']_1$ then $g-g'$ factors through an object $X$ in $\mathcal{C}_1\cap\mathcal{W}_1$, which is also an object in $\mathcal{C}_2\cap\mathcal{W}_2$ by Lemma \ref{lem:inclusions-1}. Now $\beta g -\beta g' = \beta(g-g')$ also factors through the same $X$, so $[\beta g]_2 = [\beta g']_2$.

On the other hand, we define $\varphi$ as follows. Let $f \colon C_1 \to C_2$ be a morphism in $\mathcal{A}$. Applying $\Hom{\mathcal{A}}{C_1}{-}$ to the $\exact{}_1$-conflation $\xi$ yields an exact sequence:
\[
  \Hom{\mathcal{A}}{C_1}{\widetilde{C_2}} 
  \longrightarrow
  \Hom{\mathcal{A}}{C_1}{C_2}
  \longrightarrow
  \Ext{\exact{}_1}{1}{C_1}{W_1}=0
\]
so there exists $g \colon C_1 \to \widetilde{C_2}$ with $\beta g = f$. If $g'$ is another such lift, since $\beta(g-g') = f-f = 0$, the difference $g-g'$ factors through (the kernel) $W_1$ in $\mathcal{W}_1$. Since $C_1$ is cofibrant and $\widetilde{C_2}$ is fibrant in the model category $(\mathcal{A},\exact{}_1)$, by \cite[Prop.~4.4(5)]{MR2811572} $g-g'$ must factor through an object in $\mathcal{C}_1 \cap \mathcal{W}_1$, which means that $[g]_1 = [g']_1$. We conclude that there is a well-defined map $\bar{\varphi} \colon \Hom{\mathcal{A}}{C_1}{C_2} \to   \operatorname{Hom}_{\underline{\mathcal{C}_1}}(C_1,G(C_2))
$ sending $f$ to $[g]_1$ such that $\beta g=f$. We want to show that $\bar{\varphi}$ factors via the natural projection $\Hom{\mathcal{A}}{C_1}{C_2}\rightarrow \Hom{\underline{\mathcal{C}_2}}{C_1}{C_2}$. For that, we need to argue that if a morphism $f \colon C_1 \to C_2$ factors through an object in $\mathcal{C}_2 \cap \mathcal{W}_2$ then $\bar{\varphi}(f)=[g]_1=0$, i.e. $g$ factors through an object in $\mathcal{C}_1 \cap \mathcal{W}_1$. If $f \colon C_1 \to C_2$ factors through an object in $\mathcal{C}_2 \cap \mathcal{W}_2$, since $\mathcal{C}_2 \cap \mathcal{W}_2 \subseteq \mathcal{W}_2 \subseteq \mathcal{W}_1$, $f$ factors by an object in $\mathcal{W}_1$, and another application of  \cite[Prop.~4.4(5)]{MR2811572}, guarantees that $f$ even factors through an object $Z$ in $\mathcal{C}_1 \cap \mathcal{W}_1$. We write $h\colon C_1\rightarrow Z$ and $k\colon Z\rightarrow C_2$ such that $f=kh$. Since $Z$ lies in $\mathcal{C}_1$, an application of $\Hom{\mathcal{A}}{Z}{-}$ to the $\exact{}_1$-conflation $\xi$ yields (as above) a morphism $t \colon Z \to \widetilde{C_2}$ such that $\beta t = k$. The map $g-t h\colon C_1\rightarrow \widetilde{C_2}$ satisfies $\beta(g-t h) = \beta g- \beta t h = f - kh = 0$, implying that $g-t h$ factors through $\alpha$ (the kernel of $\beta$) via some morphisms $\omega\colon C_1\rightarrow W_1$.
Therefore, $g$ admits a factorisation through the object $W_1 \oplus Z$ (which lies in $\mathcal{W}_1$) as follows:
\[
  \xymatrix{
    C_1 \ar[dr]_-{
      \left(
      \begin{smallmatrix}
        \omega \\ h
      \end{smallmatrix}
      \right)
    } \ar[rr]^-{g} & & GC_2
    \\
    {} & W_1\oplus Z \ar[ur]_-{\left(
      \begin{smallmatrix}
        \alpha & \ell
      \end{smallmatrix}
      \right)} & {} 
  }
\]
A third application of \cite[Prop.~4.4(5)]{MR2811572} ensures that $g$ factors through an object in $\mathcal{C}_1 \cap \mathcal{W}_1$, as wanted.

It follows easily from the construction of $\varphi$ and $\psi$ that these are inverse maps. It is a routine check that they are also natural in $C_1$ and $C_2$, thus proving the desired adjunction. Finally, by (the dual of) \cite[Prop.~9.1.18]{Nee} or (dual of) \cite[Prop.~4.9.1]{Krause_2010}, we also get a left adjoint to the inclusion $J\colon \mathcal{W}_1 \rightarrow \operatorname{Ho}(\mathcal{A},\exact{}_2)$, completing the colocalising sequence.
\end{proof}

By working with the fibrant objects, one can prove the existence of a localisation sequence using the dual assumptions and dual arguments.

\begin{prp}\label{prp:Ho-verdier-abstract-op}
  Let $\mathcal{A}$ be an additive category, $\exact{}_2 \subseteq \exact{}_1$ be two exact structures on $\mathcal{A}$, such that $(\mathcal{A},\exact{}_\ell)$, for $\ell=1,2$ have enough injectives. Suppose that, for each $\ell=1,2$, there is a hereditary Hovey triple $(\mathcal{A},\mathcal{W}_\ell,\mathcal{F}_\ell)$ in the exact category $(\mathcal{A},\exact{}_\ell)$ with $\mathcal{W}_\ell\cap\mathcal{F}_\ell=\Inj{(\mathcal{A},\exact{}_\ell)}$, and assume that $\mathcal{W}_1 \supseteq \mathcal{W}_2$. Then the following statements hold.
  \begin{prt}
  \item  $\mathcal{W}_1$, viewed as a full subcategory of $\operatorname{Ho}(\mathcal{A},\exact{}_2)$, is a thick triangulated subcategory; 
  \item The functor
  \(
      I_* \colon \operatorname{Ho}(\mathcal{A},\exact{}_2) \longrightarrow \operatorname{Ho}(\mathcal{A},\exact{}_1)
  \)
  induced by the identity functor $I \colon (\mathcal{A},\exact{}_2) \longrightarrow (\mathcal{A},\exact{}_1)$ (which is homotopical) can be identified with the Verdier localisation functor $V \colon \operatorname{Ho}(\mathcal{A},\exact{}_2) \longrightarrow \operatorname{Ho}(\mathcal{A},\exact{}_2)/\mathcal{W}_1$;
  \item  There is a localisation sequence of triangulated categories as follows:
  \[\begin{tikzcd}
	{\mathcal{W}_1} && {\operatorname{Ho}(\mathcal{A},\exact{}_2)} && {\operatorname{Ho}(\mathcal{A},\exact{}_1)}
	\arrow["J"{description}, from=1-1, to=1-3]
	\arrow["{J_r}", bend left=30pt, from=1-3, to=1-1]
	%\arrow["{J_\ell}"', shift right=3, from=1-3, to=1-1]
	\arrow["{I_*}"{description}, from=1-3, to=1-5]
	\arrow["{I^r}", bend left=30pt, from=1-5, to=1-3]
	%\arrow["{I^\ell}"', shift right=3, from=1-5, to=1-3]
\end{tikzcd}.\]
  \end{prt}
\end{prp}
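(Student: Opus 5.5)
Parts (a) and (b) go through exactly as in \prpref{Ho-verdier-abstract}. Those arguments never used cofibrant objects; they used only three facts: the identity functor $I$ is homotopical, so $I_*$ exists and $I_*q_2=q_1$; an object is zero in $\operatorname{Ho}(\mathcal{A},\exact{}_1)$ if and only if it lies in $\mathcal{W}_1$; and the universal properties of $q_1$, $q_2$ and $V$. The first fact is the dual half of \lemref{inclusions-1}. Indeed, a weak equivalence in $(\mathcal{A},\exact{}_2)$ factors as $\varphi=\pi\iota$, with $\iota$ an $\exact{}_2$-inflation whose cokernel lies in $\mathcal{W}_2$, and $\pi$ an $\exact{}_2$-deflation whose kernel lies in $\mathcal{W}_2\cap\mathcal{F}_2=\Inj{(\mathcal{A},\exact{}_2)}\subseteq\mathcal{W}_2\subseteq\mathcal{W}_1$. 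Both maps are then weak equivalences in $(\mathcal{A},\exact{}_1)$. I would therefore first record the dual statements $\mathcal{F}_1\subseteq\mathcal{F}_2$ and $\Inj{(\mathcal{A},\exact{}_1)}\subseteq\Inj{(\mathcal{A},\exact{}_2)}$, with proofs dual to \lemref{inclusions-1}, and then repeat (a) and (b) verbatim.

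For (c), I would use the equivalences $\theta_\ell\colon\underline{\mathcal{F}_\ell}\to\operatorname{Ho}(\mathcal{A},\exact{}_\ell)$, whose quasi-inverses $\eta_\ell$ are given by fibrant replacement. Since $\mathcal{F}_1\subseteq\mathcal{F}_2$ and $\mathcal{F}_1\cap\mathcal{W}_1\subseteq\mathcal{F}_2\cap\mathcal{W}_2$, the inclusion induces a functor $F\colon\underline{\mathcal{F}_1}\to\underline{\mathcal{F}_2}$. I claim that $F$ is right adjoint to $G=\eta_1 I_*\theta_2$, so that $I^r=\theta_2F\eta_1$ is right adjoint to $I_*$.

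To prove the claim, take $F_2\in\mathcal{F}_2$ and $F_1\in\mathcal{F}_1$. Choose an $\exact{}_1$-conflation $F_2\stackrel{\alpha}{\rightarrowtail}\widetilde{F_2}\stackrel{\beta}{\twoheadrightarrow}W_1$ with $\widetilde{F_2}\in\mathcal{F}_1$ and $W_1\in\mathcal{W}_1$; this is the special preenvelope from the complete cotorsion pair $(\mathcal{W}_1,\mathcal{F}_1)$. Then $G(F_2)=\widetilde{F_2}$. Define $\psi\colon\operatorname{Hom}_{\underline{\mathcal{F}_1}}(\widetilde{F_2},F_1)\to\operatorname{Hom}_{\underline{\mathcal{F}_2}}(F_2,F_1)$ by $[g]\mapsto[g\alpha]$. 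For the inverse $\varphi$, given $f\colon F_2\to F_1$, use $\Ext{\exact{}_1}{1}{W_1}{F_1}=0$ to extend $f$ along $\alpha$. Two extensions differ by a map factoring through $\beta$, hence through $W_1$. By the dual of \cite[Prop.~4.4(5)]{MR2811572}, applied to maps between bifibrant objects of $(\mathcal{A},\exact{}_1)$ (where every object is cofibrant), such a map factors through $\mathcal{W}_1\cap\mathcal{F}_1$.

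The main obstacle is showing that $\varphi$ kills maps $f$ that factor through $\mathcal{F}_2\cap\mathcal{W}_2$. I would dualise the argument of \prpref{Ho-verdier-abstract} as follows. Such an $f$ factors through $\mathcal{W}_1$, hence through some $Z\in\mathcal{W}_1\cap\mathcal{F}_1$, say $f=kh$ with $h\colon F_2\to Z$ and $k\colon Z\to F_1$. Extend $h$ along $\alpha$ to some $t\colon\widetilde{F_2}\to Z$, which is possible since $Z\in\mathcal{F}_1$. If $g$ extends $f$, then $g-kt$ vanishes on $\alpha$, so it factors through $\beta$. Hence $g$ factors through $Z\oplus W_1\in\mathcal{W}_1$, and therefore through $\mathcal{W}_1\cap\mathcal{F}_1$ by the same proposition.

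It is routine to check that $\varphi$ and $\psi$ are mutually inverse and natural. Finally, the dual of \cite[Prop.~9.1.18]{Nee} gives a right adjoint $J_r$ to $J$, which completes the localisation sequence.
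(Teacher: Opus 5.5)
Your proposal is correct and follows the paper's route. The paper proves this proposition only by saying it is the dual of the argument for \prpref{Ho-verdier-abstract}, and you have carried out that dualisation faithfully: the homotopicality of $I$ from the injective Hovey triples, fibrant replacement via the complete cotorsion pair $(\mathcal{W}_1,\mathcal{F}_1)$, the adjunction $G\dashv F$ between $\underline{\mathcal{F}_2}$ and $\underline{\mathcal{F}_1}$, and the $Z\oplus W_1$ factorisation to show that $\varphi$ is well defined. One small citation point: for $J_r$ you need \cite[Prop.~9.1.18]{Nee} itself rather than its dual; the dual is what the paper needed in the left-adjoint (colocalisation) case.
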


\begin{rmk}\label{rmk:notes_on_recollement}
     In case that the conditions of Proposition \ref{prp:Ho-verdier-abstract} and Proposition \ref{prp:Ho-verdier-abstract-op} are both satisfied, we get a recollement of triangulated categories:
  \[\begin{tikzcd}
	{\mathcal{W}_1} && {\operatorname{Ho}(\mathcal{A},\exact{}_2)} && {\operatorname{Ho}(\mathcal{A},\exact{}_1)}
	\arrow["J"{description}, from=1-1, to=1-3]
	\arrow["{J_r}", shift left=3, from=1-3, to=1-1]
	\arrow["{J_\ell}"', shift right=3, from=1-3, to=1-1]
	\arrow["{I_*}"{description}, from=1-3, to=1-5]
	\arrow["{I^r}", shift left=3, from=1-5, to=1-3]
	\arrow["{I^\ell}"', shift right=3, from=1-5, to=1-3]
\end{tikzcd}.\]

    \item Note that in the proof of Proposition \ref{prp:Ho-verdier-abstract} we describe explicitly the left adjoint to $I_*$ and, therefore, we can describe its essential image as the subcategory formed by those objects which are isomorphic in $\operatorname{Ho} (\mathcal{A},\exact{}_2)$ to an object in $\mathcal{C}_1$.  Dually, the essential image of the right adjoint to $I_*$ contains precisely the objects in $\operatorname{Ho}(\mathcal{A},\exact{}_2)$ which are isomorphic to an object in $\mathcal{F}_1$. 
\end{rmk}

\begin{cor}\label{cor:cof-fib-equiv}
Suppose that we are in the setup of both Propositions \ref{prp:Ho-verdier-abstract} and \ref{prp:Ho-verdier-abstract-op}, and $\Phi_2$ denote the weak equivalences in the exact model structure of $(\mathcal{A},\exact{}_2)$. Then $I_*$ induces an equivalences $$\mathcal{C}_1[\Phi_2^{-1}]\rightarrow \operatorname{Ho}(\mathcal{A},\exact{}_1)\ \ {\text and}\ \ \mathcal{F}_1[\Phi_2^{-1}]\rightarrow \operatorname{Ho}(\mathcal{A},\exact{}_1)$$
%canonical functor $I_*\colon\operatorname{Ho}(\mathcal{A},\exact{}_2)\longrightarrow\operatorname{Ho}(\mathcal{A},\exact{}_1)$ induces equivalences of categories
 %   $\mathcal{C}_1[\mathcal{Weq}^{-1}$
   %\[
    %\operatorname{Ho}^{\mathcal{C}_1}(\mathcal{A},\exact{}_2)\,\cong\, \operatorname{Ho}(\mathcal{A},\exact{}_1)\quad\text{and}\quad \operatorname{Ho}^{\mathcal{F}_1}(\mathcal{A},\exact{}_2)\,\cong\, \operatorname{Ho}(\mathcal{A},\exact{}_1)
   %\]
   % where $\operatorname{Ho}^{\mathcal{C}_1}(\mathcal{A},\exact{}_2)$ is the full subcategory of $\operatorname{Ho}(\mathcal{A},\exact{}_2)$ consisting of all the cofibrant objects with respect to the model structure $(\mathcal{C}_1,\mathcal{W}_1,\mathcal{A})$ and $\operatorname{Ho}^{\mathcal{F}_1}(\mathcal{A},\exact{}_2)$ is the full subcategory of $\operatorname{Ho}(\mathcal{A},\exact{}_2)$ consisting of all the fibrant objects with respect to the model structure $(\mathcal{A},\mathcal{W}_1,\mathcal{F}_1)$.
\end{cor}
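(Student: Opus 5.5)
The plan is to exploit the fact, recalled in Subsection~\ref{exact-model-structures} and used in the proof of \prpref{Ho-verdier-abstract}(c), that the inclusion $\mathcal{C}_1 \hookrightarrow \mathcal{A}$ induces a triangle equivalence $\theta_1\colon\underline{\mathcal{C}_1}\to\operatorname{Ho}(\mathcal{A},\exact{}_1)$. We then compare $\underline{\mathcal{C}_1}$ with $\mathcal{C}_1[\Phi_2^{-1}]$. Here $\mathcal{C}_1[\Phi_2^{-1}]$ is read as the localisation of $\mathcal{C}_1$ at the $\exact{}_2$-weak equivalences between objects of $\mathcal{C}_1$. Equivalently, it is the essential image of $\mathcal{C}_1$ in $\operatorname{Ho}(\mathcal{A},\exact{}_2)$, i.e.\ the full subcategory $q_2(\mathcal{C}_1)$, which by \rmkref{notes_on_recollement} is the essential image of the left adjoint $I^\ell$.

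The first step is to identify this full subcategory with $\underline{\mathcal{C}_1}$ and to check that $I_*$ restricts to an equivalence on it. By \lemref{inclusions-1}, $\mathcal{C}_1\subseteq\mathcal{C}_2$, and the $\exact{}_2$-homotopy category can be computed as $\underline{\mathcal{C}_2}$. For $C,C'\in\mathcal{C}_1$ we therefore have $\operatorname{Hom}_{\operatorname{Ho}(\mathcal{A},\exact{}_2)}(C,C')=\operatorname{Hom}_{\underline{\mathcal{C}_2}}(C,C')$. The adjunction isomorphism $\varphi,\psi$ constructed in the proof of \prpref{Ho-verdier-abstract}(c), applied with $C_2=C'$, gives $\operatorname{Hom}_{\underline{\mathcal{C}_2}}(C,C')\cong\operatorname{Hom}_{\underline{\mathcal{C}_1}}(C,\widetilde{C'})$. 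Since $C'$ is already $\mathcal{C}_1$-cofibrant, the conflation $W_1\rightarrowtail\widetilde{C'}\twoheadrightarrow C'$ can be taken trivial, so $\widetilde{C'}=C'$. The alternative is to note that $\beta$ becomes an isomorphism in $\underline{\mathcal{C}_1}$ because $W_1\in\mathcal{C}_1\cap\mathcal{W}_1$ by heredity, which makes the conflation split.

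It follows that the unit $\mathrm{id}\to I_*I^\ell$ is an isomorphism, so $I^\ell$ is fully faithful. Hence $I_*$ restricted to the essential image of $I^\ell$, namely $q_2(\mathcal{C}_1)$, is an equivalence onto $\operatorname{Ho}(\mathcal{A},\exact{}_1)$. This last point is the standard fact that a Verdier localisation with a fully faithful left adjoint restricts to an equivalence on that image.

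The second step is to identify $q_2(\mathcal{C}_1)$ with $\mathcal{C}_1[\Phi_2^{-1}]$. Because $\mathcal{C}_1\subseteq\mathcal{C}_2$ and every object is $\exact{}_2$-fibrant, all objects of $\mathcal{C}_1$ are bifibrant in $(\mathcal{A},\exact{}_2)$. Weak equivalences between bifibrant objects are homotopy equivalences, so the localisation of $\mathcal{C}_1$ at $\Phi_2$ is $\mathcal{C}_1$ modulo maps factoring through $\Prj{(\mathcal{A},\exact{}_2)}$. That is exactly the full subcategory of $\underline{\mathcal{C}_2}$ on $\mathcal{C}_1$, i.e.\ $q_2(\mathcal{C}_1)$. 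I expect this identification to be the main subtle point. To handle it, I would argue via the universal property: $\mathcal{C}_1\to\underline{\mathcal{C}_2}$ inverts $\Phi_2$, and any functor inverting $\Phi_2$ identifies homotopic maps by the standard cylinder argument, using that a cylinder inclusion $C\oplus C\to C\oplus P$ with $P$ projective has both components in $\Phi_2$.

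The statement for $\mathcal{F}_1$ is dual. Use \prpref{Ho-verdier-abstract-op}, the inclusion $\mathcal{F}_1\subseteq\mathcal{F}_2$ from \lemref{inclusions-1}, the fully faithful right adjoint $I^r$ with essential image given by the objects isomorphic to those of $\mathcal{F}_1$ (\rmkref{notes_on_recollement}), and the equivalence $\underline{\mathcal{F}_1}\simeq\operatorname{Ho}(\mathcal{A},\exact{}_1)$.
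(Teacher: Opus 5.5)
Your proposal follows the paper's argument: $\mathcal{C}_1$ lies, up to isomorphism, in the essential image of the fully faithful left adjoint $I^\ell$, where $I_*$ restricts to an equivalence, and $\mathcal{C}_1$-cofibrant replacement gives essential surjectivity (dually with $\mathcal{F}_1$ and $I^r$); you are more careful than the paper in actually justifying the identification of $\mathcal{C}_1[\Phi_2^{-1}]$ with the full image of $\mathcal{C}_1$ in $\operatorname{Ho}(\mathcal{A},\exact{}_2)$, which the paper takes for granted. In that extra step the cylinder must lie in $\mathcal{C}_1$, because a functor defined on $\mathcal{C}_1$ cannot be evaluated on $C\oplus P$ for an arbitrary $\exact{}_2$-projective $P$ (note $\mathcal{C}_1\cap\operatorname{Prj}(\mathcal{A},\exact{}_2)=\operatorname{Prj}(\mathcal{A},\exact{}_1)$); so first use your Step~1 comparison to see that a map between objects of $\mathcal{C}_1$ factoring through $\operatorname{Prj}(\mathcal{A},\exact{}_2)$ already factors through some $P\in\operatorname{Prj}(\mathcal{A},\exact{}_1)$, and then run the split-cylinder argument with $C\oplus P\in\mathcal{C}_1$.
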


\begin{proof}
The categories $\mathcal{C}_1[\Phi_2^{-1}]$ and $\mathcal{F}_1[\Phi_2^{-1}]$ are (non-strict!) subcategories of $\operatorname{Ho}(\mathcal{A},\exact{}_1)$ in which $I_*$ acts fully faithfully (since they are in the essential image of $I^\ell$ and $I^r$, respectively, see the remark above). Moreover, every object in $\operatorname{Ho}(\mathcal{A},\exact{}_1)$ is obtained, up to isomorphism, by applying $I_*$ to these subcategories, and therefore $I_*$ indeed induces the desired equivalences.
    %It follows directly from \prpref{Ho-verdier-abstract}, \prpref{Ho-verdier-abstract-op}, \rmkref{notes_on_recollement}, as $\mathcal{C}_1$ and $\mathcal{F}_1$ inside $\operatorname{Ho}(\mathcal{A},\exact{}_2)$, are the (non-essential) images of the fully faithful functors $I^\ell$ and $I^r$, respectively.% and \cite[Prop.~4.10.1]{Krause_2010} (and its dual statement).
\end{proof}

%\emph{Warning:} The essential images of the functors $I^\ell$ and $I^r$; that is, the closures of the images under isomorphisms in $\operatorname{Ho}(\mathcal{A},\exact{}_2)$, are ${}^\perp\mathcal{W}_1$ and $\mathcal{W}_1^\perp$ inside $\operatorname{Ho}(\mathcal{A},\exact{}_2)$. Indeed, this follows from \cite[Prop.~4.10.1]{Krause_2010} (and its dual statement). Thus, one also has equivalences:
%\[
%    \operatorname{Ho}^{{}^\perp\mathcal{W}_1}(\mathcal{A},\exact{}_2)\,\cong\, \operatorname{Ho}(\mathcal{A},\exact{}_1)\quad\text{and}\quad \operatorname{Ho}^{\mathcal{W}_1^\perp}(\mathcal{A},\exact{}_2)\,\cong\, \operatorname{Ho}(\mathcal{A},\exact{}_1)\,.
 %   \]

\subsection{The \texorpdfstring{$Q$}{Q}-shaped homotopy category}
\label{Q-shaped-homotopy-category}
\phantom{.} \vspace*{1ex}

 In Section \ref{sec:Q-shaped derived category}, we have retrieved the $Q$-shaped derived category by placing a suitably chosen exact model structure on the exact category which is maximal among those under consideration, namely on the abelian category $\lMod{Q,A}$. Here we consider an exact model structure which is minimal among those under consideration and study the interaction between the homotopy categories induced by the maximal and minimal exact structure. For this purpose, we use \thmref{Q-shaped-relative-Prj} and assume the following setup.
\begin{stp}
\label{stp:HJ+theta}
Let $Q$, $\Bbbk$, and $A$ be as in \stpref{HJ}, and let $\theta$ be a class of $A$-modules satisfying the following properties:
\begin{rqm}    
\item $\theta$ is saturated and precovering;
\item the exact category $\lMode{A}{\theta}$ is efficient;
\item the class $\mathscr{G}_\theta:=\mathscr{G}(\theta)=\{\Sq{q}(T)\,|\, T\in\theta,\,q\in Q\}$ generates a complete cotorsion pair in $\lMode{Q,A}{\theta}$.
\end{rqm}
\end{stp}

Note that \stpref{HJ+theta} guarantees that  \thmref{Q-shaped-relative-Prj} can be applied to the exact categories $(\lMod{A},\exact{}) = \lMode{A}{\theta}$ and $(\lMod{Q,A},\exact{Q}) = \lMode{Q,A}{\theta}$ with $\mathscr{T}=\theta$; indeed: Part (1) in \stpref{HJ+theta} and \lemref{precovering} imply that $\lMode{A}{\theta}$ has enough projectives. As $\lMode{A}{\theta}$ is efficient by part (2) it has, in particular, exact coproducts (recall from Subsection \ref{efficient} that this follows from \cite[Lem.~1.4]{SaorinStovicek}). The completeness of the cotorsion pair $(\mathcal{C}(\theta),\mathcal{W}(\theta))$ that appears in \thmref{Q-shaped-relative-Prj} is guaranteed by part (3) in \stpref{HJ+theta}.

\begin{lem}\label{generation of cotorsion by a set}
Let $Q$ and $A$ be as in \stpref{HJ} and assume that $\theta$ is a saturated precovering class of $A$-modules for which $\lMode{Q,A}{\theta}$ is efficient. If $\theta$ is the saturation of a set $\rho$ of $A$-modules, then the class of objects $\mathscr{G}_\theta:=\{\Sq{q}(T)\,|\, T\in\theta,\,q\in Q\}$ generates a complete cotorsion pair in $\lMode{Q,A}{\theta}$.
\end{lem}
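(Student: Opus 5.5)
The plan is to reduce the (proper) class $\mathscr{G}_\theta$ to a set of stalk objects that generates the same cotorsion pair, and then invoke \cite[Cor.~2.15(3)]{SaorinStovicek} in the efficient exact category $\lMode{Q,A}{\theta}$, exactly as in \prpref{CW-properties}(d) and \prpref{assumptions-ok-Prj}. Concretely, put $M=A\oplus\bigoplus_{T\in\rho}T$. By \lemref{saturation}(2) together with \eqref{saturation}, we have $\theta=\Prjp{\lMode{A}{\rho}}=\operatorname{Add}(M)$. The candidate generating set is then
\[
\mathscr{G}' \,=\, \{\Sq{q}(M) \;|\; q\in Q\},
\]
which is a set because $Q$ is small. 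The claim to establish is $\mathscr{G}_\theta^\perp=(\mathscr{G}')^\perp$ in $\lMode{Q,A}{\theta}$. Once this holds, the cotorsion pairs generated by $\mathscr{G}_\theta$ and by $\mathscr{G}'$ coincide, and the latter is complete by \cite[Cor.~2.15(3)]{SaorinStovicek}.

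The inclusion $\mathscr{G}_\theta^\perp\subseteq(\mathscr{G}')^\perp$ is immediate, since $\mathscr{G}'\subseteq\mathscr{G}_\theta$. For the converse, take $X$ with $\Ext{\exact{Q}}{1}{\Sq{q}(M)}{X}=0$ for all $q$, and let $T\in\theta$, so that $T$ is a direct summand of $M^{(I)}$ for some set $I$. The functor $\Sq{q}=\stalkco{q}\otimes_\Bbbk-$ is additive and commutes with coproducts. Hence $\Sq{q}(T)$ is a direct summand of $\Sq{q}(M)^{(I)}$, and it suffices to show
\[
\Ext{\exact{Q}}{1}{\Sq{q}(M)^{(I)}}{X}\cong\textstyle\prod_I\Ext{\exact{Q}}{1}{\Sq{q}(M)}{X}.
\]

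This Ext-coproduct compatibility is the step I expect to be the only real point. I would deduce it from exactness of coproducts in $\lMode{Q,A}{\theta}$: efficiency gives exact coproducts by \cite[Lem.~1.4]{SaorinStovicek}, and \lemref{exact-coprod} gives them as well. With enough projectives (\prpref{Mod-theta}(c)), take a conflation $K\rightarrowtail P\twoheadrightarrow\Sq{q}(M)$ with $P$ projective. Its $I$-fold coproduct is again a conflation, and $P^{(I)}$ is projective. Applying $\Hom{}{-}{X}$ to both conflations and using that Hom turns coproducts into products, the two long exact sequences compute $\Ext{}{1}{}{}$ as cokernels of isomorphic maps, which yields the displayed isomorphism.

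Alternatively, one can argue directly with conflations: a conflation ending in $\Sq{q}(M)^{(I)}$ restricts along each coproduct inclusion to conflations ending in $\Sq{q}(M)$, and these all split. However, gluing the resulting splittings is exactly what the projective-presentation argument above avoids, so I would use that argument. Finally, Ext commutes with finite direct sums, so vanishing on $\Sq{q}(M)^{(I)}$ passes to its summand $\Sq{q}(T)$, and $X\in\mathscr{G}_\theta^\perp$ follows.
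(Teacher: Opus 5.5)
Your proposal is correct and follows the paper's proof. You write $\theta=\operatorname{Add}(A\oplus\bigoplus_{T\in\rho}T)$ via \lemref{saturation}(2), use that $\Sq{q}$ commutes with coproducts to see that $\mathscr{G}_\theta$ and the set of stalks $\Sq{q}(M)$ have the same $\operatorname{Ext}^1$-orthogonal, and conclude with \cite[Cor.~2.15(3)]{SaorinStovicek} in the efficient category $\lMode{Q,A}{\theta}$, which is what the paper's appeal to \prpref{CW-properties}(d) amounts to. Your projective-presentation argument for $\operatorname{Ext}^1$ of a coproduct makes explicit a step the paper leaves implicit; the side appeal to \lemref{exact-coprod} is unnecessary, since it would need exact coproducts in $\lMode{A}{\theta}$ (not assumed) and efficiency of $\lMode{Q,A}{\theta}$ already gives exact coproducts.
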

\begin{proof}
It follows from Lemma \ref{lem:saturation}(2) that $\theta=\mathsf{Add}(A\oplus \oplus_{T\in \rho}T)$. Since $\Sq{q}$ commutes with coproducts, the cotorsion pair generated by the class $\mathscr{G}_\theta$ coincides with the one generated by the set $\{\Sq{q}(M)\colon M\in \{A\}\cup \rho\}$. Since we are working in an efficient category by assumption, it follows by Proposition \ref{prp:CW-properties} that the cotorsion pair is complete.
%We show the cotorsion pair generated by $\mathscr{G}_\theta$ is in fact generated by the set of object $\mathscr{G}_\rho:=\{\Sq{q}(T)\,|\, T\in\rho,\,q\in Q\}$. This suffices to show that it is complete (see Proposition \ref{prp:CW-properties}). We do this simply by showing that $\mathscr{G}_\theta=\mathsf{Add}{\mathscr{G}_\rho}$. Indeed, note that since $\Sq{q}$ commutes with coproducts, and since coproducts are exact in the efficient exact category $\lMode{Q,A}{\theta}$, the assignment $\Sq{q}$ commutes with coproducts (up to projective summands). Since we have $\theta=\mathsf{Add}(A\oplus \bigoplus_{T\in\rho}T)$ by \lemref{saturation}, the commutation with coproducts just mentioned implies that $\mathsf{Add}(\mathscr{G}_\theta)=\mathsf{Add}(\mathscr{G}_\rho)$ and, therefore, $\mathscr{G}_\theta^\perp=\mathscr{G}_\rho^\perp$, as wanted.
\end{proof}

We can now list some examples of classes $\theta$ satisfying the properties in the setup above.

\begin{exa}\label{exa:theta satisfying setup}
The following classes $\theta$ satisfy the conditions of Setup \ref{stp:HJ+theta}. Note that conditions (1) and (2) are established in \exaref{Prj-PPrj-Mod}, so we only show that they satisfy condition (3) as well.
\begin{enumerate}
\item $\theta=\Prj{A}$. In this case $\lMode{Q,A}{\theta}$ is the abelian category $\lMod{Q,A}$. The cotorsion pair obtained in this case is proven to be complete in Proposition \ref{prp:assumptions-ok-Prj}.
\item $\theta=\lMod{A}$. The completeness of the cotorsion pair generated by $\mathscr{G}_\theta$ is proven in Proposition \ref{prp:injective cotorsion pair is complete}.
\item $\theta=\Prj{\lMode{A}{\rho}}$, where $\rho$ is a set of finitely presented $A$-modules (see Lemma \ref{generation of cotorsion by a set}). In particular, if we take $\rho=\lsmod{A}$, then we see that $\mathscr{G}_\rho$ generates a complete cotorsion pair in the (efficient) \emph{objectwise pure exact structure} $\lMode{Q,A}{\rho}$.
\end{enumerate}
\end{exa}

In item (2) of \exaref{theta satisfying setup} we show that the conditions of Theorem \ref{thm:Q-shaped-relative-Prj} are satisfied for $\theta=\lMod{A}$. We focus our attention on this particular case. %we give a distinct name in the homotopy category obtained by this exact model structure.

\begin{dfn}\label{dfn:q-shaped-homotopy}
    Let $\theta=\lMod{A}$ in which case the exact structure on $\lMod{A}^\theta$ is the split exact structure and consequently, the exact structure on $\lMod{Q,A}^\theta$ is the objectwise split exact structure. The homotopy category of the exact model structure associated to the set $\mathscr{G}_\theta$ by Theorem \ref{thm:Q-shaped-relative-Prj} is called the $Q$\textbf{-shaped homotopy category} of $A$, and we denote by $\QSH{Q}{A}$.
\end{dfn}
The following example justifies the name in a natural way.

\begin{exa}\label{some homotopy categories}
Consider the class $\theta=\lMod{A}$. 
If we take the quiver $Q=Q^{\text{cpx}}$ discussed in the introduction, for which $\lMod{Q,A}$ is equivalent to the category of cochain complexes $\mathbf{Ch}(A)$, the exact category $\lMode{Q,A}{\theta}$ is $\mathbf{Ch}(A)$ equipped with the degreewise split exact structure. It is well-known from \cite[Chap.~I.3]{Happel} that this is a Frobenius exact structure in $\mathbf{Ch}(A)$ whose stable category is precisely the (usual) homotopy category of complexes $\mathbf{K}(A)$. If instead, we consider the case that $Q=Q^{\text{$N$-cpx}}$, the quiver that underlines the category of $N$-complexes, a generalisation of Happel's construction of the homotopy category was introduced in \cite[Thm.~4.3]{gillespie2015homotopy} by Gillespie.
    
More generally, for any $Q$ and $A$ as in Setup \ref{stp:HJ}, we have seen in \exaref{Prj-PPrj-Mod}(c) that the category $\lMode{Q,A}{\theta}$ is always Frobenius. The exact model structure associated to $\mathscr{G}_\theta$ by Theorem \ref{thm:Q-shaped-relative-Prj}
is in this case, by Propositon \ref{prp:injective cotorsion pair is complete}, the one associated to the Hovey triple $(\lMode{Q,A}{\theta},\Inj({\lMode{Q,A}{\theta}}),\lMode{Q,A}{\theta})$. This means that the homotopy category is just the stable category. Hence, in both cases presented in this example, the $Q$-shaped homotopy category coincides with the classical homotopy category of complexes and $N$-complexes, respectively.
\end{exa}

     In particular, the abstract machinery developed in \prpref{Ho-verdier-abstract} and \prpref{Ho-verdier-abstract-op} applies also for $\mathcal{A}=\lMod{Q,A}$ endowed with exact structures satisfying Setup \ref{stp:HJ+theta}. In particular, considering the classes $\theta_1=\lPrj{A}$ and $\theta_2=\lMod{A}$ in $\lMod{A}$, one retrieves a $Q$-shaped version of the result described in \cite[Ex.~4.14]{Krause_2010} for categories of complexes.

\begin{thm}\label{thm:Q-shaped-verider-loc}
    Consider the classes $\theta_1=\lPrj{A}$ and $\theta_2=\lMod{A}$ in $\lMod{A}$. The following statements hold.
    \begin{prt}
        \item The functor $I\colon \lMod{Q,A}^{\theta_2}\longrightarrow\lMod{Q,A}^{\theta_1}$ is homotopical and the induced functor $I_*\colon \QSH{Q}{A}\longrightarrow\QSD{Q}{A}$ is a Verdier localisation;
        \item The functor $I_*\colon \QSH{Q}{A}\longrightarrow\QSD{Q}{A}$ admits a fully faithful left adjoint functor $\mathbf{c}\colon\QSD{Q}{A}\longrightarrow\QSH{Q}{A}$ and a fully faithful right adjoint functor $\mathbf{f}\colon\QSD{Q}{A}\longrightarrow\QSH{Q}{A}$, which map a $Q$-shaped module to its cofibrant replacement and fibrant replacement, respectively (in the abelian exact category $\lMode{Q,A}{\theta}$);
        \item Denote by $\mathbf{K}_Q^{\text{ac}}(A)$ the trivial objects $\mathcal{W}({\theta_1})$. There is a recollement of triangulated categories of the form
    \[
    \begin{tikzcd}[column sep=6em]
	{\mathbf{K}^{\text{ac}}_Q(A)} & {\QSH{Q}{A}} & {\QSD{Q}{A}}
	\arrow["j"{description}, from=1-1, to=1-2]
	\arrow["{\ell}"', shift right=3, from=1-2, to=1-1]
 \arrow["{r}", shift left=3, from=1-2, to=1-1]
	\arrow["I_*"{description}, from=1-2, to=1-3]
	\arrow["{\mathbf{c}}"', shift right=3, from=1-3, to=1-2]
 \arrow["{\mathbf{f}}", shift left=3, from=1-3, to=1-2]
\end{tikzcd}.
    \]
    \end{prt}
\end{thm}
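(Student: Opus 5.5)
The plan is to apply \prpref{Ho-verdier-abstract}, \prpref{Ho-verdier-abstract-op} and \rmkref{notes_on_recollement} with $\mathcal{A}=\lMod{Q,A}$, $\exact{}_2$ the objectwise split structure (from $\theta_2=\lMod{A}$) and $\exact{}_1$ the abelian structure (from $\theta_1=\lPrj{A}$). Clearly $\exact{}_2\subseteq\exact{}_1$, and both exact categories have enough projectives and injectives by \prpref{Prj-transfer} and \exaref{Prj-PPrj-Mod}.

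The Hovey triples are as follows. For $\ell=1$ we take the projective triple $({}^\perp\mathbb{E},\mathbb{E},\lMod{Q,A})$ and the injective triple $(\lMod{Q,A},\mathbb{E},\mathbb{E}^\perp)$ of \cite{HJ-JLMS}. These are recovered in \exaref{Q-shaped derived category revisited} and \exaref{exmp:injective abelian model}, both with $\mathcal{W}_1=\mathbb{E}$. By \prpref{CW-properties}(c) and its dual, the intersections are $\Prj{}$ and $\Inj{}$ respectively. For $\ell=2$, \prpref{injective cotorsion pair is complete} and \exaref{some homotopy categories} show that the model structure of \thmref{Q-shaped-relative-Prj} is given by $(\lMod{Q,A},\mathcal{W}_2,\lMod{Q,A})$ with $\mathcal{W}_2=\Inj{}=\Prj{}$, since the category is Frobenius by \rmkref{Frobenius}. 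This single triple serves as both the projective and the injective hereditary triple. Here $\mathcal{C}_2\cap\mathcal{W}_2=\Prj{(\mathcal{A},\exact{}_2)}$ and $\mathcal{W}_2\cap\mathcal{F}_2=\Inj{(\mathcal{A},\exact{}_2)}$, so all hypotheses of the two propositions are in place. Its homotopy category is $\QSH{Q}{A}$, and the homotopy category for $\ell=1$ is $\QSD{Q}{A}$.

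The key remaining hypothesis is $\mathcal{W}_1\supseteq\mathcal{W}_2$, that is, every objectwise-split projective lies in $\mathbb{E}$. By \prpref{Prj-transfer}(a), such an object is a summand of $\bigoplus_q\Fq{q}(T_q)$ with $T_q$ arbitrary modules. Its underlying object in $\lMod{Q}$ is a summand of a sum of $Q(q,-)\otimes_\Bbbk T_q$. Since $Q(q,-)$ is projective in $\lMod{Q}$ and $\Bbbk$ is hereditary, each $T_q$ has projective dimension at most $1$ over $\Bbbk$, so this object has projective dimension at most $1$ in $\lMod{Q}$. Alternatively, \lemref{GPrj-Ext}(a) gives $\HH{1}{G}{n}=0$ on projectives of the abelian structure. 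The cleaner route I will try first is therefore: $\Fq{q}(T)$ is $\exact{}_1$-Gorenstein projective by \lemref{GPrj}, and it has finite projective dimension in $\lMod{Q}$ via a length-one $\Bbbk$-projective resolution of $T$ tensored with $Q(q,-)$. This underlying projective-dimension claim is the step I regard as the main obstacle. The difficulty is that $\mathbb{E}$ is defined on the underlying $\lMod{Q}$-object $X^\natural$, not via $A$-module structure, so one must check that the $\Bbbk$-hereditary argument really bounds $\operatorname{pd}$ of $Q(q,-)\otimes_\Bbbk T$, using that $Q(q,-)$ is a $\Bbbk$-projective representable functor.

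Granting this, the conclusions follow in order. Part (a), homotopicality, is \lemref{inclusions-1}(3), and the Verdier localisation statement is \prpref{Ho-verdier-abstract}(b). For part (b), \prpref{Ho-verdier-abstract}(c) and \prpref{Ho-verdier-abstract-op}(c) provide the left adjoint $\mathbf{c}=I^\ell$ and the right adjoint $\mathbf{f}=I^r$, both fully faithful, as holds for adjoints of a Verdier quotient with a (co)localisation. The proof of \prpref{Ho-verdier-abstract}(c) shows that $I^\ell$ is computed by $\mathcal{C}_1$-cofibrant replacement, and dually $I^r$ by fibrant replacement. For part (c), \rmkref{notes_on_recollement} assembles these into the recollement with $\mathbf{K}^{\text{ac}}_Q(A)=\mathcal{W}(\theta_1)=\mathbb{E}$ embedded via $j=J$.
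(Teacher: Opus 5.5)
Your proposal is correct and has the same architecture as the paper's proof. You use the same Hovey triples: Holm–Jørgensen's two abelian triples with $\mathcal{W}_1=\mathbb{E}$, and the single Frobenius triple with $\mathcal{W}_2=\Prj{(\lMod{Q,A},\exact{}_2)}=\Inj{(\lMod{Q,A},\exact{}_2)}$ for the objectwise split structure. You then apply \lemref{inclusions-1}(3), \prpref{Ho-verdier-abstract}, \prpref{Ho-verdier-abstract-op} and \rmkref{notes_on_recollement}, as the paper does.

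You differ only in the one substantive check, $\mathcal{W}_2\subseteq\mathcal{W}_1$. The paper argues cohomologically. By \thmref{Q-shaped-relative-Prj}(1)\eqclbl{i$\,'$} with $\mathscr{T}_\ell=\theta_\ell$, an object $X$ is trivial in $\lMode{Q,A}{\theta_\ell}$ iff $\sHH{i}{\stalkco{q}}{n}(\Hom{A}{T}{X})=0$ for all $T\in\theta_\ell$. These functors are computed in $\lMod{Q}$ regardless of the exact structure, so $\theta_1\subseteq\theta_2$ gives the inclusion at once.

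You instead use the explicit description of $\mathcal{W}_2$ together with the definition \eqref{E} of $\mathbb{E}$. The step you flag as the main obstacle is already settled by your own sketch:
\begin{itemlist}
\item Each $Q(q,p)$ is a finitely generated projective $\Bbbk$-module, so $Q(q,-)\otimes_\Bbbk-$ is exact.
\item It carries projective $\Bbbk$-modules to summands of coproducts of $Q(q,-)$, which are projective in $\lMod{Q}$.
\item Hence a length-one projective resolution of $T$ over the hereditary ring $\Bbbk$ becomes a length-one projective resolution of $\Fq{q}(T)^\natural$ in $\lMod{Q}$.
\item Projective dimension $\leqslant 1$ passes to coproducts and summands.
\end{itemlist}

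Your route is more elementary, since it avoids \lemref{Ext-comparison-Prj} and \corref{Ext-formula-exact-Prj}. It is, however, tied to $\theta_2=\lMod{A}$, where $\mathcal{W}_2$ is just the class of projective-injectives. The paper's monotonicity argument works verbatim for any pair $\theta_1\subseteq\theta_2$ as in \stpref{HJ+theta}.

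Drop your two side remarks; both are wrong as stated, though neither is load-bearing. First, \lemref{GPrj} does not make $\Fq{q}(T)$ Gorenstein projective in the abelian structure for arbitrary $T$. It requires $T$ to be a Gorenstein projective $A$-module, and the claim is false in general. Second, \lemref{GPrj-Ext}(a) only yields vanishing on objects of finite projective dimension in $\lMod{Q,A}$. Objectwise-split projectives such as $\Fq{q}(T)$ need not have finite projective dimension there, for instance when $T$ has infinite projective dimension over $A$.
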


\begin{proof}
We apply \prpref{Ho-verdier-abstract} and \prpref{Ho-verdier-abstract-op}. In the (abelian) exact category $\lMode{Q,A}{\theta_1}$ we have, as recalled in Subsection \ref{Prelim-on-Q-shaped} the (abelian) exact model structures $({}^\perp\mathbb{E},\mathbb{E},\lMod{Q,A})$ and $(\lMod{Q,A},\mathbb{E},\mathbb{E}^\perp)$. We proved in Examples \ref{exa:Q-shaped derived category revisited} \ref{exmp:injective abelian model} that these model structures are obtained via the general machinery of Theorems \ref{thm:Q-shaped-relative-Prj} and \ref{thm:Q-shaped-relative-Inj}, respectively. In $\lMode{Q,A}{\theta_2}$, we have the exact Hovey triple $(\lMode{Q,A}{\theta_2},\Prj({\lMode{Q,A}{\theta_2}})=\Inj({\lMode{Q,A}{\theta_2}}),\lMode{Q,A}{\theta_2})$ (see Example \ref{some homotopy categories}). To prove that the assumptions of \prpref{Ho-verdier-abstract} and \prpref{Ho-verdier-abstract-op} are satisfied, we just need to show that $\Prj({\lMode{Q,A}{\theta_2}})=\Inj({\lMode{Q,A}{\theta_2}})\subseteq \mathbb{E}$. Since $\theta_1$ and $\theta_2$ are selected in a way to satisfy Setup \ref{stp:HJ+theta}, we can make use of item (2) of Theorem \ref{thm:Q-shaped-relative-Prj} with $\mathscr{T}_\ell=\theta_\ell$, $\ell=1,2$, to identify trivial objects. More concretely, an object $X$ in $\lMode{Q,A}{\theta_\ell}$ is trivial if and only if one has
 \[
 \sHH{i}{\stalkco{q}}{n}(\Hom{A}{T}{X})=0,\, \text{ for all } T \in \mathscr{T}_\ell, \, q \in Q,\, n \in \mathbb{Z},\, \text{ and }\, i>0,
 \]
 where $\sHH{i}{\stalkco{q}}{n}$ is computed in $\lMod{Q}$, independently of the exact structure on $\lMode{Q,A}{\theta_\ell}$. Since  $\mathscr{T}_1\subseteq\mathscr{T}_2$, it follows that, if $X$ is trivial in \smash{$\lMode{Q,A}{\theta_2}$}, then it is also trivial in \smash{$\lMode{Q,A}{\theta_1}$}, as wanted.
\end{proof}

\begin{cor}\label{cor: derived is cof homotopy}
Consider the classes $\theta_1=\lPrj{A}$ and $\theta_2=\lMod{A}$ in $\lMod{A}$. Let $\mathcal{C}_1$ and $\mathcal{F}_1$ denote the cofibrant objects and fibrant objects with respect to the two  abelian model structures considered in $\lMode{Q,A}{\theta_1}$. Let $\Phi_2$ denote the weak equivalences for the exact model structure in $\lMode{Q,A}{\theta_2}$. The canonical functor $I_*\colon\QSH{Q}{A}\longrightarrow\QSD{Q}{A}$ induces triangle equivalences
$$\mathbf{K}^{\text{cof}}_Q(A):=\mathcal{C}_1[\Phi_2^{-1}]\stackrel{\sim}{\longrightarrow}\QSD{Q}{A}\quad\text{ and }\quad\mathbf{K}^{\text{fib}}_Q(A):=\mathcal{F}_1[\Phi_2^{-1}]\stackrel{\sim}{\longrightarrow}\QSD{Q}{A}.$$
\end{cor}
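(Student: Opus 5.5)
The plan is to deduce this directly from \corref{cof-fib-equiv}, applied to $\mathcal{A}=\lMod{Q,A}$ with $\exact{}_1$ the abelian exact structure (that is, $\theta_1=\lPrj{A}$) and $\exact{}_2$ the objectwise split exact structure ($\theta_2=\lMod{A}$). On the $\exact{}_1$ side I take the two abelian Hovey triples $({}^\perp\mathbb{E},\mathbb{E},\lMod{Q,A})$ and $(\lMod{Q,A},\mathbb{E},\mathbb{E}^\perp)$, so that $\mathcal{C}_1={}^\perp\mathbb{E}$ and $\mathcal{F}_1=\mathbb{E}^\perp$. On the $\exact{}_2$ side I take the Frobenius Hovey triple $(\lMode{Q,A}{\theta_2},\Prj{(\lMode{Q,A}{\theta_2})},\lMode{Q,A}{\theta_2})$ of \exaref{some homotopy categories}, used both as the projective triple and as the injective one.

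Next I check that the hypotheses of \prpref{Ho-verdier-abstract} and \prpref{Ho-verdier-abstract-op} hold. This was already done in the proof of \thmref{Q-shaped-verider-loc}. Both exact structures have enough projectives and injectives, by \prpref{Prj-transfer}. We have $\mathcal{C}_1\cap\mathcal{W}_1=\Prj{(\lMod{Q,A})}$ by \prpref{CW-properties}(c), applied through \exaref{Q-shaped derived category revisited}, and dually $\mathcal{W}_1\cap\mathcal{F}_1=\Inj{(\lMod{Q,A})}$. On the split side, $\mathcal{W}_2=\Prj=\Inj$, and this intersects correctly with everything. Finally $\mathcal{W}_2\subseteq\mathcal{W}_1=\mathbb{E}$, which is the cohomological comparison at the end of the proof of \thmref{Q-shaped-verider-loc}, using $\mathscr{T}_1\subseteq\mathscr{T}_2$.

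With the setup of both propositions in place, \corref{cof-fib-equiv} gives the two equivalences $\mathcal{C}_1[\Phi_2^{-1}]\to\QSD{Q}{A}$ and $\mathcal{F}_1[\Phi_2^{-1}]\to\QSD{Q}{A}$ induced by $I_*$. Here $\operatorname{Ho}(\mathcal{A},\exact{}_2)=\QSH{Q}{A}$ by \dfnref{q-shaped-homotopy}, and $\operatorname{Ho}(\mathcal{A},\exact{}_1)=\QSD{Q}{A}$.

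It remains to see that these equivalences are triangle equivalences. The step needing care is to make sure that $\mathcal{C}_1[\Phi_2^{-1}]$ is a triangulated subcategory of $\QSH{Q}{A}$, namely the essential image of the triangle functor $\mathbf{c}=I^\ell$ from \rmkref{notes_on_recollement}, and similarly that $\mathcal{F}_1[\Phi_2^{-1}]$ is the essential image of $\mathbf{f}=I^r$. I would argue this as follows. Fully faithful adjoints of triangle functors are triangle functors, by Neeman's result that adjoints of triangle functors are triangulated. Their essential images are therefore triangulated subcategories. Moreover, $I_*$ restricted to such an image is a triangle functor that is inverse to $\mathbf{c}$ (respectively $\mathbf{f}$) up to the unit and counit isomorphisms. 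Hence the induced equivalences are triangle equivalences.
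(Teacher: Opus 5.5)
Your proposal is correct and follows the paper's route. The paper's proof just cites \thmref{Q-shaped-verider-loc}, where the hypotheses of \prpref[Propositions~]{Ho-verdier-abstract} and \prpref[]{Ho-verdier-abstract-op} are checked exactly as you do, together with \corref{cof-fib-equiv}; your last paragraph, which uses Neeman's lemma on adjoints of triangle functors to show that $I_*$ restricts to triangle equivalences on the essential images of $\mathbf{c}$ and $\mathbf{f}$, usefully supplies the ``triangle'' part of the statement that the paper leaves implicit, since \corref{cof-fib-equiv} only asserts plain equivalences.
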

\begin{proof}
    Immediate by Theorem \ref{thm:Q-shaped-verider-loc} and \corref{cof-fib-equiv}.
\end{proof}

We denote by $\QSH{Q}{\lPrj{A}}$ the full subcategory of $\QSH{Q}{A}$ whose objects are $\Bbbk$-linear functors $Q\longrightarrow\lPrj{A}$. Dually we define $\QSH{Q}{\lInj{A}}$.

\begin{cor}\label{cor:finite-gldim-homotopy}
    Assume that $A$ has finite left global dimension. Then the functor $I_*\colon\QSH{Q}{A}\longrightarrow\QSD{Q}{A}$ induces triangle equivalences of categories 
    $$\QSH{Q}{\lPrj{A}}\,\cong\,\QSD{Q}{A}\quad\text{ and }\quad\ \QSH{Q}{\lInj{A}}\,\cong\,\QSD{Q}{A}.$$
\end{cor}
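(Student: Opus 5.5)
The plan is to apply \corref{cof-fib-equiv} (in the form of \corref{ cof-fib-equiv}'s consequence, \corref{ derived is cof homotopy}) after identifying the cofibrant and fibrant objects of the two abelian model structures on $\lMod{Q,A}$ in the finite global dimension case. Concretely, I will show that when $A$ has finite left global dimension $d$, the class $\mathcal{C}_1={}^\perp\mathbb{E}$ coincides with the class of $Q$-shaped modules $X$ such that $X(q)$ is a projective $A$-module for every $q$. Dually, $\mathcal{F}_1=\mathbb{E}^\perp$ coincides with those $X$ taking values in $\lInj{A}$. Granting this, the full subcategory of $\QSH{Q}{A}$ on $\mathcal{C}_1$ is exactly $\QSH{Q}{\lPrj{A}}$. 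The restriction of $I_*$ to it is fully faithful because it is the essential image of the fully faithful left adjoint $\mathbf{c}$ from \thmref{Q-shaped-verider-loc}(b), by \rmkref{notes_on_recollement}. It is essentially surjective because every object of $\QSD{Q}{A}$ is isomorphic to its cofibrant replacement. This yields the first equivalence, and the second follows dually via $\mathbf{f}$.

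For the identification of $\mathcal{C}_1$, I would use the known description from \cite{HJ-JLMS}: ${}^\perp\mathbb{E}$ consists of the objects $X$ with $\Ext{Q,A}{1}{X}{E}=0$ for all exact $E$. The inclusion ${}^\perp\mathbb{E}\subseteq\{X : X(q)\in\lPrj{A}\}$ should hold without any hypothesis on $A$. The argument would be that for an $A$-module $M$ and $q\in Q$, the object $\Gq{q}(M)$ lies in $\mathbb{E}$, since it is a summand-type object built from $\Fq{\mathbb{S}^{-1}q}$, whose underlying $\lMod{Q}$-object has finite projective dimension. The adjunction $(\Eq{q},\Gq{q})$ together with the exactness of both functors then gives $\Ext{A}{1}{X(q)}{M}\cong\Ext{Q,A}{1}{X}{\Gq{q}(M)}=0$, so $X(q)$ is projective.

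For the reverse inclusion, which is where $\operatorname{gldim}A<\infty$ enters, take $X$ with projective values. I would show that $X$ is Gorenstein projective in the abelian category $\lMod{Q,A}$ by \thmref{GPrj} with the abelian exact structure. I would then show that every $E\in\mathbb{E}$ has finite projective dimension in $\lMod{Q,A}$, after which \lemref{GPrj-Ext}(a) gives $\Ext{Q,A}{1}{X}{E}=0$. To bound the projective dimension of $E$, I would resolve $E$ by objects of the form $\mathbb{F}(-)$ built from projective $A$-modules, using the sequences $\mathfrak{f}$ of \eqref{varepsilon-eta}. After $d$ steps the syzygy $K$ takes projective values. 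An exact $Q$-shaped module with projective values should have finite projective dimension, by \prpref{canonical-tac-Prj}-type tensor arguments reducing to $\lMod{Q}$, where $K^\natural$ has finite projective dimension. This step, getting from ``$X^\natural$ has finite projective dimension in $\lMod{Q}$'' together with ``values of bounded projective dimension over $A$'' to finite projective dimension in $\lMod{Q,A}$, is the one I expect to be the main obstacle. My first attempt would be to use \cite[Thm.~4.6]{MR3719530}-style local Gorensteinness of $\lMod{Q,A}$ when $A$ has finite global dimension.

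The injective case is handled dually, using Gorenstein injectives via \thmref{GInj} and the adjunction $(\Fq{q},\Eq{q})$.
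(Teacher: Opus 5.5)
\textbf{There is a genuine gap in the inclusion where finite global dimension is needed.}

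Your overall architecture matches the paper's. The corollary follows from \corref{cof-fib-equiv}, via $\mathcal{C}_1[\Phi_2^{-1}]\simeq\QSD{Q}{A}\simeq\mathcal{F}_1[\Phi_2^{-1}]$, once one knows that for $\operatorname{gldim}A<\infty$ the class ${}^\perp\mathbb{E}$ consists of the functors $Q\to\lPrj{A}$ and $\mathbb{E}^\perp$ of the functors $Q\to\lInj{A}$. The paper does not prove this identification; it quotes it from \cite[Thm.~E]{HJ-TAMS}. You try to prove it yourself. Your inclusion ${}^\perp\mathbb{E}\subseteq\{X \mid X(q)\in\lPrj{A}\}$, via $\Ext{A}{1}{X(q)}{M}\cong\Ext{Q,A}{1}{X}{\Gq{q}(M)}$ with $\Gq{q}(M)\in\mathbb{E}$, is fine. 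The reverse inclusion is the only place where finite global dimension enters, and it is not established; the route you sketch for it cannot work as described.

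After your $d$ resolution steps you have $K\in\mathbb{E}$ with projective values. You then want finite projective dimension of $K$ in $\lMod{Q,A}$ from finiteness of $\operatorname{pd}K^\natural$ in $\lMod{Q}$. Arguments of the type of \prpref{canonical-tac-Prj} only apply to objects of the form $U\otimes_\Bbbk T$, which $K$ need not be. Moreover, the implication is false unless $\operatorname{gldim}A<\infty$ is used once more. Take $Q=Q^{\text{cpx}}$, $\Bbbk$ a field and $A=\Bbbk[x]/(x^2)$. The complex $\cdots\to A\xrightarrow{x}A\xrightarrow{x}A\to\cdots$ is exact, so it lies in $\mathbb{E}$, and its terms are projective. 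Its cycles are $\Bbbk$, which has infinite projective dimension over $A$, so the complex has infinite projective dimension in $\mathbf{Ch}(A)$. Hence this step is as hard as the original problem. Appealing to \cite[Thm.~4.6]{MR3719530} does not close it either, since that result concerns $\lMod{Q}$, not $\lMod{Q,A}$ for an arbitrary $\Bbbk$-algebra $A$. Your dual argument has the same gap.

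\textbf{How to close it.} The simplest fix is to cite \cite[Thm.~E]{HJ-TAMS}, as the paper does. Alternatively, you can avoid needing finite projective dimension of $E$:
\begin{itemize}
\item Filter $X$ by the radical filtration of \cite[Construction~7.21]{HJ-JLMS}, as in the proof of \lemref{Ext-comparison-Prj}; its subquotients have the form $\bigoplus_p\Sq{p}(M_p)$.
\item Each $\Sq{p}(M)$ has a resolution of length at most $d$ by objects $\Sq{p}(P)$ with $P$ projective. These satisfy $\Ext{Q,A}{j}{\Sq{p}(P)}{E}=0$ for $j\geqslant 1$ and $E\in\mathbb{E}$, by \corref{Ext-formula-abelian-Prj} and \cite[Thm.~7.1]{HJ-JLMS}.
\item Along the filtration this gives $\Ext{Q,A}{j}{X}{E}=0$ for all $j>d$.
\item $X$ is Gorenstein projective by \thmref{GPrj}, so $\Ext{Q,A}{j}{X}{P}=0$ for $j\geqslant1$ and projective $P$, by \lemref{GPrj-Ext}(a). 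Also $\mathbb{E}$ is thick and contains the projectives.
\item Dimension shifting then yields $\Ext{Q,A}{1}{X}{E}\cong\Ext{Q,A}{d+1}{X}{E_d}=0$, where $E_d\in\mathbb{E}$ is a $d$-th syzygy of $E$.
\end{itemize}
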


\begin{proof}
    This follows from Corollary \ref{cor: derived is cof homotopy} and \cite[Thm.~E]{HJ-TAMS}, where they prove that for a ring $A$ with finite left global dimension, the cofibrant objects in the (projective) abelian model structure are precisely the $\Bbbk$-linear functors $Q\longrightarrow\lPrj{A}$, while the fibrant objects in the (injective) abelian model structure are precisely the $\Bbbk$-linear functors $Q\longrightarrow\lInj{A}$.
\end{proof}

\begin{exa}
\begin{enumerate}
    \item In the case that $Q=Q^{\text{cpx}}$ and $\lMod{Q,A}=\mathbf{Ch}(A)$ is the category of chain complexes of $A$-modules, Theorem \ref{thm:Q-shaped-verider-loc} recovers that the derived category $\mathbf{D}(A)$ is the Verdier quotient of the homotopy category modulo the acyclic complexes $\mathbf{K}^{\text{ac}}(A)\subset\mathbf{K}(A)$, in symbols:
    \[
    \mathbf{D}(A)= \frac{\mathbf{K}(A)}{\mathbf{K}^{\text{ac}}(A)}
    \]
    which is a classical result of Verdier \cite[Chap.~III,~Def.~1.2.2]{JLV77}. Moreover the recollement diagram
\[
\begin{tikzcd}[column sep=6em]
	{\mathbf{K}^{\text{ac}}(A)} & {\mathbf{K}(A)} & {\mathbf{D}(A)}
	\arrow["j"{description}, from=1-1, to=1-2]
	\arrow["{j^*}"', shift right=3, from=1-2, to=1-1]
	\arrow["{j^!}", shift left=3, from=1-2, to=1-1]
	\arrow["q"{description}, from=1-2, to=1-3]
	\arrow["{\mathbf{i}}", shift left=3, from=1-3, to=1-2]
	\arrow["{\mathbf{p}}"', shift right=3, from=1-3, to=1-2]
\end{tikzcd},\]
where $\textbf{i},\textbf{p}$ are the DG-injective and DG-projective resolution functors, is described in \cite[Ex.~4.14]{Krause_2010}. In this context, \corref{finite-gldim-homotopy} recovers that for a ring $A$ with finite (left) global dimension, one has $\mathbf{D}(A)=\mathbf{K}(\lPrj{A})$ (see for example the introduction of \cite{PJr05c} or \cite[Prop.~3.4]{LLAHBF91}).
   \item In the case that $Q=Q^{N\text{-cpx}}$ and $\lMod{Q,A}=\mathbf{Ch}_N(A)$ is the category of  $N$-complexes of $A$-modules, Theorem \ref{thm:Q-shaped-verider-loc} and Corollary \ref{cor: derived is cof homotopy} recover the stable t-structures described in \cite[Thm.~3.17]{MR3742439}.
\end{enumerate}
\end{exa}

\bibliographystyle{alpha}
\bibliography{refs}

@article {HZZ,
    AUTHOR = {Hu, Jiangsheng and Zhang, Dondong and Zhou, Panyue},
     TITLE = {Proper resolutions and {G}orensteinness in extriangulated
              categories},
   JOURNAL = {Front. Math. China},
  FJOURNAL = {Frontiers of Mathematics in China},
    VOLUME = {16},
      YEAR = {2021},
    NUMBER = {1},
     PAGES = {95--117},
      ISSN = {1673-3452,1673-3576},
   MRCLASS = {18G80 (18E10 18G10 18G25)},
  MRNUMBER = {MR4227180},
MRREVIEWER = {Jing\ He},
       DOI = {10.1007/s11464-021-0887-8},
       URL = {https://doi.org/10.1007/s11464-021-0887-8},
}

@article {MR4658661,
    AUTHOR = {Holm, Henrik and Odaba\c{s}\i , Sinem},
     TITLE = {The tensor embedding for a {G}rothendieck cosmos},
   JOURNAL = {Sci. China Math.},
  FJOURNAL = {Science China. Mathematics},
    VOLUME = {66},
      YEAR = {2023},
    NUMBER = {11},
     PAGES = {2471--2494},
      ISSN = {1674-7283,1869-1862},
   MRCLASS = {18E20 (18D15 18D20 18E10 18G05)},
  MRNUMBER = {MR4658661},
       DOI = {10.1007/s11425-021-2046-9},
       URL = {https://doi.org/10.1007/s11425-021-2046-9},
}

@incollection {MR4697475,
    AUTHOR = {Mathieu, Martin and Rosbotham, Michael},
     TITLE = {Schanuel's {L}emma for {E}xact {C}ategories},
 BOOKTITLE = {Multivariable {O}perator {T}heory},
     PAGES = {531--542},
 PUBLISHER = {Birkh\"{a}user/Springer, Cham},
      YEAR = {2023},
      ISBN = {978-3-031-50534-8; 9783031505355},
   MRCLASS = {99-06},
  MRNUMBER = {MR4697475},
       DOI = {10.1007/978-3-031-50535-5\_21},
       URL = {https://doi.org/10.1007/978-3-031-50535-5_21},
}

@article {MR3459032,
    AUTHOR = {Gillespie, James},
     TITLE = {Gorenstein complexes and recollements from cotorsion pairs},
   JOURNAL = {Adv. Math.},
  FJOURNAL = {Advances in Mathematics},
    VOLUME = {291},
      YEAR = {2016},
     PAGES = {859--911},
      ISSN = {0001-8708},
   MRCLASS = {18E30 (16E05)},
  MRNUMBER = {MR3459032},
MRREVIEWER = {Yuehui Zhang},
       DOI = {10.1016/j.aim.2016.01.004},
       URL = {https://doi-org.ep.fjernadgang.kb.dk/10.1016/j.aim.2016.01.004},
}

@article {MR1080854,
    AUTHOR = {Neeman, Amnon},
     TITLE = {The derived category of an exact category},
   JOURNAL = {J. Algebra},
  FJOURNAL = {Journal of Algebra},
    VOLUME = {135},
      YEAR = {1990},
    NUMBER = {2},
     PAGES = {388--394},
      ISSN = {0021-8693},
   MRCLASS = {18E30},
  MRNUMBER = {MR1080854},
MRREVIEWER = {Jeremy Rickard},
       DOI = {10.1016/0021-8693(90)90296-Z},
       URL = {https://doi-org.ep.fjernadgang.kb.dk/10.1016/0021-8693(90)90296-Z},
}

@article {Buhler,
    AUTHOR = {B{\"u}hler, Theo},
     TITLE = {Exact categories},
   JOURNAL = {Expo. Math.},
  FJOURNAL = {Expositiones Mathematicae},
    VOLUME = {28},
      YEAR = {2010},
    NUMBER = {1},
     PAGES = {1--69},
      ISSN = {0723-0869},
   MRCLASS = {18E10 (18-02 18E30)},
  MRNUMBER = {MR2606234},
MRREVIEWER = {Sunil K. Chebolu},
       DOI = {10.1016/j.exmath.2009.04.004},
       URL = {https://doi-org.ep.fjernadgang.kb.dk/10.1016/j.exmath.2009.04.004},
}

@article {MR3742439,
    AUTHOR = {Iyama, Osamu and Kato, Kiriko and Miyachi, Jun-ichi},
     TITLE = {Derived categories of {$N$}-complexes},
   JOURNAL = {J. Lond. Math. Soc. (2)},
  FJOURNAL = {Journal of the London Mathematical Society. Second Series},
    VOLUME = {96},
      YEAR = {2017},
    NUMBER = {3},
     PAGES = {687--716},
      ISSN = {0024-6107},
   MRCLASS = {18E30 (16E35 16G70 18G35 18G60)},
  MRNUMBER = {MR3742439},
MRREVIEWER = {Dag Oskar Madsen},
       DOI = {10.1112/jlms.12084},
       URL = {https://doi.org/10.1112/jlms.12084},
}

@article {HJ-JLMS,
    AUTHOR = {Holm, Henrik and J{\o}rgensen, Peter},
     TITLE = {The {$Q$}-shaped derived category of a ring},
   JOURNAL = {J. Lond. Math. Soc. (2)},
  FJOURNAL = {Journal of the London Mathematical Society. Second Series},
    VOLUME = {106},
      YEAR = {2022},
    NUMBER = {4},
     PAGES = {3263--3316},
      ISSN = {0024-6107,1469-7750},
   MRCLASS = {18G80 (16E35 18E35 18N40)},
  MRNUMBER = {MR4524199},
MRREVIEWER = {Xinhong\ Chen},
}

@book {Qui67,
    AUTHOR = {Quillen, Daniel G.},
     TITLE = {Homotopical algebra},
    SERIES = {Lecture Notes in Math.},
    VOLUME = {43},    
 PUBLISHER = {Springer-Verlag, Berlin-New York},
      YEAR = {1967},
     PAGES = {iv+156 pp. (not consecutively paged)},
   MRCLASS = {18.20 (55.00)},
  MRNUMBER = {MR0223432},
MRREVIEWER = {A. K. Bousfield},
}

@article {MR2811572,
    AUTHOR = {Gillespie, James},
     TITLE = {Model structures on exact categories},
   JOURNAL = {J. Pure Appl. Algebra},
  FJOURNAL = {Journal of Pure and Applied Algebra},
    VOLUME = {215},
      YEAR = {2011},
    NUMBER = {12},
     PAGES = {2892--2902},
      ISSN = {0022-4049},
   MRCLASS = {18E10 (18G35 55U15 55U35)},
  MRNUMBER = {MR2811572},
MRREVIEWER = {Timothy Porter},
       DOI = {10.1016/j.jpaa.2011.04.010},
       URL = {https://doi.org/10.1016/j.jpaa.2011.04.010},
}

@article {MR3719530,
    AUTHOR = {Dell'Ambrogio, Ivo and Stevenson, Greg and \v{S}\v{t}ov\'{\i}\v{c}ek, Jan},
     TITLE = {Gorenstein homological algebra and universal coefficient
              theorems},
   JOURNAL = {Math. Z.},
  FJOURNAL = {Mathematische Zeitschrift},
    VOLUME = {287},
      YEAR = {2017},
    NUMBER = {3-4},
     PAGES = {1109--1155},
      ISSN = {0025-5874},
   MRCLASS = {16E65 (18E30 19K35 46L80)},
  MRNUMBER = {MR3719530},
MRREVIEWER = {Matthew Pressland},
       DOI = {10.1007/s00209-017-1862-7},
       URL = {https://doi.org/10.1007/s00209-017-1862-7},
}

@article {MR2404296,
    AUTHOR = {Enochs, Edgar E. and Estrada, Sergio and Garc\'{\i}a Rozas, Juan Ramon},
     TITLE = {Gorenstein categories and {T}ate cohomology on projective
              schemes},
   JOURNAL = {Math. Nachr.},
  FJOURNAL = {Mathematische Nachrichten},
    VOLUME = {281},
      YEAR = {2008},
    NUMBER = {4},
     PAGES = {525--540},
      ISSN = {0025-584X},
   MRCLASS = {14F05 (14F43 16E65 16G20 18E15 18F20 18G25)},
  MRNUMBER = {MR2404296},
MRREVIEWER = {Andrei D. Halanay},
       DOI = {10.1002/mana.200510622},
       URL = {https://doi.org/10.1002/mana.200510622},
}

@article {OberstRohrl,
    AUTHOR = {Oberst, Ulrich and R\"{o}hrl, Helmut},
     TITLE = {Flat and coherent functors},
   JOURNAL = {J. Algebra},
  FJOURNAL = {Journal of Algebra},
    VOLUME = {14},
      YEAR = {1970},
     PAGES = {91--105},
      ISSN = {0021-8693},
   MRCLASS = {18.20},
  MRNUMBER = {MR257181},
MRREVIEWER = {J. L. Palmquist},
       DOI = {10.1016/0021-8693(70)90136-5},
       URL = {https://doi.org/10.1016/0021-8693(70)90136-5},
}

@preamble{
   "\def\cprime{$'$} "
}

@book {Happel,
    AUTHOR = {Happel, Dieter},
     TITLE = {Triangulated categories in the representation theory of
              finite-dimensional algebras},
    SERIES = {London Math. Soc. Lecture Note Ser.},
    VOLUME = {119},
 PUBLISHER = {Cambridge University Press, Cambridge},
      YEAR = {1988},
     PAGES = {x+208},
      ISBN = {0-521-33922-7},
   MRCLASS = {16A46 (16A48 16A62 16A64 18E30)},
  MRNUMBER = {MR935124},
MRREVIEWER = {Alfred G. Wiedemann},
       DOI = {10.1017/CBO9780511629228},
       URL = {http://dx.doi.org/10.1017/CBO9780511629228},
}

@preamble{
   "\def\soft#1{\leavevmode\setbox0=\hbox{h}\dimen7=\ht0\advance
    \dimen7 by-1ex\relax\if t#1\relax\rlap{\raise.6\dimen7
    \hbox{\kern.3ex\char'47}}#1\relax\else\if T#1\relax
    \rlap{\raise.5\dimen7\hbox{\kern1.3ex\char'47}}#1\relax
    \else\if d#1\relax\rlap{\raise.5\dimen7\hbox{\kern.9ex
    \char'47}}#1\relax\else\if D#1\relax\rlap{\raise.5\dimen7
    \hbox{\kern1.4ex\char'47}}#1\relax\else\if l#1\relax
    \rlap{\raise.5\dimen7\hbox{\kern.4ex\char'47}}#1\relax
    \else\if L#1\relax\rlap{\raise.5\dimen7\hbox{\kern.7ex
    \char'47}}#1\relax\else\message{accent \string\soft
    \space #1 not defined!}#1\relax\fi\fi\fi\fi\fi\fi} "
}

@book {Mac,
    AUTHOR = {Mac Lane, Saunders},
     TITLE = {Categories for the working mathematician},
    SERIES = {Grad. Texts in Math.},
    VOLUME = {5},
   EDITION = {Second},
 PUBLISHER = {Springer-Verlag, New York},
      YEAR = {1998},
     PAGES = {xii+314},
      ISBN = {0-387-98403-8},
   MRCLASS = {18-02},
  MRNUMBER = {MR1712872},
}

@incollection {Salce,
    AUTHOR = {Salce, Luigi},
     TITLE = {Cotorsion theories for abelian groups},
 BOOKTITLE = {Symposia {M}athematica, {V}ol. {XXIII} ({C}onf. {A}belian
              {G}roups and their {R}elationship to the {T}heory of
              {M}odules, {INDAM}, {R}ome, 1977)},
     PAGES = {11--32},
 PUBLISHER = {Academic Press, London-New York},
      YEAR = {1979},
   MRCLASS = {20K40 (18E40)},
  MRNUMBER = {MR565595},
MRREVIEWER = {P. L. Sperry},
}

@article {SaorinStovicek,
    AUTHOR = {Saor{\'{\i}}n, Manuel and {\v{S}}{\v{t}}ov{\'{\i}}{\v{c}}ek,
              Jan},
     TITLE = {On exact categories and applications to triangulated adjoints
              and model structures},
   JOURNAL = {Adv. Math.},
  FJOURNAL = {Advances in Mathematics},
    VOLUME = {228},
      YEAR = {2011},
    NUMBER = {2},
     PAGES = {968--1007},
      ISSN = {0001-8708},
   MRCLASS = {18E10},
  MRNUMBER = {MR2822215},
MRREVIEWER = {Wolfgang Rump},
       DOI = {10.1016/j.aim.2011.05.025},
       URL = {http://dx.doi.org/10.1016/j.aim.2011.05.025},
}

@incollection {Stovicek2013,
    AUTHOR = {{\v{S}}{\v{t}}ov{\'{\i}}{\v{c}}ek, Jan},
     TITLE = {Exact model categories, approximation theory, and cohomology
              of quasi-coherent sheaves},
 BOOKTITLE = {Advances in representation theory of algebras},
    SERIES = {EMS Ser. Congr. Rep.},
     PAGES = {297--367},
 PUBLISHER = {Eur. Math. Soc., Z\"urich},
      YEAR = {2013},
   MRCLASS = {18E10 (18E30 18F20)},
  MRNUMBER = {MR3220541},
MRREVIEWER = {R. H. Street},
}

@book {GobelTrlifaj,
    AUTHOR = {G{\"o}bel, R{\"u}diger and Trlifaj, Jan},
     TITLE = {Approximations and endomorphism algebras of modules},
    SERIES = {de Gruyter Exp. Math.},
    VOLUME = {41},
 PUBLISHER = {Walter de Gruyter GmbH \& Co. KG, Berlin},
      YEAR = {2006},
     PAGES = {xxiv+640},
      ISBN = {978-3-11-011079-1; 3-11-011079-2},
   MRCLASS = {16D90 (03E75 16D10 16E30 16S50 16S90)},
  MRNUMBER = {MR2251271},
MRREVIEWER = {Sverre O. Smal{\o}},
       DOI = {10.1515/9783110199727},
       URL = {http://dx.doi.org/10.1515/9783110199727},
}

@incollection {Freyd66,
    AUTHOR = {Freyd, Peter},
     TITLE = {Splitting homotopy idempotents},
 BOOKTITLE = {Proc. {C}onf. {C}ategorical {A}lgebra ({L}a {J}olla, {C}alif.,
              1965)},
     PAGES = {173--176},
 PUBLISHER = {Springer-Verlag New York, Inc., New York},
      YEAR = {1966},
   MRCLASS = {18.15 (55.40)},
  MRNUMBER = {MR206069},
MRREVIEWER = {Ronald\ Brown},
}

@article {Hovey02,
    AUTHOR = {Hovey, Mark},
     TITLE = {Cotorsion pairs, model category structures, and representation
              theory},
   JOURNAL = {Math. Z.},
  FJOURNAL = {Mathematische Zeitschrift},
    VOLUME = {241},
      YEAR = {2002},
    NUMBER = {3},
     PAGES = {553--592},
      ISSN = {0025-5874},
     CODEN = {MAZEAX},
   MRCLASS = {55U35 (18E30 18G55)},
  MRNUMBER = {MR1938704},
       DOI = {10.1007/s00209-002-0431-9},
       URL = {http://dx.doi.org/10.1007/s00209-002-0431-9},
}

@article {EEnOJn95b,
    AUTHOR = {Enochs, Edgar E. and Jenda, Overtoun M. G.},
     TITLE = {Gorenstein injective and projective modules},
   JOURNAL = {Math. Z.},
  FJOURNAL = {Mathematische Zeitschrift},
    VOLUME = {220},
      YEAR = {1995},
    NUMBER = {4},
     PAGES = {611--633},
      ISSN = {0025-5874},
     CODEN = {MAZEAX},
   MRCLASS = {16E30 (16D50)},
  MRNUMBER = {MR1363858},
MRREVIEWER = {T. W. Hungerford},
}

@article {HHl04a,
  AUTHOR =	 {Holm, Henrik},
  TITLE =	 {Gorenstein homological dimensions},
  JOURNAL =	 {J. Pure Appl. Algebra},
  FJOURNAL =	 {Journal of Pure and Applied Algebra},
  VOLUME =	 {189},
  YEAR =	 {2004},
  NUMBER =	 {1-3},
  PAGES =	 {167--193},
  ISSN =	 {0022-4049},
  CODEN =	 {JPAAA2},
  MRCLASS =	 {16E10 (16E05 16E30)},
  MRNUMBER =	 {MR2038564},
  MRREVIEWER =	 {Zhaoyong Huang},
}

@incollection {JLV77,
  AUTHOR =	 {Verdier, Jean-Louis},
  TITLE =	 {Cat\'{e}gories d\'{e}riv\'{e}es. Quelques
                  r\'{e}sultats {\upshape (}\'{e}tat 0{\upshape )}},
  BOOKTITLE =	 {SGA~$4\tfrac{1}{2}$},
  PAGES =	 {262--311},
  PUBLISHER =	 {Springer},
  ADDRESS =	 {Berlin Heidelberg New York},
  YEAR =	 {1977},
}

@article {LLAHBF91,
  AUTHOR =	 {Avramov, Luchezar L. and Foxby, Hans-Bj{\o}rn},
  TITLE =	 {Homological dimensions of unbounded complexes},
  JOURNAL =	 {J. Pure Appl. Algebra},
  FJOURNAL =	 {Journal of Pure and Applied Algebra},
  VOLUME =	 {71},
  YEAR =	 {1991},
  NUMBER =	 {2-3},
  PAGES =	 {129--155},
  ISSN =	 {0022-4049},
  CODEN =	 {JPAAA2},
  MRCLASS =	 {18G20 (13D05 18E25 18G15 55U25)},
  MRNUMBER =	 {MR1117631},
  MRREVIEWER =	 {M. H. Bijan-Zadeh},
}

@book {Nee,
  AUTHOR =	 {Neeman, Amnon},
  TITLE =	 {Triangulated categories},
  SERIES =	 {Annals of Mathematics Studies},
  VOLUME =	 {148},
  PUBLISHER =	 {Princeton University Press},
  ADDRESS =	 {Princeton, NJ},
  YEAR =	 {2001},
  PAGES =	 {viii+449},
  ISBN =	 {0-691-08685-0; 0-691-08686-9},
  MRCLASS =	 {18E30 (55-02 55N20 55U35)},
  MRNUMBER =	 {MR1812507},
  MRREVIEWER =	 {Stanis{\l}aw Betley},
}

@article {PJr05c,
  AUTHOR =	 {J{\o}rgensen, Peter},
  TITLE =	 {The homotopy category of complexes of projective
                  modules},
  JOURNAL =	 {Adv. Math.},
  FJOURNAL =	 {Advances in Mathematics},
  VOLUME =	 {193},
  YEAR =	 {2005},
  NUMBER =	 {1},
  PAGES =	 {223--232},
  ISSN =	 {0001-8708},
  CODEN =	 {ADMTA4},
  MRCLASS =	 {18E30 (16E05)},
  MRNUMBER =	 {MR2132765},
  MRREVIEWER =	 {Jun-ichi Miyachi},
}

@book {modcat,
    AUTHOR = {Hovey, Mark},
     TITLE = {Model categories},
    SERIES = {Math. Surveys Monogr.},
    VOLUME = {63},
 PUBLISHER = {American Mathematical Society},
   ADDRESS = {Providence, RI},
      YEAR = {1999},
     PAGES = {xii+209},
      ISBN = {0-8218-1359-5},
   MRCLASS = {55U35 (18D15 18G30 18G55)},
  MRNUMBER = {MR1650134},
MRREVIEWER = {Teimuraz Pirashvili},
}

@book {mta,
    AUTHOR = {Jensen, Christian U. and Lenzing, Helmut},
     TITLE = {Model-theoretic algebra with particular emphasis on fields,
              rings, modules},
    SERIES = {Algebra, Logic and Applications},
    VOLUME = {2},
 PUBLISHER = {Gordon and Breach Science Publishers},
   ADDRESS = {New York},
      YEAR = {1989},
     PAGES = {xiv+443},
      ISBN = {2-88124-717-2},
   MRCLASS = {03C60 (00A05 03-02 12L12 13L05 16-02)},
  MRNUMBER = {MR1057608},
MRREVIEWER = {Anand Pillay},
}

@book {rha,
  AUTHOR =	 {Enochs, Edgar E. and Jenda, Overtoun M. G.},
  TITLE =	 {Relative homological algebra},
  SERIES =	 {de Gruyter Exp. Math.},
  VOLUME =	 {30},
  PUBLISHER =	 {Walter de Gruyter \& Co.},
  ADDRESS =	 {Berlin},
  YEAR =	 {2000},
  PAGES =	 {xii+339},
  ISBN =	 {3-11-016633-X},
  MRCLASS =	 {16E65 (13H10 16-02 16E10 18G25)},
  MRNUMBER =	 {MR1753146},
  MRREVIEWER =	 {J. Kuzmanovich},
}

@preamble{
   "\providecommand{\arxiv}[2][AC]{\mbox{\href{http://arxiv.org/abs/#2}{\tt
    arXiv:#2 [math.#1]}}}
\providecommand{\oldarxiv}[2][AC]{\mbox{\href{http://arxiv.org/abs/math/#2}{\sf arXiv:math/#2 [math.#1]}}}"
}

@preamble{
"\providecommand{\MR}[1]{\mbox{\href{http://www.ams.org/mathscinet-getitem?mr=#1}{#1}}}
  \renewcommand{\MR}[1]{\mbox{\href{http://www.ams.org/mathscinet-getitem?mr=#1}{#1}}}"
}

@article {gillespie2015homotopy,
    AUTHOR = {Gillespie, James},
     TITLE = {The homotopy category of {$N$}-complexes is a homotopy
              category},
   JOURNAL = {J. Homotopy Relat. Struct.},
  FJOURNAL = {Journal of Homotopy and Related Structures},
    VOLUME = {10},
      YEAR = {2015},
    NUMBER = {1},
     PAGES = {93--106},
      ISSN = {2193-8407,1512-2891},
   MRCLASS = {18G35 (55U15 55U35)},
  MRNUMBER = {3313637},
MRREVIEWER = {Philippe\ Gaucher},
       DOI = {10.1007/s40062-013-0043-6},
       URL = {https://doi.org/10.1007/s40062-013-0043-6},
}

@incollection {Krause_2010,
    AUTHOR = {Krause, Henning},
     TITLE = {Localization theory for triangulated categories},
 BOOKTITLE = {Triangulated categories},
    SERIES = {London Math. Soc. Lecture Note Ser.},
    VOLUME = {375},
     PAGES = {161--235},
 PUBLISHER = {Cambridge Univ. Press, Cambridge},
      YEAR = {2010},
      ISBN = {978-0-521-74431-7},
   MRCLASS = {18E35 (18E30)},
  MRNUMBER = {2681709},
MRREVIEWER = {Jue\ Le},
       DOI = {10.1017/CBO9781139107075.005},
       URL = {https://doi.org/10.1017/CBO9781139107075.005},
}

@article {HJ-TAMS,
    AUTHOR = {Holm, Henrik and J{\o}rgensen, Peter},
     TITLE = {The {$Q$}-shaped derived category of a ring -- compact and perfect objects},
   JOURNAL = {Trans. Amer. Math. Soc.},
  FJOURNAL = {Transactions of the American Mathematical Society},
    VOLUME = {377},
      YEAR = {2024},
    NUMBER = {5},
     PAGES = {3095–3128}
}

\end{document}